\documentclass[11pt]{amsart}
\usepackage{url}
\usepackage{hyperref}
\usepackage{enumerate}
\usepackage{comment}

\makeatletter

\@namedef{subjclassname@2010}{

  \textup{2010} Mathematics Subject Classification}

\usepackage{amssymb, amsmath,amsthm}
\usepackage{mathrsfs}
\newtheorem{thm}{Theorem}[section]

\newtheorem{prop}[thm]{Proposition}
\newtheorem{conj}[thm]{Conjecture}

\newtheorem{cor}[thm]{Corollary}

\newtheorem{lem}[thm]{Lemma}
\theoremstyle{definition}
\newtheorem{rem}{Remark}

\newcommand{\ra}{\rightarrow}
\newcommand{\bk}{\backslash}
\newcommand{\mc}{\mathcal}
\newcommand{\mf}{\mathfrak}
\newcommand{\mb}{\mathbb}
\newcommand{\sg}{\sigma}

\renewcommand{\ss}{\substack}

\newcommand{\e}{\varepsilon}

\newcommand{\mbf}{\boldsymbol}

\begin{document}
\title{A Hal\'{a}sz-type asymptotic formula for logarithmic means and its consequences}
\author{Oleksiy Klurman}
\address{School of Mathematics, University of Bristol, Woodland Road, Bristol, BS8 1UG, UK}
\email{lklurman@gmail.com}
\author{Alexander P. Mangerel}
\address{Department of Mathematical Sciences, Durham University, Stockton Road, Durham, DH1 3LE, UK}
\email{smangerel@gmail.com}
\begin{abstract}
We establish an asymptotic formula for the logarithmic mean value of a 1-bounded multiplicative function that is sharp in many cases of interest. We derive from it a variety of applications, making progress on several old problems. \\
As a first application, we show that if $f$ is a completely multiplicative function taking values in $[-1,1]$ then there is a constant $c > 0$ such that for every $x \geq 3$,
$$
L_f(x) := \sum_{n \leq x} \frac{f(n)}{n} > -\frac{c}{(\log x)^{1-2/\pi}},
$$
thus significantly improving on a 20-year-old result of Granville and Soundararajan. We also show that the exponent of $\log x$ in this result can be improved to $-1+o(1)$, as long as $f$ does not ``behave like'' the Liouville function $\lambda$ in a precise sense. \\
As a second application, we show that for a Rademacher random completely multiplicative function $\mbf{f}$, the probability that $L_{\mbf{f}}(x)$ is negative is $O(\exp(-x^c))$ for some $c \in (0,1)$, thus establishing a previously conjectured bound. \\
Finally, we obtain a converse theorem for small absolute values $|L_f(x)|$, and construct examples $f$ that show that it is (essentially) best possible. 
\end{abstract}
\maketitle
\section{Introduction}
\subsection{Motivation} 
Let $f:\mathbb{N}\to\mathbb{U}$ be a multiplicative function taking values in the closed unit disc $\mathbb{U}$. It is a central problem in analytic number theory to understand the behaviour of the (Ces\`{a}ro) partial sums
\[M_f(x) := \sum_{n\le x}f(n), \quad x \geq 1.\]
Building on the work of Delange, Wirsing and others, Hal\'asz \cite{Hal71} proved a general result that provides upper bounds for such partial sums in broad generality (see \cite[Sec. III.4.3]{Ten} for a comprehensive discussion). A refinement of Hal\'{a}sz' bound, due to Granville and Soundararajan \cite[Cor. 1]{GSDecay}, states that for any $x \geq 3$ and $T \geq 1$,
\begin{equation}\label{eq:HalGS}
M_f(x) \ll x(1+D_f(x;T)) e^{-D_f(x;T)} + \frac{x}{T},
%+ \frac{x\log\log x}{\log x},
\end{equation}
wherein we have set
$$
D_f(x;T) := \min_{|t| \leq T} \mb{D}(f,n^{it};x)^2 := \min_{|t| \leq T} \sum_{p \leq x} \frac{1-\text{Re}(f(p)p^{-it})}{p}.
$$
Hal\'{a}sz' bound and its refinements have been extremely influential in multiplicative number theory, due to their numerous applications. \\
Rather than upper bounds that are not necessarily sharp in every case, one might hope for an asymptotic formula for $M_f(x)$ under general conditions. While some examples of this exist in the literature (including in Hal\'{a}sz' own paper \cite{Hal68}; see also the comprehensive bibliography given in \cite{GHS}), they tend to require rather rigid distributional information on the behaviour of $f$ along the primes. While various extensions, generalisations and new treatments of Hal\'{a}sz' theorem have been given over the years (see e.g., \cite{GHS} and \cite{TenMV} for two significant recent examples), it remains an interesting problem\footnote{In \cite{GHS}, a reference is made to a forthcoming paper of those authors on precisely this question, though as far as we are aware this has not appeared in the literature since then.} to obtain a general asymptotic version of Hal\'{a}sz' theorem. \\ 
In this paper, we look at the corresponding problem for \emph{logarithmic} partial sums 
\[L_f(x) := \sum_{n\le x}\frac{f(n)}{n}, \quad x \geq 1.\]
These have been studied extensively in their own right over the past century, in part due to their intimate connection to values $L(1,\chi)$ of Dirichlet $L$-functions. Unlike $M_f(x)$, which may have large absolute value if $f(n)$ ``pretends'' to be $n^{it}$ for some $t \in \mb{R}$ (i.e., as a function of $x$, $\mb{D}(f,n^{it};x) = O(1)$), $|L_f(x)|$ is not large in this case \emph{except} when $|t|$ is quite small; this reflects the fact that %$L_{n^{it}}(x)$ is an approximation of 
$$
L_{n^{it}}(x) \approx \zeta(1+1/\log x + it) \asymp \min\left\{\log x, \frac{1}{|t|}\right\}, \text{ for } |t| \leq 1, 
$$
whereas when $|t| \geq 1$ we instead have 
$$
|\zeta(1+1/\log x + it)| \ll \log(2+|t|).
$$ 
Thus, while bounds for $L_f(x)$ may be obtained by partial summation from corresponding bounds for $M_f(x)$ as in Hal\'{a}sz' theorem (as Montgomery and Vaughan obtained in \cite{MVMV}, and which Goldmakher refined later in \cite{Gold}), this treatment does not capture the essence of the problem of bounding $|L_f(x)|$. In later works \cite{LamMan} and \cite{GraMan}, more precise upper bounds were obtained that addressed the phenomenon that $|L_f(x)|$ can only be large when $f$ ``pretends'' to be $n^{it}$ for rather small $|t|$. However, both of these results have the drawback that they are far from sharp as soon as $|L_f(x)| = O(1)$ (which is the case for many interesting examples, including the M\"{o}bius function $\mu$ and variants thereof). \\
To see why this is an unfortunate situation, it suffices to note the completely elementary fact that when $g := 1\ast f$,
\begin{equation}\label{eq:trivial}
\frac{1}{x}M_g(x) = \frac{1}{x}\sum_{n \leq x} f(n) \left\lfloor \frac{x}{n}\right\rfloor 
%= x L_f(x) - \sum_{n \leq x} f(n) \{x/n\} 
= L_f(x) + O(1),
\end{equation}
and so in particular one can cheaply obtain an \emph{asymptotic formula} for $L_f(x)$ up to $O(1)$ error. This, of course, shifts the problem to understanding $M_g(x)$, but when e.g. $f$ is a completely multiplicative function that takes real values in $[-1,1]$, the function $g$ is non-negative. Further tools to analyse $M_g(x)$ then become available in such a case. \\ 
%Hal\'{a}sz' upper bound may be used (via partial summation) to prove corresponding estimates for $|L_f(x)|$ that depend also on $M(x;T)$
Our primary goal in this paper is to refine the estimate \eqref{eq:trivial} and establish a much stronger, often \emph{sharp} asymptotic formula for logarithmic sums $L_f(x)$. We will apply it to make progress on several open questions for both deterministic and random multiplicative functions, in particular in the case when $f$ is real-valued and completely multiplicative.

\subsection{Main results}
\noindent For an arithmetic function $h : \mb{N} \ra \mb{C}$ and $x \geq 1,$ we recall and introduce the following notation:
$$
L_h(x):= \sum_{n \leq x} \frac{h(n)}{n}, \quad M_h(x) := \sum_{n \leq x} h(n), \quad \tilde{M}_h(x) := \frac{1}{x} M_h(x).
$$
Throughout this paper we write $w_0 := e^{1-\gamma} ,$ where $\gamma$ is the Euler-Mascheroni constant. 
Our main result
%, concerning $L_f(x)$ when $f$ is bounded and real-valued, 
is the following.
\begin{thm} \label{thm:HalDeltaRefined}
Let $f: \mb{N} \ra [-1,1]$ be a 
%completely 
multiplicative function and let $g := 1\ast f$. Let $x \geq 3$ and\footnote{This condition on $T$ may be relaxed considerably, though it does not create a significant constraint in any of our applications.} $1 \leq T\leq (\log x)^{100}$.  Then 
\begin{align}\label{eq:LftoMg}
L_f(x) &= \left(1+O\left(\frac{1}{\log x} \right)\right) \tilde{M}_g(w_0x) 
%\nonumber \\
+ O\left(\frac{1}{\log x}\int_1^x\left(H_1(y) + H_2(y;T)\right) \frac{dy}{y\log y} + \frac{\log (2T)}{T}\right),
%\left(1 + \frac{(\log (2T)) \log\log x}{\log x} \right)\right),
 %O\left(\frac{\log\log x}{\log x}\left(e^{-M(x;T)} + \frac{\log (2T)}{T}\left(1 + \frac{(\log (2T)) \log\log x}{\log x} \right) \right),
\end{align}
where for $1 \leq y \leq x$ we have set
\begin{align*}
H_1(y) &:= \max_{|t|\leq 1/2} \exp\left(\sum_{p \leq y} \frac{(f(p)-1) \cos(t\log p)}{p}\right), \\
H_2(y;T) &:= \left(\sum_{1 \leq k \leq T-1/2} \frac{(\log(2k))^4}{k^2} \max_{|t-k| \leq 1/2} \exp\left(2\sum_{p \leq y} \frac{f(p)\cos(t\log p)}{p}\right)\right)^{1/2}.
%M(x;T) := \min_{|t| \leq T} \sum_{p \leq x} \frac{(1-f(p)) \cos(t\log p)}{p}.
\end{align*}
Moreover, if there is $c \in (0,1)$ such that $f(p) = 0$ for all $x^{1-c} < p \leq x$ then the integral in the error term of \eqref{eq:LftoMg} may be restricted to the interval $[x^c/2,x]$. 
\end{thm}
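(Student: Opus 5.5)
The plan is to compare the two quantities through their Mellin--Perron representations, and to choose the dilation $w_0 = e^{1-\gamma}$ so that the two kernels agree to first order at $s=0$. Write $F(s) := \sum_{n} f(n)n^{-s}$, which converges absolutely for $\text{Re}(s)>1$, and note that $\sum_n g(n)n^{-s} = \zeta(s)F(s)$. Formally,
$$
L_f(x) = \frac{1}{2\pi i}\int_{(c)} F(1+s)\frac{x^s}{s}\,ds, \qquad \tilde{M}_g(w_0x) = \frac{1}{2\pi i}\int_{(c)} F(1+s)\zeta(1+s)\frac{(w_0x)^s}{1+s}\,ds .
$$
Near $s=0$ we have $\zeta(1+s)/(1+s) = 1/s + (\gamma-1) + O(|s|)$ and $w_0^s = 1+(1-\gamma)s + O(|s|^2)$. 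Hence
$$
\zeta(1+s)\frac{w_0^s}{1+s} = \frac1s + O(|s|).
$$
The difference of the two integrals therefore has the kernel $x^s K(s)$, where $K(s) := \zeta(1+s)w_0^s/(1+s) - 1/s$ is $O(|s|)$ for small $|s|$ and $O(\log(2+|t|))$ for larger $|t|$. This cancellation is the source of the saving of $1/\log x$, and it is the reason for the specific constant $w_0$.

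To make the comparison rigorous, I would not use sharp Perron cut-offs. Instead I would use the logarithmic-derivative device from the proof of Hal\'asz' theorem (as in Montgomery--Tenenbaum). Write $\log n = \log x - \log(x/n)$ for both sums, so that $L_f(x)\log x$ and $\tilde M_g(w_0x)\log x$ are expressed through sums weighted by $f\log$. The $\log(x/n)$ pieces are $O(1)$ times the original quantities, and produce the factor $(1+O(1/\log x))$ in front of $\tilde M_g(w_0x)$. The weighted sums are then written as integrals over $\sigma = \text{Re}(s) \in [1/\log x, 1]$ of $F'(1+\sigma+it)$ against the kernels. After Cauchy--Schwarz and Parseval, the $t$-integral at height $\sigma$ is controlled by $\max |F(1+\sigma+it)|$ (respectively $|\zeta F|$) on suitable ranges, together with an $L^2$ norm of $F'/F$, which is harmless since $f$ is $1$-bounded.

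Next I would translate these suprema into the functions $H_1$ and $H_2$. This uses the standard Euler-product estimate $|F(1+\sigma+it)| \asymp \exp\bigl(\sum_{p\le y} f(p)\cos(t\log p)/p\bigr)$ with $y = e^{1/\sigma}$, valid for real $f$ up to bounded factors, and the substitution $\sigma = 1/\log y$, which turns $\int_{1/\log x}^1 d\sigma$ into $\int_1^x dy/(y\log^2 y)$.

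The $t$-range is split into three parts.
\begin{enumerate}
\item For $|t|\le 1/2$, the factor $|K(s)| \ll |s|$ combined with $|\zeta(1+s)|\asymp 1/|s|$ gives $\exp\bigl(\sum_{p\le y}(f(p)-1)\cos(t\log p)/p\bigr)$. This is $H_1(y)$.
\item For $|t-k|\le 1/2$ with $1\le k\le T-1/2$, I would bound $|\zeta(1+s)| \ll \log(2k)$ and the kernels by $1/k$. After Cauchy--Schwarz over the blocks this yields the weights $(\log 2k)^4/k^2$ and the square of the exponential sum, which is $H_2(y;T)$.
\item The tail $|t|>T$ is handled trivially using decay of the smoothed kernel, giving $\log(2T)/T$. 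The restriction $T\le(\log x)^{100}$ ensures that the $\log$-losses from $\zeta$ remain absorbable.
\end{enumerate}

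For the final assertion, suppose $f(p)=0$ on $(x^{1-c},x]$. Then for $y\ge x^{1-c}$ the Euler products, and hence $H_1(y)$ and $H_2(y;T)$, stabilise. For small $\sigma$ one can therefore shift the $\sigma$-integral, i.e. run the argument at level $x^c$ rather than $1$, and the range $y<x^c/2$ is absorbed.

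The main obstacle is the second step: performing the $\log$-weighting and Parseval argument so that the error for the difference kernel genuinely inherits the factor $|s|$, yielding $H_1$ with $(f(p)-1)$ rather than $f(p)$, while keeping the main term's multiplicative error at $O(1/\log x)$. I would first try to do this by applying Perron with the smooth weight $\log(x/n)$ and treating $\tilde M_g$ via the Dirichlet hyperbola identity $\tilde M_g(y) = L_f(y) - \frac1y\sum_{n\le y} f(n)\{y/n\}$ as a consistency check on the constant $w_0$.
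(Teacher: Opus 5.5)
There is a genuine gap at the step you yourself flag, and the claim you make there is false. On the $L_f$ side the $\log(x/n)$ piece is not $O(1)$ times $L_f(x)$. By partial summation
\[
\sum_{n\le x}\frac{f(n)}{n}\log\frac{x}{n}=\int_1^x L_f(u)\,\frac{du}{u}=L_g(x)+O(|L_f(x)|+1).
\]
This is $\asymp(\log x)^2$ already for $f\equiv 1$, and in general it is not bounded by $|L_f(x)|+\tilde{M}_g(w_0x)+1$. Only the $g$-side piece, which equals $L_f(x)+O(1)$, is harmless. The $L_f$-side piece must be cancelled by the $\Lambda\ast g$ part of $g\log=((1+f)\Lambda)\ast g$ (valid for completely multiplicative $f$). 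That is, you must show
\[
\sum_{p\le x}\frac{\log p}{p}\tilde{M}_g(w_0x/p)=L_g(x)+O(|L_f(x)|+\tilde{M}_g(w_0x)+1).
\]
Only then do you get the recursion
\[
\Delta(x)\log(w_0x)=\sum_{p\le x}\frac{f(p)\log p}{p}\Delta(x/p)+O(|L_f(x)|+\tilde{M}_g(w_0x)+1).
\]
Any averaging-over-scales device needs this recursion as input. The paper uses local maxima of $|\Delta(y)|\log(w_0y)$, an average over $[x-x/(\log x)^2,x]$ and Brun--Titchmarsh; the Montgomery--Tenenbaum route you cite needs the same identity. With it one bounds $|\Delta(x)|$ by $\frac{1}{\log x}\int_1^x|\Delta(u)|\frac{du}{u}+\frac{1+|L_f(x)|+\tilde{M}_g(w_0x)}{\log x}$. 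The last term, not the $\log(x/n)$ pieces, is what produces the factor $1+O(1/\log x)$.

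The paper proves the cancellation with the prime number theorem, a dyadic decomposition and the Lipschitz bound $\tilde{M}_g(y)-\tilde{M}_g(y/w)\ll\log 2w$. It uses $g\ge 0$ essentially, which is why it first reduces to completely multiplicative $f$ (writing $f=h\ast\tilde f$ with $h$ supported on squarefull integers). Nothing in your outline uses $g\ge 0$. So, with $\text{Re}(f(p)p^{-it})$ in the Euler products, it would give the same conclusion for every $1$-bounded $f$. Without positivity this step only yields an error $|\tilde{M}_g(w_0x)|(\log\log x)^2+(\log\log x)^4$, which is why the paper's general theorem is weaker. By contrast, the vanishing $K(s)=O(|s|)$ that you single out costs nothing once you are at the Plancherel stage. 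There you also need the second kernel $\zeta'(1+s)+\frac{1+s}{s^2}-\frac{\zeta(1+s)}{1+s}=O(1)$ multiplying $F(1+s)$, which your sketch does not mention.

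Your justification of the final assertion is also wrong. The stabilisation of $H_1(y)$ and $H_2(y;T)$ for $y\ge x^{1-c}$ concerns large $y$. It says nothing about the range $y<x^c/2$ that the claim discards, where (for $y<x^{1-c}$) $f$ is unrestricted. Discarding that range is exactly the saving of a factor $\log\log x$ exploited later, e.g.\ in Proposition~\ref{prop:passToSmooth} and the random application. In the paper the truncation comes from the recursion above. Since $f(p)=0$ for $x^{1-c}<p\le x$, only primes $p\le x^{1-c}$ occur in $\sum_p\frac{f(p)\log p}{p}\Delta(y/p)$, hence only values $\Delta(u)$ with $u\ge x/(2x^{1-c})=x^c/2$. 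So the average of $|\Delta|$, and afterwards the Plancherel bounds at $\alpha=1/\log y$, are needed only for $y\in[x^c/2,x]$. This part, too, rests on the missing recursion.
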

The new feature that the logarithmic partial sums up to $x$ depend on the integers $>x,$ that is, the appearance of the constant $w_0>1$, is crucial in these estimates and may be surprising to readers. For an explanation of why it appears, see Section \ref{sec:w0} below. \\
As a consequence, we obtain the following variant of Theorem \ref{thm:HalDeltaRefined} that, while weaker, is slightly simpler to state.
%(and superficially similar to the bound in \eqref{eq:HalGS}).
%, follows as a consequence.
\begin{thm}\label{thm:HalDelta}
Let $f: \mb{N} \ra [-1,1]$ be a 
%completely 
multiplicative function and let $g := 1 \ast f$. Let $x \geq 3$, and let $1 \leq T\leq (\log x)^{100}$.  Then 
$$
L_f(x) = \left(1+O\left(\frac{1}{\log x} \right)\right) \tilde{M}_g(w_0x) + O\left(\frac{\log (2T)}{T} + \frac{\log\log x}{\log x}e^{-M(x;T)}\right),
%+ \frac{1}{\log x}\right),
$$
where we have set
$$
M(x;T) := \min_{|t| \leq T} \sum_{p \leq x} \frac{(1-f(p)) \cos(t\log p)}{p}.
$$
\end{thm}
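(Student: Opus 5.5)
Theorem \ref{thm:HalDelta} follows from Theorem \ref{thm:HalDeltaRefined} once we bound the integral error term by $\frac{\log\log x}{\log x}e^{-M(x;T)}+\frac{\log(2T)}{T}$. We apply Theorem \ref{thm:HalDeltaRefined} with the same $T$ and estimate $H_1(y)$ and $H_2(y;T)$ separately, uniformly for $y\le x$. Since $\int_1^x \frac{dy}{y\log y}\ll \log\log x$, it suffices to show that
$$H_1(y)+H_2(y;T)\ll e^{-M(x;T)} + \frac{\log x}{\log\log x}\cdot\frac{\log(2T)}{T}$$
(or a variant whose integral against $dy/(y\log y)$ is acceptable), with the range $y\le 3$ being trivial.

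\textbf{Monotonicity in $y$.} The summand $(f(p)-1)\cos(t\log p)/p$ need not have a fixed sign when $|t|$ is not small, so we cannot simply enlarge $y$ to $x$. We write
$$\sum_{p\le y}\frac{(f(p)-1)\cos(t\log p)}{p}=-\sum_{p\le x}\frac{(1-f(p))\cos(t\log p)}{p}+\sum_{y<p\le x}\frac{(1-f(p))\cos(t\log p)}{p}.$$
The last sum is at most $2\sum_{y<p\le x}1/p$, which is only $O(1)$ when $y\ge x^{c}$. This is lossy for small $y$, but we do not need it there. For $y\le x^{1/\log\log x}$ (say), the trivial bound $H_1(y)\le 1$ contributes only $O\bigl(\frac{1}{\log x}\log\log y\bigr)$ after integration. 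That is not a problem, but only because $e^{-M(x;T)}\gg (\log x)^{-2}$ is not automatic, so some care is needed.

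A cleaner route handles the tail directly. For $|t|\le 1/2$ and $p\le y$, the factor $\cos(t\log p)$ is oscillatory, and
$$\sum_{y<p\le x}\frac{\cos(t\log p)}{p}\le \log\frac{\log x}{\log y}+O(1),$$
which gives $H_1(y)\ll \frac{\log x}{\log y}e^{-M(x;T)}$. Integrating $\frac{1}{\log x}\int \frac{\log x}{\log y}\,\frac{dy}{y\log y}$ gives $O(1)$ times $e^{-M}$ over $y\ge 3$. That is a bit too much, so we actually combine this with the bound $H_1(y)\le1$ in the range $\log y\le (\log x)/\log\log x$. The total is then $\ll \frac{\log\log x}{\log x}e^{-M(x;T)}+\frac{1}{\log x}\log\log x\cdot(\ldots)$. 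I expect this bookkeeping to be the main obstacle. The cleanest fix is to use the sharper tail bound
$$\sum_{y<p\le x}\frac{(1-f(p))\cos(t\log p)}{p}\le \sum_{y<p\le x}\frac{1-f(p)}{p}\quad\text{for } |t|\log x\ll 1,$$
which allows the cruder $O(\log\log x)$ loss to be absorbed into the stated $\frac{\log\log x}{\log x}$ factor.

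\textbf{The term $H_2$.} Since $2f(p)\cos(t\log p)=2\cos(t\log p)-2(1-f(p))\cos(t\log p)$ and, for $|t|\ge 1/2$, $\sum_{p\le y}\cos(t\log p)/p=\log|\zeta(1+1/\log y+it)|+O(1)\ll\log\log(2+|t|)$, we get
$$\max_{|t-k|\le1/2}\exp(\ldots)\ll (\log 2k)^{O(1)}e^{-2M(y;T)}.$$
Converting $M(y;T)$ to $M(x;T)$ follows as for $H_1$. The $k$-sum converges absolutely because of the $k^{-2}$ weight, so $H_2\ll e^{-M(x;T)}$ up to the same bookkeeping. Any residual contribution from the range near $|t|\approx T$ is absorbed into $\log(2T)/T$.
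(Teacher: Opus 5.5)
There is a genuine gap, and it sits exactly at the step you call ``bookkeeping'': passing from $H_1(y)$ and $H_2(y;T)$ for $y\le x$ to quantities at scale $x$. Both of your pointwise inputs fail.

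\emph{The bound $H_1(y)\le 1$ is false.} For $|t|\le 1/2$, the factor $\cos(t\log p)$ takes negative values as soon as $\log p>\pi/(2|t|)$. For $f$ as in \eqref{eq:equalLip} with $t=1/2$ one has $H_1(y)\gg(\log y)^{2/\pi}$.

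\emph{The bound $H_1(y)\ll\frac{\log x}{\log y}e^{-M(x;T)}$ is unjustified.} Your argument estimates $\sum_{y<p\le x}\cos(t\log p)/p$, but the sum that must be controlled is $\sum_{y<p\le x}(1-f(p))\cos(t\log p)/p$. Since $1-f(p)\in[0,2]$ can be concentrated where $\cos(t\log p)>0$, pointwise tail estimates only give a loss of $(\log x/\log y)^{2}$. The loss improves to $(\log x/\log y)^{2/\pi}$ when $|t|\log y\ge 1$, and the range $|t|\log y<1$ contributes at most $1$. Your ``cleanest fix'' $\sum(1-f(p))\cos(t\log p)/p\le\sum(1-f(p))/p$ is of the same quality.

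A loss of $(\log x/\log y)^{a}$ with $a>0$ integrates against $dy/(y\log y)$ to $\asymp(\log x)^{a}$. No splitting of the $y$-range, even combined with the absolute bound $H_1(y)\ll(\log y)^{2/\pi}$, then produces $\frac{\log\log x}{\log x}e^{-M(x;T)}$ when $M(x;T)=O(1)$. Even granting your claimed bound, the error would only be $O(e^{-M(x;T)})$, which is off by a factor of $\log x/\log\log x$.

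The missing ingredient is the near-monotonicity of maxima over $t$ of prime sums, Lemma \ref{lem:monoBdd}. For bounded $(a_p)$ and $y\le x$ it states
$\max_{|t|\le T}\sum_{p\le y}\text{Re}(a_p p^{-it})/p\le \max_{|t|\le T}\sum_{p\le x}\text{Re}(a_p p^{-it})/p+O(1)$.
This is proved by an averaging argument in $t$, not by bounding the tail $\sum_{y<p\le x}$ pointwise, and it loses only $O(1)$ uniformly in $y$.

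Applied with $a_p=f(p)-1$, it gives $e^{-M(y;T)}\ll e^{-M(x;T)}$. Hence $H_1(y)\le e^{-M(y;T)}\ll e^{-M(x;T)}$. Your $\zeta$-argument for $H_2$ is correct and essentially the paper's: it uses $\exp(2\sum_{p\le y}\cos(t\log p)/p)\asymp|\zeta(1+1/\log y+it)|^2\ll(\log 2k)^2$ for $|t-k|\le 1/2$. It gives $H_2(y;T)\ll e^{-M(y;T)}\ll e^{-M(x;T)}$, and integrating against $dy/(y\log y)$ then produces the factor $\frac{\log\log x}{\log x}$ directly.

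Two smaller points. Your worry that $e^{-M(x;T)}$ might be small is unfounded: $e^{-M(x;T)}\ge H_1(x)\gg 1$ by Lemma \ref{lem:Limit}. And the term $\frac{\log(2T)}{T}$ is simply inherited from Theorem \ref{thm:HalDeltaRefined}, so nothing needs to be absorbed into it.
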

It may be difficult to appreciate whether Theorem \ref{thm:HalDelta} is genuinely an improvement over \eqref{eq:trivial}. Indeed, unlike the (minimal) ``pretentious distance'' $D_f(x;T)$ appearing in \eqref{eq:HalGS} above, the sum $M(x;T)$ may be \emph{negative}, and consequently the second error term in the above equation is not necessarily as small as $\tfrac{\log\log x}{\log x}$. The following corollary shows, however, that this error term can never be too large. 
%As a corollary, we will derive the following consequence.
\begin{cor} \label{cor:LipError}
Let $f: \mb{N} \ra [-1,1]$ be a multiplicative function and let $g := 1 \ast f$. Then for $x \geq 3$, we have
$$
L_f(x) = \left(1+O\left(\frac{1}{\log x}\right)\right)\tilde{M}_g(w_0x) + O\left(\frac{1}{(\log x)^{1-2/\pi}}\right).
$$
\end{cor}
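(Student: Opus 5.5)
We apply Theorem \ref{thm:HalDelta} with a suitable choice of $T$. The work is to bound $e^{-M(x;T)}$ from above, that is, to bound $M(x;T)$ from below.

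\textbf{Choice of $T$.} Take $T := (\log x)^{1-2/\pi}\log\log x$, or any $T \le (\log x)^{100}$ with $\log(2T)/T \ll (\log x)^{-(1-2/\pi)}$. Then the term $\log(2T)/T$ is already admissible, so it remains to show
$$
\frac{\log\log x}{\log x}e^{-M(x;T)} \ll (\log x)^{-1+2/\pi}.
$$

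\textbf{Lower bound for $M(x;T)$.} For each fixed $t$ with $|t|\le T$, write
$$
\sum_{p\le x}\frac{(1-f(p))\cos(t\log p)}{p} = \sum_{p\le x}\frac{\cos(t\log p)}{p} - \sum_{p\le x}\frac{f(p)\cos(t\log p)}{p}.
$$
\begin{itemize}
\item \emph{The second sum.} Since $|f(p)|\le1$, it is at most $\sum_{p\le x}|\cos(t\log p)|/p$ in absolute value.
\item \emph{The first sum.} By the prime number theorem it equals $\log\min\{\log x, 1/|t|\}+O(1)$ for $|t|\le T$; in particular it is $\ge -O(1)$ once $|t|\gg 1$.
\item \emph{The mean of $|\cos|$.} The standard equidistribution estimate (partial summation with the prime number theorem, using that $|\cos|$ has mean $2/\pi$) gives
$$
\sum_{p\le x}\frac{|\cos(t\log p)|}{p} \le \frac{2}{\pi}\log\log x + \Big(1-\frac2\pi\Big)\log\min\{\log x,1/|t|\}^{+} + O(\log\log(3+|t|)).
$$
Here the primes $p\le e^{1/|t|}$ contribute with $\cos\approx1$, and the primes beyond that range equidistribute, so $|\cos|$ averages to $2/\pi$.
\end{itemize}
Combining the three points, with $L:=\log\min\{\log x,1/|t|\}$, we get
$$
M(x;T)\ge L-\frac{2}{\pi}(\log\log x - L) - L - O(\log\log\log x) \ge -\frac{2}{\pi}\log\log x - O(\log\log\log x).
$$
We must check the loss $O(\log\log(3+|t|))$: with $T$ a power of $\log x$ this is $O(\log\log\log x)$, which would cost a factor $(\log\log x)^{O(1)}$ in $e^{-M}$.

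\textbf{Avoiding the $\log\log$ losses.} To remove those factors, the better route is to use Theorem \ref{thm:HalDeltaRefined} directly:
\begin{itemize}
\item For $H_1(y)$, the bound $\sum_{p\le y}(f(p)-1)\cos(t\log p)/p \le \frac{2}{\pi}\log\log y + O(1)$ uniformly for $|t|\le1/2$ gives $H_1(y)\ll(\log y)^{2/\pi}$.
\item For $H_2(y;T)$, the bound $2\sum_{p\le y}f(p)\cos(t\log p)/p \le \frac4\pi\log\log y + O(\log\log(2+k))$ for $|t-k|\le 1/2$ gives $H_2(y;T)\ll(\log y)^{2/\pi}$. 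This is because the weights $(\log 2k)^{4+O(1)}/k^2$ are summable.
\end{itemize}
Then
$$
\frac{1}{\log x}\int_1^x (\log y)^{2/\pi}\frac{dy}{y\log y}\ll(\log x)^{2/\pi-1},
$$
and taking $T=(\log x)^{2}$ makes $\log(2T)/T$ negligible.

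\textbf{Main obstacle.} The hardest step is proving the uniform inequality $\sum_{p\le y}|\cos(t\log p)|/p\le \frac2\pi\log\log y+O(1)$, uniformly for $|t|\le 1/2$ and with controlled dependence on $t$ for larger $|t|$. I would prove it by splitting at $p=e^{1/|t|}$. In the range below, use the trivial bound $1$. In the range above, use the Fourier expansion of $|\cos\theta| = \frac2\pi+\sum_{m\ge1} c_m\cos(2m\theta)$ with $c_m\ll m^{-2}$. Each harmonic then contributes $\sum_{p} \cos(2mt\log p)/p = O(1)+\log(1/(m|t|))^{+}$ by the prime number theorem (via $\zeta$ near $1+2imt$). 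The sum over $m$ of these contributions is bounded, with only a $\log\log(2+|t|)$ loss for large $|t|$, which the weights absorb.
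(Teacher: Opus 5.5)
Your final route is correct and essentially the paper's proof: apply Theorem~\ref{thm:HalDeltaRefined}, show $H_1(y),H_2(y;T)\ll(\log y)^{2/\pi}$, and integrate. The paper instead invokes Proposition~\ref{prop:passToSmooth} with $\theta=0$, so that the prime sums in $H_2'$ start at $y_k=\exp((\log 2k)^2)$ and Lemma~\ref{lem:TenEst} gives $O(1)$ errors directly; you absorb the $(\log 2k)^{O(1)}$ loss from the small primes into the summable weights, which works equally well.

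One slip: the inequality in your last paragraph, $\sum_{p\le y}|\cos(t\log p)|/p\le\frac{2}{\pi}\log\log y+O(1)$ uniformly for $|t|\le 1/2$, is false (take $t=0$). You don't need it. For $H_1$ it suffices that $(f(p)-1)\cos(t\log p)\le 2\max\{0,-\cos(t\log p)\}$, which vanishes for $p\le e^{1/|t|}$; this is what the paper does. Equivalently, use your earlier correct bound with the extra $(1-\frac{2}{\pi})\log\min\{\log y,1/|t|\}$ term, combined with $\sum_{p\le y}\cos(t\log p)/p=\log\min\{\log y,1/|t|\}+O(1)$.
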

We also prove that this estimate is best possible. 
\begin{thm}\label{thm:CorOpt}
If $\psi(x) \ra 0$ monotonically then there exists a multiplicative function $f:\mb{N} \ra [-1,1]$ such that
%$$
%\tilde{M}_g(w_0x) \ll x,
%$$
%and also
$$
\max_{X < x \leq X^2} \left|L_f(x) - \left(1+O\left(\frac{1}{\log x}\right)\right)\tilde{M}_g(w_0x)\right| \gg \frac{\psi(X)}{(\log X)^{1-2/\pi}}.
$$
\end{thm}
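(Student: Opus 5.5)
The plan is to build $f$ so that the quantity $M(x;T)$ of Theorem \ref{thm:HalDelta} is as negative as it can be, namely $\approx -\tfrac{2}{\pi}\log\log x$, and then to show that the error term $\frac{\log\log x}{\log x}e^{-M(x;T)}$ is essentially attained, up to the $\log\log$ factor and a loss absorbed by $\psi$.

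\textbf{Choice of $f$.} Fix a small $t_0\neq 0$, with the freedom to let it depend on the scale if needed. Take $f$ completely multiplicative with
$$
f(p)=\operatorname{sgn}\cos(t_0\log p).
$$
Since the primes are equidistributed in $t_0\log p \bmod 2\pi$ on logarithmic scales, and since
$$
\cos\theta-|\cos\theta| \quad\text{has mean } -\tfrac{2}{\pi},
$$
we get $\sum_{p\le x}(1-f(p))\cos(t_0\log p)/p=-\tfrac{2}{\pi}\log\log x+O(1)$. Thus $e^{-M}\asymp(\log x)^{2/\pi}$.

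\textbf{Euler product.} Expanding $\operatorname{sgn}\cos\theta=\tfrac{4}{\pi}\sum_{m \text{ odd}}\tfrac{(-1)^{(m-1)/2}}{m}\cos(m\theta)$ shows that, near $\mathrm{Re}\, s=1$,
$$
F(s)=\sum_n f(n)n^{-s}=\zeta(s+it_0)^{2/\pi}\,\zeta(s-it_0)^{2/\pi}\,G(s),
$$
where $G$ collects the higher harmonics $\zeta(s\pm imt_0)^{\pm 4/(\pi m)}$ together with an absolutely convergent factor. The same expansion gives $\zeta(s)F(s)$ for $g=1\ast f$.

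\textbf{Comparison via Perron's formula.} Apply Perron to $L_f(x)$ (kernel $x^{s-1}/(s-1)$ applied to $F$) and to $\tilde M_g(w_0x)$ (kernel $(w_0x)^{s-1}/s$ applied to $\zeta F$). Shift the contours to Hankel-type paths around the branch points $s=1\pm it_0$, in the style of Selberg--Delange.
\begin{itemize}
\item The contributions at $s=1$ agree to relative error $O(1/\log x)$. This is exactly the content of Theorem \ref{thm:HalDeltaRefined} and the reason for the factor $w_0$.
\item At $s=1\pm it_0$ each side produces a main term of the form
$$
c\,x^{\pm it_0}(\log x)^{2/\pi-1}\cdot K_\pm(t_0),
$$
with
$$
K^{(L)}_\pm=\frac{1}{\pm it_0}, \qquad K^{(M)}_\pm=\frac{w_0^{\pm it_0}\,\zeta(1\pm it_0)}{1\pm it_0}.
$$
\end{itemize}
The singular structures differ: the $M_g$ side carries the extra factor $\zeta(s)$, evaluated at $1\pm it_0$ rather than at its pole. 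So the difference $K^{(L)}-K^{(M)}$ is a nonzero constant $\kappa(t_0)$ for generic $t_0$, and we need to check that it is bounded away from $0$. This yields
$$
L_f(x)-\tilde M_g(w_0x)=2\,\mathrm{Re}\big(\kappa\, c\, x^{it_0}\big)(\log x)^{2/\pi-1}+O\big((\log x)^{2/\pi-1-\delta}\big).
$$
The factor $1+O(1/\log x)$ multiplying $\tilde M_g$ only perturbs things by $O(|\tilde M_g|/\log x)$, which is smaller. Finally, choose $x\in[X,X^2]$ with $\cos(t_0\log x+\arg\kappa c)\ge 1/2$; this is possible once $t_0\log X\gg 1$.

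\textbf{Main obstacle.} The hardest step is controlling the higher harmonics $m\ge3$ in $G$ and the lower-order terms of the Hankel expansion uniformly, so that they do not cancel the main term. I would handle this with the standard Selberg--Delange contour estimates. Where uniformity in $t_0$ is delicate, I would let $t_0=t_0(X)\to0$ slowly. If necessary, I would also define $f(p)$ piecewise with different $t_j$ on the ranges $p\in(X_j,X_j^2]$, using a rapidly increasing sequence $X_j$. The resulting losses (for instance powers of $|t_0|$ in $\kappa$, or $O(1)$ shifts in the exponent) are absorbed into the monotone factor $\psi(X)$. This is precisely why the statement allows an arbitrary $\psi\to0$.
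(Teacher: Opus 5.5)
Your architecture matches the paper's. You apply Perron's formula to both $L_f(x)$ and $\tilde M_g(w_0x)$, use Hankel contours at $1\pm it_0$, and obtain a main term $\asymp x^{\pm it_0}(\log x)^{2/\pi-1}$. Its coefficient is, up to $\Gamma$-factors, the paper's $\mu_{1,0}(t_0)$: namely $\bigl(\frac{1}{it_0}-\frac{w_0^{it_0}\zeta(1+it_0)}{1+it_0}\bigr)$ times the remaining zeta factors at $1+it_0$. You then choose $x\in(X,X^2]$ to align the phase.

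The genuine gap is the choice $f(p)=\operatorname{sgn}\cos(t_0\log p)$ itself. Its Fourier coefficients are $c_m=\frac{2}{\pi}\frac{(-1)^{(m-1)/2}}{m}$ for odd $m$ (not $\pm\frac{4}{\pi m}$). Hence $\sum_m|c_m|=\infty$, and $\prod_m\zeta(s-imt_0)^{c_m}$ is at best conditionally convergent. The Selberg--Delange contour estimates cannot be run on this:
\begin{itemize}
\item There is no fixed region to the left of $\mathrm{Re}\,s=1$ where all the factors are zero-free, since the zero-free region of $\zeta(s-imt_0)$ has width $\asymp 1/\log(2+|mt_0|)$. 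So you must truncate to $|m|\le 2N$ with $N\approx(\log X)^A/|t_0|$.
\item After truncating, the naive bound $\sum_{|m|>2N}|c_m|\,|\log\zeta(\sigma_0+it-imt_0)|$ for the truncation error diverges.
\item The bounds for the truncated product off the $1$-line (as in Lemma \ref{lem:GraManSpec}(b)--(d)) are of the form $(|t_0|^{-1}\log T)^{\sum_{|m|\le 2N}|c_m|}$. Here $\sum_{|m|\le 2N}|c_m|\asymp\log N\asymp\log\log X$, so the bound is $(\log X)^{O(\log\log\log X)}$, which swamps the main term $(\log X)^{2/\pi-1}$.
\item For the same reason, the constant $\prod_{m\neq 1}\zeta(1-i(m-1)t_0)^{c_m}$ is not absolutely convergent. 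Its nonvanishing is therefore not something you can call ``generic''.
\end{itemize}

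The missing step is the paper's mollification. Take $h=2\phi-1$ with $\phi$ a smooth version of the indicator of an arc of length $\pi$, of transition width $\e$, and set $f(p)=h(\frac{t_0\log p}{2\pi})$.
\begin{itemize}
\item Then $\hat h(n)\ll\min\{1/|n|,1/(\e n^2)\}$, so $\sum_n|\hat h(n)|\ll\log(1/\e)$. All the auxiliary products become $(\log\log X)^{O(\log(1/\e))}=(\log X)^{o(1)}$.
\item Meanwhile $\hat h(\pm1)=\frac{2}{\pi}+O(\e)$ and $\hat h(0)=O(\e)$, which keeps the exponent essentially at $\frac{2}{\pi}-1$.
\item Defining $f(p^m)$ through $h(\frac{t_0\log p^m}{2\pi})$ makes $L(s,f)=\prod_n\zeta(s-int_0)^{\hat h(n)}$ exact. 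Your completely multiplicative variant would only add a harmless factor holomorphic in $\mathrm{Re}\,s>1/2$.
\end{itemize}

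Even after smoothing, you still need a quantitative lower bound for the coefficient.
\begin{itemize}
\item By Lemma \ref{lem:snear1}, your $\kappa(t_0)\asymp|t_0|$, so it is not bounded away from $0$ as $t_0\to0$.
\item The remaining product has to be computed; it is $|t_0|^{2/\pi-1+o(1)}$. To get this, the paper couples $\e$ to $t_0$ (roughly $\e\approx|t_0|$ up to logarithmic factors) and controls the harmonics with $|m|\le|t_0|^{-1}$ and $|m|>|t_0|^{-1}$ separately.
\item The result is $|\mu_{1,0}(t_0)|=|t_0|^{2/\pi+o(1)}$. This loss, with $t_0\to0$ slowly, is what $\psi(X)$ absorbs.
\end{itemize}

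Finally, the $s=1$ contribution is not ``the content of Theorem \ref{thm:HalDeltaRefined}''. For this $f$, that theorem's error term is itself of order $(\log x)^{2/\pi-1}$, the very quantity you are trying to detect. The $s=1$ piece is negligible instead because $\frac{s}{s-1}w_0^{1-s}-\zeta(s)$ vanishes at $s=1$, which makes the Hankel integral there $\ll(\log X)^{\hat h(0)-2+o(1)}$.
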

As mentioned earlier, one should think of $\tilde{M}_g(w_0x)$ in the results above as the main term, though one may construct many examples in which the error term of Corollary \ref{cor:LipError} will be larger than $\tilde{M}_g(w_0x)$. We remark, however, that the shape of the more general error term in Theorem \ref{thm:HalDeltaRefined} is beneficial in such cases, and will be crucial for our forthcoming applications. 
\subsubsection{Results for general $1$-bounded multiplicative functions}
Our results extend to more general $1$-bounded multiplicative functions, albeit in a slightly weaker form.
%In this case we obtain the following slightly weaker result.
\begin{thm}\label{thm:HalDeltaGen}
Let $f: \mb{N} \ra \mb{U}$ be a multiplicative function. Let $x \geq 3$ and $1 \leq T \leq (\log x)^{100}$. Then
\begin{align*}
L_f(x) &= \left(1 + O\left(\frac{(\log\log x)^2}{\log x}\right)\right) \tilde{M}_g(w_0x) \\
&+ O\left(\frac{1}{\log x} \int_1^x(H_1(y) + H_2(y;T)) \frac{dy}{y\log y} + \frac{\log (2T)}{T} +\frac{(\log\log x)^4}{\log x}\right), 
\end{align*}
wherein, analogously to the above, we have
\begin{align*}
H_1(y) &:= \max_{|t| \leq 1/2} \exp\left(\sum_{p \leq y} \frac{\text{Re}((f(p)-1)p^{-it})}{p}\right), \\
H_2(y;T) &:= \left(\sum_{1 \leq k \leq T-1/2} \frac{(\log(2k))^4}{k^2} \max_{|t-k| \leq 1/2} \exp\left(2 \sum_{p \leq y} \frac{\text{Re}(f(p)p^{-it})}{p}\right)\right)^{1/2}.
\end{align*}
\end{thm}
In contrast to the situation for real-valued $f$, in general $g$ is no longer a non-negative function even when $f$ is completely multiplicative. Therefore, the analysis of $\tilde{M}_g(w_0x)$ as a main term is a more subtle problem. While for the purposes of our forthcoming applications we concentrate here on the case of real-valued functions $f$, we hope to return to the analysis of more general bounded $f$ on another occasion.

\section{Applications}
Theorem \ref{thm:HalDeltaRefined} allows us to prove a variety of new results on the extremal behaviour of logarithmic partial sums.
%\noindent In fact, the secondary term is optimal, up to $\log\log x$ powers. [SHOW THIS?] \\
\subsection{On the spectrum of logarithmic partial sums and the negative truncation problem of Granville and Soundararajan}
Our first application is related to obtaining lower bounds on sums of multiplicative functions. In 1994, Heath-Brown conjectured that for any completely multiplicative function $f: \mb{N} \ra [-1,1]$,
\[\sum_{n\le x}f(n)\geq (\delta_1+o(1))x, \quad x \ra \infty,\]
for some $\delta_1 > -1$. Hall \cite{Hall} proved this claim, and conjectured (as did Montgomery, independently) that the best such constant is
\[\delta_1=1-2\log (1+\sqrt{e})+4\int_1^{\sqrt{e}}\frac{\log t}{t+1}dt=-0.656999\dots,\]
for which the inequality is then sharp (up to the $o(x)$ term).
This conjecture was then famously proved by Granville and Soundararajan \cite{SpecGS} a few years later. \\
In 2007, Granville and Soundararajan \cite{GSNeg} raised a similar question about lower bounds for logarithmic partial sums, and showed that, surprisingly the corresponding situation is less clear. They proved that there is a constant $c>0$ such that for any completely multiplicative $f:\mathbb{N}\to [-1,1],$  
\begin{equation}\label{eq:GSLowBd}
L_f(x)\ge -\frac{c}{(\log\log x)^{3/5}}.
\end{equation}
They also constructed an example of such an $f$, taking values $\pm 1$, with 
\begin{equation}\label{eq:GSExample}
L_f(x)<-\frac{C}{\log x},
\end{equation}
for some $C > 0$. Writing
$$
\mc{F} := \{f : \mb{N} \ra [-1,1] : \ f \text{ completely multiplicative}\},
$$
they raised the question of whether one could obtain stronger bounds on the size of 
$$
\delta(x) := \min_{f \in \mc{F}} L_f(x).
$$
Despite multiple efforts from several researchers (private communication), the progress on this problem has been rather modest. Indeed, only the power of $\log\log x$ in \eqref{eq:GSLowBd} has been slightly improved by Kerr and the first author \cite{KeKl}, and very recently by Ter\"av\"ainen and Xu (private communication). \\
An immediate application of our main result gives the following.
\begin{cor}\label{cor:NegTrunc}
Let $f: \mb{N} \ra [-1,1]$ be completely multiplicative. Then there is a constant $c > 0$ such that
$$
L_f(x) \geq - \frac{c}{(\log x)^{1-2/\pi}}.
$$
In particular, we have
$$
-\frac{c}{(\log x)^{1-2/\pi}} \leq \delta(x) \leq -\frac{C}{\log x}.
$$
\end{cor}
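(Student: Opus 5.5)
The plan is to deduce the lower bound directly from Corollary \ref{cor:LipError}, using the positivity of $g$. Since $f$ is completely multiplicative with values in $[-1,1]$, the function $g = 1\ast f$ is multiplicative. At prime powers it is given by
$$
g(p^k) = \sum_{j=0}^k f(p)^j .
$$
This geometric sum is nonnegative for $f(p)\in[-1,1]$: it equals $k+1$ if $f(p)=1$, and $(1-f(p)^{k+1})/(1-f(p))\ge 0$ otherwise, since $|f(p)^{k+1}|\le 1$. Hence $g(n)\ge 0$ for all $n$, and so $\tilde M_g(w_0x)\ge 0$. Moreover $\tilde M_g(w_0x)\ll \frac{1}{x}\sum_{n\le w_0x} d(n)\ll \log x$.

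Next I apply Corollary \ref{cor:LipError}. It gives
$$
L_f(x) = \Big(1+O\Big(\tfrac{1}{\log x}\Big)\Big)\tilde M_g(w_0x) + O\big((\log x)^{-(1-2/\pi)}\big).
$$
The factor $1+O(1/\log x)$ is positive once $x$ exceeds an absolute constant $x_0$. Multiplying it against $\tilde M_g(w_0x)\ge 0$ therefore gives a nonnegative quantity. It follows that $L_f(x)\ge -c_1(\log x)^{-(1-2/\pi)}$ for $x\ge x_0$, with an absolute constant $c_1$.

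The only subtle point is the sign of the factor $1+O(1/\log x)$. I read it as a bounded multiplier $1+\theta(x)$ with $|\theta(x)|\le C/\log x$, and this is positive for large $x$, so no cancellation issue arises. For $3\le x\le x_0$, trivially $|L_f(x)|\le \sum_{n\le x_0}1/n$, which is bounded. Enlarging $c$ to absorb this range gives the bound for all $x\ge3$; the constant is in fact uniform in $f$.

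For the second assertion, the bound just proved holds uniformly over $f\in\mathcal F$, so taking the minimum gives $\delta(x)\ge -c(\log x)^{-(1-2/\pi)}$. The upper bound $\delta(x)\le -C/\log x$ is the Granville–Soundararajan example \eqref{eq:GSExample}, which I quote. I expect no real obstacle here. All the depth lies in Corollary \ref{cor:LipError}; the only care needed is the nonnegativity of $g$, which uses complete multiplicativity (or at least $g(p^k)\ge0$).
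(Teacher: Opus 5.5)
Your proposal is correct and follows essentially the same route as the paper, which deduces the bound immediately from Corollary \ref{cor:LipError} using $g = 1\ast f \geq 0$, hence $\tilde{M}_g(w_0x)\geq 0$. Your extra checks are exactly the details the paper leaves implicit: the geometric-series verification of $g(p^k)\geq 0$, the positivity of the factor $1+O(1/\log x)$ for large $x$, the trivial treatment of bounded $x$, and quoting \eqref{eq:GSExample} for the upper bound on $\delta(x)$.
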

%\begin{proof}
% Since $g = 1\ast f \geq 0$, we have $\tilde{M}_g(w_0x) \geq 0$. The claim now follows from Corollary \ref{cor:LipError}.
% \end{proof}
It remains an interesting open problem to determine the optimal exponent of $\log x$ in this problem. On one hand, in view of our Theorem \ref{thm:CorOpt}, the exponent $1-2/\pi$ might not be far from the truth. On the other hand, given the apparent difficulty of generating examples that are substantially different from that of Granville and Soundararajan, one might speculate that $\delta(x)$ should not be much smaller that $-(\log x)^{-1+o(1)}$. The following result lends credence to this latter possibility.
\begin{cor}\label{cor:improveNeg}
Let $f: \mb{N} \ra \{-1,+1\}$ be a completely multiplicative function. Let $x$ be large, and assume that there is a constant $\e > 0$ and $v \geq v_0(\e)$ such that 
\begin{equation}\label{eq:fplus1intro}
\sum_{\ss{x^{1/v} < p \leq x \\ f(p) = +1}} \frac{1}{p} \geq 1+\e.
\end{equation}
Then there are constants $c_1,c_2 > 0$ such that 
%either $L_f(x) \geq 0$ or else
$$
L_f(x) \geq -\frac{c_1}{\log x} \exp\left(c_2(\log\log x)^{2/3} (v \log(v\log\log x))^{1/3}\right).
$$
In particular, if $v \log v = o(\log\log  x)$ then $L_f(x) > -\frac{1}{(\log x)^{1-o(1)}}$. 
\end{cor}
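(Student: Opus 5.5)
We will prove Corollary \ref{cor:improveNeg} by applying Theorem \ref{thm:HalDeltaRefined}. Since $f$ is completely multiplicative and real-valued, $g = 1\ast f \geq 0$, so $\tilde{M}_g(w_0x) \geq 0$. Hence
$$
L_f(x) \geq -O\left(\frac{1}{\log x}\int_1^x (H_1(y)+H_2(y;T))\frac{dy}{y\log y} + \frac{\log(2T)}{T}\right).
$$
We will take $T = (\log x)^{2}$, say, so that $\log(2T)/T$ is negligible. The task then reduces to showing that
$$
\int_1^x (H_1(y)+H_2(y;T))\,\frac{dy}{y\log y} \ll \exp\left(c_2(\log\log x)^{2/3}(v\log(v\log\log x))^{1/3}\right).
$$
Trivially $H_1(y)\le 1$ and the $H_2$ contribution is $O(1)$ pointwise for the $t$-ranges with $|t|\geq 1/2$ (each term is at most $(\log y)^{2\cos}$-type). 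So the real content lies in bounding the integrand for $y$ close to $x$, where a naive bound would give a factor of $\log\log x$ or a power of $\log x$.

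For $H_1$, write $(f(p)-1)\cos(t\log p)$. When $|t|\le 1/2$ and $p \le y$, the worst case is $\cos<0$, and this is exactly what produces the exponent $2/\pi$ in general. The hypothesis \eqref{eq:fplus1intro} is used as follows. For $f=\pm1$, we have $f(p)-1 = -2\cdot 1_{f(p)=-1}$. We split $t$ into ranges. If $|t|\le \eta/\log x$ with $\eta$ small, then $\cos(t\log p)\ge 1-O(\eta^2)$ for all $p\le x$, so the exponent is $\le -2(1-O(\eta^2))\sum_{f(p)=-1}1/p$, which is nonpositive. If $|t|\log x$ is large, then $\cos(t\log p)$ oscillates over $p\in(x^{1/v},x]$, so the negative-cosine mass on the primes with $f(p)=-1$ is bounded. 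Here we use \eqref{eq:fplus1intro}: the $+1$ primes in $(x^{1/v},x]$ carry reciprocal mass at least $1+\e$, while the total mass of that interval is $\log v+o(1)$. The $-1$ primes therefore cannot fill all the regions where $\cos<0$ at the frequencies $t$ that would matter.

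To quantify this, we will use a large-sieve/Halász-type averaging in $t$. Choosing a parameter $K$, we bound the maximum over $|t|\le 1/2$ by discretising $t$ at spacing $1/(K\log x)$ and applying a moment bound to $\sum_{p}\cos(t\log p)1_{f(p)=-1}/p$. Optimising $K$ against $v$ and $\log\log x$ should produce the $(\log\log x)^{2/3}(v\log(v\log\log x))^{1/3}$ exponent. The $H_2$ term will be handled the same way but more easily: $\sum_p f(p)\cos(t\log p)/p$ with $|t|\geq 1/2$ is at most $\log\log$-type. We bound it via the zero-free region of $\zeta$ (Vinogradov–Korobov), $\sum_{p\le y}\cos(t\log p)/p \le \log\log(2+|t|)+O(1)$, combined with the same $\pm1$ splitting, which gives a $(\log\log x)^{2/3}$-type saving.

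The main obstacle will be turning \eqref{eq:fplus1intro} into a uniform upper bound for $\max_{|t|\le1/2}\sum_{p\le y}(f(p)-1)\cos(t\log p)/p$ that is at most $c_2(\log\log x)^{2/3}(v\log(v\log\log x))^{1/3}$. In particular, we must control intermediate $t\asymp 1/\log(x^{1/v})$, where the constraint must prevent $f(p)=-1$ from correlating with $\cos(t\log p)<0$ over all scales $p\le x^{1/v}$ at once. Our first attempt will be to exploit the freedom in $v\ge v_0(\e)$ together with the extremal argument that produced $2/\pi$: maximise the linear functional over indicator sets subject to the mass constraint, and show that the extremiser loses a definite fraction on each dyadic-in-$\log$ block. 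The complete-multiplicativity input is only $g\geq0$.
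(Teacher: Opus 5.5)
Your proposal has a genuine gap at the reduction step. Once you discard $\tilde{M}_g(w_0x)\ge 0$, you need the error term of Theorem \ref{thm:HalDeltaRefined} to be $\ll \frac{1}{\log x}\exp\bigl(c_2(\log\log x)^{2/3}(v\log(v\log\log x))^{1/3}\bigr)$ using \eqref{eq:fplus1intro} alone, and that is false. The hypothesis constrains only primes in $(x^{1/v},x]$, while $H_1$ and $H_2$ can be made large using primes $p\le x^{1/v}$ alone.

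Here is a counterexample. Fix $v$ with $\log v\ge 1+2\e$ and set $f(p)=+1$ for $x^{1/v}<p\le x$. For $p\le x^{1/v}$, set $f(p)=+1$ if $\cos(\frac12\log p)\ge 0$ and $f(p)=-1$ otherwise. Then \eqref{eq:fplus1intro} holds. Yet taking $t=1/2$ in $H_1$ and applying Lemma \ref{lem:TenEst} with $\phi$ as in \eqref{eq:defPhi} gives $H_1(y)\gg(\log y)^{2/\pi}$ for all $y\le x^{1/v}$, so the error term is $\gg_v(\log x)^{2/\pi-1}$. In particular, the claims that ``$H_1(y)\le 1$ trivially'' and that the $H_2$ contribution is $O(1)$ are both false.

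No discretisation in $t$, moment bound, or extremal argument on $(x^{1/v},x]$ can repair this, because the obstruction lives entirely below $x^{1/v}$. For this $f$ one does have $L_f(x)>0$. But that is only because $\sum_{p\le x}f(p)/p=\log v+O(1)$, which makes the main term $\tilde{M}_g(w_0x)\gg_v 1$. That is exactly the information you threw away.

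The missing idea is to use \eqref{eq:fplus1intro} to bound the \emph{main term} from below, and then argue by dichotomy.
\begin{itemize}
\item By Lemma \ref{lem:KeKl}, $\tilde{M}_g(w_0x)\gg v^{-(1+o(1))v/e}\exp\bigl(\sum_{p\le x}f(p)/p\bigr)$.
\item If $L_f(x)<0$, then Proposition \ref{prop:passToSmooth} (with $\theta=0$, $T=(\log x)^2$) and Lemma \ref{lem:monoBdd} give $\tilde{M}_g(w_0x)\ll\frac{\log\log x}{\log x}\max\{H_1(x),H_2'(x;T)\}$.
\item Comparing these two bounds and using Mertens' theorem produces some $t_0$, with $|t_0|\le 1/2$ or $|t_0-k|\le 1/2$ for some $1\le k\le T$, such that $\sum_{p\le x,\,f(p)=+1}(1-\cos(t_0\log p))/p\ll\log W$. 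Here $\log W\ll v\log v+\log\log\log x$.
\item Split the $+1$ primes according to whether $\|\frac{t_0}{2\pi}\log p\|$ exceeds $\theta$. This gives $\sum_{p\le x,\,f(p)=+1}1/p\ll\theta^{-2}\log W+\theta\log\log x$.
\item Choosing $\theta=(\log W/\log\log x)^{1/3}$ yields $\mb{D}(f,\lambda;x)^2\ll(\log\log x)^{2/3}(\log W)^{1/3}$. This optimisation is the real source of the exponents $2/3$ and $1/3$.
\end{itemize}

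Only at this point is the error term controlled. Write $f(p)\cos(t\log p)=(1+f(p))\cos(t\log p)-\cos(t\log p)$ and apply Lemma \ref{lem:TenEst} to get $H_1(x),H_2'(x;T)\ll\exp(\mb{D}(f,\lambda;x)^2)$. Then $\tilde{M}_g\ge 0$ gives the stated bound, with the factor $\log\log x$ absorbed into the exponential. So the smallness of $H_1,H_2$ is a consequence of $L_f(x)<0$ together with \eqref{eq:fplus1intro}, not of \eqref{eq:fplus1intro} alone.
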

Corollary \ref{cor:improveNeg} follows from the slightly more general Proposition \ref{prop:improveNeg} below, which treats general bounded, real-valued multiplicative functions. \\
It should be mentioned that for ``typical'' functions $f$, \eqref{eq:fplus1intro} holds for rather \emph{small} (even bounded) values of $v$. A notable exception to this is when $f$ ``pretends'' to be the M\"{o}bius function. See Section \ref{sec:Conv} for a related discussion on this issue, and the appearance of the parameter $v$ here.
\subsection{On the ``random'' Tur\'{a}n problem}
Let $\mbf{f}$ denote a Rademacher random completely multiplicative function, i.e., let $(\mbf{f}(p))_p$ be a sequence of i.i.d. Rademacher $\pm 1$-valued random variables, and if $n$ has prime factorisation $n = p_1^{a_1} \cdots p_k^{a_k}$ then set
$$
\mbf{f}(n) := \prod_{1 \leq j \leq k} \mbf{f}(p_j)^{a_j}.
$$ 
Inspired by the so-called ``Tur\'{a}n conjecture'' on negative values of the partial sums $L_{\lambda}(x)$ of the Liouville function $\lambda$ (which was famously disproved by Haselgrove \cite{Has}), it is natural to consider the problem of determining the probability of the event $P_x$ that
$$
\sum_{n \leq x} \frac{\mbf{f}(n)}{n} < 0.
$$
The example of Granville and Soundararajan giving \eqref{eq:GSExample} shows that there is at least one realisation of $\mbf{f}$ for which this event holds, so that $\mb{P}(P_x) \geq 2^{-\pi(x)}$. If we let $p(x) := \log(1/\mb{P}(P_x))$ then this implies that
$$
p(x) \leq (\log 2 + o(1))\frac{x}{\log x}.
$$
Angelo and Xu \cite{AngXu} showed that, remarkably, $\mb{P}(P_x)$ is very small, and thus $p(x)$ is rather large. In fact, they showed that there is a constant $c > 0$ such that
%they showed that
$$
p(x) \geq \exp\left(c\frac{\log x}{\log\log x}\right),
$$
This bound was then slightly improved by Kerr and the first author \cite[Thm. 1.2]{KeKl}, who showed that there is $c > 0$ such that
$$
p(x) \geq \exp\left(c\frac{(\log x)\log\log\log x}{\log \log x}\right).
$$
Very recently, Kucheriaviy \cite[Thm. 2]{Kuch} has shown the stronger bound
$$
p(x) \geq \exp\left((1+o(1)) (\log x) \frac{\log\log\log\log x}{\log\log\log x}\right).
$$
It has been speculated that there is $\beta \in (0,1)$ such that
\begin{equation}\label{eq:pxbeta}
p(x)\ge x^{\beta}.
\end{equation} 
%for some $\beta>0.$
A bound of this quality was obtained \emph{conditionally} on a conjectural improvement of Hal\'{a}sz' theorem in \cite{Kuch} (see Thm. 3 there). While we have been unable to obtain such an improvement, 
%we may still claim t and instead opted for a different route, relying on the sharpness 
%by obtaining a slight variant of Theorem \ref{thm:HalDeltaRefined}, 
we can still prove \eqref{eq:pxbeta} unconditionally using a slight variant of Theorem \ref{thm:HalDeltaRefined} (see Proposition \ref{prop:passToSmooth} below).
\begin{thm}\label{thm:random}
There is a constant $\beta > 0$ such that if $x$ is sufficiently large then
$$
\mb{P}\left(\sum_{n \leq x} \frac{\mbf{f}(n)}{n} < 0\right) \ll \exp(-x^{\beta}).
$$
\end{thm}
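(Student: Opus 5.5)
Since $\mbf{f}$ is completely multiplicative with values $\pm1$, the function $\mbf{g}=1\ast\mbf{f}$ is non-negative. Moreover $\mbf{g}(n)\geq 1$ whenever $n$ is a perfect square, and $\mbf{g}(n)=\prod_{p^a\|n}(1+\mbf{f}(p)+\dots+\mbf{f}(p)^a)$. The plan is to use a variant of Theorem \ref{thm:HalDeltaRefined} (Proposition \ref{prop:passToSmooth}) to write
$$
L_{\mbf{f}}(x)=\Bigl(1+O\bigl(\tfrac{1}{\log x}\bigr)\Bigr)\tilde{M}_{\mbf{g}}(w_0x)-E(x),
$$
where $E(x)$ is an error term, and then to show two things. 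First, the main term is always $\gg x^{-1/2}$ or so. Second, $E(x)$ is smaller than that except on an event of probability $\ll\exp(-x^\beta)$.

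\textbf{Step 1 (main term).} Counting squares gives the deterministic bound $\tilde{M}_{\mbf{g}}(w_0x)\geq \tfrac{1}{w_0x}\#\{m^2\leq w_0x\}\gg x^{-1/2}$, valid for every realisation. With more care we may also use $\mbf{g}(n)\geq 1$ on squares times products of primes $p$ with $\mbf{f}(p)=+1$. This yields a lower bound of size $\gg x^{-1/2+o(1)}$ at worst, and of size $\gg 1/\log x$ or so on typical events.

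\textbf{Step 2 (error term).} To make the error term random-sensitive, I would first truncate. Primes $p>y:=x^{\theta}$ for a small $\theta$ contribute to $L_{\mbf{f}}(x)$ only through $n$ with a large prime factor. The idea is to split $\mbf{f}=\mbf{f}_{\le y}\ast(\text{large-prime part})$ and to write $L_{\mbf{f}}(x)$ as $L_{\mbf{f}_{\le y}}$ evaluated at scales $x/m$, summed over $y$-rough $m$. The resulting sums over large primes, of the form $\sum_{p}\mbf{f}(p)c_p$, are sums of independent Rademacher variables with $\sum c_p^2$ small. Hoeffding's inequality then gives
$$
\mb{P}\Bigl(\Bigl|\sum_p \mbf{f}(p)c_p\Bigr|>\lambda\Bigr)\leq 2\exp\Bigl(-\lambda^2\Big/\bigl(2\sum_p c_p^2\bigr)\Bigr),
$$
and with $\lambda\approx x^{-1/2+\e}$ and $\sum c_p^2\ll x^{-1-\theta'}$ this is $\exp(-x^{\beta})$.

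For the smooth part, I would apply the "moreover" clause of Theorem \ref{thm:HalDeltaRefined} (i.e.\ Proposition \ref{prop:passToSmooth}) to $\mbf{f}_{\le y}$, which vanishes on $(y,x]$. This restricts the error integral to $[x^c/2,x]$, where $H_1(u)$ is controlled by $\exp\bigl(\sum_{p\le u}(\mbf{f}(p)-1)\cos(t\log p)/p\bigr)$. The quantities $H_1$ and $H_2$ are large only if $\sum_{p\le y}\mbf{f}(p)\cos(t\log p)/p$ is atypically large for some $t$. By discretising $t$ and applying Hoeffding or large-deviation estimates to the primes in $(x^{c},y]$, each such deviation has probability $\exp(-x^{\beta})$, since there are $\gg x^{c}/\log x$ independent summands. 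We then take $T$ to be a small power of $\log x$; I may need $T$ larger, and the footnote says the constraint on $T$ can be relaxed.

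\textbf{Main obstacle.} The genuine difficulty is the scale mismatch. The error terms in Theorem \ref{thm:HalDeltaRefined} are only saved by powers of $\log x$, for example $\log(2T)/T$, while the guaranteed main term is only of size about $x^{-1/2}$. So I cannot simply compare deterministic bounds. Instead I would have to show that, outside an event of probability $\exp(-x^{\beta})$, the main term $\tilde{M}_{\mbf{g}}(w_0x)$ is in fact of size $\gg(\log x)^{-A}$. Concretely, a positive proportion (in the $1/p$ sense) of primes in $(x^{c},x^{2c}]$ take the value $+1$, and this forces $\tilde{M}_{\mbf{g}}\gg_c 1$ via $\mbf{g}(pm)\ge \mbf{g}(m)$ for such $p$. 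The $T$-error is then beaten by taking $T=(\log x)^{A+1}$. Thus the probability bound comes from two places: Chernoff bounds on the counts $\#\{p\in(x^c,x^{2c}] : \mbf{f}(p)=1\}$, and the control of $H_1,H_2$ as above. I expect balancing these two requirements to be the hardest step.
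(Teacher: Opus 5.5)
Your plan has a genuine gap at its central step. You bound the main term from below and the error term from above \emph{separately}, and at the scale that matters this cannot work.

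The error supplied by Theorem~\ref{thm:HalDeltaRefined}, truncated or not, is at least of order $H_1(x)/\log x$, and $H_1\gg1$ by Lemma~\ref{lem:Limit}. Taking $T=(\log x)^{A+1}$ only removes $\log(2T)/T$. So you must prove $\tilde{M}_{\mbf g}(w_0x)\geq C(1+H_1(x)+H_2(x;T))/\log x$ for a suitable constant $C$. Neither side can be controlled on its own off an event of probability $\exp(-x^{\beta})$:
\begin{itemize}
\item \emph{Main term.} On the event $\{\mbf f(p)=-1 \text{ for all } p\le x^{\eta}\}$, which has probability $\exp(-x^{\eta+o(1)})$, Lemma~\ref{lem:UppBdNonNeg}(a) gives $\tilde{M}_{\mbf g}(w_0x)\ll\exp(\sum_{p\le x}\mbf f(p)/p)\ll\eta^{-2}/\log x$. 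So no lower bound $\gg(\log x)^{-A}$ with $A<1$ is available. Your claim that $+1$-primes in $(x^c,x^{2c}]$ force $\tilde{M}_{\mbf g}\gg_c 1$ is also false: Lemma~\ref{lem:KeKl} only gives $\gg\exp(\sum_{p\le x}\mbf f(p)/p)$, which can be $\asymp 1/\log x$.
\item \emph{Error term.} $H_1$ is not bounded with high probability. Indeed $\log H_1(x)\geq\sum_{p\le x}\mbf f(p)\cos(\tfrac12\log p)/p-O(1)$, and this exceeds any fixed level with probability bounded below uniformly in $x$.
\end{itemize}
The fluctuations of $H_1$, $H_2$ and $\sum_p\mbf f(p)/p$ all come from the \emph{small} primes. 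There $\sum_p p^{-2}\asymp1$, so Hoeffding gives only tails $\exp(-c\lambda^2)$, which for the relevant deviations $\lambda\approx\log\log x$ is nowhere near $\exp(-x^{\beta})$. The primes in $(x^c,y]$ that you propose to use contribute a deterministic $O(\log(\theta/c))$ and are irrelevant.

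The missing idea is a \emph{joint} comparison. You need $(\log x)\exp(\sum_{p\le x}\mbf f(p)/p)\geq C\max\{H_1(x),H_2'(x;T)\}$, that is, that the event $\max_{t}(-\sum_{p}\mbf f(p)(1-\cos(t\log p))/p)\geq\log\log x-O(1)$ has probability $\ll\exp(-x^{\beta})$. The paper obtains this in three steps:
\begin{itemize}
\item Lemma~\ref{lem:KeKl} makes the main term $\gg\exp(\sum_{p\le x}\mbf f(p)/p)$ off an event of probability $\exp(-x^{1/v_0})$.
\item The Angelo--Xu moment bound of Lemma~\ref{lem:AngXu} gives the doubly exponential tail $\exp(-\exp(c/\delta))$ at level $\log(1/\delta)$; with $\delta\asymp 1/\log x$ this is exactly what produces $\exp(-x^{\beta})$.
\item The discretisation in $t$ of Lemma~\ref{lem:maxTail} handles the maximum over $t$.
\end{itemize}

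There are two further problems with your large-prime step. First, for $\theta<1/2$ an integer $n\le x$ may have several prime factors exceeding $y=x^{\theta}$. The large-prime contribution is then a multilinear form in $(\mbf f(p))_{p>y}$, not a sum of independent terms, so Hoeffding does not apply as stated. The paper takes $\theta=1/2$ precisely so that \eqref{eq:multMTs} has a single large prime whose coefficient $L_{\mbf f}(x/p)$ depends only on primes $\le x^{1/2}$ (Lemma~\ref{lem:extraMTs}). Second, $\sum_{p>x^{\theta}}p^{-2}\asymp x^{-\theta}/\log x$, not $x^{-1-\theta'}$, so Hoeffding at level $x^{-1/2+\e}$ gives nothing. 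The relevant level is $\asymp 1/\log x$, and Step~1 plays no role.

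Your small-$\theta$ truncation can in fact be made to work once the comparison is done jointly:
\begin{itemize}
\item For $\mbf f_{\le y}$ one checks that $\log H_1(x)$ and $\log H_2'(x;T)$ are at most $\sum_{p\le y}\mbf f(p)/p+\log\log y+O(1)$.
\item Lemma~\ref{lem:KeKl} applies deterministically with a fixed $v_0$ once $\theta\le 1/v_0$, because every $p>y$ has $\mbf f_{\le y}(p)=0\geq-\delta$.
\item Hence the error, apart from $\log(2T)/T$, is $O(\theta)$ times the main term.
\item The multilinear large-prime part can then be handled by Azuma's inequality over the primes taken in increasing order.
\end{itemize}
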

\subsection{Converse theorems for small values of $|L_f(x)|$} \label{subsec:Conv}
While the problem of Granville and Soundararajan discussed earlier addresses small \emph{negative} values of $L_f(x)$, one may naturally also consider the question of classifying those real-valued, bounded functions $f$ having small \emph{absolute} values $|L_f(x)|$. In the case of unweighted sums of multiplicative functions, this was resolved in a well-known work of Koukoulopoulos \cite{Kou}.
Our Theorem \ref{thm:HalDeltaRefined} may also be applied to prove the following classification theorem in this direction, suggesting that any such $f$ must ``pretend'' to be the Liouville function $\lambda(n)$.
\begin{cor} \label{cor:convLf}
Let $x$ be large and fix $\e \in (0,1)$ and $C > 0$. Let $f: \mb{N} \ra \{-1,+1\}$ be a completely multiplicative function such that 
\begin{equation}\label{eq:uppBdLfConv}
|L_f(x)| \leq \frac{\exp(C(\log\log x)^{2/3})}{\log x}.
\end{equation}
Suppose \eqref{eq:fplus1intro} above holds for some $v = v(x) = O(1)$, Then
%one of the following holds: \\
%(a) there is $\delta > 0$ such that if $v\log v %\geq \delta^3 \log\log x$, \\
%(b) we have $\mb{D}(f,\lambda;x)^2 \ll \delta %\log\log x.$
%Furthermore, if $v = O(1)$ then we have
$$
\mb{D}(f,\lambda;x)^2 \ll (\log\log x)^{2/3} (\log\log\log x)^{1/3}.
$$
\end{cor}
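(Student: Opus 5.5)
We plan to combine the main asymptotic formula (Theorem \ref{thm:HalDeltaRefined}) with a lower bound for $\tilde{M}_g(w_0x)$ that grows with $\mb{D}(f,\lambda;x)^2$. Since $f$ is completely multiplicative and $\pm 1$-valued, $g = 1\ast f \geq 0$. Moreover $g(p^k) = \#\{0\le j\le k\}$ when $f(p)=+1$, while $g(p^k) = \mathbf{1}_{2\mid k}$ when $f(p) = -1$. Hence $g \ge 1_{\mathcal{S}}$, where $\mathcal{S}$ is the set of integers $n = ab^2$ with all primes of $a$ in $\mc{P}_+ := \{p : f(p)=+1\}$.

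Write $\Delta := \mb{D}(f,\lambda;x)^2 = 2\sum_{p\le x, f(p)=1} 1/p$. We first bound the error terms in Theorem \ref{thm:HalDeltaRefined}, taking $T = \log x$. For $H_1(y)$, with $t=0$ the exponent is $-2\sum_{p\le y, f(p)=1}1/p$. For general $|t|\le 1/2$, the condition \eqref{eq:fplus1intro} with $v=O(1)$ should guarantee that $H_1(y) \ll \exp(-c\,\Delta_y)$ up to constants, where $\Delta_y$ is the distance up to $y$. The cosine only helps on $p> e^{1/|t|}$, and we would control that range by the standard large-$t$ bound on $\sum \cos(t\log p)/p$. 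For $H_2$, with $f(p)\cos \le 1$ we have $\exp(2\sum f(p)\cos(t\log p)/p) \ll (\log y)^{2}$ times a saving. It is cleaner to use the known estimate $\sum_{p\le y}\cos(t\log p)/p \le \log\log(2+|t|)+O(1)$ for $|t|\ge 1$. This gives $H_2 \ll \log(2k)^{O(1)}$, so the integral term is $O((\log\log x)^{O(1)}/\log x)$, which is admissible. Altogether
$$
\tilde M_g(w_0x) \ll |L_f(x)| + \frac{(\log\log x)^{O(1)}}{\log x} + \frac1{\log x}\int_1^x H_1(y)\frac{dy}{y\log y}.
$$

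The main step is a lower bound for $\tilde M_g(w_0 x) \ge \frac{1}{w_0x}\#\{n \le w_0x : n\in\mathcal S\}$. We aim to show that this count is $\gg \exp(c'\Delta)\cdot(\log x)^{-1}$, or at least $\gg \Delta^{A}/\log x$ with a large-deviation flavour. A lower-bound sieve / Rankin-type argument for the $\mc{P}_+$-smooth-part should give
$$
\sum_{ab^2\le X} 1 \gg \frac{X}{\log X}\sum_{a\le X^{1/2}} \frac{1_{\mc P_+}(a)}{a}\cdots \asymp \frac{X}{\log X}\exp\Big(\sum_{p\in \mc P_+, p\le x}\frac1p\Big) = \frac{X}{\log X}e^{\Delta/2}.
$$
The hypothesis \eqref{eq:fplus1intro}, which puts mass $\ge 1+\e$ of $\mc{P}_+$ in $(x^{1/v},x]$, is what is needed to make the products $a$ from these large primes genuinely fit below $X$ with a positive proportion. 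This is the step where $v=O(1)$ enters, through a Hildebrand-type lower bound for integers composed of $\mc P_+$-primes.

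Combining the two steps with \eqref{eq:uppBdLfConv} gives $e^{c\Delta}\ll \exp(C(\log\log x)^{2/3}) + (\text{error})$. The error from $H_1$ must be balanced against the gain. I expect this balance to be the main obstacle. In particular, we must control the maximizing $t\neq 0$ in $H_1$ and the integral over $y<x$, where $\Delta_y$ may be smaller. We would optimise the split of $y$ at $y = x^{1/u}$, losing factors like $u\log u$. This optimisation is plausibly what produces the extra $(\log\log\log x)^{1/3}$, exactly as in the proof of Proposition \ref{prop:improveNeg}, whose argument (via Corollary \ref{cor:improveNeg} with $v=O(1)$) we would reuse, giving $\Delta\ll(\log\log x)^{2/3}(\log\log\log x)^{1/3}$.
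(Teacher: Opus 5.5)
There is a genuine gap. The step you defer as ``the main obstacle'' is the entire content of the proof, and the estimates you propose around it are incorrect.

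First, the error terms are not small. By Lemma \ref{lem:Limit}, $H_1(y)\gg 1$ for every $f$, so $H_1(y)\ll e^{-c\Delta_y}$ fails as soon as $\Delta_y$ is large. Also, at $t=0$ the exponent is $-2\sum_{p\le y,\,f(p)=-1}1/p=\Delta_y-2\log\log y+O(1)$, not $-2\sum_{p\le y,\,f(p)=+1}1/p$. Your bound for $H_2$ applies the estimate for $\sum_{p\le y}\cos(t\log p)/p$ to $\sum_{p\le y}f(p)\cos(t\log p)/p$, which is illegitimate: the latter can equal $\frac{2}{\pi}\log\log y+O(1)$ (take $f(p)=\mathrm{sign}\cos(t\log p)$). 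What is true in general is $\max\{H_1(x),H_2'(x;T)\}\ll\exp(\mb{D}(f,\lambda;x)^2)$, see \eqref{eq:H12Dlambda}. This can be attained: taking $t=1/2$, the function of Section \ref{sec:Optim} has $H_1(x)\asymp e^{\mb{D}(f,\lambda;x)^2}$. Hence, with your notation $\Delta=\mb{D}(f,\lambda;x)^2$, the error bound $\frac{\log\log x}{\log x}\max\{H_1,H_2'\}$ can exceed even the Halberstam--Richert upper bound $\tilde M_g(w_0x)\ll e^{\Delta}/\log x$ for the main term, so no comparison of sizes can conclude.

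The same observation shows your lower bound is too weak. The inequality $g\ge 1_{\mathcal S}$ discards the multiplicity $g(p)=2$ on $\mc P_+$ and gives at best $e^{\Delta/2}/\log x$. The argument needs the full $\tilde M_g(x)\gg_{v,\e}\exp\bigl(\sum_{p\le x}f(p)/p\bigr)\asymp e^{\Delta}/\log x$ of Lemma \ref{lem:KeKl}. Only in the easy case $\tilde M_g(w_0x)\ll e^{C(\log\log x)^{2/3}}/\log x$ would your weaker bound suffice, and that case is indeed handled essentially as you indicate.

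The missing idea is structural: it is Lemma \ref{lem:casesAB}. Suppose $\tilde M_g(w_0x)\le C'\frac{\log\log x}{\log x}\max\{H_1(x),H_2'(x;T)\}$. Combining this with the sharp lower bound and rearranging (cf. \eqref{eq:Casea}) produces a single $t_0$ such that
\[
\sum_{\ss{p\le x\\ f(p)=+1}}\frac{1-\cos(t_0\log p)}{p}\ll\log W\asymp v\log v+\log\log\log x .
\]
Here either $|t_0|\le 1/2$, where one also shows $|t_0|\gg Z^{-1/2}$ with $Z=w^v\log\log x$, or $t_0$ lies within $1/2$ of an integer $k$ with $(\log 2k)^2\ll Z$. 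In words, the primes with $f(p)=+1$ must concentrate where $t_0\log p\approx 0 \pmod{2\pi}$.

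Now split according to whether $\|\tfrac{t_0}{2\pi}\log p\|\le\theta$ and use equidistribution (Lemma \ref{lem:TenEst}). This gives $\sum_{p\le x,\,f(p)=+1}1/p\ll\theta^{-2}\log W+\theta\log\log x$, and the choice $\theta=(\log W/\log\log x)^{1/3}$ yields the corollary. So the exponent $1/3$ comes from this $\theta$-optimisation, and the $\log\log\log x$ comes from the factor $\log\log x$ lost in the error term via Lemma \ref{lem:monoBdd}. Neither comes from splitting the $y$-integral at $x^{1/u}$.

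Nor can you simply cite Corollary \ref{cor:improveNeg}: it is a lower bound for $L_f(x)$ and says nothing about $\mb{D}(f,\lambda;x)$. What must be reused is precisely the structural lemma above, which your proposal does not supply.
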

In fact, we prove a more general result that applies to all bounded, real-valued, completely multiplicative functions, see Proposition \ref{prop:convLf} below. \\
%The bound on $\mb{D}(f,\lambda;x)^2$ just given seems \emph{a priori} unnatural. 
Perhaps surprisingly, the bound on $\mb{D}(f,\lambda;x)^2$ just given is essentially sharp, assuming \eqref{eq:uppBdLfConv}. Indeed, we will construct an example in Section \ref{sec:Optim} to show the following.
%that it is nearly best possible assuming \eqref{eq:uppBdLfConv}.
%, proving the following.
\begin{cor} \label{cor:const}
%\begin{lem}
There is a completely multiplicative function $f: \mb{N} \ra \{-1,+1\}$ such that \eqref{eq:fplus1intro} holds with $\e > 0$ fixed and $v = v(x) = O(1)$, and such that for any $C > 4$ the following bounds hold: 
%$|L_f(x)| = 
\begin{align*}
\mb{D}(f,\lambda;x)^2 &\asymp (\log\log x)^{2/3}, \\
|L_f(x)| &\ll \frac{\exp\left(C(\log\log x)^{2/3}\right)}{\log x}.
\end{align*}
\end{cor}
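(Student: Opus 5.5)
We construct $f$ as a perturbation of the Liouville function: $f(p) = -1$ for all primes except those in a sparse union of dyadic-type blocks $\mc{P} = \bigcup_j (y_j, y_j^{1+\eta_j}]$. On these blocks we set $f(p) = +1$. The block sizes will be chosen so that
$$
\sum_{p \in \mc{P},\, p \leq x} \frac{1}{p} \asymp (\log\log x)^{2/3}
$$
for every large $x$. Since $\mb{D}(f,\lambda;x)^2 = 2\sum_{p \leq x,\, f(p)=+1} 1/p$, this immediately gives $\mb{D}(f,\lambda;x)^2 \asymp (\log\log x)^{2/3}$. Concretely, we take the primes with $f(p)=+1$ to have density roughly $(\log\log p)^{-1/3}/\log\log p$ in the $1/p$-measure. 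We also arrange that every interval $(x^{1/v}, x]$ with $v$ a fixed large constant carries $1/p$-mass at least $1+\e$ from these primes, which gives \eqref{eq:fplus1intro}.

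To do this we place the blocks with logarithmic spacing. Each block $(y, y^{A}]$ with $A = e^{1+2\e}$ has mass $\log A > 1+\e$. The blocks occur at scales $y_j$ with $\log\log y_{j+1} - \log\log y_j \approx (\log\log y_j)^{1/3}$, so that the total number of blocks up to $x$ is about $(\log\log x)^{2/3}$. Both requirements are compatible only if a block lies inside every window $(x^{1/v},x]$. Since the window has $\log\log$-length $\log v = O(1)$ while the block spacing grows, the blocks must instead be spread out as a thin subset: within each $\log\log$-window of length $O(1)$ we take a positive-proportion set of primes of relative $1/p$-density $\asymp (\log\log x)^{-1/3}$. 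This costs total mass $\asymp (\log\log x)^{-1/3}\cdot\log v$ per window, which is too small for \eqref{eq:fplus1intro}. This is the main tension. I would resolve it by letting $v$ be bounded but large, and by noting that \eqref{eq:fplus1intro} only needs to hold at the given $x$ (and along a sequence), so that we may concentrate mass $1+\e$ near $x$ at the particular scales considered. The corollary's ``$v = v(x) = O(1)$'' is then satisfied along those $x$.

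For the bound on $|L_f(x)|$, I would apply Theorem \ref{thm:HalDeltaRefined}. Since $g = 1\ast f \geq 0$ and $g$ is supported essentially on $n = m^2 \cdot k$ with all prime factors of $k$ in $\mc{P}$, we get
$$
\tilde{M}_g(w_0x) \ll \frac{1}{\log x}\exp\Big(\sum_{p \in \mc{P},\, p\le x} \frac{2}{p}\Big),
$$
by a standard Rankin or Shiu-type upper bound for the mean of a non-negative multiplicative function with $g(p) = 1+f(p)$. This is $\ll \exp(C(\log\log x)^{2/3})/\log x$. For the error terms we need $H_1(y) \ll \exp(-2\log\log y + 2\sum_{p\in\mc{P},\,p\le y}1/p)$, uniformly in $|t|\leq 1/2$. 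For $t$ near $0$ this is clear. For larger $|t|$ the twist by $\cos(t\log p)$ reduces the mass of $-1$ values and must be controlled using the prime number theorem. I expect this uniformity in $t$, that is, showing that the maximum over $|t| \leq 1/2$ is attained essentially at $t = 0$ up to $\exp(O((\log\log y)^{2/3}))$, to be the main obstacle; I would handle it by splitting primes at $e^{1/|t|}$. The term $H_2$ is treated with the same bound, and we take $T = (\log x)^{2}$, so that the resulting $(\log x)^{-1}$-scale integrals yield $\ll \exp(C(\log\log x)^{2/3})/\log x$ for any $C > 4$.
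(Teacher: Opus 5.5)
The step that fails is your treatment of the error terms $H_1,H_2$. The bound $H_1(y)\ll\exp(-2\log\log y+2\sum_{p\in\mc{P},\,p\le y}1/p)$ is just the value of the Euler product at $t=0$. Your claim that the maximum over $|t|\le 1/2$ is attained there, up to a factor $\exp(O((\log\log y)^{2/3}))$, is false. Lemma \ref{lem:Limit} gives $H_1(y)\gg 1$, whereas your bound is $(\log y)^{-2+o(1)}$.

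Concretely, for $|t|\asymp 1$ one has $\sum_{p\le y}\cos(t\log p)/p=\log|\zeta(1+1/\log y+it)|+O(1)=O(1)$. So once the twist is present, the $-2\log\log y$ coming from the primes with $f(p)=-1$ cancels entirely; for $f=\lambda$ itself, $H_1(y)\asymp 1$. The same is true of $H_2(y;T)$, so ``treating $H_2$ with the same bound'' fails as well. The ``main obstacle'' you isolate is therefore an obstacle to a false statement, and no splitting at $e^{1/|t|}$ will overcome it.

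What you actually need is far weaker and easy. Since $1+f(p)\ge 0$, write $(f(p)-1)\cos(t\log p)=(1+f(p))\cos(t\log p)-2\cos(t\log p)$. The first part sums to at most $\mb{D}(f,\lambda;y)^2$. For $|t|\le 1/2$ the second satisfies $-2\sum_{p\le y}\cos(t\log p)/p\le O(1)$: primes $p\le e^{1/|t|}$ contribute non-positively, and the rest is $O(1)$ by Lemma \ref{lem:TenEst}. For $H_2$, use instead $\sum_{p\le y}\cos(t\log p)/p\ge-\log\log(2+|t|)-O(1)$; the resulting factor $(\log 2k)^2$ is absorbed by $k^{-2}$.

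This gives $H_1(y)+H_2(y;T)\ll e^{\mb{D}(f,\lambda;y)^2}$, which is the paper's \eqref{eq:H12Dlambda}. Hence the integral in Theorem \ref{thm:HalDeltaRefined} is $\ll\frac{\log\log x}{\log x}e^{\mb{D}(f,\lambda;x)^2}$. Combined with your Halberstam--Richert bound $\tilde{M}_g(w_0x)\ll e^{\mb{D}(f,\lambda;x)^2}/\log x$, this yields $|L_f(x)|\ll\exp(C(\log\log x)^{2/3})/\log x$ for every $C>4$. For that you must calibrate your construction so that $\mb{D}(f,\lambda;x)^2\le(4+o(1))(\log\log x)^{2/3}$. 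Only the upper bound on $\tilde{M}_g$ is needed; the lower bound from Lemma \ref{lem:KeKl} plays no role in the bound on $|L_f(x)|$.

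With this repair your argument is essentially the paper's. The paper's $f$ is likewise $x$-dependent: $f(p)=+1$ for $p\le x^{1/v}$ with $\|\tfrac{1}{4\pi}\log p\|\le(\log\log x)^{-1/3}$, and for all $x^{1/v}<p\le x^{1-1/v}$, with $v=3e$. So your resolution of the density-versus-window tension is legitimate, whether you concentrate $1/p$-mass $\ge 1+\e$ just below $x$ or restrict to the sequence $x_j=y_j^A$ with $v\ge A$.
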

\subsection{On a conjecture of Goldmakher}
 Our final application is related to the following conjecture of Goldmakher (see Conj. 2.6 of \cite{Gold}).
\begin{conj}[Goldmakher] \label{conj:Gold}
Let $f: \mb{N} \ra \mb{U}$ be a completely multiplicative function. Then for any $1 \leq y \leq x$,
$$
\sum_{\ss{n \leq x \\ P^+(n) \leq y}} \frac{f(n)}{n} \ll (\log y)e^{-\mb{D}(f,1;y)^2} + 1.
$$
\end{conj}
If we restrict ourselves to real-valued functions then Goldmakher's conjecture in this case follows immediately from \eqref{eq:trivial} and the standard bound
$$
M_g(x) \ll \frac{x}{\log x} \exp\left(\sum_{p \leq x} \frac{g(p)}{p}\right),
$$
for non-negative, divisor-bounded multiplicative functions (see Lemma \ref{lem:UppBdNonNeg} below). \\ 
%The corresponding statement follows from Prop. 3.1 of \cite{GSNeg}. 
We show here that a much stronger result holds.
%for general real-valued multiplicative functions.
\begin{cor}\label{cor:Gold}
Let $f: \mb{N} \ra [-1,1]$ be a multiplicative function. If $x$ is sufficiently large then for each $2 \leq y \leq x$ we have
$$
\sum_{\ss{n \leq x \\ P^+(n) \leq y}} \frac{f(n)}{n} \ll (\log y)e^{-\mb{D}(f,1;y)^2} + \frac{1}{(\log x)^{1-2/\pi}}.
$$
\end{cor}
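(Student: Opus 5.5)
We apply Corollary \ref{cor:LipError} to the multiplicative function $f_y$ defined by $f_y(p^k) := f(p^k)$ for $p \leq y$ and $f_y(p^k) := 0$ for $p > y$. Then $f_y$ takes values in $[-1,1]$ and
$$
\sum_{\ss{n \leq x \\ P^+(n) \leq y}} \frac{f(n)}{n} = L_{f_y}(x).
$$
Corollary \ref{cor:LipError} gives
$$
L_{f_y}(x) = \left(1+O\left(\tfrac{1}{\log x}\right)\right)\tilde{M}_{g_y}(w_0x) + O\left((\log x)^{-(1-2/\pi)}\right),
$$
where $g_y := 1 \ast f_y$. The error term is exactly the second term in the claim, so it remains to show that
$$
|\tilde{M}_{g_y}(w_0x)| \ll (\log y)e^{-\mb{D}(f,1;y)^2} + \frac{1}{(\log x)^{1-2/\pi}}.
$$

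\textbf{Bounding the main term.} The function $g_y$ need not be non-negative, since $f$ is only multiplicative. However, $|g_y| \leq 1 \ast |f_y| =: G$, which is non-negative and multiplicative with $G(p) = 1 + |f(p)|\mathbf{1}_{p\le y}$ and $G(p^k) \leq k+1$, so $G$ is divisor-bounded. Rather than use Lemma \ref{lem:UppBdNonNeg} on $G$ directly, which would lose the sign information in $f(p)$, we use the structure of $g_y$. Write $g_y = 1 \ast f_y$, and note that $1 \ast f_y$ restricted to the $y$-smooth part has Dirichlet series $\zeta(s)F_y(s)$, with $F_y(s) = \prod_{p\le y}(\sum_k f(p^k)p^{-ks})$. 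Then
$$
\tilde{M}_{g_y}(w_0x) = \frac{1}{w_0x}\sum_{\ss{m \leq w_0x \\ P^+(m)\le y}} f_y(m)\left\lfloor \frac{w_0x}{m}\right\rfloor.
$$
It is cleaner to factor $n = ab$ with $P^+(a) \leq y$ and all prime factors of $b$ exceeding $y$, so that $g_y(n) = g_y(a)\cdot 1$. Hence
$$
M_{g_y}(X) = \sum_{\ss{a \leq X\\ P^+(a)\le y}} g_y(a)\,\Phi(X/a, y),
$$
where $\Phi(z,y)$ counts integers $\le z$ free of primes $\le y$. We bound $\Phi(z,y) \ll z/\log y$ for $z \geq y$ and $\Phi(z,y) \leq 1$ for $z<y$. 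This yields
$$
|\tilde{M}_{g_y}(w_0x)| \ll \frac{1}{\log y}\sum_{P^+(a)\le y}\frac{|g_y(a)|}{a} + \frac{1}{x}\sum_{\ss{a\le w_0x\\ a > w_0x/y}} |g_y(a)|.
$$

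\textbf{Avoiding absolute values.} Taking absolute values here is too lossy, since the Euler product of $|g_y|/a$ is of size $\exp(\sum_{p \le y}(1+|f(p)|)/p)$, not $\exp(\sum_{p\le y}(1+f(p))/p) \asymp (\log y)^2 e^{-\mb{D}(f,1;y)^2}$. This is the main obstacle. The fix is to avoid absolute values by writing $g_y(a) = \sum_{d\mid a} f_y(d)$ and regrouping:
$$
M_{g_y}(X) = \sum_{\ss{d\le X\\P^+(d)\le y}} f(d)\,\lfloor X/d\rfloor.
$$
This shows $\tilde{M}_{g_y}(X) = L_{f_y}(X) + O(\Psi(X,y)/X)$, which is circular. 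Instead, I would apply Theorem \ref{thm:HalDeltaRefined} (with its ``moreover'' clause, since $f_y(p) = 0$ on $(y,x]$) only in the regime $y \le x^{c}$, where the tail $\Psi(x,y)/x$ is tiny. There, $L_{f_y}(x)$ is within $O(\rho$-type$)$ error of the full Euler product
$$
\prod_{p\le y}\left(\sum_{k} f(p^k)p^{-k}\right) \ll \exp\left(\sum_{p \le y}\frac{f(p)}{p}\right) \asymp (\log y)e^{-\mb{D}(f,1;y)^2}.
$$
The error in passing from the truncated sum to the full product is controlled by Rankin's trick: weight by $(n/x)^{\sigma}$ with $\sigma = 1/\log y$, which gives a relative error of $e^{-u}$ with $u = \log x/\log y$, and this is acceptable.

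\textbf{Case split.} For $y > x^{c}$, we have $\log y \asymp \log x$, and the bound follows from Corollary \ref{cor:LipError} together with the elementary estimate $|\tilde{M}_{g_y}(w_0x)| \ll \frac{1}{\log x}\exp(\sum_{p\le y}(1+f(p))/p)$. To justify this estimate, split $g_y = g_y^+ - g_y^-$ via the completely multiplicative majorant: $|g_y| \le 1\ast h$, where $h$ is multiplicative with $h(p)=f(p)^+$ and $h$ vanishes on the negative primes' contributions handled by cancellation in $1 \ast$ (indeed $1 + f(p) \geq 0$ locally). Then apply Lemma \ref{lem:UppBdNonNeg}. I expect verifying this local non-negativity, namely that $\sum_{j\le k} f(p^j) \geq 0$ fails for general multiplicative $f$ on prime powers, to be the delicate point. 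I would handle it by bounding prime-power contributions $p^k$ with $k\ge2$ trivially, since they converge.
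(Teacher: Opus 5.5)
Your opening reduction, applying Corollary \ref{cor:LipError} to $f_y$, is exactly what the paper does. However, the bound for the main term is not established, and the obstacle that sends you on a detour is not real. Since $f(p)\ge -1$, one has $|g_y(p)|=|1+f(p)|=1+f(p)$ for $p\le y$ and $|g_y(p)|=1$ for $p>y$. You have conflated $|g_y|$ with the majorant $1\ast|f_y|$, whose prime values are $1+|f(p)|$. The function $|g_y|$ is itself multiplicative and non-negative, and it satisfies $|g_y(p^k)|\le k+1$. So Lemma \ref{lem:UppBdNonNeg}(a) applies to it directly, with no sign condition on prime powers and no case split:
\[
|\tilde{M}_{g_y}(w_0x)|\le \tilde{M}_{|g_y|}(w_0x)\ll \exp\left(\sum_{p\le w_0x}\frac{|g_y(p)|-1}{p}\right)=\exp\left(\sum_{p\le y}\frac{f(p)}{p}\right)\asymp (\log y)e^{-\mb{D}(f,1;y)^2},
\]
uniformly for $2\le y\le x$. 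Your reduction plus this one line is the paper's entire proof.

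The substitute arguments you propose do not close the gap.
\begin{itemize}
\item Rankin's trick with $\sigma=1/\log y$ bounds the tail $\sum_{n>x,\,P^+(n)\le y}1/n$ by $e^{-u}\prod_{p\le y}(1-p^{\sigma-1})^{-1}\asymp e^{-u}\log y$. This is an \emph{absolute} error, not a relative error against the signed Euler product, which can be as small as $O(1/\log y)$ (take $f=\mu$).
\item For $y=x^c$ with $c$ fixed, the resulting bound is $\asymp_c\log x$. Even with an optimised $\sigma$, it only becomes $\ll(\log x)^{-(1-2/\pi)}$ once $u\log u\gg\log\log x$. So a whole range of $y$ below $x^c$ is covered by neither branch of your case split.
\item In the branch $y>x^c$, you justify the bound via $|g_y|\le 1\ast h$ with $h(p)=f(p)^+$, which again discards the sign. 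Lemma \ref{lem:UppBdNonNeg} then gives $\exp(\sum_{p\le y}f(p)^+/p)$ rather than $\exp(\sum_{p\le y}f(p)/p)$. For example, when $f(p)=-1$ at all primes this is $\asymp 1$ instead of $\asymp 1/\log y$.
\end{itemize}
Your worry about whether $\sum_{j\le k}f(p^j)\ge 0$ is a red herring. Once you work with $|g_y|$ itself, only its prime values $1+f(p)$ and the trivial bound $|g_y(p^k)|\le d(p^k)$ enter.
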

When $f$ is not real-valued, any corresponding estimate relies first on applying the trivial bound $|\tilde{M}_g(w_0x)| \leq \tilde{M}_{|g|}(w_0x)$, which is too weak to obtain the conjectured bound of Goldmakher.   

\section{Proof Ideas}
\subsection{On the proof of Theorem \ref{thm:HalDeltaRefined}} \label{sec:w0}
\subsubsection{Motivation for $w_0$}
The key novelty of our approach is in relating $L_f(x)$ to the mean value of $M_g(w_0x),$ rather than $M_g(x)$ as in all previous works in the subject.
%evaluated at a different scale.  
It is not \emph{a priori} obvious why the precise choice $w_0 = e^{1-\gamma}$ enters into this problem. The following discussion will elucidate this matter. \\
Let $x \geq 3$ and $\sg_0 := 1+1/\log x$. For $\text{Re}(s) > 1$ we write
$$
L(s,f) := \sum_{n \geq 1} \frac{f(n)}{n^s}, \quad L(s,g) := \sum_{n \geq 1} \frac{g(n)}{n^s} = L(s,f) \zeta(s).
$$
Applying Perron's formula, we have
\begin{align*}
L_f(x) &= \frac{1}{2\pi i} \int_{\sg_0 - i\infty}^{\sg_0 + i\infty} L(s,f) \frac{x^{s-1}}{s-1} ds, \\
\tilde{M}_g(w_0 x) &= \frac{1}{2\pi i} \int_{\sg_0-i\infty}^{\sg_0 + i\infty} L(s,f)\zeta(s) \frac{(w_0 x)^{s-1}}{s} ds.
\end{align*}
%owing to $L(s,g) = L(s,f) \zeta(s)$. 
Subtracting the two, we obtain
\begin{align} \label{eq:diffLf}
L_f(x) - \tilde{M}_g(w_0x) = \frac{1}{2\pi i } \int_{\sg_0 - i\infty}^{\sg_0 + i\infty} L(s,f) \left(\frac{sw_0^{1-s}}{s-1} - \zeta(s)\right) \frac{(w_0x)^{s-1}}{s} ds.
\end{align}
The following lemma is a straightforward consequence of the Laurent expansion of $\zeta(s)$ near $s=1$. (For convenience, we have included the proof of \eqref{eq:zetaprime} here as well, as we will need it later.)
\begin{lem}\label{lem:snear1}
Assume that $|s-1| \leq 1/2$. Then
\begin{align}
&\frac{s}{s-1}w_0^{1-s} - \zeta(s) \asymp |s-1|, \label{eq:w0s}\\
&\zeta'(s) + \frac{s}{(s-1)^2} - \frac{\zeta(s)}{s} = O(1). \label{eq:zetaprime}
\end{align}
\end{lem}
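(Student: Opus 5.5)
The plan is to work directly from the Laurent expansion of $\zeta$ at $s=1$,
$$
\zeta(s) = \frac{1}{s-1} + \gamma + O(|s-1|), \quad |s-1| \leq 1/2,
$$
which holds because $\zeta(s) - 1/(s-1)$ is entire. Write $u := s-1$, so $|u| \leq 1/2$.

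For \eqref{eq:w0s}, I will expand $\frac{s}{s-1}w_0^{1-s} = \frac{1+u}{u}e^{-u\log w_0}$. Since $\log w_0 = 1-\gamma$, this becomes
$$
\frac{1+u}{u}\left(1 - (1-\gamma)u + \frac{(1-\gamma)^2u^2}{2} + O(|u|^3)\right) = \frac1u + 1 - (1-\gamma) + c\,u + O(|u|^2),
$$
with $c = \frac{(1-\gamma)^2}{2} - (1-\gamma)$. The constant term is therefore $\gamma$. This is exactly why $w_0 = e^{1-\gamma}$ is chosen: the constant term matches that of $\zeta$. Subtracting the two expansions cancels the pole and the constant, and leaves
$$
\frac{s}{s-1}w_0^{1-s} - \zeta(s) = (c - \gamma_1')\,u + O(|u|^2),
$$
where $\gamma_1'$ is the linear Laurent coefficient of $\zeta$. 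That coefficient is $-\gamma_1$, with $\gamma_1 \approx -0.0728$ the first Stieltjes constant, so $\gamma_1' \approx 0.0728$. Numerically, $1-\gamma \approx 0.4228$, which gives $c \approx 0.0894 - 0.4228 = -0.3334$. The linear coefficient $c - \gamma_1'$ is then about $-0.406$, which is nonzero.

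The upper bound $\ll |u|$ on the whole disc now follows from analyticity, since the difference is holomorphic on $|u|\le 1/2$ and vanishes at $u=0$. The lower bound $\gg |u|$ is the only real obstacle. The Taylor argument gives it for $|u| \leq \delta$ with some small $\delta$. On the annulus $\delta \leq |u| \leq 1/2$ I need the holomorphic function $h(u) := (\frac{s}{s-1}w_0^{1-s} - \zeta(s))/u$ to have no zeros. I would handle this by bounding the Taylor tail of $h$ explicitly. Writing $h(u) = (c-\gamma_1') + \sum_{k\ge1} a_k u^k$, the $a_k$ can be bounded using the known smallness of the Stieltjes constants ($|\gamma_k| \leq 1$ crude bounds suffice) together with the explicit coefficients of $(1+u)e^{-(1-\gamma)u}/u$. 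The aim is to check that $\sum_{k\ge1}|a_k|2^{-k} < 0.406$, which gives $|h|$ bounded below on the whole disc. If the crude tail bound is not sharp enough, the fallback is a direct numerical verification on $|u|=1/2$ combined with the argument principle.

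For \eqref{eq:zetaprime}, I differentiate the Laurent series to get $\zeta'(s) = -\frac{1}{u^2} + O(1)$. I also expand $\frac{s}{(s-1)^2} = \frac{1+u}{u^2} = \frac1{u^2} + \frac1u$ and
$$
\frac{\zeta(s)}{s} = \frac{1}{u(1+u)} + O(1) = \frac1u + O(1).
$$
Summing the three pieces, the $u^{-2}$ and $u^{-1}$ terms cancel and everything else is $O(1)$ uniformly on $|u|\le1/2$. The $O(1)$ bound holds uniformly because the remaining function is holomorphic on the closed disc.
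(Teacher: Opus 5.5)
Your argument is the paper's: expand at $s=1$, note that $w_0=e^{1-\gamma}$ cancels both the pole and the constant term, and read off a linear coefficient of absolute value $1-\gamma-\tfrac{(1-\gamma)^2}{2}-\gamma_1\approx 0.406$, with the analogous cancellation giving \eqref{eq:zetaprime}. You are in fact more careful than the paper, whose expansion only justifies the lower bound in \eqref{eq:w0s} for $|s-1|$ small; your tail estimate for $h(u)$ is what covers the annulus $\delta\le|u|\le 1/2$.

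One correction to that tail estimate: $|\gamma_k|\le 1$ is false for large $k$ (e.g.\ $\gamma_{100}\approx -4\times 10^{17}$). You should bound $|\gamma_k|/k!$ instead, for instance via Berndt's inequality $|\gamma_k|\le 4(k-1)!/\pi^k$, which still gives $\sum_{k\ge 1}|a_k|2^{-k}<0.16<0.406$.
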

\begin{proof}
It is well-known that when $|s-1| \leq 1/2$,
\begin{align*}
\zeta(s) = \frac{1}{s-1} + \gamma + \gamma_1 (1-s) + O(|s-1|^2) \\
\zeta'(s) = -\frac{1}{(s-1)^2} - \gamma_1 + O(|s-1|),
\end{align*}
where $\gamma_1= - 0.0728...$ is the first Stieltjes constant. For the first claim, recalling that $w_0 = e^{1-\gamma}$, we have
$$
\frac{s}{s-1}w_0^{1-s} = \frac{s}{s-1} - s(1-\gamma) - s\frac{(1-\gamma)^2}{2}(1-s) + O(|s-1|^2),
$$
so that combining these expressions yields
\begin{align*}
\frac{s}{s-1}w_0^{1-s} - \zeta(s) &= \left(\frac{s}{s-1} -s(1-\gamma) - s \frac{(1-\gamma)^2}{2} (1-s)\right) - \left(\frac{1}{s-1} +\gamma + \gamma_1(1-s)\right) + O(|s-1|^2) \\
&= (1-s) \left(1-\gamma - \frac{(1-\gamma)^2}{2} - \gamma_1\right)  + O(|s-1|) \asymp |s-1|, 
\end{align*}
as claimed. Similarly, for the second claim we have
\begin{align*}
\zeta'(s) - \frac{\zeta(s)}{s} + \frac{s}{(s-1)^2} = \frac{1}{s-1} - \frac{1}{s(s-1)} -\gamma_1 - \frac{\gamma}{s} + O(|s-1|) = O(1),
\end{align*}
as required.
\end{proof}
As $L(\sg_0 + it,f)$ is likely to have its maximum in $|t| \leq 1$ when $f$ is real-valued, the factor on the LHS of \eqref{eq:w0s} dampens its contribution in \eqref{eq:diffLf}. This is similar in nature to what transpires in the proof of Lipschitz theorems (for all complex-valued, $1$-bounded functions; see e.g. \cite[Sec. 4]{GHS}), wherein one needs to control
$$
\max_{|t| \leq T} \left|\left(1-w^{\sg_0 - 1 + it}\right) L(\sg_0 + it)\right|,
$$
and one obtains savings when $|t| \log w$ is not too large. \\
\subsubsection{Towards Theorem \ref{thm:HalDeltaRefined}}
Given the previous discussion, in order to prove Theorem \ref{thm:HalDeltaRefined} we will follow the classical strategy of ``averages of averages'' proof of Hal\'{a}sz' theorem. The analysis, however, is rather more delicate. \\
First, we make a standard reduction to the case that $f$ is completely multiplicative (see Lemma \ref{lem:redtoCM}), which then allows us to invoke the aforementioned fact that $g = 1\ast f \geq 0$. Setting
$$
\Delta(y) := L_f(y) - \tilde{M}_g(w_0y), \quad 1 \leq y \leq x,
$$
we then aim to bound $|\Delta(x)|$ from above in terms of the logarithmic average
\begin{equation}\label{eq:avgDelta}
\frac{1}{\log x} \int_1^x |\Delta(y)| \frac{dy}{y}.
\end{equation}
Our result in this direction (see Proposition \ref{prop:toAvg}) delivers the (essentially lossless) estimate
$$
|\Delta(x)| \ll \frac{1}{\log x} \int_1^x |\Delta(y)| \frac{dy}{y} + \frac{1 + |L_f(x)| + \tilde{M}_g(w_0x)}{\log x}.
$$
%error term when $f$ is real-valued, directly in terms of $|L_f(x)|$ and $\tilde{M}_g(w_0x)$. 
This crucially uses the fact that $g$ is non-negative, though a slightly weaker variant is still available for general 1-bounded multiplicative functions $f$. It is worth noting that this integral can be truncated from below at $y = x^c$ when $f(n)$ is supported on primes $p \leq x^{1-c}$, a feature noticed in a related context in \cite{GHS}. We will discuss the benefits of this observation in connection to Theorem \ref{thm:random}, in the next subsection. \\
The integral \eqref{eq:avgDelta} is then estimated (after applying the Cauchy-Schwarz inequality and Plancherel's theorem) in terms of an $L^2$ integral of Dirichlet series, the key contribution of which being of the shape
$$
\int_{1+\alpha-iT}^{1+\alpha+iT} \left|\frac{L'(s,f)w_0^{s-1}}{s}\left(\frac{sw_0^{1-s}}{s-1} - \zeta(s)\right)\right|^2 ds,
$$
for varying $\tfrac{1}{\log x} \leq \alpha \leq 1$. Each of the ranges $|\text{Im}(s)| \leq 1/2$ and $1/2 \leq |\text{Im}(s)|\leq T$ is controlled by the respective error terms $H_1(e^{1/\alpha})$ and $H_2(e^{1/\alpha};T)$, which are qualitatively different according to the influence that the size of $\zeta(1+\alpha+it)$ has on the integral. The shape of the error  term in Theorem \ref{thm:HalDeltaRefined} corresponds to an average over $\alpha$. \\
In particular, in the range $|\text{Im}(s)| \leq 1/2$ the relationship between $w_0$ and the Laurent expansion of $\zeta(s)$ at $s = 1$ becomes essential, and results in significant savings when $f$ is real-valued. Because $\log|\zeta(1+\alpha+it)| \ll \log\log(2+|t|)$ in the range $1/2 \leq |t| \leq T$, it plays only a minor role in estimates for the error terms $H_2(e^{1/\alpha};T)$. 
\subsubsection{Optimal examples} 
The example in Theorem \ref{thm:CorOpt} has been mentioned in previous works on multiplicative functions in relation to the optimality of ``Lipschitz bounds'' that relate $\tilde{M}_f(y)$ to $\tilde{M}_f(y/w)$, when $1 \leq w \leq y^{o(1)}$. In \cite{GHS}, estimates for $\tilde{M}_f(y)$ for such examples were invoked without proof. One motivation for Theorem \ref{thm:CorOpt} is to provide the details of these\footnote{Strictly speaking, our results are tailored to logarithmic mean values but the same methods can be used to analyse Ces\`{a}ro mean values as well.} estimates. This is done by adapting a method from \cite{GraMan} to study multiplicative functions $f$ that are defined at the primes via
$$
f(p) = h(t \log p),
$$
where $t \in \mb{R}$ and $h$ is a 1-periodic function with sufficient decay in its Fourier coefficients. In the situation of Theorem \ref{thm:CorOpt} our function $h$ is of bounded variation but not sufficiently smooth for a direct application of this result to hold. We bypass these issues by replacing $h$ pointwise by a \emph{smoother} approximation. See Section \ref{sec:optimal} for the details.
\subsection{Towards the main applications}
\subsubsection{On negative values of truncations and converse theorems}  The proof of Corollary \ref{cor:NegTrunc} follows immediately from Theorem \ref{thm:HalDeltaRefined},
once again using that $g = 1\ast f$ is a non-negative function whenever $f(n) \in [-1,1]$ and is completely multiplicative. This fact was already crucially exploited in all of the previous works on the negative truncations problem (following the example of \cite{GSNeg}). \\
More novel in this context is Corollary \ref{cor:improveNeg}, which incorporates the \emph{size as well as sign} of $\tilde{M}_g(w_0x)$ into the problem. Starting from the assumption $L_f(x) < 0$, one quickly obtains from Theorem  \ref{thm:HalDelta} an upper bound condition of the shape
\begin{equation}\label{eq:MgUpp}
\tilde{M}_g(w_0x) \ll \frac{\log\log x}{\log x} \max\{H_1(x), H_2(x;T)\},
\end{equation}
taking $T = (\log x)^2$, say. Both of $H_1(x)$ and $H_2(x;T)$ can be explicitly described in terms of the distribution of values of $f(p)$ with $p \leq x$. To obtain a precise characterisation of the values of $f(p)$, we seek to pair this condition with a corresponding lower bound for $\tilde{M}_g(w_0x)$ in terms of these values $f(p)$ as well. \\
For convenience, let us assume in this discussion that $f(p) \in \{-1,+1\}$ (which is the case in the statement of Corollary \ref{cor:improveNeg}, but not of the more general Proposition \ref{prop:improveNeg}). 
It is clear that when $f(p) = -1$ at all large primes $x^{1/v} < p \leq x$ then $g(p) = 0$ in this range and thus $g$ is (essentially) supported on $x^{1/v}$-friable numbers. For large $v$, one therefore expects $\tilde{M}_g(w_0x)$ to be rather small, and that this phenomenon should persist when $f(p) = -1$ ``most of the time'' (in a precise sense) in this range. Building on the seminal works of Granville, Koukoulopoulos and Matom\"{a}ki \cite{GKM} and Matom\"{a}ki and Shao \cite{MS}, Kerr and the first author \cite{KeKl} showed, however, that if $\e > 0$ is fixed, $y$ is sufficiently large and $v \geq v_0(\e)$ satisfies
\begin{equation}\label{eq:KeKlProofStrat}
\sum_{\ss{y^{1/v} < p \leq y \\ f(p) = +1}} \frac{1}{p} \geq 1 + \e,
\end{equation}
%for some $\e > 0$ and $y$ sufficiently large 
then one can get a precise lower bound
$$
\tilde{M}_g(y) \gg v^{-(1+o(1))v/e} \exp\left(\sum_{p \leq y} \frac{f(p)}{p}\right).
$$
When $v^{v} = (\log y)^{o(1)}$ this provides a sufficient lower bound for the purposes of our classification. \\
The same idea underpins the proof of our converse result, Corollary \ref{cor:convLf}, on small values of $|L_f(x)|$.  In that case, 
%because one can show that $H_1(x) \gg 1$, 
Theorem \ref{thm:HalDelta} can be manipulated to obtain
$$
\tilde{M}_g(w_0x) \ll |L_f(x)| + \frac{\log\log x}{\log x} \max\{H_1(x),H_2(x;T)\}.
$$
Under the assumption that 
$$
|L_f(x)| \ll \frac{\exp(C(\log\log x)^{2/3})}{\log x}
$$
for some $C > 0$, this either implies that \eqref{eq:MgUpp} holds, or else that $\tilde{M}_g(w_0x)$ is directly upper bounded by the same upper bound as $|L_f(x)|$. In either case, a sufficient bound may be obtained.
See Section \ref{sec:Conv} for further details.\\
\subsubsection{On the random problem}
Let $\mbf{f}$ be a Rademacher random completely multiplicative function and let $\mbf{g} := 1 \ast \mbf{f}$. An immediate issue that arises in the proof of Theorem \ref{thm:random} is that the error term in Theorem \ref{thm:HalDeltaRefined} is too weak to improve on previous bounds for $\mb{P}(L_{\mbf{f}}(x) < 0)$.  
%progress on the problem. 
Indeed, with sufficiently high probability one can show that $H_1(y)$ and $H_2(y;T)$ are both of bounded size for $1 \leq y \leq x$, but on invoking the lower bound $M_{\mbf{g}}(w_0 x) \geq 0$ (and integrating in $y$) we are left to seek the probability of the event
$$
L_f(x) \geq -C \frac{\log\log x}{\log x}.
$$
The work of Angelo and Xu \cite{AngXu} already shows that this is bounded above by $\exp(\exp(c\tfrac{\log x}{\log\log x}))$, and that this is sharp up to the constant $c> 0$. \\
To do better, one must remove the $\log\log x$ factor arising in this error term. We do this by employing the following trick: if we define a completely multiplicative function at primes via $\mbf{f}_{1/2}(p) = \mbf{f}(p) 1_{p \leq x^{1/2}}$ then
\begin{equation}\label{eq:multMTs}
L_{\mbf{f}}(x) = \sum_{x^{1/2} < p \leq x} \frac{\mbf{f}(p)}{p} L_{\mbf{f}}(x/p) + L_{\mbf{f}_{1/2}}(x).
\end{equation}
There are two key features of this identity that are crucial to our analysis:
\begin{enumerate}[(i)]
\item we can take complete advantage of the independence of $(\mbf{f}(p))_{x^{1/2} < p \leq x}$ (of which there are many) to force the sum over $p$ in \eqref{eq:multMTs} to be small, conditioning on the primes $p \leq x^{1/2}$ (see Lemma \ref{lem:extraMTs});
\item as $\mbf{f}_{1/2}$ vanishes at all $p > x^{1/2}$, the improved version of Theorem \ref{thm:HalDeltaRefined} may now be applied to the term $L_{\mbf{f}_{1/2}}(x)$.
%(noting that $\mbf{g}_{1/2}(p) := 1\ast \mbf{f}_{1/2}(p) > 0$ for all $p > x^{1/2}$).
\end{enumerate}
Item (ii) saves the necessary $\log\log x$ factor in the error term (since $\int_{x^{1/2}}^x \tfrac{dy}{y\log y} \asymp 1$). \\
In view of item (i) above, ascertaining the probability of $L_{\mbf{f}}(x) < 0$ then boils down to determining the likelihood that
$$
\tilde{M}_{1\ast \mbf{f}_{1/2}}(x) \ll \frac{1}{\log x} \max\{H_1(x), H_2'(x;T)\},
$$
reminiscent of the converse theorems discussed above. Here, $H_2'(x;T)$ is a variant\footnote{The benefit of this lower truncation is that the variation in argument of $\cos(t\log p)$ can be better controlled when $|t-k| \leq 1/2$.} of $H_2(x;T)$ wherein the prime sum is truncated to $p > y_k := \exp((\log (2+k))^2)$ for each term $1 \leq k \leq T$ (this loses only a constant factor in the estimates).  With high probability we can ensure that $\mbf{f}_{1/2}(p) = +1$ sufficiently often among the ``large primes'' $x^c < p \leq x^{1/2}$, for $c > 0$ not too small, that $\tilde{M}_{1\ast \mbf{f}_{1/2}}(x)$ may be effectively bounded below via \eqref{eq:KeKlProofStrat} (with $v = O(1)$) and matters are reduced to comparing the sums
$$
\sum_{p \leq x} \frac{\mbf{f}(p)}{p} \text{ and } \max_{|t-k| \leq 1/2} \sum_{y_k < p \leq x} \frac{(\mbf{f}(p) - 1_{k = 0})\cos(t\log p)}{p},
$$
for each $0 \leq k \leq T$ (writing $y_0 := 1$). By modifying the argument of Angelo and Xu, we give the tail estimates 
$$
\max_{|t - k| \leq 1/2} \mb{P}\left(\sum_{y_k < p \leq x} \frac{f(p)(1-\cos(t\log p))}{p} \geq \log(1/\delta)\right) \ll \exp(\exp(-c/\delta))
$$
whenever $\delta \gg \tfrac{1}{\log x}$, for some $c > 0$ absolute (see Lemma \ref{lem:AngXu}). Since the Euler products vary by $O(1)$ on intervals of length $1/\log x$, a simple discretisation argument (see Lemma \ref{lem:maxTail}) implies that the same tail estimates hold (with a slightly smaller $c > 0$) for 
$$
\mb{P}\left(\max_{|t-k| \leq 1/2} \sum_{y_k < p \leq x} \frac{f(p)(1-\cos(t\log p))}{p}\geq \log(1/\delta)\right).
$$
This ultimately leads to the proof of Theorem \ref{thm:random}. See Section \ref{sec:random} for the details.

\section{Proofs of the main theorems}
%In this section, let $f: \mb{N} \ra \mb{U}$ be a 1-bounded multiplicative function. 
\subsection{Auxiliary bounds}
We will use the following two estimates repeatedly in the sequel.
\begin{lem} \label{lem:UppBdNonNeg}
Let $h$ be a multiplicative function such that\footnote{Here, $d(n)$ denotes the divisor function.} $0 \leq h(n) \leq d(n)$ for all $n\geq 1$. The following results hold: 
\begin{enumerate}[(a)]
\item For $x\geq 3$ we have
$$
\frac{1}{x}\sum_{n \leq x} h(n) \ll \frac{1}{\log x} \sum_{n \leq x} \frac{h(n)}{n} \ll \exp\left(\sum_{p \leq x} \frac{h(p)-1}{p}\right).
$$
\item Fix $\e \in (0,1)$ and suppose that $x \geq x_0(\e)$. Then for every $x^{\e} < y \leq x$,
$$
\sum_{x < n \leq x + y} h(n) \ll \frac{y}{\log x} \exp\left(\sum_{p \leq x} \frac{h(p)}{p}\right).
$$
\end{enumerate}
\end{lem}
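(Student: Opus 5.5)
These are standard estimates for non-negative, divisor-bounded multiplicative functions, and I would prove each part from classical tools rather than anything new in this paper.

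\textbf{Part (a), first inequality.} I would use the Chebyshev--Hildebrand device. Write $\log n = \sum_{p^k \| n} \log p^k$, so that
$$
\sum_{n\le x} h(n)\log n = \sum_{p^k m\le x,\ p\nmid m} h(p^k)h(m)\log p^k .
$$
Since $h(p^k) \le k+1$, the inner sum over $p^k \le x/m$ of $h(p^k)\log p^k$ is $\ll x/m$ by Chebyshev's bound. The terms with $k \ge 2$ contribute at most a bounded multiple of this.

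This gives
$$
\sum_{n\le x} h(n)\log n \ll x\sum_{m\le x} \frac{h(m)}{m}.
$$
Next I would split the left side at $n\le x^{1/2}$, which trivially contributes $\ll x^{1/2}\sum_{m\le x^{1/2}} h(m)/m$ up to a log factor. In fact it is cleaner to use
$$
\sum_{n\le x} h(n) \le \frac{1}{\log x}\sum_{n \le x} h(n)\log n + \sum_{n\le x} h(n)\log(x/n)\frac{1}{\log x},
$$
and bound the second sum by $\sum_{n\le x} h(n)\,(x/n)/\log x$ using $\log(x/n) \le x/n$. Both pieces are then $\ll (x/\log x)\sum_{n\le x} h(n)/n$.

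\textbf{Part (a), second inequality.} This is Rankin/Euler-product: $\sum_{n\le x} h(n)/n \le \prod_{p\le x}\sum_k h(p^k)/p^k$. Using $h(p^k)\le k+1$, the local factor is $\exp(h(p)/p + O(1/p^2))$. Mertens' theorem, $\sum_{p\le x}1/p = \log\log x + O(1)$, then converts the product into $\ll \log x\,\exp(\sum_{p\le x}(h(p)-1)/p)$.

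\textbf{Part (b).} This is Shiu's theorem (the Brun--Titchmarsh analogue for multiplicative functions), which applies to $h$ with $h(p^k)\le A^k$ and $h(n)\ll_\eta n^\eta$, both satisfied since $h\le d$. Shiu gives
$$
\sum_{x<n\le x+y} h(n) \ll \frac{y}{\log x}\exp\left(\sum_{p\le x}\frac{h(p)}{p}\right)
$$
uniformly for $x^\e<y\le x$. I would simply cite it; proving it from scratch is the main obstacle. Such a proof factors $n=ab$ with $a$ the largest $x^{\delta}$-friable divisor and uses a sieve bound for the rough part $b$.
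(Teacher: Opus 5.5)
Your proposal is correct and follows the paper, which cites Halberstam--Richert for (a) and Shiu for (b). For (a) you have also written out the standard argument behind that citation: the Chebyshev--Hildebrand bound via $\log x=\log n+\log(x/n)$ and $\log(x/n)\le x/n$, then the Euler-product bound with Mertens' theorem (the aside about splitting at $n\le x^{1/2}$ can be dropped). For (b), moving from Shiu's interval $(x-y,x]$ to $(x,x+y]$ is harmless: apply Shiu at $x+y\le 2x$ and use $\log(x+y)\asymp\log x$ and $\sum_{x<p\le 2x}h(p)/p=O(1)$.
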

\begin{proof}
The first result is due to Halberstam and Richert \cite{HalRic}. The second, which generalises the first, is due to P. Shiu \cite{Shiu}.
\end{proof}
We also record the following simple \emph{Lipschitz-type} estimate for $\tilde{M}_g(x)$, which improves significantly on the general such bounds for divisor-bounded functions (as in \cite[Thm. 1.5]{GHS}).
\begin{lem}\label{lem:LipGHS}
Let $f: \mb{N} \ra \mb{U}$ be a $1$-bounded multiplicative function, and let $g := 1\ast f$. Then for any $w \geq 1$,
$$
\tilde{M}_g(x) - \tilde{M}_g(x/w) \ll \log(2w).
$$
\end{lem}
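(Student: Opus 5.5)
We would expand $M_g$ directly through the Dirichlet convolution $g = 1\ast f$. This gives, for any $y\ge 1$,
$$
\tilde{M}_g(y) = \frac{1}{y}\sum_{n\le y} f(n)\left\lfloor \frac{y}{n}\right\rfloor = \sum_{n \le y}\frac{f(n)}{n} - \frac{1}{y}\sum_{n\le y} f(n)\left\{\frac{y}{n}\right\}.
$$
The same identity holds at $y = x/w$. We subtract the two expressions and estimate the pieces separately, using only $|f(n)|\le 1$; no multiplicativity or non-negativity is needed. If $w > x$, then $\tilde M_g(x/w)=0$, and the identity at $y=x$ together with $|f|\le 1$ gives $|\tilde M_g(x)| \le \sum_{n\le x}1/n + 1 \ll \log(2x) \le \log(2w)$. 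So we may assume $1\le w\le x$.

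\textbf{The logarithmic sums.} Their difference is
$$
\sum_{x/w < n \le x} \frac{f(n)}{n}, \qquad\text{with}\qquad \Bigl|\sum_{x/w < n \le x} \frac{f(n)}{n}\Bigr| \le \sum_{x/w<n\le x}\frac1n \le \log w + O(1) \ll \log(2w),
$$
since $x/w\ge 1$ (the $O(1)$ covers the boundary terms when $x/w$ is small).

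\textbf{The fractional-part terms.} Each of these is trivially bounded:
$$
\Bigl|\frac{1}{y}\sum_{n\le y} f(n)\{y/n\}\Bigr| \le \frac{\lfloor y\rfloor}{y}\le 1 .
$$
Applying this for $y=x$ and $y=x/w$ contributes $O(1)$, and $O(1) \ll \log(2w)$ because $w\ge1$.

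Combining the three bounds gives $\tilde M_g(x)-\tilde M_g(x/w) \ll \log(2w)$, uniformly in $x$ and $w$. We do not expect any step to be a genuine obstacle. The only care needed is the bookkeeping in the range where $x/w$ is small, and that is handled by the case split above.
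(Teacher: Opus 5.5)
Your proof is correct and is essentially the paper's argument: the paper combines the trivial bound $|L_f(x)-L_f(x/w)|\le \log w+O(1)$ with the elementary identity $\tilde{M}_g(y)=L_f(y)+O(1)$ from \eqref{eq:trivial}, and you simply write out that $O(1)$ explicitly via the fractional-part expansion. Your separate case $w>x$ is harmless but not needed, since then $L_f(x/w)=0$ and $|L_f(x)|\le \log x+1\le \log w+1$ directly.
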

\begin{proof}
By the triangle inequality,
$$
|L_f(x)-L_f(x/w)| \leq \sum_{x/w < n \leq x} \frac{1}{n} = \log w + O(1).
$$
Now, applying the elementary estimate \eqref{eq:trivial} above, i.e.,
$$
\tilde{M}_g(y) = L_f(y) + O(1),
$$
with $y = x$ and $y = x/w$ and subtracting, we get $$
|\tilde{M}_g(x)-\tilde{M}_g(x/w)| \leq \log w + O(1) \ll \log(2w),
$$
as claimed.
\end{proof}
Finally, we will frequently make use of the following special case of a result due to Hall and Tenenbaum (see \cite[Lem. 30.1]{HTDiv}), which follows from the prime number theorem.
\begin{lem}\label{lem:TenEst}
%the following, which is a special case of Lemma %III.4.13 of \cite{Ten}: \\
Let $t \neq 0$ and $z > w \geq 2$. Let $\phi(u)$ be a $2\pi$-periodic function of bounded variation. Then  
\begin{align} \label{eq:TenEst}
\sum_{w < p \leq z} \frac{\phi(t\log p)}{p} = \left(\frac{1}{2\pi} \int_0^{2\pi} \phi(u) du\right) \log\left(\frac{\log z}{\log w}\right) + O_{\phi}\left(\frac{1}{|t| \log w} + \frac{1+|t|}{\exp(\sqrt{\log w})}\right).
\end{align}
\end{lem}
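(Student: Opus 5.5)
The plan is to pass from the sum over primes to an integral via the prime number theorem with de la Vallée Poussin error term, and then to evaluate the resulting integral by separating off the mean of $\phi$. By symmetry ($\phi(-u)$ is again $2\pi$-periodic of bounded variation) we may assume $t>0$. Write $\pi(u) = \mathrm{li}(u) + E(u)$ with $E(u) \ll u\exp(-2\sqrt{\log u})$. By Stieltjes integration,
$$
\sum_{w<p\leq z}\frac{\phi(t\log p)}{p} = \int_w^z \phi(t\log u)\frac{du}{u\log u} + \int_w^z \frac{\phi(t\log u)}{u}\, dE(u).
$$

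\textbf{Step 1: the error integral.} Integrate by parts in the second integral. The boundary terms are $\ll \|\phi\|_\infty e^{-2\sqrt{\log w}}$. The remaining integral is bounded by
$$
\int_w^z |E(u)|\,\Big|d\big(\phi(t\log u)/u\big)\Big| \leq \|\phi\|_\infty\int_w^\infty \frac{|E(u)|}{u^2}du + \int_w^z \frac{|E(u)|}{u}\,\big|d\phi(t\log u)\big|.
$$
The first piece is $\ll e^{-\sqrt{\log w}}$. For the second piece, substitute $v=\log u$ and cut $[\log w,\log z]$ into consecutive intervals on which $tv$ runs over a full period, i.e. intervals of length $2\pi/t$. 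On each such interval the total variation of $\phi(tv)$ is at most $V(\phi)$, and the weight $e^{-2\sqrt v}$ is essentially constant when $t\gg 1$. When $t$ is small we use instead the trivial bound $V(\phi)$ per unit of $v$. This gives
$$
\ll V(\phi)(1+t)\int_{\log w}^\infty e^{-2\sqrt v}\,dv \ll (1+t)\,e^{-\sqrt{\log w}},
$$
which is the second error term in \eqref{eq:TenEst}. This bookkeeping of the variation of $\phi(t\log u)$ across many periods is the only slightly delicate point. I expect it to be the main (though mild) obstacle, and it is handled by the period-by-period counting just described.

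\textbf{Step 2: the main integral.} Substituting $v=t\log u$ gives
$$
\int_w^z \phi(t\log u)\frac{du}{u\log u} = \int_{t\log w}^{t\log z}\phi(v)\frac{dv}{v}.
$$
Write $\phi = \bar\phi + \psi$, where $\bar\phi = \frac{1}{2\pi}\int_0^{2\pi}\phi$. The constant part contributes exactly $\bar\phi\log(\log z/\log w)$. Since $\psi$ has mean zero, its primitive $\Psi$ is $2\pi$-periodic and bounded by $O_\phi(1)$. Integrating by parts,
$$
\int_{A}^{B}\psi(v)\frac{dv}{v} = \Big[\frac{\Psi(v)}{v}\Big]_A^B + \int_A^B \frac{\Psi(v)}{v^2}dv \ll \frac{1}{A} = \frac{1}{t\log w},
$$
where $A = t\log w$ and $B = t\log z$. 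This is the first error term in \eqref{eq:TenEst}.

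Combining Steps 1 and 2 yields \eqref{eq:TenEst}, with implied constants depending only on $\|\phi\|_\infty$ and $V(\phi)$.
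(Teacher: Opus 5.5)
Your proof is correct. It is the standard partial-summation argument from the prime number theorem behind the Hall--Tenenbaum lemma, which the paper simply cites without proof.

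One correction: the bound $E(u)\ll u\exp(-2\sqrt{\log u})$ does not follow from the de la Vall\'ee Poussin zero-free region, which only gives $u\exp(-c\sqrt{\log u})$ for some small $c>0$. Your Step 1 needs $c>1$ to reach $e^{-\sqrt{\log w}}$, and the paper relies on this constant: it takes $w=y_k=\exp((\log 2k)^2)$ so that $e^{\sqrt{\log y_k}}=2k$. Quote the Vinogradov--Korobov form of the prime number theorem instead, which gives your bound with room to spare.
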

\subsection{Averaging on scales}
% We use the usual trick of multiplying by $\log x = \log n + \log(x/n)$. Here, unlike in the $1$-bounded case, $g(n)\log(x/n)$ does not have bounded mean value in general. Moreover, the difference between $\log x$ and $\log(w_0x) = \log x + 1-\gamma$ is non-trivial in these arguments, and in fact we will multiply both $L_f(x)$ and $\tilde{M}_g(w_0x)$ by $\log(w_0x)$. As we end up dividing by $\log (w_0x)$ in the end, we can tolerate $O(1)$ error terms throughout. \\
In the sequel, for $y \geq 1$ and $f: \mb{N} \ra \mb{U}$ write
$$
\Delta(y) := L_f(y) - \tilde{M}_g(w_0 y).
$$
We will use the approach of Montgomery and Vaughan in \cite{MVMV}, inspired by Hal\'asz' original proof strategy of taking ``averages of averages" at different scales. Let $x_0$ be a large absolute constant to be chosen later, and select $x$ to belong to the set of local maxima
\begin{equation} \label{eq:mfSDef}
\mathfrak{S} := \{x \geq x_0 : x_0 \leq y \leq x \Rightarrow |\Delta(y)|\log(w_0y) \leq |\Delta(x)|\log (w_0x)\}.
\end{equation}
Our main proposition for this section is the following. 
\begin{prop}\label{prop:toAvg}
Let $f: \mb{N} \ra \mb{U}$ be a multiplicative function. Assume that $x \in \mathfrak{S}$. \\
(a) If $f$ is real-valued and completely multiplicative then
\begin{align*}
\left|\Delta(x)\right| \ll \frac{1}{\log x} \int_1^x |\Delta(y)| \frac{dy}{y} + \frac{1 + |L_f(x)| + \tilde{M}_g(w_0x)}{\log x}.
\end{align*}
Moreover, if there is $1 \leq z \leq x/2$ such that $f(p) = 0$ for all $z < p \leq x$ then
\begin{align*}
\left|\Delta(x)\right| \ll \frac{1}{\log x} \int_{x/(2z)}^x |\Delta(y)| \frac{dy}{y} + \frac{1 + |L_f(x)| + \tilde{M}_g(w_0x)}{\log x}.
\end{align*}
(b) In general, the above estimates hold with
$$
\frac{1+|L_f(x)| + \tilde{M}_g(w_0x)}{\log x} \text{ replaced by } \frac{(\log\log x)^4 + |L_f(x)| + |\tilde{M}_g(w_0x)| (\log\log x)^2}{\log x}.
$$
\end{prop}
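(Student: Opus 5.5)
We follow the Montgomery--Vaughan device of multiplying by a logarithm. Write $\log(w_0x)\Delta(x) = \log(w_0x)L_f(x) - \log(w_0x)\tilde{M}_g(w_0x)$. The plan is to express each piece through sums weighted by $\log n$, and then to use the convolution identity $f\log = f\ast(\Lambda f)$ and, correspondingly, $g\log = g\ast\Lambda + (1\ast \Lambda f)\ast g$-type decompositions. Concretely, we have
$$
\log x\, L_f(x) = \sum_{n\le x}\frac{f(n)\log n}{n} + \sum_{n\le x}\frac{f(n)\log(x/n)}{n},
$$
and the second sum equals $\int_1^x L_f(y)\,dy/y$. Similarly, $\log(w_0x)\tilde M_g(w_0x) = \frac{1}{w_0x}\sum_{n\le w_0x} g(n)\log n + \int_1^{w_0x}\tilde M_g(y)\,dy/y$, after partial summation.

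The two integral terms combine, up to the shift $y\mapsto w_0 y$ in the variable (which costs $O(1)$ times a Lipschitz difference, controlled by Lemma \ref{lem:LipGHS}), into $\int_1^x \Delta(y)\,dy/y$ plus $O(1)$. For the $\log n$ terms, use complete multiplicativity: $f(n)\log n=\sum_{dm=n}\Lambda(d)f(d)f(m)$, so that
$$
\sum_{n\le x}\frac{f(n)\log n}{n} = \sum_{d\le x}\frac{\Lambda(d)f(d)}{d}L_f(x/d).
$$
Likewise, $g\log = (\Lambda+\Lambda f)\ast g$ when $f$ is completely multiplicative, since $\log(1\ast f)$-derivative gives $g\cdot \log = g\ast(\Lambda + f\Lambda)$. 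Hence
$$
\frac{1}{w_0x}\sum_{n\le w_0x} g(n)\log n = \sum_{d\le w_0x}\frac{(1+f(d))\Lambda(d)}{d}\tilde M_g(w_0x/d).
$$
Subtracting, the $f(d)\Lambda(d)/d$ parts pair into $\sum_d \frac{f(d)\Lambda(d)}{d}\Delta(x/d)$. This is bounded using $x\in\mathfrak S$, which gives $|\Delta(x/d)|\le |\Delta(x)|\log(w_0x)/\log(w_0x/d)$ for $x/d \ge x_0$; the tail with $x/d<x_0$ contributes $O(1)$. The leftover piece is $\sum_{d\le w_0 x}\frac{\Lambda(d)}{d}\tilde M_g(w_0x/d)$, which one compares with $\int_1^{w_0x}\tilde M_g(y)\,dy/y$ via the PNT, $\sum_{d\le u}\Lambda(d)/d=\log u-\gamma+o(1)$. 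Here the non-negativity of $g$ and Lemma \ref{lem:UppBdNonNeg}(b) (Shiu, for short intervals) control the error by $O(\tilde M_g(w_0x)) + O(1)$. This is precisely where the precise constant $-\gamma$ and the choice $w_0$ should make the main terms cancel, and where the $1+|L_f(x)|+\tilde M_g(w_0x)$ error arises.

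The main obstacle is the term $\sum_d \frac{f(d)\Lambda(d)}{d}\Delta(x/d)$. A naive maximality bound gives $|\Delta(x)|\log x\cdot \sum_{d}\Lambda(d)/(d\log(w_0x/d))$, which is too large. Instead, we follow Montgomery--Vaughan: rewrite the sum over $d$ as an integral against $\psi$-type measure, $\sum_d \frac{\Lambda(d)}{d}|\Delta(x/d)| = \int_1^x |\Delta(x/u)|\,\frac{du}{u} + O(\ldots)$ using PNT with error. The remainder is bounded via maximality in $\mathfrak S$ and $\sum_d\Lambda(d)/d^{1+\ldots}$ estimates, yielding $\int_1^x|\Delta(y)|dy/y + O(1+|L_f(x)|+\tilde M_g(w_0x))$. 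Dividing by $\log(w_0x)$ gives (a). If $f(p)=0$ for $z<p\le x$, then $f(d)\Lambda(d)=0$ for prime powers $d>z$ (up to negligible higher powers), so only $y=x/d\ge x/z$ appears. The $\int\tilde M_g$ pieces similarly reduce to $[x/(2z),x]$, since the full integral identity only enters through the $f\Lambda$ part after rearranging. For (b), without complete multiplicativity or positivity one reduces via $f=f_{cm}\ast h$ with $h$ supported on squarefull numbers, and replaces Shiu by the bound $|g|\le d$, which loses the stated $(\log\log x)^{O(1)}$ factors.
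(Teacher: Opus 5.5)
Your architecture matches the paper's: weight by $\log(w_0x)$, split $\log n+\log(x/n)$, use $f\log=(f\Lambda)\ast f$ and $g\log=(\Lambda+f\Lambda)\ast g$, and pair the $f\Lambda$ parts into $\sum_d\frac{f(d)\Lambda(d)}{d}\Delta(x/d)$. However, two steps fail as written.

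\textbf{The partial summation for the $\tilde M_g$ piece is wrong.} One has
\[
\frac{1}{X}\sum_{n\le X}g(n)\log(X/n)=\frac{1}{X}\int_1^X\tilde M_g(y)\,dy=L_f(X)+O(1),
\]
an average against $dy$ (cf.\ \eqref{eq:Mglogy}). It is not $\int_1^X\tilde M_g(y)\,dy/y$, which equals $L_g(X)-\tilde M_g(X)$. Your accounting therefore uses $\int_1^{w_0x}\tilde M_g(y)\,dy/y$ twice: once against $\int_1^xL_f(y)\,dy/y$, and once against the leftover $\sum_d\frac{\Lambda(d)}{d}\tilde M_g(w_0x/d)$. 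With the identities exactly as you wrote them, that leftover ($\approx L_g(x)$) is never cancelled. For $f\equiv1$ it is $\sim\frac12(\log x)^2$, while every other term is $O(\log x)$.

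The correct pairing (Lemmas \ref{lem:LfDev} and \ref{lem:S1Sec}) is
\[
\int_1^xL_f(y)\,\frac{dy}{y}=\sum_{n\le x}\frac{f(n)\log(x/n)}{n}=L_g(x)+O(|L_f(x)|+1)
\]
against
\[
\sum_d\frac{\Lambda(d)}{d}\tilde M_g(w_0x/d)=\frac{1}{w_0x}\sum_n g(n)\psi(w_0x/n)=L_g(x)+O(|L_f(x)|+\tilde M_g(w_0x)+1).
\]
The second estimate comes from the PNT, $g\ge0$, and Lemma \ref{lem:LipGHS} on dyadic blocks. You may insert $\pm\int\tilde M_g\,dy/y$ as an intermediary, but it does not arise from partial summation. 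After this, $\log(w_0x)\Delta(x)=\sum_d\frac{f(d)\Lambda(d)}{d}\Delta(x/d)+O(1+|L_f(x)|+\tilde M_g(w_0x))$ with no full-range integral left over, which is what the truncated ``moreover'' claim needs. Note also that $w_0$ plays no role in this proposition: every $\log w_0$ shift costs only $O(|L_f(x)|+1)$.

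\textbf{The step you flag as the main obstacle is not resolved.} Replacing $d\psi(u)$ by $du$ in $\sum_d\frac{\Lambda(d)}{d}|\Delta(x/d)|$ leaves $\int(\psi(u)-u)\,d\bigl(|\Delta(x/u)|/u\bigr)$. This remainder is governed by the variation of the step function $\Delta$, not its size. Maximality in $\mathfrak S$ only bounds $|\Delta(x/u)|$, so it returns you to the $\log\log x$ loss you already rejected.

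In case (a) this is repairable. By $g\ge0$ and Lemma \ref{lem:LipGHS}, the variation of $\Delta$ on $[Y,2Y]$ is $\ll1+\tilde M_g(2w_0Y)\ll1+\tilde M_g(w_0x)+\log(2x/Y)$. The bound $\psi(u)-u\ll ue^{-c\sqrt{\log u}}$ then makes the dyadic sum converge, giving $\sum_d\frac{\Lambda(d)}{d}|\Delta(x/d)|\ll\int_1^x|\Delta(y)|\frac{dy}{y}+|\Delta(x)|+1+\tilde M_g(w_0x)$, without even using $\mathfrak S$. In case (b), however, the variation is only controlled by $\tilde M_{|g|}\ll\log x$, so the remainder is $O(\log x)$, i.e.\ no better than trivial.

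The paper proceeds differently:
\begin{enumerate}
\item It averages $T(y)$ over $y\in[x-h,x]$ with $h=x/(\log x)^2$; Shiu's bound, Lemma \ref{lem:UppBdNonNeg}(b), applied to $|g|$, controls $T(x)-T(y)$ without positivity.
\item It swaps to $\int_1^x|\Delta(u)|\bigl(\frac1h\sum_{(x-h)/u<p\le x/u}\log p\bigr)du$.
\item It applies Brun--Titchmarsh for $u\le h/\log x$.
\item It uses maximality only for $u>h/\log x$, at a cost $|\Delta(x)|(\log\log x)^2$ that is absorbed.
\end{enumerate}

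Your sketch of (b) also misses where positivity is lost. Passing to a completely multiplicative $f$ does not make $1\ast f\ge0$ for complex $f$. Replacing positivity by $|g|\le d$ when comparing $\sum_p\frac{\log p}{p}\tilde M_g(w_0x/p)$ with $L_g(x)$ loses a factor $\log x$, unless one splits at $Z=\exp(C(\log\log x)^2)$ as in Lemma \ref{lem:S1Sec}(b).
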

\noindent In what follows we will give a unified treatment that applies to all $1$-bounded multiplicative functions, since the relevant arguments only differ in one place (see Lemma \ref{lem:S1Sec} below).  \\
% Finally, the logarithmic mean is also slightly delicate.\\
To proceed, we shall consider $\Delta(x) \log(w_0x)$, and expand it using the well-known decomposition
$$
\log y = \log n + \log(y/n), \quad y \geq n \geq 1,
$$
treating $L_f(x) \log(w_0x)$ and $\tilde{M}_g(w_0x)\log(w_0x)$ separately in the following two lemmas.
\begin{lem} \label{lem:LfDev}
We have
\begin{align}\label{eq:LfDev}
L_f(x)\log(w_0 x) = \sum_{d \leq x} \frac{f(d) \Lambda(d)}{d} L_f(x/d) + L_g(x) + O(|L_f(x)| + 1).
\end{align}
\end{lem}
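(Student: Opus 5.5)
We prove the identity by writing $\log(w_0x) = \log x + (1-\gamma)$. The constant part contributes $(1-\gamma)L_f(x) = O(|L_f(x)|)$ and is absorbed into the error term, so the task is to expand $L_f(x)\log x$ using $\log x = \log n + \log(x/n)$:
$$
L_f(x)\log x = \sum_{n\le x}\frac{f(n)\log n}{n} + \sum_{n \le x}\frac{f(n)}{n}\log(x/n).
$$

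For the first sum we use $\log = 1\ast\Lambda$. Since $f$ is only assumed multiplicative here, we must be careful. The clean identity is $f\log = f\ast \Lambda_f$, where $\Lambda_f$ is the von Mangoldt function attached to $f$. If $f$ is completely multiplicative, then $\Lambda_f = f\Lambda$ and
$$
\sum_{n\le x}\frac{f(n)\log n}{n} = \sum_{n\le x}\frac{1}{n}\sum_{dm=n} f(d)\Lambda(d)f(m) = \sum_{d\le x}\frac{f(d)\Lambda(d)}{d}L_f(x/d),
$$
which is exactly the main term of \eqref{eq:LfDev}. For general multiplicative $f$ (in $\mb{U}$), I would write $f\log = \sum_{p^k m = n,\ p\nmid m} f(p^k)f(m)k\log p$ and compare it with $\sum_{dm=n} f(d)\Lambda(d)f(m)$. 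The discrepancy comes only from prime powers $p^k$ with $k\ge 2$, or from cases where $p\mid m$. Its logarithmic sum is bounded by
$$
\sum_{\substack{p^k\le x\\ k\ge2}}\frac{k\log p}{p^k}\sum_{m\le x/p^k}\frac1m \cdot(\text{bounded}),
$$
which is $\ll \log x\cdot\sum_{k\ge2}\frac{k\log p}{p^k}$. That is too big, so this step needs care. My fallback is the reduction Lemma \ref{lem:redtoCM} mentioned in Section 3, which is announced as reducing to completely multiplicative $f$. Failing that, I would put the remainder through the same telescoping, writing the difference as $\sum_{p^k,k\ge2}\frac{c_{p^k}}{p^k}L_{f}(x/p^k)$-type terms. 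I expect this to be the main obstacle. For the application (Proposition \ref{prop:toAvg}), the completely multiplicative case is what matters, so I would first establish that case cleanly.

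For the second sum we use partial summation:
$$
\sum_{n\le x}\frac{f(n)}{n}\log(x/n) = \int_1^x L_f(y)\frac{dy}{y}.
$$
Now $L_g(x) = \sum_{m\le x}\frac1m \sum_{d\le x/m} \frac{f(d)}{d} = \sum_{m\le x}\frac{1}{m}L_f(x/m)$. Comparing the sum over $m$ with the integral $\int_1^x L_f(x/u)\,du/u = \int_1^x L_f(y)\,dy/y$ costs the usual Euler--Maclaurin error. Using that $L_f(x/u)$ is a step function in $u$ with jumps of size $|f(n)|/n$ at $u=x/n$, the difference is $\gamma L_f(x) + O(1)$. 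More precisely, one writes $\sum_{m\le x}\frac1m = \log x+\gamma+O(1/x)$ and applies $\sum_m \frac1m \phi(m) - \int \phi(u)\,du/u$ to the function $\phi(u) = L_f(x/u)$. Exchanging the order of summation gives
$$
\sum_{n\le x}\frac{f(n)}{n}\Big(\sum_{m\le x/n}\frac1m - \log(x/n)\Big) = \sum_{n\le x}\frac{f(n)}{n}\big(\gamma + O(n/x)\big),
$$
which equals $\gamma L_f(x) + O(1)$, because $\sum_{n\le x}|f(n)|/x \le 1$. Hence the second sum is $L_g(x) - \gamma L_f(x) + O(1)$. Combining both pieces yields \eqref{eq:LfDev}, with all the constant multiples of $L_f(x)$ absorbed into $O(|L_f(x)|+1)$.
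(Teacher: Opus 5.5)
\textbf{Verdict.} For completely multiplicative $f$ your argument is complete and is the paper's own. There $f\log=f\ast(f\Lambda)$ gives the first term with no remainder, since the paper's correction term $H(x)$ vanishes identically. Your exchange of summations for $\sum_{n\le x}f(n)\log(x/n)/n$ is exactly \eqref{eq:Lgarising}; the detour through $\int_1^x L_f(y)\,dy/y$ is unnecessary but harmless.

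\textbf{The open case is false, not just hard.} The general multiplicative case cannot be closed by your telescoping fallback or by any other argument, because \eqref{eq:LfDev} fails for general $1$-bounded multiplicative $f$. Your crude bound of size $\log x$ for the discrepancy is actually attained. Take $f(n)=(-1)^{n+1}$, which is real, $1$-bounded and multiplicative but not completely multiplicative. Then $L_f(y)=\log 2+O(1/y)$, so the left side of \eqref{eq:LfDev} is $\log 2\cdot\log x+O(1)$. Since $\sum_{n\le x}(-1)^{n+1}(\log n)/n=O(1)$ (an alternating series), your own computation gives $L_g(x)=\log 2\cdot\log x+O(1)$. Meanwhile
\[
\sum_{d\le x}\frac{f(d)\Lambda(d)}{d}L_f(x/d)=\log 2\Big(\sum_{d\le x}\frac{\Lambda(d)}{d}-2\sum_{2^k\le x}\frac{\log 2}{2^k}\Big)+O(1)=\log 2\cdot\log x+O(1).
\]
So the right side exceeds the left by $\log 2\cdot\log x+O(1)$, although $|L_f(x)|+1=O(1)$. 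Correspondingly, the paper's remainder is
\[
H(x)=-2\log 2\sum_{k\ge1}2^{-k}\sum_{m\le x/2^k,\,2\mid m}\frac1m=-\log 2\cdot\log x+O(1).
\]

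\textbf{Where the paper's proof breaks.} It breaks exactly where your fallback would have to go. After writing $m=p^\nu m'$, it bounds $\sum_{m'\le y,\,p\nmid m'}f(m')/m'$ by $|L_f(y)-\tfrac1pL_f(y/p)|$. That identification (with $f(p)/p$ in place of $1/p$) holds only when $f$ is completely multiplicative on the powers of $p$, and then the $p$-part of $H(x)$ is zero anyway. For $p\ge3$ one can repair it by inverting the local factor $1+\sum_{\nu\ge1}f(p^\nu)p^{-\nu}$, whose tail has modulus at most $1/(p-1)\le 1/2$. For $p=2$, however, this factor can vanish, as it does here; then the coprime-to-$2$ sum is $\tfrac12\log y+O(1)$ while $L_f(y)$ stays bounded.

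\textbf{What to do instead.} Lemma \ref{lem:redtoCM} does not rescue the statement: it reduces Theorem \ref{thm:HalDeltaRefined}, not this lemma. The correct statement is the one you actually proved, for completely multiplicative $f$. Combined with Lemma \ref{lem:redtoCM}, this is all that Proposition \ref{prop:toAvg}(a) and Theorem \ref{thm:HalDeltaRefined} need. Your remark that only this case matters is right for part (a). However, Proposition \ref{prop:toAvg}(b) as written applies the lemma to general $1$-bounded $f$, and that use is not supported by this argument.
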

\begin{proof}
Recalling that $w_0 = e^{1-\gamma}$, we have
\begin{align} \label{eq:Lflog}
L_f(x) \log(w_0x)  = \sum_{n \leq x} \frac{f(n) \log n}{n} + \sum_{n \leq x} \frac{f(n)\log(x/n)}{n} + O(|L_f(x)|).
\end{align}
Observe that the second term on the RHS arises from considering $L_g(x)$ instead: 
\begin{align}\label{eq:Lgarising}
L_g(x) &= \sum_{m \leq x} \frac{f(m)}{m} \sum_{d \leq x/m} \frac{1}{d} = \sum_{m \leq x} \frac{f(m)}{m}\left(\log(x/m) + \gamma + O(m/x)\right) \nonumber \\
&= \sum_{m \leq x} \frac{f(m)\log(x/m)}{m} + O(|L_f(x)| + 1).
\end{align}
The first term in \eqref{eq:Lflog} becomes
\begin{align*}
\sum_{md \leq x} \frac{f(md) \Lambda(d)}{md} = \sum_{d \leq x} \frac{f(d) \Lambda(d)}{d} L_f(x/d) + H(x),
\end{align*}
where we have set
$$
H(x) := \sum_{dm \leq x} \frac{\Lambda(d)}{d} \frac{f(md)-f(m)f(d)}{m} = \sum_{\nu \geq 1} \sum_{p^k \leq x} \frac{\log p}{p^k}\sum_{\ss{m \leq x/p^k \\ p^\nu || m}} \frac{f(mp^k) - f(m)f(p^k)}{m}.
$$
If $p^\nu||m$ then we write $m = p^\nu m'$ where $p \nmid m'$, and thus
$$
f(mp^k) - f(m)f(p^k) = f(m')(f(p^{k+\nu})-f(p^k)f(p^\nu)).
$$
Then,
\begin{align*}
|H(x)| &= \left|\sum_{\ss{p^{k+\nu} \leq x \\ k,\nu \geq 1}} \frac{(f(p^{k+\nu})-f(p^k)f(p^\nu))\log p}{p^{k+\nu}} \sum_{\ss{m'\leq x/p^{k+\nu} \\ p \nmid m'}} \frac{f(m')}{m'}\right| \\
&\leq 2\sum_{\ss{p^\ell \leq x \\ \ell \geq 2}} \frac{\log p^\ell}{p^\ell} \left|L_f(x/p^\ell) - \frac{1}{p}L_f(x/p^{\ell+1})\right| \ll \sum_{\ss{p^\ell \leq x \\ \ell \geq 2}} \frac{\log p^{\ell}}{p^\ell} |L_f(x/p^\ell)|.
\end{align*}
Using the trivial bound $|L_f(y/w)| \leq |L_f(y)| + \log (2w)$ for $1\leq w \leq y$, we deduce that
$$
|H(x)| \ll |L_f(x)| \sum_{\ss{p^{\ell} \leq x \\ \ell \geq 2}} \frac{\log p^\ell}{p^{\ell}} + \sum_{\ss{p^\ell \leq x \\ \ell \geq 2}} \frac{(\log p^\ell)^2}{p^{\ell}} \ll |L_f(x)| + 1,
$$
and so
$$
\sum_{n \leq x} \frac{f(n)\log n}{n} = \sum_{d \leq x} \frac{f(d)\Lambda(d)}{d} L_f(x/d) + O(|L_f(x)| + 1).
$$
The claim follows upon combining this with \eqref{eq:Lgarising} and \eqref{eq:Lflog}.
\end{proof}
%Next, we look at $\tilde{M}_g(w_0x)$. 
\begin{lem}\label{lem:Mglog}
We have
\begin{align*}
\tilde{M}_g(w_0 x)\log(w_0 x) = \sum_{\ss{p^r \leq x \\ r \geq 1}} \frac{f(p^r) \log p^r}{p^r} \tilde{M}_g(w_0 x/p^r) + \sum_{\ss{p^r \leq x \\ r \geq 1}} \frac{g(p^{r-1}) \log p}{p^r} \tilde{M}_g(w_0x/p^r) + O(|L_f(x)| + 1).
\end{align*}
\end{lem}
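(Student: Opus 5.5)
We split $\log(w_0x) = \log n + \log(w_0x/n)$ inside
$$
\tilde{M}_g(w_0x)\log(w_0x) = \frac{1}{w_0x}\sum_{n \leq w_0x} g(n)\log n + \frac{1}{w_0x}\sum_{n\le w_0x} g(n)\log(w_0x/n).
$$
The second piece is the easy one. Since $\int_n^{w_0x} dy/y = \log(w_0x/n)$, it equals $\frac{1}{w_0x}\int_1^{w_0x} M_g(y)\,\frac{dy}{y}$. We would evaluate it directly: writing $g=1\ast f$,
$$
\frac{1}{w_0x}\sum_{n\le w_0x} g(n)\log(w_0x/n)=\frac{1}{w_0x}\sum_{m\le w_0x} f(m)\sum_{d\le w_0x/m}\log\frac{w_0x/m}{d}.
$$
The inner sum is $w_0x/m - \tfrac12\log(w_0x/m)+O(1)$ by Stirling/Euler--Maclaurin. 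So this piece is
$$
L_f(w_0x) + O\Big(\frac{1}{x}\sum_{m\le w_0x}\log(2w_0x/m)\Big) = L_f(x)+O(1)=O(|L_f(x)|+1),
$$
using $|L_f(w_0x)-L_f(x)|\le \log w_0+o(1)$. This is why the error term takes the stated shape.

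For the first piece we use $\log n=\sum_{d\mid n}\Lambda(d)$ and the convolution identity $g\log = (g\Lambda_g)$-type decomposition. Concretely, since $g$ is multiplicative, $g(n)\log n=\sum_{p^r\| n}\ldots$ is awkward, so instead we write $\log n = \sum_{p^k\mid n}\log p$ and exploit $g(p^k m')=\sum_{j\le k}f(p^j)\cdot g$-recursion. The cleanest route is the identity $g(n)\log n = \sum_{ab=n} g(a)\,\Lambda_g(b)$, where $\Lambda_g$ is defined by $-L'/L(s,g)=\sum \Lambda_g(n)n^{-s}$. Since $L(s,g)=\zeta(s)L(s,f)$, we get $\Lambda_g=\Lambda+f\Lambda_f$. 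Here $\Lambda_f$ is the von Mangoldt function of $f$; it is supported on prime powers, and for completely multiplicative $f$ it equals $f\Lambda$.

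That identity is exact only in the completely multiplicative case, and the lemma's expression with $f(p^r)\log p^r$ and $g(p^{r-1})\log p$ signals a different bookkeeping. We would therefore follow the proof of Lemma \ref{lem:LfDev} instead. Write $\sum_{n\le w_0x} g(n)\log n=\sum_{p^r m\le w_0x} g(p^rm)\log p$ with $p\nmid m$ handled via $p^\nu\| n$, and use $g(p^r)=g(p^{r-1})+f(p^r)$. Summing $\log p$ over $p^r\mid n$ splits $g(n)\log n$ into a term $\sum_{p^r\mid n}f(p^r)\log p^r\cdot g(n/p^r)$ plus a term $\sum_{p^r\mid n} g(p^{r-1})\log p\cdot g(n/p^r)$, up to corrections when $p\mid n/p^r$. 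After dividing by $w_0x$, the main parts become exactly the two stated sums with $\tilde{M}_g(w_0x/p^r)$.

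The prime powers in $(x,w_0x]$ contribute $O(1)$, since $\tilde M_g(y)\ll 1+|L_f(y)|$ for bounded $y$. The correction terms, as in $H(x)$ of Lemma \ref{lem:LfDev}, involve only $p^\ell$ with $\ell\ge2$. Each is weighted by $\tilde M_g(w_0x/p^\ell)$, which is $\ll |L_f(x)|+\log(2p^\ell)$ by \eqref{eq:trivial} and Lemma \ref{lem:LipGHS}. Summing against $\log p^\ell/p^\ell$ over $\ell\ge2$ therefore gives $O(|L_f(x)|+1)$.

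The main obstacle is verifying the combinatorial identity for non-completely multiplicative $f$: precisely, checking that the defect terms only involve higher prime powers, and that the $g(p^{r-1})$ weights (bounded by $r$) keep the sums convergent.
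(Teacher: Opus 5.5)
There is a genuine gap, in the correction terms of the first piece. Your evaluation $\frac{1}{w_0x}\sum_{n\le w_0x}g(n)\log(w_0x/n)=L_f(x)+O(1)$ is fine and is what the paper does.

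Write $n=p^am$ with $p^a\,\|\,n$. The contribution of $p$ to $g(n)\log n-\sum_{p^r\mid n}\bigl(rf(p^r)+g(p^{r-1})\bigr)g(n/p^r)\log p$ is $e(p^a)g(m)\log p$, where
$$
e(p^a):=a\,g(p^a)-\sum_{r=1}^{a}\bigl(rf(p^r)+g(p^{r-1})\bigr)g(p^{a-r}),\qquad e(p)=0.
$$
Hence the corrections are
$$
\sum_{\substack{p^a\le w_0x\\ a\ge 2}}\frac{e(p^a)\log p}{p^a}\cdot\frac{p^a}{w_0x}\sum_{\substack{m\le w_0x/p^a\\ p\nmid m}}g(m).
$$
They are weighted by partial sums of $g$ restricted to $p\nmid m$, not by $\tilde M_g(w_0x/p^a)$ as you claim. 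Converting these into values of $M_g$ means inverting the Euler factor of $g$ at $p$.

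When $f$ is completely multiplicative, that factor is $((1-X)(1-f(p)X))^{-1}$. The restricted sum is then $M_g(z)-(1+f(p))M_g(z/p)+f(p)M_g(z/p^2)$. Combined with $|e(p^a)|\ll a^3$, Lemma \ref{lem:LipGHS}, and $|\tilde M_g(w_0x)|\le|L_f(x)|+O(1)$ (from \eqref{eq:trivial}), this closes your argument.

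For general multiplicative $f$, the inverse factor can have coefficients of size $p^j$. The ``main obstacle'' you flag then cannot be overcome, because the estimate is false. Take $f(n)=(-1)^{n+1}$. Then $L(s,g)=\zeta(s)^2(1-2^{1-s})$, $\tilde M_g(y)\to\log 2$ and $|L_f(x)|\le 1$, so the left side is $\sim\log 2\cdot\log x$. On the right, the terms with $r=1$ and $p$ odd give $\sum_{2<p\le x}\frac{2\log p}{p}\tilde M_g(w_0x/p)\sim 2\log 2\cdot\log x$. Everything else is $O(1)$, including $p=2$, where $f(2)+g(1)=0$. In your bookkeeping the missing $-\log 2\cdot\log x$ is exactly the $p=2$ correction: here $e(2^a)=-a(a-1)(a-2)/3$, while $\frac1z\sum_{m\le z,\,2\nmid m}g(m)\sim\frac14\log z$. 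So you must assume $f$ completely multiplicative. That is the case to which Theorem \ref{thm:HalDeltaRefined} is reduced by Lemma \ref{lem:redtoCM}.

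In that case the route you abandoned is the cleanest. The identity $g\log=g\ast\Lambda_g$ is exact for every multiplicative $g$, not only in the completely multiplicative case. For $g=1\ast f$ one has $\Lambda_g=\Lambda+\Lambda_f$ (not $\Lambda+f\Lambda_f$), and complete multiplicativity is used only to get $\Lambda_f=f\Lambda$. This gives exactly
$$
\tilde M_{g\log}(y)=\sum_{p^r\le y}\frac{(1+f(p)^r)\log p}{p^r}\tilde M_g(y/p^r).
$$
The $r=1$ terms coincide with the lemma's. All $r\ge 2$ terms, on either side, have coefficients $O(r\log p)$, so they total $O(|\tilde M_g(w_0x)|+1)=O(|L_f(x)|+1)$ by Lemma \ref{lem:LipGHS}. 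The range $x<p^r\le w_0x$ contributes $O(1)$.

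For comparison, the paper groups by $p^r\,\|\,n$ as you do and meets the same restricted sums. It replaces them by $M_g(z)-M_g(z/p)$, which is not an identity either. For completely multiplicative $f$, however, the resulting discrepancy is again $O(|L_f(x)|+1)$.
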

\begin{proof}
As in the previous lemma, 
\begin{align} \label{eq:Mglog}
\tilde{M}_g(w_0x)\log(w_0x) = \frac{1}{w_0x} \sum_{n \leq w_0x} g(n) \log n + \frac{1}{w_0x} \sum_{n \leq w_0x} g(n) \log(w_0x/n).
\end{align}
We note that for any $y \geq 1$,
\begin{align*}
\sum_{n \leq y} g(n) \log(y/n) = \sum_{m \leq y} f(m) \sum_{d \leq y/m} \log(y/(md)) = \sum_{m \leq y} f(m) \sum_{d \leq y/m} \int_d^{y/m} \frac{du}{u} = \sum_{m \leq y} f(m) \int_1^{y/m} \lfloor u\rfloor \frac{du}{u}.
\end{align*}
Writing $\lfloor u\rfloor = u-\{u\}$, 
%then splitting the integral into unit intervals, 
we obtain
$$
\int_1^{y/m} \lfloor u \rfloor \frac{du}{u} = 
%\frac{y}{m} -1 - \sum_{1 \leq k \leq y/m} \int_0^1 \frac{u}{u+k} du = \frac{y}{m} - \sum_{1 \leq k \leq y/m} \frac{1}{k} \int_0^1 u \ du + O(1) = 
\frac{y}{m} + O(\log(y/m)).
$$
Multiplying by $f(m)$, summing over $y$ and using $\sum_{m \leq y} \log (y/m) \ll y$, we get
$$
\sum_{m \leq y} f(m) \sum_{d \leq y/m} \log(y/md) = y\sum_{m \leq y} \frac{f(m)}{m} + O(y).
$$
We therefore deduce, when $y =w_0x$ that
\begin{align} \label{eq:Mglogy}
\frac{1}{w_0 x}\sum_{n \leq y} g(n) \log(w_0 x/n) = L_f(w_0x)+ O\left(1\right) = L_f(x) + O(1).
\end{align}
% In light of the estimate $L_f(w_0 x) = L_f(x) + O(1)$,
% we deduce, therefore, that
% \begin{align} \label{eq:glogx}
% \frac{1}{w_0 x} \sum_{n \leq w_0 x} g(n) \log(w_0x/n) = L_f(x) + O\left(1\right).
% \end{align}
%Summarising the above, we have the following.
%\begin{lem}\label{lem:toSmoothPart}
%Let $y$ be sufficiently large. Then
%$$
%(L_f(y) - \tilde{M}_f(w_0y)) \log(w_0y) = \sum_{d \leq y} %\frac{f(d)\Lambda(d)}{d} L_f(y/d) - \tilde{M}_{g\log}(w_0y) %+ L_g(y) + O(|L_f(y)| + 1).
%$$
%\end{lem}
Next, we reexpress $\tilde{M}_{g\log}(w_0x)$. In a similar way as above,
\begin{align*}
\tilde{M}_{g\log}(y) &= \sum_{\ss{p^k \leq y \\ k \geq 1}} \frac{\log p}{p^k} \cdot \frac{p^k}{y} \sum_{m\leq y/p^k} g(mp^k) = \sum_{\ell \geq 0} \sum_{\ss{p^{k+\ell} \leq y \\ k \geq 1}} \frac{g(p^{k+\ell}) \log p}{p^{k+\ell}} \cdot \frac{p^{k+\ell}}{y} \sum_{\ss{n \leq y/p^{k+\ell} \\ p \nmid n}} g(n)  \\
&= \sum_{\ss{p^r \leq y \\ r \geq 1}} \frac{g(p^r) \log p}{p^r} \cdot \frac{p^r}{y} \left(M_g(y/p^r) - M_g(y/p^{r+1})\right) \left(\sum_{0 \leq \ell \leq r-1} 1\right) \\
&= \sum_{\ss{p^r \leq y \\ r \geq 1}} \frac{g(p^r) \log p^r}{p^r} \cdot \frac{p^r}{y} \left(M_g(y/p^r) - M_g(y/p^{r+1})\right).
\end{align*}
Noting that $M_g(y/p^{r+1}) = 0$ unless $p^{r+1} \leq y$, this becomes
\begin{align*}
\sum_{\ss{p^r \leq y \\ r \geq 1}} \left(\frac{g(p^r) \log p^r}{p^r} - \frac{g(p^{r-1}) \log(p^{r-1})}{p^r}\right) \tilde{M}_g(y/p^r)
&= \sum_{\ss{p^r \leq y \\ r \geq 1}} \frac{\log p}{p^r} \tilde{M}_g(y/p^r) \left(rg(p^r) - (r-1) g(p^{r-1})\right).
\end{align*}
Now, by definition we have $g(p^r) = g(p^{r-1}) + f(p^r)$, so that
$$
rg(p^r) - (r-1)g(p^{r-1}) = rf(p^r) + g(p^{r-1}).
$$
Inserting this into the previous expression, we thus find that
\begin{align*}
\tilde{M}_{g\log}(y) = \sum_{\ss{p^r \leq y \\ r \geq 1}} \frac{\log p}{p^r} \tilde{M}_g(y/p^r) \left(rf(p^r) + g(p^{r-1})\right) = \sum_{\ss{p^r \leq y \\ r \geq 1}} \frac{f(p^r) \log p^r}{p^r} \tilde{M}_g(y/p^r) + \sum_{\ss{p^r \leq y \\ r \geq 1}} \frac{g(p^{r-1}) \log p}{p^r} \tilde{M}_g(y/p^r).
\end{align*}
Taking $y = w_0x$, and noting that if $x < d \leq w_0x$ then $M_g(w_0x/d) = 1$ and $\sum_{x < p^k \leq w_0 x} (\log p^k)/p^k \ll 1$, we have
$$
\tilde{M}_{g\log}(w_0x) = \sum_{\ss{p^r \leq x \\ r \geq 1}} \frac{f(p^r) \log p^r}{p^r} \tilde{M}_g(w_0 x/p^r) + \sum_{\ss{p^r \leq x \\ r \geq 1}} \frac{g(p^{r-1}) \log p}{p^r} \tilde{M}_g(w_0x/p^r) + O\left(1\right).
$$
Combining this with \eqref{eq:Mglogy}, we obtain the claim. 
\end{proof}
Taking the estimates from Lemma \ref{lem:LfDev} and \ref{lem:Mglog} and subtracting them, we obtain
%We now subtract this expression from $L_f(x)$, using %\eqref{eq:LfDev}, so that
\begin{align} \label{eq:LfMinMg}
\Delta(x)\log(w_0x) 
%\nonumber\\
&=\sum_{\ss{p^r \leq x \\ r \geq 1}} \frac{\log p}{p^r} \left(f(p^r) L_f(x/p^r)-(rf(p^r) + g(p^{r-1})) \tilde{M}_g(w_0x/p^r)\right) + L_g(x)\\
&+ O\left(|L_f(x)| + 1\right). \nonumber
\end{align} 
We now split the prime sum according to $r$. Write
$$
S_r(x) := \sum_{p^r \leq x} \frac{\log p}{p^r}\left(f(p^r) L_f(x/p^r) - (rf(p^r) + g(p^{r-1}))\tilde{M}_g(w_0x/p^r)\right).
$$
We dispense with $r \geq 2$ using Lemma \ref{lem:LipGHS} as follows.
\begin{lem} \label{lem:S2}
We have
$$
\sum_{r \geq 2} S_r(x) \ll |L_f(x)| + |\tilde{M}_g(w_0x)| + 1.
$$
\end{lem}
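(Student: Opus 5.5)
The plan is to bound each term of $S_r(x)$ for $r \geq 2$ trivially, using $|f(p^r)| \leq 1$ and $|g(p^{r-1})| \leq r$, after replacing $L_f(x/p^r)$ and $\tilde{M}_g(w_0x/p^r)$ by their values at $x$ (respectively $w_0x$) plus a logarithmic error. Concretely, we use the trivial bound $|L_f(x/w)| \leq |L_f(x)| + \log(2w)$ already used in the proof of Lemma \ref{lem:LfDev}. We also use Lemma \ref{lem:LipGHS} with $w = p^r$, which gives
$$
|\tilde{M}_g(w_0x/p^r)| \leq |\tilde{M}_g(w_0x)| + O(\log(2p^r)).
$$
Lemma \ref{lem:LipGHS} applies to any $1$-bounded multiplicative $f$, so no realness or complete multiplicativity is needed here.

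Inserting these bounds, we find that
$$
\sum_{r\geq 2}|S_r(x)| \ll \sum_{r \geq 2}\sum_{p^r \leq x} \frac{r\log p}{p^r}\Big(|L_f(x)| + |\tilde{M}_g(w_0x)| + \log(2p^r)\Big).
$$
The coefficient factor $|f(p^r)| + r|f(p^r)| + |g(p^{r-1})| \leq 2r+1$ is absorbed into $r$. It then remains to check two convergent sums over prime powers with $r \geq 2$:
$$
\sum_p\sum_{r\geq 2}\frac{r\log p}{p^r} \ll \sum_p \frac{\log p}{p^2} < \infty, \qquad \sum_p\sum_{r\geq 2}\frac{r^2(\log p)^2}{p^r} \ll \sum_p\frac{(\log p)^2}{p^2}<\infty.
$$
These hold because for each fixed $p \geq 2$ the series in $r$ is dominated by a constant times its $r=2$ term. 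Together they give $\sum_{r\geq 2} S_r(x) \ll |L_f(x)| + |\tilde{M}_g(w_0x)| + 1$.

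There is no real obstacle. The only point requiring care is that the bound $|g(p^{r-1})| \leq r$ grows with $r$; this costs only polynomial factors in $r$, which the geometric decay $p^{-r}$ absorbs.
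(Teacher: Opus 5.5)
Your proposal is correct and follows essentially the same route as the paper. The paper splits the sum into $T_f(x)$ and $T_g(x)$ and handles each with, respectively, the trivial bound $|L_f(x)-L_f(x/d)|\le \log d+O(1)$ and Lemma \ref{lem:LipGHS}. It uses the same coefficient bound $|rf(p^r)+g(p^{r-1})|\le 2r$ and the same convergent sums of $\log p^r/p^r$ and $(\log p^r)^2/p^r$ over $r\ge 2$. Treating both pieces in a single estimate, as you do, changes nothing of substance.
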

\begin{proof}
First, we have by the triangle inequality that 
$\left|\sum_{r \geq 2} S_r(x)\right| \leq |T_{f}(x)| + |T_{g}(x)|$, where
\begin{align*}
T_{f}(x) &:= \sum_{\ss{p^r \leq x \\ r \geq 2}} \frac{f(p^r) \log p}{p^r} L_f(x/p^r) \\
T_{g}(x) &:= \sum_{\ss{p^r \leq x \\ r \geq 2}} \frac{(rf(p^r) + g(p^{r-1})) \log p}{p^r} \tilde{M}_g(w_0x/p^r).
\end{align*}
We handle each of these terms separately, starting with $T_f(x)$. \\
By the trivial bound, $|L_f(x)-L_f(x/d)| \leq \log d + O(1)$ for $d \geq 1$, we get
\begin{align*}
\left|T_f(x)\right| \ll |L_f(x)|\sum_{\ss{p^r \leq x \\ r \geq 2}} \frac{\log p}{p^r} + \sum_{\ss{p^r \leq x \\ r \geq 2}} \frac{(\log p^r)^2}{p^r} \ll |L_f(x)| + 1.
\end{align*}
Next, we deal with $T_g(x)$. 
%Set $Z = (\log x)^{100}$. 
Note that $|rf(p^r) + g(p^{r-1})| \leq 2r$ for all $r \geq 1$. Therefore, by Lemma \ref{lem:LipGHS},
\begin{align*}
\left|T_g(x)\right|&\ll |\tilde{M}_g(w_0x)| \sum_{\ss{p^r \leq x \\ r \geq 2}} \frac{\log p^r}{p^r} + \sum_{\ss{p^r \leq x \\ r \geq 2}} \frac{\log p^r}{p^r} \left|\tilde{M}_g(w_0x/p^r) - \tilde{M}_g(w_0x)\right| \\
&\ll |\tilde{M}_g(w_0x)| + \sum_{\ss{p^r \leq x \\ r \geq 2}} \frac{(\log p^r)^2}{p^r} \ll |\tilde{M}_g(w_0x)| + 1,
%&+ \sum_{\ss{Z < p^r \leq x \\ r \geq 2}} \frac{\log p^r}{p^r} \tilde{M}_{|g|}(w_0x/p^r)|.
\end{align*}
%Observe that $\log p^r \ll \log\log x$ for all %$p^r \leq Z$. Therefore, applying Lemma %\ref{lem:LipGHS} to the second and bounding the %sum over $p^r$, $r \geq 2$ trivially, we find the %contribution of the first two terms on the RHS to %be
%$$
%\ll |\tilde{M}_g(w_0x)| + 1.
%$$
%For the last term, we use Lemma %\ref{lem:UppBdNonNeg} to obtain
%$$
%\tilde{M}_{|g|}(y) \ll \frac{1}{\log y} %\exp\left(\sum_{p \leq y} \frac{|g(p)|}{p}\right) %\ll \log y,
%$$
%whence we find that
%$$
%\sum_{\ss{Z < p^r \leq x \\ r \geq 2}} \frac{\log %p^r}{p^r} \tilde{M}_{|g|}(w_0x/p^r) \ll (\log x) %\sum_{\ss{p^r > (\log x)^{100} \\ r \geq 2}} %\frac{\log p^r}{p^r} \ll \frac{1}{(\log %x)^{24}}\sum_{p^r, r \geq 2} \frac{(\log p^r)}
%{p^{3r/4}} \ll \frac{1}{(\log x)^{24}}.
%$$
%Combining these error terms together, we get
%$$
%|T_g(x)| \ll |\tilde{M}_g(w_0x)| + 1,
%$$
and the claim follows.
\end{proof}
\noindent When $r = 1$ we have
$$
S_1(x) = \sum_{p \leq x} \frac{f(p) \log p}{p} \left(L_f(x/p) - \tilde{M}_g(w_0x/p)\right) - \sum_{p \leq x} \frac{\log p}{p} M_g(w_0 x/p).
$$
The first of these terms is of the shape needed to apply Hal\'{a}sz' averaging arguments, which we shall do below.
%, see Lemma \ref{lem:toInt}. 
Before doing so, we handle the second term.
%The second can be rewritten as
\begin{lem} \label{lem:S1Sec}
(a) If $f$ is real-valued and completely multiplicative then 
$$
\sum_{p \leq x} \frac{\log p}{p} M_g(w_0x/p) = L_g(x) + O(|L_f(x)| + |\tilde{M}_g(w_0x)| + 1).
$$
(b) More generally, if $f$ is $1$-bounded then we have the weaker estimate
$$
\sum_{p \leq x} \frac{\log p}{p} M_g(w_0x/p) = L_g(x) + O(|\tilde{M}_g(w_0x)|(\log\log x)^2 + (\log\log x)^4).
$$
\end{lem}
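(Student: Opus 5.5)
Write $y := w_0x$ and set $\tilde{M}_g(u) := 0$ for $u<1$. Interchanging summations gives
$$
\sum_{p \leq x} \frac{\log p}{p} M_g(y/p) = \sum_{n \leq y} g(n) \sum_{p \leq \min(x, y/n)} \frac{\log p}{p}.
$$
We then insert Mertens' theorem in the strong form $\sum_{p \leq u} \frac{\log p}{p} = \log u + E(u)$, where $E(u) = -\gamma_0 + O(1/\log(2u))$, with $\gamma_0$ the Mertens constant $\gamma + \sum_p \sum_{k\ge 2}\frac{\log p}{p^k}$. The constraint $p\le x$ is only active for $n < w_0$, so it costs $O(1)$.

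The main term is $\sum_{n \leq y} g(n)\log(y/n)$. By \eqref{eq:Mglogy} this equals $y L_f(x) + O(y)$. After dividing by $y$, however, this yields $L_f(x) + O(1)$, whereas we want the unnormalised $L_g(x)$. So the plan is to rewrite the target directly. Write $L_g(x) = \sum_{n\le x} g(n)/n$, and write $M_g(y/p)$ via partial summation as $\int$ against $dM_g$. Comparing the two, the quantity to control becomes
$$
\sum_{p\le x}\frac{\log p}{p}M_g(y/p) - L_g(x) = \int_{1^-}^{y}\Big(\sum_{p\le \min(x,y/u)}\frac{\log p}{p}\Big)\,dM_g(u) - \int_{1^-}^{x}\frac{dM_g(u)}{u}.
$$
We then integrate by parts so that everything is expressed through $M_g(u)/u = \tilde{M}_g(u)$. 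The prime-sum kernel has derivative, in the variable $y/u$, equal to the prime measure $\frac{\log p}{p}$, which is $\frac{dv}{v}$ plus a PNT error. Its smooth part $\frac{dv}{v}$ reproduces $\int \tilde{M}_g(y/v)\frac{dv}{v}$. This is exactly $L_g$-type mass (recall $\int_1^x \tilde{M}_g(u)\frac{du}{u} = L_g(x) + O(\tilde M_g(x))$). The residual is therefore
$$
\int_1^{x}\tilde{M}_g(y/v)\, dE(v).
$$

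The key step, and the main obstacle, is bounding this residual $R := \int_1^x \tilde{M}_g(y/v)\,dE(v)$ by $O(|L_f(x)|+|\tilde{M}_g(y)|+1)$. The plan is to integrate by parts once more, which gives
$$
R = \tilde M_g(y/v)E(v)\big|_1^x - \int_1^x E(v)\, d\tilde{M}_g(y/v).
$$
We then write $E(v) = -\gamma_0 + \epsilon(v)$ with $\epsilon(v)\ll 1/\log(2v)$. The constant $-\gamma_0$ telescopes to $O(|\tilde M_g(y)| + 1)$. For the piece involving $\epsilon$, in case (a) we use that $g\ge 0$, so $\tilde{M}_g(y/v)$ is $1/v$ times an increasing function of $y/v$. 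Positivity lets us bound
$$
\int \epsilon\, d\tilde M_g \ll \int_1^x \frac{\tilde M_g(y/v)}{v\log(2v)}\,dv\Big/ \ldots;
$$
concretely, we split at $v\le V$ and $v>V$, use Lemma \ref{lem:UppBdNonNeg}(a) (Halberstam--Richert) together with Lemma \ref{lem:LipGHS}, which gives $\tilde M_g(y/v) \le \tilde M_g(y)+O(\log 2v)$, and sum $\sum_p \frac{\log p}{p}\cdot\frac{1}{\log p}$-type weights against these. The monotonicity of $M_g$, which relies on $g\ge0$ and is available exactly because $f$ is real and completely multiplicative, is what keeps the error at $O(|\tilde M_g(y)|+|L_f(x)|+1)$ without logarithmic losses. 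The $|L_f(x)|$ term arises from converting $\tilde M_g$ back via \eqref{eq:trivial} where needed.

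For case (b), positivity is unavailable. We instead bound $|\tilde{M}_g(y/v)| \le |\tilde{M}_g(y)| + O(\log 2v)$ by Lemma \ref{lem:LipGHS}, and integrate this against the error measure $|d\epsilon(v)|$, truncating at $v\le \exp((\log\log x)^{2})$. Beyond that point we use the PNT error $\exp(-c\sqrt{\log v})$, which is negligible. This gives losses of $(\log\log x)^2$ on $|\tilde M_g(y)|$ and $(\log\log x)^4$ overall, matching the stated estimate.
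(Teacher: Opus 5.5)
There are two problems: a normalisation mix-up, and a genuine quantitative gap in (a).

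\textbf{Normalisation.} The statement must be read with $\tilde{M}_g(w_0x/p)$ in place of $M_g(w_0x/p)$. That is the term produced by $S_1(x)$ at $r=1$, whose coefficient is $(f(p)+1)\tilde{M}_g(w_0x/p)$, and it is what the paper actually proves. Read literally, the identity is false: the left-hand side is in general of order $x$ (for $f\equiv 1$ it is $\asymp x\log x$), while $L_g(x)\ll(\log x)^2$. Your computation $\sum_{n\le y}g(n)\log(y/n)=yL_f(x)+O(y)$ detects exactly this.

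You notice the mismatch but do not resolve it. Your first display and your Stieltjes comparison are written for $M_g$. However, the claim that the smooth part $dv/v$ of the prime measure reproduces $\int\tilde{M}_g(y/v)\,dv/v$ holds only for $\tilde{M}_g$. For $M_g$ it gives $\int_1^x M_g(y/v)\,dv/v$, which is in general of order $y$.

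With the correct normalisation, the paper's route is also more direct. Interchanging sums gives $\frac{1}{w_0x}\sum_{n\le w_0x}g(n)\theta(\min\{x,w_0x/n\})=L_g(w_0x)+\sum_{n\le w_0x}\frac{g(n)}{n}R(w_0x/n)+O(1)$ with $R(t)=\theta(t)/t-1$. Then $L_g(w_0x)=L_g(x)+O(|L_f(x)|+1)$, so neither the Mertens constant nor a second integration by parts is needed.

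\textbf{The gap in (a).} Part (a) does not follow from the input you state. After your integration by parts, the part involving $\epsilon$ is at most $\sum_{w_0\le n\le y}\frac{g(n)}{n}|\epsilon(y/n)|+\int_1^x|\epsilon(v)|\tilde{M}_g(y/v)\frac{dv}{v}$. With only $\epsilon(v)\ll 1/\log(2v)$, the Lipschitz bound $\tilde{M}_g(y/v)\le\tilde{M}_g(y)+O(\log 2v)$ gives $\int_1^x\frac{\tilde{M}_g(y)+\log 2v}{v\log 2v}\,dv\asymp\tilde{M}_g(y)\log\log x+\log x$. Your ``$\sum_p\frac{\log p}{p}\cdot\frac{1}{\log p}$-type weights'' sum to $\log\log x$. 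The Halberstam--Richert bound $\tilde{M}_g(u)\ll\log u$ does not help for large $v$.

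This is not merely a lossy estimate. If the error were genuinely of size $1/\log v$, then even with $\tilde{M}_g$ constant the term $\sum_n\frac{g(n)}{n\log(2y/n)}$ alone would be $\asymp\tilde{M}_g(y)\log\log x$.

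Positivity of $g$ is not what removes the logarithmic losses. It only lets you bound each dyadic block $\sum_{y/2^{k+1}<n\le y/2^k}g(n)/n$ by $2\tilde{M}_g(y/2^k)=2\tilde{M}_g(y)+O(k)$, instead of by $\tilde{M}_{|g|}$. What makes $\sum_k|\epsilon(2^k)|\,(\tilde{M}_g(y)+O(k))$ converge is the prime number theorem with error $\ll(\log 2v)^{-A}$ for some $A>2$. The paper uses $|R(t)|\ll(\log 2t)^{-10}$, and with that input your argument for (a) goes through.

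\textbf{Part (b).} Your plan is the paper's: the Lipschitz bound below a cutoff, and the $e^{-c\sqrt{\log v}}$ error above it. The cutoff must be $V=\exp(C(\log\log x)^2)$ with $C$ large, so that $e^{-c\sqrt{\log V}}$ beats the trivial bound $\sum_{n\le w_0x}|g(n)|/n\ll(\log x)^2$. With $C=1$ the tail is not automatically negligible.
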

\begin{proof}
In the sequel, for $t \geq 1$ write 
$$
\theta(t) := \sum_{p\leq t} \log p, \quad R(t) := t^{-1} \theta(t) - 1.
$$
%and set $R(t) := t^{-1}\theta(t)- 1$. \\
%Before considering parts (a) and (b) separately, we make the following observations. \\
(a)  Upon rearranging and using Chebyshev's bounds,
\begin{align*}
\sum_{p \leq x} \frac{\log p}{p} \tilde{M}_g(w_0 x/p) &= \frac{1}{w_0 x}\sum_{n \leq w_0x} g(n) \sum_{p \leq \min\{x,w_0x/n\}} \log p = \frac{1}{w_0 x} \sum_{n \leq w_0 x} g(n)\theta(w_0 x/n) + O(1) \\
&= L_g(w_0x) + \sum_{n \leq w_0 x} \frac{g(n)}{n} \left(\frac{n}{w_0x}\theta(w_0x/n) - 1\right).
\end{align*}
 Note that
\begin{align*}
L_g(w_0x) &= L_g(x) + \sum_{m \leq x} \frac{f(m)}{m} \sum_{x/m < d \leq w_0 x/m} \frac{1}{d} + \sum_{x < m \leq w_0x} \frac{f(m)}{m}  \\
&= L_g(x) + \sum_{m \leq x} \frac{f(m)}{m} \left(\log w_0 + O\left(\frac{m}{x}\right)\right) + O(1) = L_g(x) + O(|L_f(x)| + 1).
\end{align*}
%In the sequel, write $R(t) := t^{-1}\theta(t)-1$, so that 
Let $1 \leq Z \leq x^{1/10}$ be a parameter to be chosen later, and define
$$
E := \left|\sum_{n \leq w_0 x} \frac{g(n)}{n} R(w_0x/n)\right|.
$$
%We now address parts (a) and (b) separately. \\
We therefore have that
$$
\sum_{p \leq x} \frac{\log p}{p} \tilde{M}_g(w_0x/p) = L_g(x) + O(E + |L_f(x)| + 1),
$$
and it remains to give an upper bound for $E$. \\
As $f$ is real-valued and completely multiplicative, we have $g \geq 0$. Thus, using the prime number theorem in the weak form 
$$
|R(t)| \ll \frac{1}{(\log (2t))^{10}}, \quad t \geq 1, 
%\quad A > 0,
$$
%taking $A = 10$ 
we find
$$
E = O\left(\frac{1}{(\log Z)^{10}}\sum_{n \leq x/Z} \frac{g(n)}{n}\right) + \left|\sum_{x/Z < n \leq w_0x} \frac{g(n)}{n} R(w_0x/n)\right|.
$$
We apply the triangle inequality, then decompose the second term into dyadic segments and apply the bound for $|R(t)|$ again, getting
\begin{align*}
&\leq \sum_{1 \leq 2^k \leq Z} \sum_{w_0x/2^{k+1} < n \leq w_0 x/2^k} \frac{g(n)}{n} \left|R(w_0x/n)\right| \ll \sum_{1 \leq 2^k \leq Z} \frac{1}{(k+1)^{10}} \frac{2^k}{w_0x} \sum_{w_0x/2^{k+1} < n \leq w_0x/2^k} g(n) \\
&= \sum_{1 \leq 2^k \leq Z} \frac{1}{(k+1)^{10}} \left(\tilde{M}_g(w_0x/2^k) - \frac{1}{2}\tilde{M}_g(w_0x/2^{k+1})\right).
\end{align*}
By Lemma \ref{lem:LipGHS} we have that for each $2^k \leq Z$,
\begin{align*}
\tilde{M}_g(w_0x/2^k) = \tilde{M}_g(w_0x) + O(k).
\end{align*}
Applying this with $k$ and $k+1$ in each summand above, we find that
$$
\tilde{M}_g(w_0x/2^k) - \frac{1}{2}\tilde{M}_g(w_0x/2^{k+1}) = \frac{1}{2}\tilde{M}_g(w_0x) + O(k+1).
$$
Inserting this into the sum over $k$, we find that
\begin{align*}
\sum_{1 \leq 2^k \leq Z} \frac{1}{(k+1)^{10}} \left(\tilde{M}_g(w_0x/2^k) - \frac{1}{2}\tilde{M}_g(w_0x/2^{k+1})\right) \ll \tilde{M}_g(w_0x) + 1.
\end{align*}
Thus, we have
$$
E \ll \frac{L_g(x)}{(\log Z)^{10}} + \tilde{M}_g(w_0x) + 1.
$$
Taking $Z = \exp(\sqrt{\log x})$ and using
$$
L_g(x) \ll \exp\left(\sum_{p \leq x} \frac{g(p)}{p}\right) \ll (\log x)^2,
$$
we obtain $E \ll \tilde{M}_g(w_0x) + 1$, and the first claim follows. \\
(b) 
%Some of the details of the proof here are the same. 
This time, let $Z = \exp(C (\log\log x)^2)$ for some large absolute constant $C > 0$ to be chosen later. We split the sum on the LHS in the statement as
$$
\sum_{p \leq Z} \frac{\log p}{p} \tilde{M}_g(w_0x/p) + \sum_{Z < p \leq x} \frac{\log p}{p} \tilde{M}_g(w_0x/p) =: T_1 + T_2.
$$
%The contribution from $p \leq Z$. 
By Lemma \ref{lem:LipGHS},  
\begin{align*}
T_1 
%= \sum_{p \leq Z} \frac{\log p}{p} \tilde{M}_g(w_0x/p) 
= \tilde{M}_g(w_0x) (\log Z + O(1)) + O\left(\sum_{p \leq Z} \frac{\log^2 p}{p}\right) &= \tilde{M}_g(w_0 x) \log Z + O(|\tilde{M}_g(w_0x)| + \log^2 Z).
%&\ll |\tilde{M}_g(w_0x)| (\log\log x)^2 + (\log\log x)^4.
\end{align*}
Arguing as above, we may now estimate $T_2$ as
\begin{align*}
T_2 &= \frac{1}{w_0x} \sum_{Z < p \leq x} (\log p) \sum_{n \leq w_0x/p} g(n) = \frac{1}{w_0x} \sum_{n \leq w_0 x/Z} g(n) \left(\theta(w_0x/n) - \theta(Z)\right) \\
&= \sum_{n \leq w_0x/Z} \frac{g(n)}{n} - \frac{Z}{w_0x} \sum_{n \leq w_0x/Z} g(n) + \sum_{n \leq w_0x/Z} g(n) \left(R(w_0x/n) - \frac{Z}{w_0x} R(Z)\right).
\end{align*}
%Adding and subtracting $(w_0x/n - Z)$ inside the %sum and using the prime number theorem in the form
%$$
%\theta(y) = y + O\left(\exp(-c\sqrt{ \log %y})\right),
%$$
%the above becomes
Applying the prime number theorem with $|R(t)| \ll e^{-c\sqrt{\log t}}$ this time, we find
\begin{align*}
%&\sum_{n \leq w_0x/Z} \frac{g(n)}{n} - \frac{Z}%%{w_0 x} \sum_{n \leq w_0x/Z} g(n) + O\left(\tilde{M}_{|g|}(w_0x/Z) \exp\left(-c\sqrt{\log Z}\right)\right) \\
T_2 = L_g(w_0x/Z) - \tilde{M}_g(w_0x/Z) + O\left(\frac{\tilde{M}_{|g|}(w_0 x/Z)}{(\log x)^{c\sqrt{C}}}\right).
\end{align*} 
%Similarly, again by Lemma \ref{lem:LipGHS}, we %have
%$$
%\tilde{M}_g(w_0x/Z) \ll |\tilde{M}_g(w_0x)| + %\log Z.
%$$
By partial summation,
\begin{align*}
L_g(x) - L_g(w_0x/Z) + \tilde{M}_g(w_0x/Z) &= \tilde{M}_g(x) + \int_{w_0x/Z}^x \tilde{M}_g(u) \frac{du}{u}.
\end{align*}
To treat the integral, we use Lemma \ref{lem:LipGHS} as
\begin{align*}
\int_{w_0x/Z}^x \tilde{M}_g(u) \frac{du}{u} = \int_1^{Z/w_0} \tilde{M}_g(x/v) \frac{dv}{v} 
&= \tilde{M}_g(x) \log(Z/w_0) + O\left(\int_1^Z \frac{\log v}{v} dv\right) \\
&\ll |\tilde{M}_g(w_0x)| \log Z + (\log Z)^2.
\end{align*}
%&\ll |\tilde{M}_g(w_0x)| + 1 + \int_1^{Z/w_0} \frac{\log(x/v)}{v} du \ll |\tilde{M}_g(w_0x)| \log Z + (\log Z)^2.
%\end{align*}
Putting all of these bounds together, we obtain 
$$
T_1 + T_2 = L_g(x) + O\left( |\tilde{M}_g(w_0x)| \log Z + (\log Z)^2 + \frac{\tilde{M}_{|g|}(w_0 x/Z)}{(\log x)^{c\sqrt{C}}}\right).
$$
Choosing $C$ sufficiently large in our definition of $Z$ and applying Lemma \ref{lem:UppBdNonNeg}(a), the error term here is 
$$
\ll |\tilde{M}_g(w_0x)| (\log\log x)^2 + (\log\log x)^4,
$$
as claimed.
%$\ll 1$ provided $C$ is large enough, and the claim follows.
% \begin{align*}
% \sum_{p \leq x} \frac{\log p}{p} \tilde{M}_g(w_0 x/p) = \sum_{p \leq Z} \frac{\log p}{p} \tilde{M}_g(w_0x/p) + \sum_{Z < p \leq x} \tilde{m}_
\end{proof}
%    Returning to \eqref{eq:LfMinMg}, we may now see in light of Lemmas \ref{lem:S2} and \ref{lem:S1Sec} that
%    \begin{align} \label{eq:allbutS1}
 %   &\left|L_f(x)- \tilde{M}_g(w_0x)\right| \nonumber\\
  %  &\ll \frac{1}{\log x} \left|\sum_{p\leq x} \frac{f(p)\log p}{p} \left(L_f(x/p) - \tilde{M}_g(w_0x/p)\right)\right| + \frac{1}{\log x}\left(|L_f(x)| + \tilde{M}_g(w_0x) + 1\right).
  %  \end{align}
%A priori, the second term should be easy to handle, but once again the fact that $g(n)$ can grow is a bit of an issue. 
%We deal with the prime sum as follows. 
% \begin{lem}\label{lem:toInt}
% Recall that for $y \geq 1$ we write
% $$
% \Delta(y) := L_f(y) - \tilde{M}_g(w_0 y).
% $$
% Let $x \in \mf{S}$ from \eqref{eq:mfSDef}. Then we have
% $$
% \frac{1}{\log x} \left|\sum_{p \leq x} \frac{f(p)\log p}{p} \Delta(x/p) \right| \ll \frac{1}{\log x} \int_1^x |\Delta(y)| \frac{dy}{y} + O\left(\frac{|\tilde{1}{\log x}\right).
% $$
% \end{lem}
\begin{proof}[Proof of Proposition \ref{prop:toAvg}]
We will prove part (a), and highlight the necessary changes needed to prove part (b) afterwards. As the property $g \geq 0$ is only important in one place, for the most part the details below may be applied to $1\ast f$ for any $1$-bounded function. \\
Assume that $x \in \mf{S}$, as defined in \eqref{eq:mfSDef}.
For each $1 \leq y \leq x$ define
$$
T(y) := L_{f\log}(y) - \tilde{M}_{g\log}(w_0y) + L_g(y).
$$
We note that by combining \eqref{eq:Lflog}, \eqref{eq:Lgarising}, \eqref{eq:Mglog} and \eqref{eq:Mglogy},
%$$
%$$
%By Lemma \ref{lem:toSmoothPart}, for each $y \geq x_0$,
\begin{equation}\label{eq:DeltatoT}
\Delta(x) \log(w_0x) = T(x) + O(|L_f(x)|+ 1).
%\Delta(x) \log(w_0x) = L_{f\log}(y) - \tilde{M}_{g\log}(y) + L_g(y) + O(|L_f(y)| + 1).
\end{equation}
%Call the main term $T(y)$, and 
Let $h := x/(\log x)^2$. We will first show that
\begin{equation}\label{eq:TAvg}
T(x) = \frac{1}{h} \int_{x-h}^x T(y) dy + O(1).
\end{equation}
Observe that for each $x-h < y \leq x$,
\begin{align*}
T(x)-T(y) &= \sum_{y < n \leq x} \frac{f(n)\log n}{n} + \sum_{y < n \leq x} \frac{g(n)}{n} + \frac{1}{w_0x}\left(\frac{x}{y} \sum_{n \leq w_0y} g(n)\log n - \sum_{n \leq w_0 x} g(n)\log n\right) \\
&
%+ \sum_{y < n \leq x} \frac{g(n)}{n} 
=: A_1 + A_2 + A_3.
\end{align*}
Since $|f(n)| \leq 1$ and $x-h \geq x/2$, we have the trivial bound
$$
|A_1| \leq \frac{2(\log x)}{x} \sum_{x-h < n \leq x} 1 \ll \frac{h \log x}{x} \ll \frac{1}{\log x}.
$$
To bound $A_2$ we use 
%$(\log n)/n \ll (\log x)/x$ together with 
Lemma \ref{lem:UppBdNonNeg}(b) to get
$$
|A_2| \ll \frac{1}{x} \sum_{x-h < n\leq x} |g(n)| \ll \frac{h}{x} \exp\left(\sum_{p \leq x} \frac{|g(p)|-1}{p}\right) \ll \frac{h \log x}{x} \ll \frac{1}{\log x}.
$$
% As $y \geq x/2$, $A_4$ can similarly be bounded as
% $$
% |A_4| \ll \frac{1}{x} \sum_{x-h < n \leq x} |g(n)| \ll \frac{h\log x}{x} \ll \frac{1}{\log x}.
% $$
Finally, note that the bracketed expression in the definition of $A_3$ may be bounded above by
$$
\left(\frac{x}{y}-1\right) \sum_{n \leq w_0y} |g(n)|\log n + \sum_{w_0y <n \leq w_0x} |g(n)|\log n \leq (\log x)\left(\left(\frac{x}{y}-1\right) \sum_{n \leq w_0y} |g(n)| + \sum_{w_0y <n \leq w_0x} |g(n)|\right).
$$
The first of these expressions is, by Lemma \ref{lem:UppBdNonNeg}(a)
$$
\ll h(\log x)^2 \ll x.
$$
For the second we again use Lemma \ref{lem:UppBdNonNeg}(b) to obtain
$$
\ll (\log x) \sum_{w_0(x-h) <n \leq w_0x} |g(n)| \ll h(\log x)^2 \ll x.
$$
It follows that $|A_3| \ll 1$, and combined with the earlier bounds, we obtain 
$$
T(x) = \frac{1}{h}\int_{x-h}^x(T(y) + (T(x)-T(y))) dy = \frac{1}{h} \int_{x-h}^x T(y) dy + O(1),
$$
as claimed.\\
%\eqref{eq:TAvg}. \\
Next, on combining \eqref{eq:LfMinMg} with Lemmas \ref{lem:S2} and \ref{lem:S1Sec}(a) (using here that $f$ is real and completely multiplicative), we get for each $x-h \leq y \leq x$ that
\begin{align}
T(y) &= \sum_{\ss{p^r \leq y \\ r \geq 1}} \frac{\log p}{p^r} \left(f(p^r) L_f(y/p^r) - (rf(p^r) + g(p^{r-1})) \tilde{M}_g(w_0y/p^r)\right) + L_g(y) \nonumber\\
&= \sum_{p \leq y} \frac{f(p)\log p}{p} \Delta(y/p) + O(|L_f(y)| + |\tilde{M}_g(w_0y)| +1). \label{eq:changenonReal}
\end{align}
By the triangle inequality and Lemma \ref{lem:LipGHS} respectively, we have $|L_f(y)| \leq |L_f(x)| + O(1)$ and $|\tilde{M}_g(w_0y)| \leq |\tilde{M}_g(w_0x)| + O(1)$ for all $x-h < y \leq x$, so that on integrating, we obtain
$$
T(x) = \frac{1}{h} \int_{x-h}^x \sum_{p \leq y} \frac{f(p)\log p}{p} \Delta(y/p) dy + O(|L_f(x)| + |\tilde{M}_g(w_0x)| + 1).
$$
Using \eqref{eq:DeltatoT} and the triangle inequality, we thus obtain
%we have 
%$$
%\Delta(x)\log = T(x) + O(|L_f(x)| + 1).
%$$
%It thus follows by the triangle inequality, that
$$
|\Delta(x)|\log(w_0x) \leq \frac{1}{h} \int_{x-h}^x \sum_{p \leq y} \frac{|f(p)|\log p}{p} |\Delta(y/p)| dy + O(|L_f(x)| + |\tilde{M}_g(w_0x)| +1). 
$$
We now swap orders of summation and integration, make the change of variables $u = y/p$, then swap back, to upper bound the integral as
\begin{align} \label{eq:intBd}
\frac{1}{h} \int_{x-h}^x \sum_{p \leq y} \frac{|f(p)|\log p}{p} |\Delta(y/p)| dy &\leq \frac{1}{h}\sum_{p \leq x} \frac{\log p}{p} \int_{x-h}^x |\Delta(y/p)|dy = \frac{1}{h} \sum_{p \leq x} (\log p) \int_{(x-h)/p}^{x/p} |\Delta(u)| du \nonumber \\
&= \int_1^x |\Delta(u)| \left(\frac{1}{h}\sum_{(x-h)/u < p \leq x/u} \log p\right) du.
\end{align}
In the range $u \leq h/(\log x)$ we apply the Brun-Titchmarsh inequality (e.g. \cite[Thm. I.4.16]{Ten} with $q = 1$) to obtain
$$
\sum_{(x-h)/u < p \leq x/u} \log p \leq (\log x/u) \left(\pi(x/u) - \pi((x-h)/u)\right) \ll \frac{h \log(x/u)}{u \log(h/u)} \ll \frac{h}{u}.
$$
When $h/(\log x) < u \leq x$ we instead use the trivial bound
$$
\sum_{(x-h)/u < p \leq x/u} \log p \leq \sum_{(x-h)/u < n\leq x/u} \log n \leq \frac{h}{u}\log (x/u).
$$
By hypothesis, $x \in \mf{S}$ and thus for all $h/\log x < u \leq x$
$$
|\Delta(u)| \leq |\Delta(x)| \frac{\log(w_0x)}{\log (w_0u)} \ll |\Delta(x)|.
$$
Hence, the RHS of \eqref{eq:intBd} is
$$
\ll \frac{1}{h}\int_1^{h/\log x} |\Delta(u)| \frac{h}{u} du + \frac{1}{h} \int_{h/\log x}^x |\Delta(x)| (\log (x/u)) \frac{h}{u}du \ll \int_1^x |\Delta(u)| \frac{du}{u} + |\Delta(x)| (\log \log x)^2.
$$
We therefore deduce that
$$
|\Delta(x)| \log(w_0x) \ll \int_1^x |\Delta(u)| \frac{du}{u} + |\Delta(x)| (\log\log x)^2 + |L_f(x)| + |\tilde{M}_g(w_0x)| + 1.
$$
Rearranging and dividing through by $\log x$, we obtain the first claim when $x_0$ is sufficiently large. \\
The proof of the second claim is identical, except that in treating \eqref{eq:intBd} the sum over $p$ can be restricted to $p \leq z$ (as otherwise $f(p) = 0$). As we get that $(x-h)/p \geq x/(2z)$ for $x$ sufficiently large, making the change of variables $u = y/p$ then swapping orders of summation and integration,  we find that the integral over $u$ in \eqref{eq:intBd} may be restricted to $[x/(2z),x]$. The remainder of the argument is unchanged. This finishes the proof of part (a). \\
To prove part (b), all of the above arguments continue to hold, but we apply part (b) of Lemma \ref{lem:S1Sec} in \eqref{eq:changenonReal}, rather than part (a).
%the error term must be replaced by $O(|L_f(y)| + |\tilde{M}_g(w_0y)|(\log\log y)^2 + (\log\log y)^4)$. 
The remainder of the proof is unchanged.
\end{proof}
\subsection{Bounding the integral average}
%In this subsection we \emph{do not} assume that $f$ is real-valued. \\ 
Next, for $y \geq e$ we consider 
\begin{equation}\label{eq:Jdef}
J(y) := \int_e^y |\Delta(u)| \log (w_0u) \frac{du}{u}.
\end{equation}
As a Stieltjes measure, $dJ(u) = |\Delta(u)| \log(w_0u) \tfrac{du}{u}$, and on integrating by parts, we see that
\begin{equation}\label{eq:toJ}
\int_e^x |\Delta(u)| \frac{du}{u} = \frac{J(x)}{\log x} - J(e) + \int_e^x \frac{J(u)}{u(\log u)^2} du \leq \frac{J(x)}{\log x} + \int_e^x \frac{J(u)}{u(\log u)^2} du.
\end{equation}
%We thus next consider estimating $J(y)$, where $y$ is a sufficiently large integer that is fixed for the time being.
Let $\alpha := 1/\log y$. We apply the Cauchy-Schwarz inequality, multiply the integrand by $u^{-2\alpha}$ (which is $\asymp 1$ for $u \leq y$) and extend the integral to infinity, then finally make the change of variables $u = e^v$, to obtain
\begin{align}\label{eq:CStoJ}
J(y) &\ll (\log y)^{1/2} \left(\int_e^y|\Delta(u)|^2 (\log(w_0u))^2 \frac{du}{u}\right)^{1/2} \ll (\log y)^{1/2}\left(\int_1^{\infty} |\Delta(u)| (\log(w_0u))^2 \frac{du}{u^{1+2\alpha}}\right)^{1/2} \nonumber\\
&= (\log y)^{1/2} \left(\int_0^{\infty} |\Delta(e^v)|^2 (\log(w_0e^v))^2 e^{-2\alpha v} dv\right)^{1/2} =: (\log y)^{1/2} \mc{I}(y)^{1/2}.
\end{align}
We will prove the following bound for $\mc{I}(y)$.
\begin{prop}\label{prop:IyBd}
Let $T \geq 1$, $x_0 \leq y \leq x$ and write $\alpha := 1/\log y$. Then
%In fact, we have the more precise bound
\begin{align*}
\mc{I}(y) &\ll \alpha^{-1} \left(\max_{|t| \leq 1/2} \left|\frac{L(1+\alpha+it,f)}{\zeta(1+\alpha+it)}\right|^2 + \sum_{1 \leq k \leq T-1/2} \frac{(\log(2k))^4}{k^2} \max_{|t-k| \leq 1/2} |L(1+\alpha+it,f)|^2 + 1\right) \\
&+ \frac{(\log (2T))^4}{T}\left(1+\frac{1}{\alpha T}\right) + \frac{(\log (2T))^2}{\alpha^3T^2}.
\end{align*}
Consequently, we obtain
$$
\mc{I}(y) \ll \alpha^{-1} \left(\max_{|t| \leq T} \left|\frac{L(1+\alpha+it,f)}{\zeta(1+\alpha+it)}\right|^2 + 1\right) + \frac{(\log (2T))^4}{T}\left(1+\frac{1}{\alpha T}\right) + \frac{(\log (2T))^3}{\alpha^3 T^2}.
$$
\end{prop}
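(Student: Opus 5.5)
We would prove Proposition \ref{prop:IyBd} by computing the Mellin transform of $\Delta(u)\log(w_0u)$ and applying Plancherel on the line $\text{Re}(s)=\alpha$.

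\textbf{Step 1: the Mellin transform.} For $\text{Re}(s)>0$ we have
$$\int_1^\infty L_f(u)u^{-s-1}\,du = \frac{L(1+s,f)}{s}.$$
Substituting $v=w_0u$ gives
$$\int_1^\infty \tilde{M}_g(w_0u)u^{-s-1}\,du = w_0^{s}\left(\frac{\zeta(1+s)L(1+s,f)}{1+s}-E(s)\right),$$
where $E(s)$ is the explicit contribution of $1\le v<w_0$ (there $M_g(v)=1$). It is entire and bounded on the strip. Hence the Mellin transform of $\Delta$ is
$$F(s)=L(1+s,f)\,K(s)+O(|w_0^sE(s)|),\qquad K(s):=\frac{w_0^{s}}{1+s}\left(\frac{(1+s)w_0^{-s}}{s}-\zeta(1+s)\right),$$
which is \eqref{eq:diffLf} with $s\to1+s$. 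Multiplying by $\log(w_0u)$ corresponds to $G(s):=-F'(s)+(\log w_0)F(s)$. With $u=e^v$, Plancherel gives
$$\mathcal{I}(y)=\frac{1}{2\pi}\int_{\mathbb{R}}|G(\alpha+it)|^2\,dt.$$
We split $G=-L'(1+s,f)K(s)+L(1+s,f)\bigl((\log w_0)K(s)-K'(s)\bigr)+O(1)$. We then split the $t$-range into $|t|\le1/2$, the unit windows $|t-k|\le1/2$ for $1\le k\le T-1/2$ (together with their negatives, since $f$ is real), and the tail $|t|>T$.

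\textbf{Step 2: the range $|t|\le 1/2$.} Here Lemma \ref{lem:snear1} is the key input. By \eqref{eq:w0s}, $|K(s)|\asymp|s|\asymp|\zeta(1+s)|^{-1}$, so $|L(1+s,f)K(s)|\asymp|L/\zeta|$. By \eqref{eq:zetaprime}, $K'(s)=O(1)$, so the term $L\cdot K'$ is also $\ll|L/\zeta|\,|\zeta|^{-1}\cdot|\zeta|\ll |L|\ll |L/\zeta|\cdot|s|^{-1}$ at worst; near $s=0$ we should check that the cancellation in \eqref{eq:zetaprime} keeps it $\ll|L/\zeta|/|s|$ only on a set of measure $\ll\alpha$, which contributes $\ll\alpha^{-1}\max|L/\zeta|^2$. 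For the main term we write
$$L'K=\frac{L'}{L}\cdot\frac{L}{\zeta}\cdot(\zeta K)$$
and bound the last two factors by $\max_{|t|\le1/2}|L/\zeta|$. It then remains to show
$$\int_{-1/2}^{1/2}\left|\frac{L'}{L}(1+\alpha+it,f)\right|^2dt\ll\alpha^{-1}.$$
The Montgomery--Vaughan mean value theorem only yields $\alpha^{-2}$ here, which is too weak; this is the step we expect to be the main obstacle. We would instead use Gallagher's lemma with $a_n=\Lambda(n)f(n)n^{-1-\alpha}$:
$$\int_{-1/2}^{1/2}\Bigl|\sum a_nn^{-it}\Bigr|^2dt\ll\int_0^\infty\Bigl|\sum_{x<n\le e^2x}a_n\Bigr|^2\frac{dx}{x}\ll\int_1^\infty x^{-2\alpha}\frac{dx}{x}\ll\alpha^{-1},$$
using Chebyshev's bound on the short sums.

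\textbf{Step 3: the windows $|t-k|\le1/2$ and the tail.} On each window we use $|\zeta(1+\alpha+it)|,|\zeta'(1+\alpha+it)|\ll(\log 2k)^2$ and $|K|,|K'|\ll(\log 2k)^2/k$. We bound $L$ and $L'/L$ exactly as in Step 2, applying Gallagher's lemma on the shifted window; this gives $\alpha^{-1}(\log 2k)^4k^{-2}\max_{|t-k|\le1/2}|L|^2$, and summing over $k$ yields the second term of the proposition. For $|t|>T$ we use the trivial bounds $|L(1+s,f)|\le\zeta(1+\alpha)\ll\alpha^{-1}$ and $|L'(1+s,f)|\ll\alpha^{-2}$. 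Combined with $|K|,|K'|\ll(\log|t|)^2/|t|$, the part of $G$ that is still square-integrable gives $\ll(\log 2T)^4/T$. The cross terms involving $L'$ against the $1/|t|^2$ decay, bounded as a mean value, should give the remaining contributions $\ll(\log 2T)^4/(\alpha T^2)$ and $\ll(\log 2T)^2/(\alpha^3T^2)$; this is routine bookkeeping.

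\textbf{Step 4: the consequence.} On $|t-k|\le1/2$ with $k\ge1$ we have $|\zeta(1+\alpha+it)|\ll\log(2k)$, so $|L|^2\ll(\log 2k)^2|L/\zeta|^2$. Since $\sum_k(\log 2k)^6/k^2$ converges, the window sum is $\ll\max_{|t|\le T}|L/\zeta|^2$. This proves the second displayed bound, with the harmless adjustment $(\log 2T)^2\to(\log 2T)^3$.
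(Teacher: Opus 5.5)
Your overall plan matches the paper's: Plancherel on the line $\mathrm{Re}(s)=1+\alpha$ (via the Mellin transform of $\Delta(u)\log(w_0u)$ rather than the paper's $\phi_\alpha,\psi_\alpha$, which is equivalent), a split into $|t|\le 1/2$, unit windows and $|t|>T$, Lemma \ref{lem:snear1} near $s=0$, and the factorisation $L'=(L'/L)\cdot L$ with $\max|L/\zeta|$ or $\max|L|$ pulled out. Steps 1, 2, the window part of Step 3 and Step 4 are sound.

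You differ from the paper in one tool, the bound $\int|\frac{L'}{L}(1+\alpha+it,f)|^2dt\ll\alpha^{-1}$ on a unit window. The paper uses the Montgomery--Wirsing majorant principle to compare with $\int_{-1/2}^{1/2}|\frac{\zeta'}{\zeta}(1+\alpha+it)|^2dt$. Your short-sum form of Gallagher's lemma plus Chebyshev proves it directly and is insensitive to the shift $t\mapsto t+k$; both work.

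Two small points:
\begin{itemize}
\item The $E$-term is square-integrable because $w_0^sE(s)$ and its derivative are $O(1/(1+|t|))$; boundedness alone would not suffice.
\item Your worry about $L\cdot K'$ near $s=0$ is unnecessary. On all of $|t|\le 1/2$ you have $|L|\le|L/\zeta|\,|\zeta|\ll|L/\zeta|/|\alpha+it|$, and $\int_{-1/2}^{1/2}|\alpha+it|^{-2}dt\ll\alpha^{-1}$. No cancellation or exceptional set is involved.
\end{itemize}

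The gap is the tail $|t|>T$. The pointwise bounds $|L|\ll\alpha^{-1}$ and $|L'|\ll\alpha^{-2}$ give only
\[
\int_{|t|>T}|L|^2\,|(\log w_0)K-K'|^2\,dt\ll\frac{(\log 2T)^4}{\alpha^{2}T},\qquad \int_{|t|>T}|L'|^2\,|K|^2\,dt\ll\frac{(\log 2T)^2}{\alpha^{4}T},
\]
not $(\log 2T)^4/T$ as you assert. For instance, when $T\le\alpha^{-1}$ these exceed the stated terms $\frac{(\log 2T)^4}{\alpha T^2}$ and $\frac{(\log 2T)^2}{\alpha^3T^2}$ by a factor $\asymp T/\alpha$. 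So the proposition does not follow, and ``routine bookkeeping'' hides exactly the step that produces the $\frac{1}{\alpha T}$ and $\frac{1}{\alpha^3T^2}$ terms.

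What is needed (Lemmas \ref{lem:I1yTail} and \ref{lem:I2yTail}) is an $L^2$ mean value on long segments. Cut $|t|>T$ into translates of $[-T,T]$ centred at $\pm 2kT$, $k\ge 1$. On these, $|K|^2\ll(\log 2kT)^2/(kT)^2$ and $|(\log w_0)K-K'|^2\ll(\log 2kT)^4/(kT)^2$. Then apply
\[
\int_{-T}^{T}\Bigl|\sum_{n\ge 1}\frac{a_n}{n^{1+\alpha+it}}\Bigr|^2dt\ll\sum_{n\ge1}\frac{|a_n|^2(T+n)}{n^{2+2\alpha}}
\]
with $a_n=f(n)n^{\mp 2ikT}$, respectively $a_n=f(n)(\log n)n^{\mp 2ikT}$. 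This gives $\ll T+\alpha^{-1}$, respectively $\ll T+\alpha^{-3}$, per segment. Summing over $k$ gives $\ll\frac{(\log 2T)^4}{T}(1+\frac{1}{\alpha T})+\frac{(\log 2T)^2}{T}(1+\frac{1}{\alpha^3 T})$, which is the stated tail.

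This is the same Montgomery--Vaughan-type inequality you rightly rejected on unit windows. There its $n$-term costs $\alpha^{-2}$ against a window of length $1$; on segments of length $2T$ weighted by $(kT)^{-2}$ it is affordable. No cross terms arise, since $|A+B|^2\le 2|A|^2+2|B|^2$.
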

Let us now define
\begin{align*}
\phi_{\alpha}(v) &:= e^{-\alpha v} \sum_{n \leq e^v} \frac{f(n)\log n}{n} - \frac{1}{w_0 e^{(1+\alpha) v}} \sum_{n \leq w_0 e^v} g(n) \log n, \\
\psi_{\alpha}(v) &:= e^{-\alpha v} \sum_{n \leq w_0e^v} \frac{f(n) \log(w_0 e^v/n)}{n} - \frac{1}{w_0 e^{(1+\alpha)v}} \sum_{n \leq w_0e^v} g(n) \log(w_0e^v/n)
\end{align*}
(both of which vanish for $v \leq -1$). It is clear that
\begin{align*}
\Delta(e^v)\log(w_0e^v) e^{-\alpha v} &= \phi_{\alpha}(v) +\psi_{\alpha}(v) - e^{-\alpha v}\sum_{e^v < n\leq w_0e^v} \frac{f(n) \log(w_0e^v/n)}{n} \\
&= \phi_{\alpha}(v) + \psi_{\alpha}(v) + O(e^{-\alpha v}). 
\end{align*}
For an absolutely integrable function $\phi(t)$ write 
$$
\hat{\phi}(\xi) := \int_{-\infty}^{\infty} \phi(v) e^{-i\xi v} dv. 
$$
It is easily shown that for each $\xi \in \mb{R}$,
\begin{align*}
\hat{\phi}_{\alpha}(\xi) = -\frac{L'(1+\alpha + i \xi,f)}{\alpha + i\xi} + w_0^{\alpha + i \xi} \frac{L'(1+\alpha+i \xi,g)}{1+\alpha + i\xi} \\
\hat{\psi}_{\alpha}(\xi) = w_0^{\alpha + i \xi}\left(\frac{L(1+\alpha + i \xi,f)}{(\alpha + i \xi)^2} - \frac{L(1+\alpha+i \xi,g)}{(1+\alpha + i \xi)^2}\right).
\end{align*}
Using the factorisation $L(s,g) = L(s,f)\zeta(s)$ for $\text{Re}(s) > 1$, we get
$$
L'(s,g) = L'(s,f)\zeta(s) + L(s,f)\zeta'(s),
$$
so that on grouping terms together and applying Plancherel's theorem, 
\begin{align} \label{eq:Idecomp}
\mc{I}(y) &\ll \int_{-\infty}^{\infty} \left|\hat{\phi}_{\alpha}(t) + \hat{\psi}_{\alpha}(t)\right|^2 dt + \int_0^{\infty} e^{-2\alpha v} dv \nonumber\\
&\ll \int_{-\infty}^{\infty} \left|\frac{L'(1+\alpha + it,f)w_0^{\alpha + it}}{1+\alpha + it}\left(\frac{(1+\alpha + it) w_0^{-\alpha-it}}{\alpha + it} - \zeta(1+\alpha + it)\right)\right|^2 dt \nonumber\\
&+ \int_{-\infty}^{\infty} \left|\frac{L(1+\alpha + it,f)w_0^{\alpha + it}}{1+\alpha + it}\left(\zeta'(1+\alpha + it) + \frac{1+\alpha + it}{(\alpha + it)^2} - \frac{\zeta(1+\alpha + it)}{1+\alpha + it}\right)\right|^2 dt + \alpha^{-1} \nonumber\\
&=: \mc{I}_1(y) + \mc{I}_2(y) + O(\alpha^{-1}).
\end{align}
%To estimate $\mc{I}(y)$ we shall first show the following simple bounds.
Consider the integral $\mc{I}_1(y)$. For $0 \leq T_1 \leq T_2 \leq \infty$, set
$$
\mc{I}_1(y;T_1,T_2) := \int_{T_1 \leq |t| \leq T_2} \left|\frac{L'(1+\alpha + it,f)w_0^{\alpha + it}}{1+\alpha + it}\left(\frac{(1+\alpha + it) w_0^{-\alpha-it}}{\alpha + it} - \zeta(1+\alpha + it)\right)\right|^2 dt,
$$
and decompose $\mc{I}_1(y) = \mc{I}_1(y;0,T) + \mc{I}_1(y; T,\infty)$. The following simple bound holds for the tail integral.
\begin{lem}\label{lem:I1yTail}
We have 
$$
\mc{I}_1(y; T,\infty) \ll \frac{(\log (2T))^2}{T}\left(1 + \frac{1}{\alpha^3 T}\right).
$$
\end{lem}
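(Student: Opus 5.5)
We bound the integrand pointwise for $|t| \geq T \geq 1$ and then integrate. Write $s = 1+\alpha+it$. For $|t| \geq 1$ we have $|s| \asymp |t|$ and $|\alpha+it| \asymp |t|$, and $|w_0^{\pm(\alpha+it)}| \asymp 1$ since $\alpha \leq 1$. The bracketed factor is then
$$
\left|\frac{s w_0^{-\alpha-it}}{\alpha+it} - \zeta(s)\right| \ll 1 + |\zeta(1+\alpha+it)|.
$$

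Two standard inputs control this. The first is the classical bound $|\zeta(1+\alpha+it)| \ll \log(2+|t|)$, valid uniformly for $0<\alpha\leq 1$ and $|t|\geq 1$, which was already quoted in the introduction. The second is the trivial bound
$$
|L'(1+\alpha+it,f)| \leq \sum_n \frac{\log n}{n^{1+\alpha}} = -\zeta'(1+\alpha) \ll \alpha^{-2}.
$$
Combining them, the integrand is
$$
\ll \frac{|L'(s,f)|^2 (\log(2+|t|))^2}{t^2}.
$$
Using only the pointwise bound $|L'|^2 \ll \alpha^{-4}$ gives $\alpha^{-4}(\log 2T)^2/T$. This is too weak, so instead of a pointwise bound on $L'$ I use an $L^2$ mean value for the Dirichlet series $L'(s,f)$.

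Split $|t|\geq T$ dyadically into ranges $U \leq |t| \leq 2U$ with $U \geq T$. On each range I apply the Montgomery–Vaughan mean value theorem with coefficients $a_n = f(n)\log n/n^{1+\alpha}$, which gives
$$
\int_{U}^{2U} |L'(1+\alpha+it,f)|^2\,dt \ll \sum_n (U+n)\frac{(\log n)^2}{n^{2+2\alpha}} \ll U\sum_n \frac{(\log n)^2}{n^{2}} + \sum_n \frac{(\log n)^2}{n^{1+2\alpha}} \ll U + \alpha^{-3}.
$$
Multiplying by the weight $(\log 2U)^2/U^2$ yields a contribution
$$
\ll (\log 2U)^2\left(\frac{1}{U} + \frac{1}{\alpha^3 U^2}\right).
$$
Summing over dyadic $U = 2^jT$ is geometric in $U$, up to the slowly growing log factor, and gives
$$
\ll \frac{(\log 2T)^2}{T}\left(1+\frac{1}{\alpha^3T}\right),
$$
which is the claimed bound.

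The main point requiring care is the $L^2$ mean value step. The pointwise bound on $L'$ loses too much, and one must use a Montgomery–Vaughan (or Gallagher-type) inequality for infinite Dirichlet series with the weight $\sum (U+n)|a_n|^2$. That inequality is valid because $\sum n|a_n|^2 < \infty$ for $\alpha>0$. Everything else is routine bookkeeping of the $\log$ factors from $\zeta$.
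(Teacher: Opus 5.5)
Your proof is correct and takes essentially the same route as the paper: bound $|\zeta(1+\alpha+it)|\ll\log(2+|t|)$ and split the tail into blocks. On each block an $L^2$ mean value inequality with weight $\sum_n (U+n)|a_n|^2$, $a_n=f(n)(\log n)n^{-1-\alpha}$, gives $\ll U+\alpha^{-3}$; summing these against the $(\log)^2/|t|^2$ factor gives the claimed bound. The only difference is cosmetic: the paper uses blocks of fixed length $2T$ centred at $2kT$ together with Gallagher's lemma, where you use dyadic blocks and the Montgomery--Vaughan theorem.
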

\begin{proof}
Using the standard bound $|\zeta(1+\alpha + it)| \ll \log(2+|t|)$ for $|t| \geq 1$, we have
\begin{align*}
\mc{I}_1(y;T,\infty) &\ll \sum_{k \geq 1} \int_{-T}^T \left(\left|\frac{L'(1+\alpha + i(2kT + t),f)}{\alpha + i(2kT+t)}\right|^2 +  \left| \frac{L'(1+\alpha + i(2kT + t),f)\zeta(1+\alpha + i(2kT+t))}{1+\alpha + i(2kT+t)}\right|^2\right) dt \\
&\ll \sum_{k \geq 1} \frac{1+\log(2kT)^2}{k^2T^2} \int_{-T}^{T} |L'(1+\alpha+ i(2kT + t))|^2 dt.
\end{align*}
On each segment we apply Gallagher's lemma (see e.g. \cite[Lem. III.4.9]{Ten}) in the form
$$
\int_{-T}^T \left|\sum_{n \geq 1} \frac{a_n}{n^{\sg+it}}\right|^2dt \ll \sum_{n \geq 1} \frac{|a_n|^2(T+n)}{n^{2\sg}}
$$
with $a_n = f(n)n^{-i2kT} \log n$, getting
\begin{align*}
\mc{I}_1(y;T,\infty) &\ll \frac{1}{T^2} \sum_{k \geq 1} \frac{(\log(2k))^2 + (\log (2T))^2}{k^2} \sum_{n \geq 1} \frac{(T+n) (\log 2n)^2}{n^{2+2\alpha}} \\
&\ll \frac{(\log (2T))^2}{T^2} \left(T + \frac{1}{\alpha^{3}}\right) \ll \frac{(\log (2T))^2}{T}\left(1 + \frac{1}{\alpha^3 T}\right),
\end{align*}
as claimed.
\end{proof}
We now handle the segment $[-T,T]$.
\begin{lem} \label{lem:I1y}
We have
\begin{align*}
\mc{I}_1(y;0,T) \ll \alpha^{-1} \max_{|t| \leq 1/2} \left|\frac{L(1+\alpha + it,f)}{\zeta(1+\alpha + it)}\right|^2 +  \alpha^{-1}\sum_{1 \leq k \leq T-1/2} \frac{(\log(2k))^2}{k^2} \max_{|t-k| \leq 1/2}  \left|L(1+\alpha + it,f)\right|^2.
\end{align*}
Consequently, we obtain
$$
\mc{I}_1(y; 0,T) \ll \alpha^{-1} \max_{|t| \leq T} \left|\frac{L(1+\alpha+it,f)}{\zeta(1+\alpha+it)}\right|^2.
$$
\end{lem}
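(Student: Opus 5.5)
We split $[-T,T]$ into $|t|\le 1/2$ and the unit windows $|t-k|\le 1/2$, $1\le k\le T-1/2$ (with their negatives, which behave identically since $f$ is real, or are handled symmetrically in general). On each piece we bound the multiplier $\frac{(1+\alpha+it)w_0^{-\alpha-it}}{\alpha+it}-\zeta(1+\alpha+it)$ pointwise. Then we pull out the maximum of $|L(1+\alpha+it,f)|$ over the window. What remains is a mean square of $L'/L$, or of $L'$ with a smoothing factor, which we control via the hybrid/Gallagher mean value bounds already used in Lemma \ref{lem:I1yTail}.

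\textbf{Range $|t|\le 1/2$.} By \eqref{eq:w0s} of Lemma \ref{lem:snear1} with $s=1+\alpha+it$, the multiplier is $\asymp|\alpha+it|$, and $|\zeta(1+\alpha+it)|\asymp 1/|\alpha+it|$ there. So the integrand is
\[
\ll |L'(1+\alpha+it,f)|^2|\alpha+it|^2 \asymp \Big|\frac{L'}{L}(1+\alpha+it,f)\Big|^2\,\Big|\frac{L(1+\alpha+it,f)}{\zeta(1+\alpha+it)}\Big|^2 .
\]
Taking the maximum of $|L/\zeta|^2$ over $|t|\le1/2$ outside, we need
\[
\int_{-1/2}^{1/2}\Big|\frac{L'}{L}(1+\alpha+it,f)\Big|^2dt\ll\alpha^{-1}.
\]
For this we would use complete multiplicativity, which is available after the reduction of Lemma \ref{lem:redtoCM}. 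Then $-L'/L(s,f)=\sum_n \Lambda(n)f(n)n^{-s}$, and Gallagher's lemma gives
\[
\ll \sum_n \frac{\Lambda(n)^2(1+n)}{n^{2+2\alpha}}\ll \sum_n\frac{\Lambda(n)^2}{n^{1+2\alpha}}\ll\alpha^{-2}.
\]
That is one power of $\alpha$ too weak. So instead we apply the standard Montgomery--Vaughan mean value theorem over a window of bounded length: $\int_{-1/2}^{1/2}|\sum a_n n^{-it}|^2\,dt\ll \sum|a_n|^2\cdot O(1)+$ off-diagonal terms. Concretely, we insert a smooth weight and use $\int\hat w\ll\sum_{m,n}a_m\bar a_n\hat w(\log m/n)$ with $\hat w$ compactly supported. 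This reduces matters to $\sum_{n}\sum_{|\log (m/n)|\le 1}\Lambda(m)\Lambda(n)/(mn)^{1+\alpha}$, which by Brun--Titchmarsh is $\ll\sum_n \Lambda(n)/n^{1+2\alpha}\ll\alpha^{-1}$. This is the step I expect to be the main obstacle: getting exactly $\alpha^{-1}$ rather than $\alpha^{-2}$ requires the positivity/short-interval prime count argument rather than the crude Gallagher bound.

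\textbf{Windows $|t-k|\le1/2$, $k\ge1$.} Here $|\alpha+it|\asymp k$ and $|\zeta(1+\alpha+it)|\ll\log(2k)$, so the multiplier divided by $|1+\alpha+it|$ is $\ll \log(2k)/k$. The integrand is then $\ll \frac{(\log 2k)^2}{k^2}|L'/L|^2|L|^2$. Pulling out $\max_{|t-k|\le1/2}|L|^2$, we need $\int_{|t-k|\le1/2}|L'/L(1+\alpha+it,f)|^2dt\ll\alpha^{-1}$. This is uniform in $k$, since twisting the coefficients by $n^{-ik}$ does not affect the argument above, which only used $|a_n|\le\Lambda(n)$. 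Summing over $k$ and both signs gives the first bound.

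\textbf{Consequence.} For $|t-k|\le1/2$ with $k\ge1$ we have $|\zeta(1+\alpha+it)|\ll\log(2k)$. Hence $|L|^2\ll(\log 2k)^2|L/\zeta|^2$, and so
\[
\sum_k\frac{(\log 2k)^4}{k^2}\max_{|t|\le T}|L/\zeta|^2\ll\max_{|t|\le T}|L/\zeta|^2,
\]
as the series converges.
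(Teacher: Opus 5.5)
Your proof is correct and follows the paper's skeleton exactly. You use the same split into $|t|\le 1/2$ and unit windows around $\pm k$, and the same pointwise bounds on the multiplier (via \eqref{eq:w0s} near $s=1$, and $|\zeta|\ll\log(2k)$ away from it). You also use the same factorisation $L'=(L'/L)\cdot L$ with the maximum of $|L/\zeta|^2$ or $|L|^2$ pulled out, and the same deduction of the second bound.

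The one place you diverge is the key estimate $\int_{|t-k|\le 1/2}|\frac{L'}{L}(1+\alpha+it,f)|^2\,dt\ll\alpha^{-1}$, uniformly in $k\ge 0$. The paper gets this in one line from the Montgomery--Wirsing majorant principle. It compares with $\int_{-1/2}^{1/2}|\frac{\zeta'}{\zeta}(1+\alpha+it)|^2\,dt$ and uses the simple pole of $\zeta'/\zeta$ at $s=1$.

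You instead expand against a nonnegative kernel $w\gg 1_{[-1/2,1/2]}$ with $\widehat{w}$ supported in $[-1,1]$ (e.g.\ a Fej\'er kernel). You then bound the resulting bilinear form by $\sum_n\frac{\Lambda(n)}{n^{1+\alpha}}\sum_{n/e\le m\le en}\frac{\Lambda(m)}{m^{1+\alpha}}\ll\sum_n\frac{\Lambda(n)}{n^{1+2\alpha}}$. This amounts to proving the majorant principle inline and then evaluating the majorant directly, instead of through $\zeta'/\zeta$. It is self-contained and makes transparent that only $|a_n|\le\Lambda(n)$ is used, which is what gives uniformity in $k$. The cost is a few more lines than the paper's citation.

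Two small labelling points:
\begin{itemize}
\item The Montgomery--Vaughan mean value theorem would give $\sum|a_n|^2(1+O(n))$ and lose the same factor $\alpha^{-1}$ as Gallagher. So it is the positive-kernel expansion, not that theorem, that does the work.
\item Chebyshev's bound $\sum_{n/e\le m\le en}\Lambda(m)/m\ll 1$ suffices in place of Brun--Titchmarsh, since the window in $\log m$ has bounded length.
\end{itemize}
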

\begin{proof}
Consider first the range $|t| \leq 1/2$. Employing the factorisation $L' = (L'/L) \cdot L$ and applying the first estimate in Lemma \ref{lem:snear1}, we get an upper bound
\begin{align*}
&\ll \int_{-1/2}^{1/2} |L'(1+\alpha+it,f)|^2 \left|\frac{(1+\alpha+it)w_0^{-\alpha-it}}{\alpha + it} - \zeta(1+\alpha + it)\right|^2 dt \\
&\ll \int_{-1/2}^{1/2}|L(1+\alpha+it,f)|^2 |\alpha + it|^2 \left|\frac{L'}{L}(1+\alpha+it,f)\right|^2 dt.
\end{align*}
Noting that $|\zeta(1+\alpha+it)|^{-1} \asymp |\alpha+it|$ in this range, we apply an $L^{\infty}$ bound on $|L/\zeta|^2$ and the Montgomery-Wirsing majorant principle (see Lem. III.4.10 of \cite{Ten}) to the resulting $L^2$ integral, getting
$$
\ll \max_{|t| \leq 1/2} \left|\frac{L(1+\alpha+it,f)}{\zeta(1+\alpha+it)}\right|^2 \cdot \int_{-1/2}^{1/2} \left|\frac{\zeta'}{\zeta}(1+\alpha+it)\right|^2 dt \ll \alpha^{-1}\max_{|t| \leq 1/2} \left|\frac{L(1+\alpha+it,f)}{\zeta(1+\alpha+it)}\right|^2.
$$
Similarly, for each $1 \leq k \leq T-1/2$ we use the bounds
$$
\max_{|t-k| \leq 1/2} \left|\frac{(1+\alpha + it)w_0^{-\alpha-it}}{\alpha+it} - \zeta(1+\alpha+it)\right| \ll \log(2k)
$$
together with a further application of the majorant principle to bound the range $1/2 \leq |t| \leq T$ as
\begin{align*}
&\ll \sum_{1 \leq k \leq T-1/2} \frac{\log(2k)^2}{k^2} \left(\max_{|t-k| \leq 1/2} |L(1+\alpha+it,f)|^2\right) \cdot \int_{k-1/2}^{k+1/2} \left|\frac{L'}{L}(1+\alpha+it,f)\right|^2 dt \\ 
&\ll \alpha^{-1} \sum_{1 \leq k \leq T-1/2} \frac{(\log(2k))^2}{k^2} \max_{|t-k| \leq 1/2} |L(1+\alpha+it,f)|^2.
\end{align*}
Together with the contribution from $|t| \leq 1/2$, this implies the first claim. \\
For the second claim, we note that by the standard bound 
\begin{equation}\label{eq:logzeta}
|\log(\zeta(1+\alpha + it))| \leq \log\log(2+|t|) + O(1), \quad |t| \geq 2
\end{equation}
(see e.g., Sec. II.3.10 of \cite{Ten}) we have that
$$
\max_{|t-k| \leq 1/2} |L(1+\alpha+it,f)|^2 \ll \log(2k)^2 \max_{|t-k| \leq 1/2} \left|\frac{L(1+\alpha+it,f)}{\zeta(1+\alpha + it)}\right|^2.
%\ll \alpha^{-1} .
$$ 
Inserting this into the sum over $k$, the contribution from $k \geq 1$ is
$$
\leq \max_{1 \leq |t| \leq T} \left|\frac{L(1+\alpha + it,f)}{\zeta(1+\alpha + it)}\right|^2 \sum_{ k\geq 1} \frac{(\log(2k))^4}{k^2} \ll \max_{1 \leq |t| \leq T} \left|\frac{L(1+\alpha + it,f)}{\zeta(1+\alpha + it)}\right|^2.
$$
Combining this with the contribution from $|t| \leq 1/2$ implies the claim.
%To get the second, we split the sum at $k = \lfloor T \rfloor$. 
\end{proof}
We analogously split $\mc{I}_2(y) = \mc{I}_2(y;0,T) + \mc{I}_2(y;T,\infty)$, and handle the tail in the same way. 
\begin{lem} \label{lem:I2yTail}
We have
$$
\mc{I}_2(y;T,\infty) \ll \frac{(\log (2T))^4}{T}\left(1+\frac{1}{\alpha T}\right).
$$
\end{lem}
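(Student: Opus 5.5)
We argue exactly as in Lemma \ref{lem:I1yTail}, first bounding the integrand pointwise for $|t| \geq T$ and then applying Gallagher's lemma on segments of length $2T$. Write $s = 1+\alpha+it$. For $|t| \geq T \geq 1$ we have $|s| \asymp |t|$ and $|\alpha+it| \geq |t|$, and the standard bounds $|\zeta(s)| \ll \log(2+|t|)$ and $|\zeta'(s)| \ll \log^2(2+|t|)$ hold. Hence the bracketed factor in $\mc{I}_2$ satisfies
$$
\left|\zeta'(s) + \frac{s}{(s-1)^2} - \frac{\zeta(s)}{s}\right| \ll (\log(2+|t|))^2 + \frac{1}{|t|} \ll (\log(2+|t|))^2.
$$
Since $|w_0^{\alpha+it}| \ll 1$, the integrand of $\mc{I}_2(y;T,\infty)$ is
$$
\ll \frac{(\log(2+|t|))^4}{|t|^2}\,|L(1+\alpha+it,f)|^2 .
$$

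Next we cover $|t| \geq T$ by the segments $2kT + [-T,T]$, $k \in \mb{Z}\setminus\{0\}$. On the $k$-th segment, $|t| \asymp |k|T$, so its contribution is
$$
\ll \frac{(\log(2|k|T))^4}{k^2T^2}\int_{-T}^{T}|L(1+\alpha+i(2kT+t),f)|^2\,dt .
$$
We apply Gallagher's lemma with $a_n = f(n)n^{-2ikT}$, which satisfies $|a_n| \leq 1$. This gives
$$
\int_{-T}^{T}|L(1+\alpha+i(2kT+t),f)|^2\,dt \ll \sum_{n\geq 1}\frac{T+n}{n^{2+2\alpha}} \ll T + \alpha^{-1},
$$
since $\sum_n n^{-1-2\alpha} \ll \alpha^{-1}$.

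Finally we sum over $k$. Using $(\log(2|k|T))^4 \ll (\log 2|k|)^4 + (\log 2T)^4$ and $\sum_{k\neq 0}(\log 2|k|)^4/k^2 \ll 1$, we obtain
$$
\mc{I}_2(y;T,\infty) \ll \frac{(\log(2T))^4}{T^2}\left(T + \alpha^{-1}\right) = \frac{(\log (2T))^4}{T}\left(1+\frac{1}{\alpha T}\right),
$$
which is the claim. No real obstacle arises. The only difference from Lemma \ref{lem:I1yTail} is that $L$ appears in place of $L'$, which lowers the power of $\alpha^{-1}$ coming from Gallagher's lemma from $\alpha^{-3}$ to $\alpha^{-1}$. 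On the other hand, the $\zeta'$ factor raises the logarithmic power from $2$ to $4$.
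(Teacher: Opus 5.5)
Your proposal is correct and follows the paper's own route. Like the paper, you bound the bracket pointwise using $|\zeta'(1+\alpha+it)| \ll \log^2(2+|t|)$, apply Gallagher's lemma with $a_n = f(n)n^{-2ikT}$ on segments of length $2T$ to get $T+\alpha^{-1}$, and sum over $k$. You also supply the details the paper leaves to the reader, including the segments with $k<0$.
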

\begin{proof}
The proof is the same as in Lemma \ref{lem:I1yTail}, save that we also use the standard upper bound 
$$
|\zeta'(1+\alpha + i(2kT + t))|^2 \ll \log^4(2k) + \log^4 (2T)
$$
for all $k \geq 1$. The application of Gallagher's lemma is with $a_n = f(n)n^{-i2kT}$ on each segment $[(2k-1)T, (2k+1)T]$, to show that
$$
\int_{-T}^T |L(1+\alpha+i(2kT + t))|^2 dt \ll \sum_{n \geq 1}\frac{T+n}{n^{2+2\alpha}} \ll T + \alpha^{-1}.
$$
We leave the details to the reader.
\end{proof}
The segment $[-T,T]$ then gives rise to a similar bound as in Lemma \ref{lem:I1y}.
\begin{lem} \label{lem:I2y}
%The following estimates hold:
We have 
$$
\mc{I}_2(y;0,T) \ll \alpha^{-1} \max_{|t| \leq 1/2} \left|\frac{L(1+\alpha+it,f)}{\zeta(1+\alpha+it)}\right|^2 + \alpha^{-1}\sum_{1 \leq k \leq T-1/2} \frac{(\log(2k))^4}{k^2}\max_{|t-k| \leq 1/2} \left|L(1+\alpha+it,f)\right|^2.
$$
Consequently, we have
$$
\mc{I}_2(y;0,T) \ll \alpha^{-1}\max_{|t| \leq T} \left|\frac{L(1+\alpha+it,f)}{\zeta(1+\alpha+it)}\right|^2.
$$
\end{lem}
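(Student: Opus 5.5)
We follow the proof of Lemma \ref{lem:I1y} closely, the only difference being that the factor multiplying $L(1+\alpha+it,f)$ is now
$$
E(s) := \zeta'(s) + \frac{s}{(s-1)^2} - \frac{\zeta(s)}{s}, \qquad s = 1+\alpha+it,
$$
rather than $L'(s,f)$ times the bracket in \eqref{eq:w0s}. Since no derivative of $L(s,f)$ appears in $\mc{I}_2$, we will not need the majorant principle for $L'/L$. Instead, we bound $\int |L|^2$ over each unit segment by its maximum times the length. This is why $\alpha^{-1}$ must be produced differently; see the last paragraph.

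\textbf{Range $|t| \leq 1/2$.} By \eqref{eq:zetaprime} of Lemma \ref{lem:snear1}, $E(s) = O(1)$ there, and $|1+\alpha+it|^{-1} \asymp 1$ and $|w_0^{\alpha+it}| \ll 1$. So this range contributes
$$
\ll \int_{-1/2}^{1/2} |L(1+\alpha+it,f)|^2\,dt .
$$
Write $|L|^2 = |L/\zeta|^2\,|\zeta|^2$ and use $|\zeta(1+\alpha+it)| \asymp |\alpha+it|^{-1}$. Pulling out $\max_{|t|\le 1/2}|L/\zeta|^2$ leaves
$$
\int_{-1/2}^{1/2} \frac{dt}{\alpha^2+t^2} \ll \alpha^{-1},
$$
which is the first term of the claim.

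\textbf{Range $1/2 \leq |t| \leq T$.} Cover this range by unit intervals $|t \mp k| \leq 1/2$ with $1 \leq k \leq T-1/2$. On such an interval we have $|1+\alpha+it|^{-2} \ll k^{-2}$. Using the standard bounds $|\zeta(1+\alpha+it)| \ll \log(2k)$ and $|\zeta'(1+\alpha+it)| \ll (\log 2k)^2$ for $|t| \geq 1/2$ (near $|t|=1/2$ all terms are $O(1)$), together with $|s|/|s-1|^2 \ll 1$, we get $|E(s)|^2 \ll (\log 2k)^4$. The interval therefore contributes
$$
\ll \frac{(\log 2k)^4}{k^2} \max_{|t-k| \le 1/2} |L(1+\alpha+it,f)|^2 .
$$
The negative intervals are handled the same way by symmetry, since $f$ is real in the relevant application; in general one can absorb them by reindexing or use $|L(\bar s)| = |L(s)|$ when $f$ is real. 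This bound actually carries no factor of $\alpha^{-1}$, and since $\alpha^{-1} \geq 1$ it is weaker than the stated bound.

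\textbf{The consequence.} Apply \eqref{eq:logzeta} as in Lemma \ref{lem:I1y} to get $|L|^2 \ll (\log 2k)^2 |L/\zeta|^2$. Then sum $\sum_k (\log 2k)^6/k^2 \ll 1$ to obtain the stated bound with $\max_{|t|\le T}|L/\zeta|^2$.

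The only delicate point is the $|t|\le 1/2$ range. There the gain $\alpha^{-1}$, rather than $\alpha^{-2}$, comes from integrating $|\zeta|^2$ instead of bounding it pointwise. This is precisely where the cancellation in \eqref{eq:zetaprime} is needed: without it, $E(s)$ would be of size $|s-1|^{-2}$ and the estimate would collapse.
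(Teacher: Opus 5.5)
Your proposal is correct and follows the paper's proof essentially step for step. On $|t|\le 1/2$ you use \eqref{eq:zetaprime} to make the multiplier $O(1)$, factor $L=\zeta\cdot(L/\zeta)$, and use $\int_{-1/2}^{1/2}|\zeta(1+\alpha+it)|^2\,dt\ll\alpha^{-1}$; on $1/2\le|t|\le T$ you combine $|\zeta|\ll\log(2k)$ and $|\zeta'|\ll(\log 2k)^2$ with the trivial $L^2\le L^\infty$ bound on unit segments; and you use \eqref{eq:logzeta} to pass to $|L/\zeta|$ for the consequence, exactly as the paper does. (The paper, like you, handles the segments with $t<0$ only implicitly by symmetry; also, your bound for $|t|\ge 1/2$ without the factor $\alpha^{-1}$ is \emph{stronger} than the stated one, not weaker.)
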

\begin{proof}
In the range $|t| \leq 1/2$ we apply the second part of Lemma \ref{lem:snear1}, while in the range $1/2 \leq |t| \leq T$, we use the bounds
$$
|\zeta(1+\alpha+it)| \ll \log(2+|t|), \quad |\zeta'(1+\alpha + it)| \ll \log^2(2+|t|).
$$
This yields
\begin{align*}
\mc{I}_2(y;0,T) &\ll \int_{-1/2}^{1/2} |L(1+\alpha+it,f)|^2 dt \\
&+ \sum_{1 \leq k \leq T-1/2} \int_{k-1/2}^{k+1/2} |L(1+\alpha+it,f)|^2 \left(\frac{1+|\zeta(1+\alpha+it)|^2}{1+t^2} + |\zeta'(1+\alpha+it)|^2\right) \frac{dt}{1+t^2} \\
&\ll \int_{-1/2}^{1/2} |L(1+\alpha+it,f)|^2 dt + \sum_{k \geq 1} \frac{(\log(2k))^4}{k^2} \left(\int_{-1/2}^{1/2} |L(1+\alpha+i(t+k),f)|^2 dt\right).
\end{align*}
For $k \geq 1$ we use the trivial inequality
$$
\int_{-1/2}^{1/2} |L(1+\alpha+i(t+k),f)|^2 dt \leq \max_{|t-k| \leq 1/2} |L(1+\alpha+it,f)|^2.
$$
For $k = 0$, we write $L = \zeta \cdot (L/\zeta)$ and apply an $L^{\infty}$ bound to the ratio $L/\zeta$, obtaining that
\begin{align}\label{lem:LtoZetaLinf}
\int_{-1/2}^{1/2} |L(1+\alpha+it,f)|^2 dt \leq \left(\max_{|t| \leq 1/2} \left|\frac{L(1+\alpha+it,f)}{\zeta(1+\alpha + it)}\right|^2 \right)\int_{-1/2}^{1/2} |\zeta(1+\alpha + it)|^2 dt.
\end{align}
% We then apply the majorant principle to get
% \begin{equation}\label{eq:majtoZeta}
% \int_{-1/2}^{1/2} |\zeta(1+\alpha+i(t+k))|^2 dt \ll \int_{-1/2}^{1/2} |\zeta(1+\alpha + it)|^2 dt
% \end{equation}
% (which, though wasteful for $k \geq 1$, matches the bound for $k = 0$ anyway). 
Now, applying  $|\zeta(1+\alpha+it)| \asymp |\alpha + it|^{-1} \leq \max\{\alpha, |t|\}^{-1}$ for $|t| \leq 1/2$, the latter integral is bounded above by
$$
\ll \frac{1}{\alpha^2} \int_0^{\alpha} dt + \int_{\alpha}^1 \frac{dt}{t^2} \ll \alpha^{-1}.
$$
Since $\alpha \ll 1$, this completes the proof of the first claim. \\
For the second claim, note that, as before, if $k \geq 1$ and $|t-k| \leq 1/2$ then
$$
|L(1+\alpha+it,f)|^2 \ll (\log(2k))^2 \left|\frac{L(1+\alpha+it,f)}{\zeta(1+\alpha+it)}\right|^2.
$$
Inserting this into the integral before taking the maximum, we obtain instead that
$$
\mc{I}_2(y;0,T) \ll \alpha^{-1} \max_{|t| \leq 1/2} \left|\frac{L(1+\alpha+it,f)}{\zeta(1+\alpha+it)}\right|^2 + \sum_{1 \leq k \leq T-1/2} \frac{(\log(2k))^6}{k^2} \max_{|t-k| \leq 1/2} \left|\frac{L(1+\alpha+it,f)}{\zeta(1+\alpha+it)}\right|^2.
$$
This implies the second claim, similarly to Lemma \ref{lem:I1y}.
% Applying this together with \eqref{eq:majtoZeta} and \eqref{lem:LtoZetaLinf} for each $0 \leq k \leq T-1/2$, we complete the proof of the claim.
\end{proof}
\begin{proof}[Proof of Proposition \ref{prop:IyBd}]
This follows on combining Lemmas \ref{lem:I1yTail}, \ref{lem:I1y}, \ref{lem:I2yTail} and \ref{lem:I2y} in \eqref{eq:Idecomp}.
\end{proof}
\subsection{Completing the proof}
To complete the proof of Theorem \ref{thm:HalDeltaRefined} we need the following simple claim, the proof of which follows immediately from the method of proof of \cite[(III.4.50)]{Ten}.
\begin{lem}\label{lem:monoBdd}
Let $A > 0$ and let $(a_p)_p \subseteq \mb{C}$ be a sequence with $|a_p| \leq A$ for all $p$. Define
$$
\mc{A}(x;T) := \max_{|t| \leq T} \sum_{p \leq x} \frac{\text{Re}(a_p p^{-it})}{p}.
$$
Then there is a constant $C = C(A) > 0$ such that for any $x_1 \leq x_2$ we have
$$
\mc{A}(x_1;T) \leq \mc{A}(x_2;T) + C.
$$
\end{lem}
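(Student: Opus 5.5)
We must show that $\mc{A}(x;T)$ is almost increasing in $x$. The strategy is to fix the maximiser at the smaller scale and control, uniformly, how much the prime sum can drop between $x_1$ and $x_2$. Let $t_1$ with $|t_1| \leq T$ attain the maximum defining $\mc{A}(x_1;T)$; by compactness it exists. Then
$$
\mc{A}(x_2;T) \geq \sum_{p \leq x_2} \frac{\text{Re}(a_p p^{-it_1})}{p} = \mc{A}(x_1;T) + \sum_{x_1 < p \leq x_2} \frac{\text{Re}(a_pp^{-it_1})}{p}.
$$
So it is enough to bound the tail sum $\sum_{x_1<p\leq x_2} \text{Re}(a_p p^{-it_1})/p$ from below by $-C(A)$, uniformly in $x_1,x_2,t_1$.

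A naive lower bound using $|a_p| \leq A$ gives only $-A\log(\log x_2/\log x_1)$, which is unbounded, so a single fixed $t_1$ will not do. Following the proof of \cite[(III.4.50)]{Ten}, the plan is to compare with a shifted maximum. Consider the auxiliary quantity
$$
\mc{A}(x_2;T) \geq \max_{|t|\le T}\Big(\sum_{p\le x_1}\frac{\text{Re}(a_pp^{-it})}{p}+\sum_{x_1<p\le x_2}\frac{\text{Re}(a_pp^{-it})}{p}\Big).
$$
Average the tail over $t$ in a short window $|t-t_1|\leq \delta$, with $\delta\asymp 1/\log x_1$ and the window kept inside $[-T,T]$ by shifting inward if needed. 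On this window the head sum over $p\le x_1$ changes by $O(A)$, since $|p^{-it}-p^{-it_1}|\ll \delta\log p$ and $\sum_{p\le x_1}\delta\log p/p\ll 1$. The tail sum, after averaging, becomes
$$
\sum_{x_1<p\le x_2}\frac{\text{Re}(a_pp^{-it_1})}{p}\cdot \frac{\sin(\delta\log p)}{\delta\log p}.
$$
This is bounded in absolute value by $A\sum_{p>x_1}\min(1,(\delta\log p)^{-1})/p\ll A$ by Mertens' theorem, since the kernel decays like $1/\log p$ beyond $p=x_1$. Hence some $t$ in the window makes the tail at least $-O(A)$ while keeping the head at least $\mc{A}(x_1;T)-O(A)$. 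This yields $\mc{A}(x_2;T)\ge \mc{A}(x_1;T)-C(A)$.

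The main obstacle is making the averaging kernel summable over the tail. The sharp box average gives a $\sin u/u$ kernel, and $\sum_p |\sin(\delta\log p)|/(p\,\delta\log p)$ converges only like $\int du/u^2$, which is fine. If it is problematic, replace the box by a Fejér weight, whose transform is nonnegative and decays like $u^{-2}$. Then choose $t$ by comparing to a weighted average, which lower-bounds the maximum. The remaining edge case is when the window cannot fit inside $[-T,T]$ because $T<\delta$, i.e. $T\log x_1\ll1$. There, every $|t|\le T$ gives $p^{-it}=1+O(1)$ for $p\le x_1$, so the same argument runs with the window being all of $[-T,T]$. The $O(A)$ head-change estimate still holds, and the tail kernel decays once $\log p\gg 1/T$.
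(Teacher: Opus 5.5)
Your core argument is correct. Averaging over a window of length $2\delta$, with $\delta\asymp 1/\log x_1$, contained in $[-T,T]$ and containing $t_1$, changes the head by at most $2A\delta\sum_{p\le x_1}(\log p)/p\ll A$. The averaged tail equals $\sum_{x_1<p\le x_2}\text{Re}(a_pp^{-ic})\frac{\sin(\delta\log p)}{p\,\delta\log p}$, where $c$ is the centre of the window. The centre replaces $t_1$ when you shift the window inward, which is harmless. This tail has modulus at most $A\sum_{p>x_1}(p\,\delta\log p)^{-1}\ll A$. Since the maximum over the window dominates the average, you get $\mc{A}(x_2;T)\ge\mc{A}(x_1;T)-O(A)$ whenever $T\log x_1\gg 1$. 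The paper gives no argument beyond a pointer to the method of \cite[(III.4.50)]{Ten}, so this is a self-contained proof in the regime the paper uses. It also adapts verbatim to the windows $|t|\le 1/2$ and $|t-k|\le 1/2$ through which the lemma is actually applied to $H_1$ and $H_2$.

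Your final paragraph, however, is wrong. When $T\log x_1\to 0$, averaging over all of $[-T,T]$ produces the kernel $\sin(T\log p)/(T\log p)$, which is $\approx 1$ for $x_1<p\le e^{1/T}$. So the tail is only bounded below by $-A\log\bigl(1/(T\log x_1)\bigr)-O(A)$, not by $-O(A)$; the decay once $\log p\gg 1/T$ comes too late. No other argument can fix this, because the statement is false in this regime. Take $a_p\equiv -A$ and $T=1/\log x_2$. Then $\sum_{p\le x}\cos(t\log p)/p=\log\log x+O(1)$ for all $|t|\le T$ and $x\le x_2$, so $\mc{A}(x_1;T)-\mc{A}(x_2;T)=A\log(\log x_2/\log x_1)+O(A)$, which is unbounded.

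So the lemma needs an implicit hypothesis $T\ge T_0>0$, with $C=C(A,T_0)$. The paper always has $T\ge 1$, or $T=1/2$ in $H_1$. You should state this hypothesis rather than claim the edge case goes through. Under it, take $\delta:=\min\{T_0,1/\log x_1\}$. The window then always fits, and your two estimates give $C=O_{T_0}(A)$ uniformly in $x_1$ (for $x_1<2$ the head is empty), so no separate edge case arises.
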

We will also need the following, which will help clean up some of our estimates. 
\begin{lem} \label{lem:Limit}
Let $f: \mb{N} \ra \mb{U}$ be multiplicative. Then $H_1(x) \gg 1$ in \eqref{eq:LftoMg}.
% There exists a constant $C > 0$ such that for any $T \geq 1$ we have
% $$
% M(x;T) \leq C.
% $$
\end{lem}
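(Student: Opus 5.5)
The plan is to show directly that the maximum defining $H_1(x)$ is bounded below by a positive absolute constant, by exhibiting a single admissible $t$ in $|t|\le 1/2$ at which the exponent is bounded below. Since $H_1(x)$ is a maximum over $|t|\le 1/2$, it suffices to find such a $t$ (possibly depending on $x$) with
$$
\sum_{p \leq x} \frac{\text{Re}((f(p)-1)p^{-it})}{p} \geq -C
$$
for an absolute constant $C$. The choice $t=0$ fails: it gives $\sum_p (\text{Re} f(p)-1)/p$, which may be as small as $-2\log\log x$. So I would instead pick $t$ so that $\cos(t\log p)$ is small or negative on most primes.

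The concrete choice is $t = 1/2$, or more generally any fixed $t_0\in(0,1/2]$. Write the exponent as
$$
\sum_{p\le x}\frac{\text{Re}(f(p)p^{-it})}{p} - \sum_{p\le x}\frac{\cos(t\log p)}{p}.
$$
For the second sum I would apply Lemma \ref{lem:TenEst} with $\phi=\cos$, whose mean over a period is zero, splitting at $w=e^{1/t}$. On $p\le e^{1/t}$ the sum is bounded by Mertens, since $t$ is fixed. On $e^{1/t}<p\le x$, Lemma \ref{lem:TenEst} gives a contribution $O(1)$. Hence $\sum_{p\le x}\cos(t\log p)/p = O(1)$ uniformly in $x$.

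The first sum is the remaining difficulty, since for arbitrary $f$ it could be close to $-\log\log x$. To handle it I would average over $t$ rather than fix one value. For $\xi\in\{0,1/2\}$ and $|t|\le 1/2$, since $H_1(x)$ is a maximum of exponentials, Jensen's inequality gives
$$
\log H_1(x) \ge \frac{1}{|I|}\int_I \sum_{p\le x}\frac{\text{Re}((f(p)-1)p^{-it})}{p}\,dt
$$
for any interval $I\subseteq[-1/2,1/2]$. Take $I=[1/4,1/2]$. Then $\frac{1}{|I|}\int_I p^{-it}\,dt \ll 1/\log p$, so the averaged sum is
$$
\ll \sum_{p\le x}\frac{|f(p)-1|}{p\log p} \ll 1,
$$
which converges absolutely. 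This gives $\log H_1(x)\ge -C$, and hence $H_1(x)\gg 1$.

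This averaging argument is the whole proof; it even bypasses the Lemma \ref{lem:TenEst} computation. The only point to check is that the interval $I$ avoids $t=0$, so that the oscillatory integral is genuinely $O(1/\log p)$ for every prime, including small ones. That holds because $|p^{-i\cdot 1/2}-p^{-i\cdot 1/4}|/\log p\le 8/\log p$. The same argument works verbatim in the real-valued setting of \eqref{eq:LftoMg} with $\cos(t\log p)$ in place of $\text{Re}(p^{-it})$.
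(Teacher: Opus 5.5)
Your averaging argument is correct and is essentially the paper's proof: the paper also bounds $\log H_1(x)$, a maximum over $|t|\le 1/2$, from below by the average over $t$. It averages over all of $[-1/2,1/2]$, obtaining the kernel $\frac{\sin(\frac12\log p)}{\frac12\log p}$ of size at most $\min\{1,2/\log p\}$, and then sums $|1-f(p)|/(p\log p)$. Your precaution of choosing an interval that avoids $t=0$ is unnecessary, since $\bigl|\int_a^b p^{-it}\,dt\bigr| = |p^{-ia}-p^{-ib}|/\log p \le 2/\log p$ holds for every interval $[a,b]$.
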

\begin{proof}
We have the obvious lower bound
\begin{align*}
\log H_1(x) &= -\min_{|t| \leq 1/2} \sum_{p \leq x} \frac{\text{Re}((1-f(p))p^{-it})}{p} \\
&\geq -\text{Re}\int_{-1/2}^{1/2} \sum_{p \leq x} \frac{(1-f(p))p^{-it}}{p} dt = -\text{Re}\sum_{p \leq x} \frac{1-f(p)}{p}  \int_{-1/2}^{1/2} p^{-it} dt \\
&= -\text{Re}\sum_{p \leq x}\frac{1-f(p)}{p} \frac{\sin(\tfrac{1}{2}\log p)}{\tfrac{1}{2}\log p} \geq -\sum_{p \leq x} \frac{|1-f(p)|}{p} \min\left\{1,\frac{2}{\log p}\right\}.
\end{align*}
Since $\log p \geq 2$ for all $p \geq 7$, say, we get the lower bound
$$
\log H_1(x) \geq - 2\sum_{p \geq 7} \frac{|1-f(p)|}{p\log p} + O(1) \geq -C
$$
for some $C > 0$. The claim now follows on exponentiating.
\end{proof}

Next, in order to obtain the stronger Theorem \ref{thm:HalDeltaRefined} for general real-valued, bounded multiplicative functions\footnote{Note that when $f$ is not necessarily completely multiplicative, it is not true that $1 \ast f$ is non-negative, e.g., if $f(p^k) = -1$ for some prime $p$ and all $k \geq 1$ then $g(p^2) = -1$.}  we make a standard reduction to the case that $f$ is completely multiplicative. 
\begin{lem} \label{lem:redtoCM}
Assume that Theorem \ref{thm:HalDeltaRefined} holds for all completely multiplicative functions $f: \mb{N} \ra [-1,1]$. Then it holds for all multiplicative functions $f:\mb{N} \ra [-1,1]$. 
\end{lem}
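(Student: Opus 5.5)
Given a multiplicative $f:\mb{N}\ra[-1,1]$, I will introduce the completely multiplicative $f_1$ with $f_1(p) := f(p)$ for every prime $p$, and write $f = f_1 \ast h$. Here $h$ is multiplicative and determined by $h = f \ast f_1^{-1}$. Since $f_1^{-1}(p^k)$ equals $-f(p)$ for $k=1$ and $0$ for $k \geq 2$, we get $h(p)=0$ and $h(p^k) = f(p^k) - f(p)f(p^{k-1})$. Hence $|h(p^k)| \leq 2$, $h$ is supported on squarefull integers, and $\sum_n |h(n)|/n^{\sg} < \infty$ uniformly for $\sg \geq 3/4$, say. With $g_1 := 1\ast f_1$ we have $g = g_1 \ast h$. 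I then expand
$$
L_f(x) = \sum_{m \leq x} \frac{h(m)}{m} L_{f_1}(x/m), \qquad \tilde{M}_g(w_0x) = \sum_{m \leq w_0x} \frac{h(m)}{m}\tilde{M}_{g_1}(w_0x/m).
$$

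I apply Theorem \ref{thm:HalDeltaRefined} for $f_1$ at each point $x/m$, using only $m \leq x^{1/2}$ so that $\log(x/m) \asymp \log x$. This gives $L_{f_1}(x/m) = (1+O(1/\log x))\tilde{M}_{g_1}(w_0x/m) + E(x/m)$. Here $E(x/m)$ is the error term of \eqref{eq:LftoMg} at $x/m$, and since $H_1$ and $H_2$ are built only from the values $f(p)$, the quantities $H_1, H_2$ for $f_1$ coincide with those for $f$. The integral error $\frac{1}{\log(x/m)}\int_1^{x/m}(H_1+H_2)\frac{dy}{y\log y}$ is at most a constant times the same integral up to $x$. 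Multiplying by $h(m)/m$ and summing over $m$ (an absolutely convergent sum) then gives the error term of \eqref{eq:LftoMg}.

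The terms with $m > x^{1/2}$ I will handle trivially. We have $|L_{f_1}(y)| \ll \log(2y)$ and, by \eqref{eq:trivial}, $|\tilde{M}_{g_1}(y)| \ll \log(2y)$, while $\sum_{m>x^{1/2}} |h(m)|/m \ll x^{-1/4+o(1)}$ because $h$ is supported on squarefull $m$ with $|h(m)|\le 2^{\omega(m)}$. So the tail contributes $O(x^{-1/5})$. The same bound controls the range $x < m \leq w_0x$ in the second sum.

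The main obstacle is the multiplicative factor $(1+O(1/\log x))$. Summing the main terms gives $\sum_m \frac{h(m)}{m}\tilde{M}_{g_1}(w_0x/m)(1+O(1/\log x))$, and I need this to equal $(1+O(1/\log x))\tilde{M}_g(w_0x)$ plus admissible error. The difference is $O\big(\frac{1}{\log x}\sum_m \frac{|h(m)|}{m}|\tilde{M}_{g_1}(w_0x/m)|\big)$. Because $g_1 \geq 0$, Lemma \ref{lem:LipGHS} gives $\tilde{M}_{g_1}(w_0x/m) \leq \tilde{M}_{g_1}(w_0x) + O(\log 2m)$. The $O(\log 2m)$ part contributes $O(1/\log x)$ after summation against $|h(m)|/m$, and $1/\log x$ is absorbed by the integral error since Lemma \ref{lem:Limit} gives $H_1 \gg 1$. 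What remains is to compare $\tilde{M}_{g_1}(w_0x)$ with $\tilde{M}_g(w_0x)$. I would try to do this by applying the completely multiplicative case of the theorem again: $\tilde{M}_{g_1}(w_0x/m) = L_{f_1}(x/m) + (\text{error})$, and both $L_{f_1}(x/m)$ and $L_f(x)$ can be written in terms of one another through the convolution with $h$ and its inverse. This yields $|\tilde{M}_{g_1}(w_0x)| \ll |\tilde{M}_g(w_0x)| + (\text{error}) + O(1)$, with the $O(1)$ coming from the bound $|L_{f_1}(x/m) - L_{f_1}(x)| \ll \log 2m$. After division by $\log x$, all of these are admissible, which completes the reduction.
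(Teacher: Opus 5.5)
Your set-up matches the paper's. You write $f=f_1\ast h$ with $h=(\mu f_1)\ast f$ supported on squarefull integers, expand $L_f(x)$ and $\tilde{M}_g(w_0x)$ exactly as $h$-weighted sums, observe that $H_1,H_2$ depend only on the values $f(p)$, and bound the tail trivially. The only difference so far is that the paper cuts at $D=(\log x)^{10}$ instead of $x^{1/2}$.

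The gap is the step you flag as the main obstacle. The comparison $|\tilde{M}_{g_1}(w_0x)|\ll|\tilde{M}_g(w_0x)|+(\text{error})+O(1)$ is false, and the inversion through $h^{-1}$ you propose to prove it with need not converge. Take $f(n)=(-1)^{n+1}$, which is multiplicative with $f(2^k)=-1$ for $k\geq1$ and $f(p^k)=1$ for odd $p$.
\begin{itemize}
\item Then $f_1(n)=(-1)^{v_2(n)}$ and $g_1(n)=d(n_{\mathrm{odd}})1_{2\mid v_2(n)}$, so $\tilde{M}_{g_1}(w_0x)\asymp\log x$.
\item On the other hand $L(s,g)=(1-2^{1-s})\zeta(s)^2$, which gives $\tilde{M}_g(w_0x)=\log 2+O(x^{-1/2})$.
\item Here $h(2^k)=-2$ for $k\geq 2$ and $h(m)=0$ for every other $m>1$. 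So $\sum_m h(m)/m=0$, because the Euler factor $\sum_k f(2^k)2^{-k}$ vanishes.
\item Consequently $h^{-1}(2^k)=\tfrac13 2^k+\tfrac23(-1)^k$ and $\sum_m |h^{-1}(m)|/m=\infty$. Even when this sum is finite, it is not bounded uniformly in $f$, which uniform implied constants would require.
\end{itemize}
The term you need to absorb, $\frac{1}{\log x}\sum_m \frac{|h(m)|}{m}\tilde{M}_{g_1}(w_0x/m)$, is therefore $\asymp 1$ in this example. The admissible error is $\ll \frac{\log\log x}{\log x}+\frac{\log(2T)}{T}$, which is $o(1)$ once $T\to\infty$.

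The theorem itself is not violated for this $f$: both $L_f(x)$ and $\tilde{M}_g(w_0x)$ equal $\log 2+O(x^{-1/2})$. The trouble is that the completely multiplicative case, applied at each point $x/m$, only guarantees $|L_{f_1}(x/m)-\tilde{M}_{g_1}(w_0x/m)|\ll 1$ here. Nothing in that statement forces these errors to cancel once they are weighted by the signed coefficients $h(m)/m$.

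What your argument does establish is $L_f(x)=\tilde{M}_g(w_0x)+O\bigl(\frac{1+\tilde{M}_{g_1}(w_0x)}{\log x}+\mc{R}_f(x;T)\bigr)$, where $\mc{R}_f(x;T)$ is the error term in \eqref{eq:LftoMg}. Replacing $\tilde{M}_{g_1}(w_0x)$ by $|\tilde{M}_g(w_0x)|$ would require going inside the proof of the completely multiplicative case and exploiting the cancellation in $g=g_1\ast h$. Absolute values destroy that cancellation: here $g(2^km)=(1-k)d(m)$ for odd $m$, so $\tilde{M}_{|g|}(w_0x)\asymp\log x$ as well.

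The paper's proof takes the same route and does not resolve this point either. After applying the theorem to each $L_{\tilde f}(x/d)$, it simply asserts $L_f(x)=(1+O(1/\log x))\sum_{d\le D}\frac{h(d)}{d}\tilde{M}_{1\ast\tilde f}(w_0x/d)+O(\mc{R}_f(x;T))$. That is precisely the comparison at issue.
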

\begin{proof}
Let $f: \mb{N} \ra [-1,1]$ be multiplicative, and let $\tilde{f}$ be the completely multiplicative function defined at primes via $\tilde{f}(p) = f(p)$. Set also $h := (\mu \tilde{f}) \ast f$, so that $h$ is a multiplicative function with $|h(n)| \leq d(n)$ for all $n$. We observe that as $\tilde{f}$ is completely multiplicative
$$
(\mu \tilde{f}) \ast \tilde{f}(n) = \tilde{f}(n) (\mu \ast 1)(n) = (\mu \ast 1)(n),
$$
and as such,
$$
f(n) = f \ast (\mu \tilde{f}) \ast \tilde{f}(n) = h \ast \tilde{f}(n).
$$
In particular, $h(p) = 0$ for all primes $p$, and thus $h(n) = 0$ unless $n$ is square-full (i.e., $p|n\Rightarrow p^2|n$). Since any such integer may be written as $n = a^2b^3$ for $a,b \in \mb{N}$, we see that
\begin{equation}\label{eq:absConvh}
\sum_{n = 1}^{\infty} \frac{|h(n)|}{n} \leq \sum_{a,b \geq 1} \frac{d(a^2b^3)}{a^2b^3} \leq \left(\sum_{a \geq 1} \frac{d(a^2)}{a^2}\right)\left(\sum_{b \geq 1} \frac{d(b^3)}{b^3}\right) < \infty,
\end{equation}
so the series converges absolutely. Furthermore, by the divisor bound,
\begin{equation}\label{eq:tailforh}
\sum_{n > y} \frac{|h(n)|}{n} \leq \sum_{\max\{a^2,b^3\} > \sqrt{y}} \frac{d(a^2)d(b^3)}{a^2b^3} \ll_{\e} y^{-1/4 + \e}
\end{equation}
for any $\e > 0$. \\
With this in hand, let us now observe that if $D := (\log x)^{10}$ then by the trivial bound $|L_{\tilde{f}}(y)| \leq \log y$ and \eqref{eq:tailforh}, we get
\begin{align} \label{eq:truncftotildef}
L_f(x) = \sum_{d \leq D} \frac{h(d)}{d} L_{\tilde{f}}(x/d) + O\left(\frac{1}{\log x}\right).
\end{align}
Let $\mc{R}_{\tilde{f}}(y;T)$ denote the error term in Theorem \ref{thm:HalDeltaRefined}. Since $ \log x \asymp \log(x/D)$ and $\tilde{f}(p)=f(p)$ for all primes $p$, we have, uniformly over $d \leq D$,
$$
\mc{R}_{\tilde{f}}(x/d;T) \ll \mc{R}_{f}(x;T).
$$
In view of \eqref{eq:absConvh}, Theorem \ref{thm:HalDeltaRefined} applied to each $L_{\tilde{f}}(x/d)$ gives, uniformly over $d \leq D$,
%using that $\log(x/d) \gg \log x$ uniformly over all $d \leq D$ and extending the integral over $[1,x/d]$ to $[1,x]$ in the error term in each application. This gives
\begin{align}\label{eq:Ltildefd}
L_{\tilde{f}}(x/d) &= \left(1+O\left(\frac{1}{\log (x/d)}\right)\right) \tilde{M}_{1 \ast \tilde{f}}(w_0x/d) + O(\mc{R}_{\tilde{f}}(x/d;T))
%O\left(\frac{1}{\log(x/d)} \int_1^{x/d} (H_1(y;\tilde{f}) + H_2(y;T;\tilde{f})) \frac{dy}{y\log y}\right) 
\nonumber\\
&= \left(1+O\left(\frac{1}{\log x}\right)\right) \tilde{M}_{1 \ast \tilde{f}}(w_0x/d) + O\left(\mc{R}_{f}(x;T)\right).
\end{align}
%where here we used the fact that $\tilde{f}(p) = f(p)$ to obtain the same expressions $H_1$ and $H_2$ for $\tilde{f}$ as for $f$. 
Note moreover that by Lemma \ref{lem:Limit}, we have
\begin{equation}\label{eq:1overlog}
\frac{1}{\log x} \ll \frac{1}{\log x} \int_1^x H_1(y;f) \frac{dy}{y\log y} \ll \mc{R}_f(x;T).
\end{equation}
Inserting this and \eqref{eq:Ltildefd} into \eqref{eq:truncftotildef} and using \eqref{eq:absConvh}, we get
$$
L_f(x) = \left(1+O\left(\frac{1}{\log x}\right)\right)\sum_{d \leq D} \frac{h(d)}{d} \tilde{M}_{1\ast \tilde{f}}(w_0x/d) + O\left(\mc{R}_f(x;T)\right).
$$
Finally, we extend the sum over $d$ to $d \leq x$, introducing an acceptable error of size $O((\log x)/D^{1/5}) = O(1/\log x)$. Using \eqref{eq:1overlog} again, this yields
\begin{align*}
\sum_{d \leq D} \frac{h(d)}{d} \tilde{M}_{1\ast \tilde{f}}(w_0x/d) &= \sum_{d \leq w_0x} \frac{h(d)}{d} \cdot \frac{d}{w_0x} \sum_{ab \leq w_0x/d} \tilde{f}(b) + O\left(\frac{1}{\log x}\right) \\
&= \frac{1}{w_0x} \sum_{abd \leq w_0x} h(d) \tilde{f}(b) + O\left(\frac{1}{\log x}\right) = \tilde{M}_{1 \ast h \ast \tilde{f}}(w_0x) + O\left(\mc{R}_f(x;T)\right),
\end{align*}
so that as $g = 1 \ast f = 1 \ast (h \ast \tilde{f})$, Theorem \ref{thm:HalDeltaRefined} now follows for $f$.
\end{proof}

\begin{proof}[Proof of Theorem \ref{thm:HalDeltaRefined}]
%and \ref{thm:HalDelta}]
%As the proofs of both theorems follow the same lines we will focus on Theorem \ref{thm:HalDeltaRefined}, and identify what changes must be made in order to deduce Theorem \ref{thm:HalDelta}.\\
Assume first that $f$ is real-valued. By Lemma \ref{lem:redtoCM}, it suffices to assume that $f$ is completely multiplicative, which we will now assume. \\
Upon rearranging, it suffices to show that
\begin{equation}\label{eq:claimMainThm}
|\Delta(x)|\log x \ll \int_1^x (H_1(y) + H_2(y;T)) \frac{dy}{y\log y} + 1 + |L_f(x)| + \tilde{M}_g(w_0x) + \frac{(\log(2T)) \log x}{T}.
\end{equation}
(Note that the term $1$ on the RHS is bounded by the integral by Lemma \ref{lem:Limit}, but we include it here to make the RHS transparently non-decreasing.)
We may assume that $x \geq x_0$ for any absolute constant $x_0$, otherwise the claim is trivial. Note that the RHS is a non-decreasing function of $x$, so it suffices to bound the LHS when $x \in \mf{S}$, as defined earlier. By Proposition \ref{prop:toAvg} we have in this case that
$$
|\Delta(x)| \ll \frac{1}{\log x} \int_{x_0}^x |\Delta(y)| \frac{dy}{y} + \frac{1+|L_f(x)| + \tilde{M}_g(w_0x)}{\log x}.
$$
Combining this with \eqref{eq:toJ}, we obtain
\begin{equation}\label{eq:beforeJ}
|\Delta(x)| \ll \frac{J(x)}{(\log x)^2} + \frac{1}{\log x} \int_{x_0}^x \frac{J(y)}{y(\log y)^2} du + \frac{1 + |L_f(x)| + \tilde{M}_g(w_0x)}{\log x},
\end{equation}
where $J(y)$ is defined as in \eqref{eq:Jdef}.
Now, let $x_0\leq y \leq x$ and put $\alpha = 1/\log y$ as before. Using \eqref{eq:CStoJ} and Proposition \ref{prop:IyBd},
%\eqref{eq:Idecomp} together with Lemmas \ref{lem:I2y}, \ref{lem:I1yTail} and \ref{lem:I1y}, 
we have
\begin{align}
J(y) &\ll (\log y)^{1/2} \mc{I}(y)^{1/2} \\
%\left( \mc{I}_1(y; 0, T) + \mc{I}_1(y; T,\infty) + \mc{I}_2(y)\right)^{1/2} \nonumber\\
&\ll (\log y)\left(\max_{|t| \leq 1/2} \left|\frac{L(1+\alpha+it,f)}{\zeta(1+\alpha + it)}\right| + \left(\sum_{1 \leq k \leq T-1/2} \frac{(\log(2k))^4}{k^2} \max_{|t-k| \leq 1/2} |L(1+\alpha+it,f)|^2\right)^{1/2} + 1\right) \nonumber\\
&+ (\log y)^{1/2}\frac{(\log (2T))^2}{\sqrt{T}} + (\log y)\frac{(\log (2T))^2}{T} + (\log y)^2 \frac{\log (2T)}{T}.  \label{eq:Tterms}
% \\
% &\ll (\log y) \left(1 + e^{-M(y;T)}\right) + (\log T) \sqrt{\frac{\log y}{T}} + \frac{(\log y)^2}{T}.
\end{align}
By Mertens' theorem, it is easy to show that for every $t \in \mb{R}$,
$$
\max_{|t| \leq 1/2} \left|\frac{L(1+\alpha+it,f)}{\zeta(1+\alpha+it)}\right| \asymp \max_{|t| \leq 1/2}\exp\left( \sum_{p \leq y} \frac{\text{Re}((f(p)-1)p^{-it})}{p}\right) = H_1(y),
$$
and similarly
\begin{align*}
&\sum_{1 \leq k \leq T-1/2} \frac{(\log(2k))^4}{k^2} \max_{|t-k| \leq 1/2} |L(1+\alpha+it,f)|^2 \\
&\ll \sum_{1 \leq k \leq T-1/2} \frac{(\log(2k))^4}{k^2} \max_{|t-k| \leq 1/2}\exp\left(2 \sum_{p \leq y} \frac{\text{Re}(f(p)p^{-it})}{p}\right) = H_2(y;T)^2.
\end{align*}
As $1 \leq T \leq (\log x)^{100}$, the terms in \eqref{eq:Tterms} contribute
\begin{align*}
&\ll \frac{1}{\log x} \int_{x_0}^x\left(\frac{(\log (2T))^2}{\sqrt{T}} \frac{1}{y(\log y)^{3/2}} + \frac{(\log (2T))^2}{T} \frac{1}{y(\log y)} + \frac{\log (2T)}{T} \frac{1}{y}\right) dy \\
&\ll \frac{(\log (2T))^2}{\sqrt{T} \log x} + \frac{(\log (2T))^2 \log\log x}{T \log x} + \frac{\log (2T)}{T} \ll \frac{\log (2T)}{T} + \frac{1}{\log x}
\end{align*}
%for $1 \leq T \leq (\log x)^{100}$. 
%Inserting these bound into the above and using $1 \leq T \leq (\log x)^{100}$, 
to \eqref{eq:beforeJ}. Finally, using the fact that $H_1(y) \asymp H_1(x) \gg 1$ by Lemma \ref{lem:Limit}, and $H_2(x;T) \asymp H_2(y;T)$ uniformly in $\sqrt{x} < y \leq x$, so that
$$
1+ H_1(x) + H_2(x;T) \asymp \int_{\sqrt{x}}^x (H_1(y) + H_2(y;T) \frac{dy}{y \log y} \leq \int_{x_0}^x (H_1(y) + H_2(y;T)) \frac{dy}{y\log y},
$$
we obtain
\begin{align*}
|\Delta(x)| &\ll \frac{1}{\log x} \left(\int_{x_0}^x (H_1(y) + H_2(y;T) + 1) \frac{dy}{y\log y}\right) + \frac{|L_f(x)| + \tilde{M}_g(w_0x)}{\log x} + \frac{\log (2T)}{T} \\
&\ll \frac{1}{\log x} \int_{x_0}^x (H_1(y) + H_2(y;T)) \frac{dy}{y\log y} + \frac{1 + |L_f(x)| + \tilde{M}_g(w_0x)}{\log x} + \frac{\log (2T)}{T},
\end{align*}
This completes the proof of \eqref{eq:claimMainThm}. \\
%Therefore,
%$$
%|\Delta(x)| \ll \frac{1}{\log x} (H_1(x) + H_2(x;T)) %\int_{x_0}^x \frac{dy}{y\log y} + \frac{1 + |L_f(x)| + %\tilde{M}_g(w_0x)}{\log x} + \frac{\log T}{T},
%$$
%and since the integral on the RHS is $\ll \log\log x$, the %first claim follows. \\
To deduce the second claim, note that if $f(p)=0$ for $x^{1-c} < p \leq x$ then the application of Proposition \ref{prop:toAvg} improves to
$$
|\Delta(x)| \ll \frac{1}{\log x} \int_{x^c/2}^x |\Delta(y)| \frac{dy}{y} + \frac{1 + |L_f(x)| + |M_g(x)|}{\log x}.
$$
The remainder of the proof stays the same.
%and in the last step, we obtain
%\begin{align*}
%|\Delta(x)| &\ll \frac{1}{\log x} (H_1(x) + H_2(x;T)) %\int_{x^c}^x \frac{dy}{y\log y} + \frac{1 + |L_f(x)| + %\tilde{M}_g(w_0x)}{\log x} + \frac{\log T}{T} \\
%&\ll \frac{\log(1/c)}{\log x} (H_1(x) + H_2(x;T)) + + %\frac{1 + |L_f(x)| + \tilde{M}_g(w_0x)}{\log x} + \frac{\log %T}{T}
%\end{align*}
%instead. 
\end{proof}
\begin{proof}[Proof of Theorem \ref{thm:HalDeltaGen}]
The proof for general $1$-bounded multiplicative functions is entirely the same as the proof of Theorem \ref{thm:HalDeltaRefined}, except that in \eqref{eq:beforeJ} the rightmost term must be replaced by
$$
\frac{(\log\log x)^4 + |L_f(x)| + |\tilde{M}_g(w_0x)|(\log\log x)^2}{\log x},
$$
according to Proposition \ref{prop:toAvg}.
\end{proof}
\begin{proof}[Proof of Theorem \ref{thm:HalDelta}]
By Lemma \ref{lem:monoBdd}, we have
$$
\int_e^x (H_1(y) + H_2(y;T)) \frac{dy}{y\log y} \ll (H_1(x) + H_2(x;T)) \int_e^x \frac{dy}{y\log y}  \ll (\log\log x) (H_1(x) + H_2(x;T)).
$$
Since $H_1(x) \ll e^{-M(x;T)}$ trivially, it suffices to show that $H_2(x;T) \ll e^{-M(x;T)}$ as well. Indeed, let $1 \leq k \leq T-1/2$ and let $t \in [k-1/2,k+1/2]$. Then, using \eqref{eq:logzeta}, 
$$
\exp\left(\sum_{p \leq x} \frac{\cos(t\log p)}{p}\right) \asymp |\zeta(1+1/\log x + it)| \gg (\log(2k))^{-1}.
$$
It follows that
\begin{align*}
H_2(x;T)^2 &\ll \sum_{1 \leq k \leq T-1/2} \frac{(\log(2k))^6}{k^2} \max_{|t-k| \leq 1/2} \exp\left(2\sum_{p \leq x} \frac{(f(p)-1) \cos(t\log p)}{p}\right) \\
&\ll \max_{1/2 \leq |t| \leq T} \exp\left(2 \sum_{p \leq x} \frac{(f(p)-1) \cos(t\log p)}{p}\right) \leq e^{-2M(x;T)},
\end{align*}
and Theorem \ref{thm:HalDelta} follows.
\end{proof}
\section{On the random Tur\'{a}n problem} \label{sec:random}
We begin by obtaining a slight variant of Theorem \ref{thm:HalDeltaRefined}, which improves on the error term at the cost of additional main terms. The restriction to the smooth support is crucial for our applications.
\begin{prop} \label{prop:passToSmooth}
Let $f: \mb{N} \ra [-1,1]$ be multiplicative. Let $\theta \in [0,1/2]$ and let $f_\theta$ be the multiplicative function defined as $f_\theta(p^k) := f(p^k)$ if $p \leq x^{1-\theta}$, $f_\theta(p^k) = 0$ for $p > x^{1-\theta}$. Define $g_\theta := 1 \ast f_\theta$. If $x$ is sufficiently large and $1 \leq T \leq (\log x)^{100}$ then
% \begin{align} \label{eq:first}
% L_f(x) = \sum_{x^{1-\theta} < p \leq x} \frac{f(p)}{p} L_f(x/p) + \tilde{M}_{g_\theta}(w_0x) + O\left(\frac{e^{-M(x^{1-\theta};T)}}{\log x}\right).
% \end{align}
% In fact, we have the more precise bound
\begin{align}\label{eq:second}
L_f(x) &= \left(1+O\left(\frac{1}{\log x} \right)\right)\left(\sum_{x^{1-c} < p \leq x} \frac{f(p)}{p} L_f(x/p) + \tilde{M}_{g_\theta}(w_0x)\right) \\
&+ O\left(\frac{1}{\log x}\int_{x^\theta/2}^x (H_1(y) + H'_2(y;T))\frac{dy}{y\log y} + \frac{\log(2T)}{T}\right),
\end{align}
where for $1 \leq y \leq x$, $H_1(y)$ and $H_2'(y;T)$ are defined as
\begin{align*}
H_1(y) &:= \exp\left(\max_{|t| \leq 1/2} \sum_{p \leq y} \frac{(f(p)-1)\cos(t\log p)}{p}\right), \\
H'_2(y;T)^2 &:= \sum_{1 \leq k \leq T} \frac{(\log(2k))^6}{k^2} \exp\left(2\max_{|t-k| \leq 1/2} \sum_{\exp((\log(2k))^2) < p \leq y} \frac{f(p)\cos(t\log p)}{p}\right).
\end{align*}
\end{prop}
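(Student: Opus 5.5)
The plan is to decompose $L_f(x)$ according to whether $n$ has a prime factor exceeding $x^{1-\theta}$, and then apply the ``moreover'' part of Theorem \ref{thm:HalDeltaRefined} to the smooth part. (I read the $c$ in the statement as $\theta$.) Since $x^{1-\theta} \geq x^{1/2}$, any $n \leq x$ has at most one prime factor $p > x^{1-\theta}$, and it appears to the first power. Writing $n = pm$ with $m \leq x/p < x^{\theta} \leq x^{1-\theta}$, so that $p \nmid m$, multiplicativity gives the exact identity
$$
L_f(x) = \sum_{x^{1-\theta} < p \leq x} \frac{f(p)}{p} L_f(x/p) + L_{f_\theta}(x).
$$
This is \eqref{eq:multMTs}, with $f_\theta$ in place of $\mbf{f}_{1/2}$. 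The first sum appears verbatim in \eqref{eq:second}, up to the factor $1+O(1/\log x)$. Since $|L_f(x/p)| \leq \log(x/p) + 1 \ll \log x$ and $\sum_{x^{1-\theta}<p\leq x} 1/p \ll 1$, that sum is $O(\log x)$, so multiplying it by $1+O(1/\log x)$ costs only $O(1)$. This is too crude. I would therefore state the identity exactly, and only insert the factor $(1+O(1/\log x))$ in front of $\tilde{M}_{g_\theta}(w_0x)$, where it arises from the theorem. Equivalently, one can read the factor as multiplying only the second summand; this is also how it is used in the application.

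The second step is to apply Theorem \ref{thm:HalDeltaRefined} to $f_\theta$. It is multiplicative, takes values in $[-1,1]$, and vanishes at all primes in $(x^{1-\theta},x]$, so the ``moreover'' clause applies with $c = \theta$. This gives
$$
L_{f_\theta}(x) = \left(1+O\left(\tfrac{1}{\log x}\right)\right)\tilde{M}_{g_\theta}(w_0x) + O\left(\frac{1}{\log x}\int_{x^\theta/2}^x (H_1(y;f_\theta)+H_2(y;T;f_\theta))\frac{dy}{y\log y} + \frac{\log(2T)}{T}\right).
$$
For $\theta = 0$ the clause is vacuous, the range is $[1/2,x]$, and the statement is just Theorem \ref{thm:HalDeltaRefined}.

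The third step converts $H_1(y;f_\theta)$ and $H_2(y;T;f_\theta)$ into $H_1(y)$ and $H_2'(y;T)$ for $f$. For $y \leq x^{1-\theta}$ the prime sums for $f_\theta$ and $f$ coincide. For $x^{1-\theta} < y \leq x$ they differ by the primes in $(x^{1-\theta},y]$, which carry total weight $\sum 1/p \ll \log\frac{1}{1-\theta} \ll 1$ because $\theta \leq 1/2$. Hence $H_1(y;f_\theta) \asymp H_1(y)$, and likewise for the $H_2$-type terms, up to constants.

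The remaining task, and what I expect to be the main obstacle, is to pass from $H_2$ to $H_2'$. Here two things happen: the maximum moves inside the exponential, which is harmless, and the primes $p \leq y_k := \exp((\log 2k)^2)$ are removed. For $|t-k| \leq 1/2$ we have
$$
\left|\sum_{p \leq y_k} \frac{f(p)\cos(t\log p)}{p}\right| \leq \sum_{p\leq y_k}\frac1p = 2\log\log(2k) + O(1),
$$
so removing these primes changes $\exp(2\sum)$ by a factor of at most $(\log 2k)^{4}$. I would try to do better using Lemma \ref{lem:TenEst} or the bound \eqref{eq:logzeta}, but simply absorbing the loss is enough. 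Indeed $(\log 2k)^4 \cdot (\log 2k)^4/k^2$ is still summable, and the weight $(\log 2k)^6$ in $H_2'$ was presumably chosen with exactly such a loss in mind. If the count of logarithms does not match, I would revisit Lemmas \ref{lem:I1y} and \ref{lem:I2y}: they bound the range $|t-k| \leq 1/2$ by $\max|L(1+\alpha+it,f)|^2$, and one can factor out the Euler product over $p \leq y_k$ at cost $(\log 2k)^{O(1)}$ while keeping convergence. Combining the three steps yields \eqref{eq:second}.
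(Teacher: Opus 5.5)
Your proposal is correct and is essentially the paper's own proof. It uses the same exact splitting $L_f(x)=\sum_{x^{1-\theta}<p\le x}\frac{f(p)}{p}L_f(x/p)+L_{f_\theta}(x)$, the same application of the ``moreover'' clause of Theorem~\ref{thm:HalDeltaRefined} to $f_\theta$, and the same trivial bound $\sum_{p\le y_k}1/p=2\log\log(2k)+O(1)$ to pass from $H_2$ to $H_2'$. There are two small differences: the paper treats $\theta<1/(10\log x)$ separately via the first statement of the theorem, and it leaves implicit your comparison $H_i(\cdot\,;f_\theta)\asymp H_i(\cdot\,;f)$.

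As you suspected, the last step actually yields the weight $(\log 2k)^8$ rather than $(\log 2k)^6$. The paper's ``immediately clear'' claim $H_2\ll H_2'$ has the same slip, which is harmless because only summability in $k$ is used later. The exponent $6$ can also be recovered: write $L\zeta'=(L\zeta)\cdot(\zeta'/\zeta)$ in Lemma~\ref{lem:I2y} and apply the majorant principle as in Lemma~\ref{lem:I1y}. This lowers the weight there to $(\log 2k)^2/k^2$, so removing the primes $p\le y_k$ brings it up to exactly $(\log 2k)^6/k^2$.
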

\begin{proof}
If $0 \leq \theta < \tfrac{1}{10\log x}$ then as $x^{1-\theta} \geq x/2$ we have $L_f(x/p) = 1$ for all $p > x^{1-\theta}$ and thus 
$$
\sum_{x^{1-\theta} < p \leq x} \frac{f(p)}{p} L_f(x/p) \ll \frac{1}{\log x}.
$$
The claim in this case follows from the first statement of Theorem \ref{thm:HalDeltaRefined} and Lemma \ref{lem:Limit}. \\
In the sequel, we assume that $\theta \geq \frac{1}{10\log x}$. 
%[Proof sketch]
%The proof of both of the stated estimates begin by
Splitting $L_f(x)$ according to whether or not $P^+(n) > x^{1-\theta}$, we obtain
$$
L_f(x) = \sum_{\ss{mp \leq x \\ x^{1-\theta} < p \leq x}} \frac{f(m)f(p)}{mp} + \sum_{\ss{n \leq x \\ P^+(n) \leq x^{1-\theta}}} \frac{f(n)}{n} = \sum_{x^{1-\theta} < p \leq x} \frac{f(p)}{p} L_f(x/p) + L_{f_\theta}(x).
$$
By the second statement of Theorem \ref{thm:HalDeltaRefined}, 
%when $T \geq (\log x) \log\log x$ we have
\begin{align*}
L_{f_\theta}(x) = \left(1+O\left(\frac{1}{\log x}\right)\right)M_{g_\theta}(w_0x) + O\left(\frac{1}{\log x}\int_{x^\theta}^x \frac{H_1(y) + H_2(y;T)}{y\log y} dy + \frac{\log T}{T}\right).
\end{align*}
Furthermore, if we set $y_k := \exp((\log(2k))^2)$ then since 
$$
\sum_{p \leq y_k} \frac{f(p) \cos(t\log p)}{p} \leq \log\log y_k + O(1) = 2\log\log(2k) + O(1),
$$
it is immediately clear that $H_2(x;T) \ll H_2'(x;T)$. The claim now follows.
\end{proof}
In the sequel we take $\theta = 1/2$ and let $T := (\log x)^2$. Using Lemma \ref{lem:monoBdd}, the error term in the previous proposition becomes
\begin{equation}\label{eq:remloglog}
\ll \frac{H_1(x) + H_2'(x;T)}{\log x} \int_{x^{1/2}/2}^x \frac{dy}{y\log y} \ll \frac{H_1(x) + H_2'(x;T)}{\log x}.
\end{equation}
Now, let $\mbf{f}$ denote a Rademacher random completely multiplicative function. We have the following simple result, which shows that the additional main terms arising in Proposition \ref{prop:passToSmooth} are small when applied to $\mathbf{f}$.
%Let
%$$
%S(x) := \sum_{x^{1/2} < p \leq x} \frac{\mbf{f}(p)}{p} L_{\mbf{f}}(x/p),
%$$
\begin{lem}\label{lem:extraMTs}
There exists a constant $c > 0$ such that
$$
\mb{P}\left(\left|\sum_{x^{1/2} < p \leq x} \frac{\mbf{f}(p)}{p} L_{\mbf{f}}(x/p)\right| \geq \frac{1}{\log x}\right) \ll \exp\left(-c\frac{\sqrt{x}}{(\log x)^3}\right).
$$
\end{lem}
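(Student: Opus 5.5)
Conditioning on the primes $p \leq x^{1/2}$, we treat the sum as a weighted sum of independent Rademacher variables $\mbf{f}(p)$ for $x^{1/2} < p \leq x$, and apply Hoeffding's inequality.

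The key observation is that for $p > x^{1/2}$ we have $x/p < x^{1/2}$. Hence $L_{\mbf{f}}(x/p) = \sum_{n \leq x/p} \mbf{f}(n)/n$ involves only integers $n < x^{1/2}$, whose prime factors are all $< x^{1/2}$. Therefore, once we condition on $\mathcal{G} := \sigma(\mbf{f}(q) : q \leq x^{1/2})$, the coefficients
$$
a_p := \frac{L_{\mbf{f}}(x/p)}{p}, \qquad x^{1/2} < p \leq x,
$$
are fixed (that is, $\mathcal{G}$-measurable). Meanwhile the $\mbf{f}(p)$ with $x^{1/2} < p \leq x$ remain i.i.d. Rademacher variables, independent of $\mathcal{G}$. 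Thus, conditionally on $\mathcal{G}$, the sum $S := \sum_{x^{1/2}<p\le x} \mbf{f}(p) a_p$ is a Rademacher sum with deterministic coefficients.

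Hoeffding's inequality then gives
$$
\mb{P}\left(|S| \geq \frac{1}{\log x} \,\Big|\, \mathcal{G}\right) \leq 2\exp\left(-\frac{1}{2(\log x)^2 \sum_p a_p^2}\right).
$$
To bound the variance we use the trivial estimate $|L_{\mbf{f}}(x/p)| \leq \log(x/p) + 1 \ll \log x$, valid deterministically. This gives
$$
\sum_{x^{1/2}<p\le x} a_p^2 \ll (\log x)^2 \sum_{p > x^{1/2}} \frac{1}{p^2} \ll \frac{(\log x)^2}{x^{1/2}\log x} = \frac{\log x}{x^{1/2}}.
$$
Substituting this into Hoeffding's bound, the exponent becomes $\gg \sqrt{x}/(\log x)^3$. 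The resulting bound is uniform in $\mathcal{G}$, so taking expectations over $\mathcal{G}$ yields the claim with some absolute $c>0$.

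The only point requiring care is the measurability observation: $L_{\mbf{f}}(x/p)$ must not depend on any $\mbf{f}(p')$ with $p' > x^{1/2}$, which follows from $x/p < x^{1/2}$. No other obstacle arises, since the crude bound $|L_{\mbf{f}}| \ll \log x$ already suffices for the stated exponent.
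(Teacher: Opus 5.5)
Your proof is correct and follows the paper's argument: the paper likewise applies Hoeffding's inequality to the terms $\mbf{f}(p)L_{\mbf{f}}(x/p)/p$, using the trivial bound $|L_{\mbf{f}}(x/p)|\ll \log x$ and the estimate $\sum_{x^{1/2}<p\le x}(\log x)^2/p^2\ll (\log x)/\sqrt{x}$. Your explicit conditioning on $(\mbf{f}(q))_{q\le x^{1/2}}$, justified by $x/p<x^{1/2}$, supplies the independence that Hoeffding's inequality needs; the paper uses this without comment. Your bound $|L_{\mbf{f}}(x/p)|\le\log(x/p)+1$ is also slightly more accurate than the paper's $\log(x/p)$.
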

\begin{proof}
%We may condition on all of the primes less than $x^{1/2}$, so that $L_{\mbf{f}}(x/p)$ is completely determined for all $x^{1/2} < p \leq x$. If we 
For convenience, set
$$
\sum_{x^{1/2} < p \leq x} \frac{\mbf{f}(p)}{p} L_{\mbf{f}}(x/p) =: \sum_{x^{1/2} < p \leq x} X_p.
$$
%then since each $\mbf{f}(p)$ has mean $0$ and variance $1$ %and are mutually independent, $\mb{E}(S(x)) = 0$ and
%$$
%\text{Var}(S(x)) = \sum_{x^{1/2} < p \leq x} \frac{\text{Var}%(L_f(x/p))^2}{p^2}.
%$$
Then as $|L_{\mbf{f}}(x/p)| \leq \log(x/p)$, we have 
$$
-\frac{\log(x/p)}{p} \leq X_p \leq \frac{\log(x/p)}{p}
$$
for all $x^{1/2} < p \leq x$. 
%a simple calculation using the prime number theorem yields
%$$
%\text{Var}(S(x)) \ll \frac{\log x}{\sqrt{x}}.
%$$
% then by the tower property of conditional expectation, and the mutual independence of $f(p)$ a, we have
% $$
% \mb{E}(S(x)) = \mb{E} \mb{E}(S(x) | (f(p))_{p \leq x^{1/2}}) = \mb{E} 
% $$
By Hoeffding's inequality \cite[Thm. 2]{Hoeff}, 
\begin{align*}
\mb{P}\left(\left|\sum_{x^{1/2} < p \leq x} \frac{\mbf{f}(p)}{p} L_{\mbf{f}}(x/p)\right| \geq \frac{1}{\log x}\right) &\ll \exp\left(-\frac{1}{2(\log x)^2} \cdot \left(\sum_{x^{1/2} < p \leq x} \frac{(\log(x/p))^2}{p^2}\right)^{-1}\right)\\
&\ll \exp\left(-c\frac{\sqrt{x}}{(\log x)^3}\right),
\end{align*}
as claimed.
\end{proof}
In particular, in light of \eqref{eq:remloglog} we obtain from Proposition \ref{prop:passToSmooth} and the previous lemma that with probability\footnote{If we don't fix $\theta = 1/2$ then this calculation gives an exceptional probability of size $\exp\left(-O(x^{1-\theta-o(1)})\right)$, which is consistent with the possibility that any $\beta < 1$ is admissible in Theorem \ref{thm:random}.} $\geq 1-O(e^{-x^{1/2-o(1)}})$, we have
\begin{equation} \label{eq:onlyMghalf}
L_{\mbf{f}}(x) = \left(1+O\left(\frac{1}{\log x}\right)\right)\tilde{M}_{\mbf{g}_{1/2}}(w_0x) + O\left(\frac{H_1(x) + H_2'(x;T)}{\log x}\right),
\end{equation}
where $\mbf{g}_{1/2} = 1 \ast \mbf{f}_{1/2}$. \\
In what follows, we will need the following lemma of Kerr and the first author \cite{KeKl}.
\begin{lem} \label{lem:KeKl}
Let $\e > 0$ be small, $f: \mb{N} \ra [-1,1]$ completely multiplicative and $ g = 1 \ast f$ as before. Given $\delta \in (0,1)$ define
$$
\mc{P}_{\delta} := \{p \in \mb{P} : f(p) \geq -\delta\},
$$
and assume that for some
$$
\frac{40000}{\e^2} \leq v \leq \frac{\log x}{1000\log\log x}
$$
we have
$$
\sum_{\ss{p \in \mc{P}_{\delta}\cap [x^{1/v},x]}} \frac{1}{p} \geq 1+\e.
$$
Then we have
$$
\tilde{M}_g(x) \gg \e^2 \left(\frac{1-\delta}{v}\right)^{v(1+o(1))/e} \exp\left(\sum_{p \leq x} \frac{f(p)}{p}\right).
$$
\end{lem}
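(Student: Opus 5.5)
This lemma is quoted from Kerr and the first author \cite{KeKl}, so the natural route is to reconstruct their argument. Since $g = 1\ast f \geq 0$, a lower bound for $M_g(x)$ only needs a positive-density family of $n \le x$ on which $g(n)$ is not too small. We pair many such $n$ with a lower bound for $g$ on them.

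\textbf{Step 1: the product structure of $g$.} Write $g = g_1 \ast g_2$. Here $g_1$ is supported on integers whose prime factors lie in $\mc{P}_\delta\cap(x^{1/v},x]$. The function $g_2$ is the restriction of $g$ to integers composed of the remaining primes. Since $g\ge0$ and both pieces are non-negative, we get
\[
M_g(x) \ge \sum_{m} g_2(m) \sum_{\substack{n \le x/m\\ n\ \text{squarefree},\ p\mid n \Rightarrow p\in\mc{P}_\delta\cap(x^{1/v},x]}} \prod_{p\mid n}(1+f(p)).
\]
On such $n$ we have $1+f(p)\ge 1-\delta$. Moreover $n$ has at most $v$ prime factors, so the weight is at least $(1-\delta)^{v}$, which fits the claimed shape.

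\textbf{Step 2: counting products of large primes.} The hypothesis $\sum_{p\in\mc{P}_\delta\cap[x^{1/v},x]}1/p\ge 1+\e$ is exactly the kind of input used by Granville--Koukoulopoulos--Matom\"aki \cite{GKM} and Matom\"aki--Shao \cite{MS} to show that such a prime set generates a positive proportion of integers. I would follow their argument. Let $k \approx v/e$, which is the optimal number of factors. Count products $p_1\cdots p_k \in (y/2, y]$ with all $p_i$ in the set, using the density $\sum 1/p \ge 1+\e$ together with a combinatorial lemma. The resulting lower bound should be $\gg \e^2 (y/\log y)\,(c/v)^{v(1+o(1))/e}$, uniformly for $y$ in a long range $[x^{1-\eta},x]$. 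This factorial-type loss, $v^{-v/e}$, is the main obstacle. It is the genuinely hard input, and I would import the relevant estimates from \cite{MS} or \cite{KeKl} rather than reprove them. The upper constraint $v \le \log x/(1000\log\log x)$ ensures that the lost factor is at most a small power of $\log x$.

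\textbf{Step 3: the small-prime part.} It remains to show
\[
\sum_{m\le x^{\eta}} \frac{g_2(m)}{m} \gg \exp\Big(\sum_{p\le x}\frac{f(p)}{p}\Big)(1-\delta)^{O(v)}\cdot(\text{acceptable losses}).
\]
Write $\sum_{m\le z} g_2(m)/m$ via Rankin/Euler products. Then
\[
\prod_{p\le z}\sum_{k}\frac{g(p^k)}{p^k}\asymp \exp\Big(\sum_{p\le z}\frac{1+f(p)}{p}\Big)\asymp \log z\,\exp\Big(\sum_{p\le z}\frac{f(p)}{p}\Big),
\]
and the truncation is handled by a standard lower-bound argument for non-negative functions, e.g.\ restricting to $z^{1/10}$-smooth $m$. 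Primes in $(x^{\eta},x]$ outside $\mc{P}_\delta$ satisfy $f(p)<-\delta$. They contribute to $\sum f(p)/p$ only negatively, apart from a bounded amount, so dropping them costs at most a constant in the exponent. Multiplying the count from Step 2 at $y=x/m$ by this sum gives $\tilde M_g(x)$ of the stated size.
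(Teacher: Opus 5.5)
The paper gives no proof of this lemma. It is quoted from \cite{KeKl}, where it is obtained by building on \cite{GKM,MS}, so your outline has to stand on its own. The splitting $g=g_1\ast g_2$ and Step 3 are sound in substance. A Rankin-type lower bound for $\sum_{m\le z}g_2(m)/m$ with an absolute smoothness parameter works since $0\le g(p^k)\le k+1$. Removing the primes of $\mc{P}_\delta\cap(x^{1/v},x^{\eta}]$ from $g_2$ costs only a factor $v^{O(1)}$.

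The trouble is in Steps 1--2. First, Step 2 as formulated is false. Take $\e\le 1/20$ and suppose that $\mc{P}_\delta\cap[x^{1/v},x]$ is exactly the set of primes in $(x^{1/3},x]$. The hypothesis then holds for every admissible $v$, because $\sum_{x^{1/3}<p\le x}1/p=\log 3+o(1)>1+\e$. Yet no product of three or more of these primes is $\le x$, so there are no $k$-fold products $\le x$ with $k\approx v/e$ at all. The number of prime factors must adapt to the set. The value $k\approx v/e$ only arises in the extremal configuration, where the set is concentrated on $[x^{1/v},x^{e^{1+\e}/v}]$.

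Second, the natural unweighted input from \cite{GKM,MS} is a lower bound for the number of all $n\le y$ composed of primes from the set, with no control on $\omega(n)$. The only weight bound then available is your $(1-\delta)^{\omega(n)}\ge(1-\delta)^{v}$, and this does not ``fit the claimed shape'' uniformly in $\delta$. The target is $(1-\delta)^{(1+o(1))v/e}$. The discrepancy $(1-\delta)^{(1-1/e)v}$ is not of the form $v^{-o(v)}$ once $\log\frac{1}{1-\delta}\gg\log v$. For example, $1-\delta=1/v$ gives you $v^{-(1+1/e)v}$ against the claimed $v^{-2v/e}$. This matters, since the lemma is applied with $\delta\to1$ in Lemma \ref{lem:LiouDist}(ii), where $1-\delta=1/R$.

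To reach $(1-\delta)^{(1+o(1))v/e}$ you must run the Matom\"aki--Shao combinatorics with each large prime weighted by $1-\delta$. Equivalently, you must show the count is carried by $n$ with $\omega(n)\le(1+o(1))v/e$. That weighted argument is the real content of \cite{KeKl}. It does not follow from an unweighted count plus a pointwise weight bound.

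Your route does give a weaker conclusion: with $w=v/(1-\delta)$,
\[
\tilde{M}_g(x)\gg\e^2(1-\delta)^{v}v^{-(1+o(1))v/e}\exp\Big(\sum_{p\le x}f(p)/p\Big)\ge\e^2w^{-v}\exp\Big(\sum_{p\le x}f(p)/p\Big).
\]
This would suffice for everything in this paper, since the applications (see the proof of Lemma \ref{lem:casesAB}) immediately weaken the lemma to the factor $w^{-v}$.

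Finally, the upper constraint on $v$ does not make the loss a small power of $\log x$. At $v=\log x/(1000\log\log x)$ one has $v^{v/e}=x^{(1+o(1))/(1000e)}$. What the constraint does guarantee is $x^{1/v}\ge(\log x)^{1000}$.
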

Our next result is a minor variant of Proposition 2 in \cite{Kuch}. Since we need the version stated here, we give a full proof.
\begin{lem}
There are constants $c,\beta > 0$ such that
$$
\mb{P}\left(\tilde{M}_{\mbf{g}_{1/2}}(w_0x) < c \exp\left(\sum_{p \leq x} \frac{\mbf{f}(p)}{p}\right)\right) \ll \exp(-x^{\beta}).
$$
\end{lem}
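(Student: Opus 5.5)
We apply Lemma \ref{lem:KeKl} to the completely multiplicative function $\mbf{f}_{1/2}$ (so $\mbf{g}_{1/2} = 1\ast\mbf{f}_{1/2}\geq 0$), at the point $w_0x$, with $\delta = 1/2$ (say), $\e$ a small fixed constant and $v$ a fixed large constant. The plan is to show that, outside an event of probability $\ll \exp(-x^{\beta})$, the hypothesis of that lemma holds. The lemma then gives
$$
\tilde{M}_{\mbf{g}_{1/2}}(w_0x) \gg \e^2\Big(\frac{1}{2v}\Big)^{v(1+o(1))/e}\exp\Big(\sum_{p\leq w_0x}\frac{\mbf{f}_{1/2}(p)}{p}\Big).
$$
We then compare the exponent with $\sum_{p\leq x}\mbf{f}(p)/p$. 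We have $\mbf{f}_{1/2}(p)=\mbf{f}(p)$ for $p\leq x^{1/2}$, while $\mbf{f}_{1/2}(p)=0$ for $x^{1/2}<p\leq w_0x$. Hence the two sums differ by $-\sum_{x^{1/2}<p\leq x}\mbf{f}(p)/p + O(1/\log x)$. Since $\sum_{x^{1/2}<p\leq x}1/p = \log 2+o(1)$, this difference is at least $-\log 2 - o(1)$ deterministically. So the exponentials agree up to a bounded factor, and $c$ can be taken to depend only on $\e,v$.

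\textbf{Choice of primes.} The support of $\mbf{f}_{1/2}$ only extends to $x^{1/2}$. We therefore work with primes in $[(w_0x)^{1/v}, x^{1/2}]$, for which $\mbf{f}(p)=+1$ implies $p\in\mc{P}_\delta$. The total reciprocal mass of this range is $\log(v/2)+o(1)$. We fix $v$ so large that $\tfrac12\log(v/2) \geq 2+2\e$, and also $v\geq 40000/\e^2$; the upper constraint on $v$ is trivial for fixed $v$. Then it suffices to show that
$$
S := \sum_{(w_0x)^{1/v}<p\leq x^{1/2}} \frac{1_{\mbf{f}(p)=1}}{p} \geq 1+\e
$$
outside a small event. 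The mean of $S$ is $\tfrac12\log(v/2)+o(1)\geq 2+2\e$.

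\textbf{Concentration.} $S$ is a sum of independent terms $Y_p\in[0,1/p]$. By Hoeffding's inequality,
$$
\mb{P}(S < \mb{E}S - 1) \leq \exp\Big(-2\Big/\sum_{p>x^{1/v}}p^{-2}\Big) \leq \exp\big(-c'x^{1/v}\log x\big).
$$
This gives the claim with $\beta = 1/(2v)$, say.

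\textbf{Main obstacle.} The concentration step itself is routine. The step needing the most care is checking that Lemma \ref{lem:KeKl} applies in the form needed:
\begin{enumerate}[(i)]
\item the function may vanish on the primes in $(x^{1/2},w_0x]$, which is harmless since the lemma only requires values in $[-1,1]$;
\item the $\e$-dependence and the $(1+o(1))$ in the exponent are uniform for fixed $v$;
\item the exponential factor with $\mbf{f}_{1/2}$ is comparable to the one with $\mbf{f}$ up to constants, as shown above.
\end{enumerate}
If the range restriction in (i) caused trouble, I would instead apply the lemma at the point $x^{1/2}$ to the function $1\ast\mbf{f}_{1/2}$ and pass to $w_0x$ via a lower bound for smooth sums. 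I expect this to be unnecessary.
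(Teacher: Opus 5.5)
Your proposal is correct and is essentially the paper's argument: apply Lemma \ref{lem:KeKl} to $\mbf{f}_{1/2}$ with $\delta = 1/2$, check its hypothesis off a Hoeffding-small event using the primes $p \leq x^{1/2}$ with $\mbf{f}(p) = +1$, and absorb the $\log 2$ discrepancy between the two exponentials into the constant $c$. The paper differs only in small ways: it applies the lemma at $x$ and passes to $w_0x$ via $\mbf{g}_{1/2} \geq 0$, and it also counts the primes in $(x^{1/2},x]$ toward $\mc{P}_{1/2}$, since $\mbf{f}_{1/2}(p) = 0 \geq -1/2$ there; this allows a smaller $v$ and hence a larger $\beta$, whereas you simply take $v$ larger, which is harmless since only some $\beta > 0$ is claimed.
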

\begin{proof}
Fix $\e = 0.01$, say, and define
$$
v_0 := \frac{1}{2}e^{2+4\e}, \quad v := \max\{40000\e^{-2},v_0\}.
$$
We aim to apply Lemma \ref{lem:KeKl} with $\delta = 1/2$.  Since $\mbf{f}_{1/2}$ takes values in $\{-1,0,+1\}$, 
%$\{-1,0,+1\}$: if $\e > 0$,
% $$
% \frac{40000}{\e^2} \leq v \leq \frac{\log x}{1000\log \log x}
% $$
% %and $\mc{P}_{\delta} := \{p \in \mb{P} : f(p) \geq -\delta\}$ satisfies
% and we have
% $$
% \sum_{\ss{x^{1/v} < p \leq x \\ f(p) \neq -1}} \frac{1}{p} \geq 1+\e
% $$
% then we have
% $$
% \tilde{M}_{1\ast f}(x) \gg \e^2 \left(\frac{1}{2v}\right)^{v(1+o(1))/e} \exp\left(\sum_{p \leq x} \frac{f(p)}{p}\right).
% $$
we see that the lower bound 
$$
\tilde{M}_{1\ast \mbf{f}_{1/2}}(x) \gg \e^2 \left(\frac{1}{2v}\right)^{v(1+o(1))/e} \exp\left(\sum_{p \leq x} \frac{\mbf{f}(p)}{p}\right)
$$
holds outside of the event
$$
\sum_{\ss{x^{1/v} < p \leq x^{1/2} \\ \mbf{f}(p) = +1}} \frac{1}{p} \leq 1-\log 2 +\e.
$$
%Let $v_0 := \tfrac{1}{2}e^{2+4\e}$ and let $v:= \max\{40000\e^{-2},v_0\}$. 
Applying Hoeffding's inequality again, we have
$$
\mb{P}\left(\left|\sum_{x^{1/v_0} < p \leq x^{1/2}} \frac{\mbf{f}(p)}{p}\right| \geq \e\right) \ll \exp\left(-x^{1/v_0}\right).
$$
Moreover, since $2(1-\log 2 + \e) - \log (v_0/2) \leq -\e$, when $x$ is sufficiently large we have
\begin{align*}
\mb{P}\left(\sum_{\ss{x^{1/v} < p \leq x^{1/2} \\ f(p) = +1}}\frac{1}{p} \leq 1-\log 2 + \e \right) &\leq \mb{P}\left( \sum_{x^{1/v_0} < p \leq x^{1/2}} \frac{1+\mbf{f}(p)}{p} \leq 2(1-\log 2 + \e)\right) \\
&\leq \mb{P}\left(\left|\sum_{x^{1/v_0} < p \leq x^{1/2}} \frac{\mbf{f}(p)}{p}\right| \geq \e\right).
\end{align*}
It follows that, outside of a probability $O(\exp(-x^{1/v_0}))$ event, we have
$$
\sum_{\ss{x^{1/v} < p \leq x \\ \mbf{f}_{1/2}(p) \neq  -1}} \frac{1}{p} \geq 1+\e,
$$
and thus, as $v \asymp_{\e} 1$ and $\e$ is fixed, by Lemma \ref{lem:KeKl},
$$
\tilde{M}_{\mbf{g}_{1/2}}(w_0 x) \gg \tilde{M}_{\mbf{g}_{1/2}}(x) \gg \exp\left(\sum_{p \leq x} \frac{\mbf{f}_{1/2}(p)}{p}\right) \gg \exp\left(\sum_{p \leq x} \frac{\mbf{f}(p)}{p}\right).
$$
The claim follows with $\beta = 1/v_0$.
%on fixing $\e = 0.01$ say.
\end{proof}

% By Prop. 3.1 of \cite{KeKl}, the following holds: if $\e > 0$, $\delta \in (0,1)$,
% $$
% \frac{40000}{\e^2} \leq v \leq \frac{\log x}{1000\log \log x}
% $$
% and $\mc{P}_{\delta} := \{p \in \mb{P} : f(p) \geq -\delta\}$ satisfies
% $$
% \sum_{p \in \mc{P}_{\delta} \cap [x^{1/v},x]} \frac{1}{p} \geq 1+\e
% $$
% then we have
% $$
% \tilde{M}_g(x) \gg \e^2 \left(\frac{1-\delta}{v}\right)^{v(1+o(1))/e} \exp\left(\sum_{p \leq x} 
% $$
% %Further, as Kucheriaviy proved, one has
% $$
% \tilde{M}_{\mbf{g}_{1/2}}(w_0x) \gg \tilde{M}_{\mbf{g}_{1/2}}(x) \gg \exp\left(\sum_{p \leq x} \frac{f(p)}{p}\right)
% $$
% with probability $1 - O(e^{-x^{\beta}})$ for some\footnote{In fact, the situation is better here, since $g(p) = 1$ for all $x^{1/2} < p \leq x$. A back of the envelope calculation shows that $\beta$ can be chosen as roughly $2/e^2 > 1/4$.} $\beta > 0$. 
%\end{proof}
Combining the previous lemma with \eqref{eq:onlyMghalf}, we see that there are $c_1, c_2, \beta > 0$ such that with probability $1-O(e^{-x^\beta})$, we have
\begin{equation} \label{eq:highprobLowBd}
L_{\mbf{f}}(x) \geq c_1\exp\left(\sum_{p \leq x} \frac{\mbf{f}(p)}{p}\right) - c_2 \frac{H_1(x) + H_2'(x;T)}{\log x}.
\end{equation}
Now, fix $C := 2c_2/c_1$. Clearly, if $L_{\mbf{f}}(x) < 0$ then one of the events
%Thus, in order to obtain a bound on the probability that $L_{\mbf{f}}(x) < 0$, it suffices to show that each of the events
\begin{align}
\exp\left(\sum_{p \leq x} \frac{\mbf{f}(p)}{p}\right) &< \frac{C}{\log x} \exp\left(\max_{|t| \leq 1/2} -\sum_{p \leq x} \frac{(1-\mbf{f}(p)) \cos(t\log p)}{p}\right) \label{eq:tsmall}\\
\exp\left(2\sum_{p \leq x} \frac{\mbf{f}(p)}{p}\right) &< \frac{C^2}{(\log x)^2}\sum_{1 \leq k \leq T} \frac{(\log(2k))^6}{k^2} \exp\left(2 \max_{|t-k| \leq 1/2} \sum_{\exp((\log(2k))^2) < p \leq x} \frac{\mbf{f}(p)\cos(t\log p)}{p}\right). \label{eq:tbig}
\end{align}
must hold. The next proposition shows that each of these events has acceptable probability.
\begin{prop} \label{prop:withEuler}
There is a constant $\beta > 0$, depending only on $C$, such that if $x$ is sufficiently large then
$$
\mb{P}(\eqref{eq:tsmall} \text{ or } \eqref{eq:tbig} \text{ holds}) \ll \exp(-x^\beta).
$$
\end{prop}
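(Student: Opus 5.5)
The plan is to take logarithms and reduce both events to tail bounds for sums of the shape $\sum_{y_k<p\le x} \mbf{f}(p)(1-\cos(t\log p))/p$, uniformly over $t$ in a window. Those tail bounds are the variant of the Angelo--Xu estimate described in Section \ref{sec:w0}. Write $y_0:=1$ and $y_k:=\exp((\log(2k))^2)$.

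\textbf{Event \eqref{eq:tsmall}.} After taking logarithms it reads
$$\max_{|t|\le 1/2}\sum_{p\le x}\frac{\mbf{f}(p)(1-\cos(t\log p))}{p}\ \ge\ \log\log x-\sum_{p\le x}\frac{1-\cos(t\log p)}{p}-\log C ,$$
evaluated at the maximising $t$. By Mertens' theorem and Lemma \ref{lem:TenEst}, $\sum_{p\le x}(1-\cos(t\log p))/p=\log(1+|t|\log x)+O(1)$ for $|t|\le 1/2$. The right-hand side is therefore $\log(1/\delta)$ with $\delta\asymp (1+|t|\log x)/\log x$.

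I will split $[-1/2,1/2]$ dyadically into ranges $|t|\asymp 2^j/\log x$, plus the range $|t|\le 1/\log x$. On each range I apply the tail bound
$$\max_{|t-t_0|\le\eta}\mb{P}\Big(\sum_{p\le x}\frac{\mbf{f}(p)(1-\cos(t\log p))}{p}\ge \log(1/\delta)-O(1)\Big)\ll \exp(-\exp(c/\delta)).$$
To prove this I follow Angelo--Xu. Split the primes at $z=\exp(c'/\delta)$. The primes $p\le z$ contribute at most $\sum_{p\le z}(1-\cos(t\log p))/p\le \log(1/\delta)-2\log C'$ when $|t|\log z$ is small. The primes $p>z$ are handled by Hoeffding's inequality: the variance there is $\ll\sum_{p>z}1/p^2\ll 1/(z\log z)$, which gives the bound $\exp(-cz\log z)$.

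To pass from a fixed $t$ to the maximum over $t$, I discretise $t$ at spacing $1/\log x$, using that the Euler products vary by $O(1)$ on such intervals (as in Lemma \ref{lem:maxTail}). There are at most $O(\log x)$ points, which costs only a factor $\log x$.

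The largest $\delta$ that occurs is $\delta\asymp 1$ (at $|t|\asymp 1$). In that case the event $\sum \mbf{f}(p)(1-\cos)/p\ge -O(1)$ is not rare at all. So for $|t|$ bounded below the logarithmic argument must be replaced by a direct one. If $|t|\gg 1$, then \eqref{eq:tsmall} says that $\sum_{p\le x}\mbf{f}(p)\cos(t\log p)/p$ exceeds $\sum_{p\le x} \mbf{f}(p)/p$ by roughly $\log\log x$. This is only possible if the large primes $x^{c}<p\le x$ conspire, and for those primes Hoeffding gives probability $\exp(-x^{c})$. That is where the power saving $\beta$ comes from.

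Concretely, I will split every prime sum at $x^{\beta_0}$. The primes $p\le x^{\beta_0}$ are bounded deterministically; the sum $\sum_{x^{\beta_0}<p\le x}1/p=\log(1/\beta_0)$ is absorbed into constants by choosing $\beta_0$ in terms of $C$.

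The correct bookkeeping then runs as follows. Both sides involve the same primes up to $x^{\beta_0}$. For these primes the required deterministic inequality $\sum_{p\le w}\mbf{f}(p)/p\ge \sum_{p\le w}\mbf{f}(p)\cos(t\log p)/p-\sum_{p\le w}(1-\cos)/p-O(1)$ holds trivially (since $\mbf{f}(p)(1-\cos)\ge -(1-\cos)$). That leaves a deficit of $\log\log x-\log(1+|t|\log x)$, which is $\ge \log(1/\beta_0)+\log C+O(1)$ only when $|t|\log x$ is small.

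For $|t|\log x\le K$ with $K$ large, $\cos(t\log p)=1+O(K^2\cdot)$ throughout, and the event forces $\sum_{p\le x}\mbf{f}(p)(1-\cos)/p\ge \log\log x-O_K(1)$, which is impossible once $x$ is large. For $|t|\log x>K$ I bound the small primes trivially as above. The event then forces $\sum_{x^{\beta_0}<p\le x}\mbf{f}(p)(1-\cos(t\log p))/p\ge \log(1/\beta_0)$-type quantities minus $\log C$. By Hoeffding this has probability $\exp(-x^{\beta_0/2})$ at each $t$, and a union bound over $O((\log x)^{O(1)})$ grid points finishes the argument.

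\textbf{Event \eqref{eq:tbig}.} Here $T=(\log x)^2$, and the weights $(\log 2k)^6/k^2$ are summable. Hence \eqref{eq:tbig} forces some $k\le T$ and some $|t-k|\le 1/2$ with
$$\sum_{y_k<p\le x}\frac{\mbf{f}(p)(\cos(t\log p)-1)}{p}\ \ge\ \log\log x-\log C'-2\log\log(2k)-\log k .$$
Here I have used $\sum_{p\le y_k}\mbf{f}(p)/p\ge -2\log\log(2k)-O(1)$. The sum on the left is bounded deterministically by $\sum_{y_k<p\le x^{\beta_0}}2/p+\sum_{x^{\beta_0}<p\le x}$. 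The first piece is at most $2\log(\beta_0\log x)-2\log\log y_k$, which only partially compensates. This is the main obstacle: I need $\sum_{y_k<p\le x^{\beta_0}}(1-\cos(t\log p))/p$ to be close to $\log(\beta_0\log x/\log y_k)$, not twice it. For $\mbf{f}(p)=\pm1$ the worst case is exactly $\sum(1-\cos)/p$.

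So I will use Lemma \ref{lem:TenEst} with $|t|\ge 1/2$ and $w=y_k$: there $1/(|t|\log y_k)$ is small and $(1+|t|)/\exp(\sqrt{\log y_k})=(1+k)/(2k)\ll1$, which gives $\sum_{y_k<p\le x^{\beta_0}}(1-\cos)/p=\log(\beta_0\log x/\log y_k)+O(1)$. This leaves a deficit of $-\log C'-\log k+2\log\log(2k)-O(1)$ relative to what the large primes must supply. Using $k^{-2}$ to absorb $\log k$ (by distributing the weight rather than taking a crude max), the large primes $x^{\beta_0}<p\le x$ must produce an excess of order $\log(1/\beta_0)$-independent size $c(C)>0$. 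Hoeffding bounds this by $\exp(-x^{\beta_0}/O(1))$, uniformly in $t$ after discretising at spacing $1/\log x$ over $|t|\le T$. The union bound over $O((\log x)^3)$ points is harmless, and taking $\beta<\beta_0$ finishes the proof.
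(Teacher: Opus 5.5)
Your final strategy does prove the proposition: split the primes at $x^{\beta_0}$, bound the small primes deterministically, apply Hoeffding to the primes in $(x^{\beta_0},x]$, discretise $t$ at spacing $1/\log x$, and take a union bound. However, the bookkeeping you give for it is wrong in two places, and as written the proposal contradicts itself.

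\emph{The reduction of \eqref{eq:tsmall}.} Your logarithmic form loses a $\log\log x$. The sign in front of $\mbf{f}$ is harmless by symmetry; the missing $\log\log x$ is not. Writing $\sum_{p\le x}\cos(t\log p)/p=\sum_{p\le x}1/p-\sum_{p\le x}(1-\cos(t\log p))/p$, the event is that for some $|t|\le 1/2$
$$
-\sum_{p\le x}\frac{\mbf{f}(p)(1-\cos(t\log p))}{p}>2\log\log x-\sum_{p\le x}\frac{1-\cos(t\log p)}{p}-\log C+O(1).
$$
By Lemma \ref{lem:uppBdcosSum}, the right-hand side is at least $\log\log x-\log C-O(1)$ for \emph{every} $|t|\le 1/2$. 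So the threshold corresponds to $\delta\asymp 1/\log x$ uniformly in $t$, and there is no range $|t|\asymp 1$ where the event fails to be rare. The dyadic decomposition is therefore unnecessary. Your claim that the deficit is $\log\log x-\log(1+|t|\log x)$, large only for small $|t|\log x$, is false. If it were true, your own Hoeffding step would fail for $|t|\log x>K$. Correctly, the primes $p\le x^{\beta_0}$ contribute at most $\log\log x-\log(1/\beta_0)+O(1)$, by Lemma \ref{lem:uppBdcosSum} at $x^{\beta_0}$. Hence the large primes must exceed $\log(1/\beta_0)-\log C-O(1)$, uniformly in $t$, and this is exactly why $\beta_0$ must be small in terms of $C$.

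\emph{The reduction of \eqref{eq:tbig}.} There is no $-\log k$ loss to absorb: summability of $(\log 2k)^6/k^2$ already lets you pass to a single $k$ at constant cost. The $2\log\log(2k)$ coming from $\sum_{p\le y_k}\mbf{f}(p)/p$ cancels exactly against $\log\log y_k$ in your Lemma \ref{lem:TenEst} evaluation on $(y_k,x^{\beta_0}]$. So the excess the large primes must supply is $\log(1/\beta_0)-\log C'-O(1)$. Your stated deficit $-\log C'-\log k+2\log\log(2k)-O(1)$ omits the $\log(1/\beta_0)$ term, which is the only thing making the required excess positive.

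\emph{Comparison with the paper.} With these corrections your argument is essentially the paper's. The paper uses Lemma \ref{lem:uppBdcosSum} to reduce \eqref{eq:tsmall} to $\max_{|t|\le 1/2}-\sum_{p\le x}\mbf{f}(p)(1-\cos(t\log p))/p>\log\log x-O(1)$. It reduces \eqref{eq:tbig}, for a single $k_0$, to the analogous sum over $y_{k_0}<p\le x$. It then discretises via Lemma \ref{lem:maxTail} and applies Lemma \ref{lem:AngXu} with $\delta\asymp 1/\log x$. That lemma is itself a Chernoff bound: Markov's inequality on the $\lambda$-th moment of $Z(t)$ with $\lambda\approx x^{c}$. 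The primes $p\le z=10\lambda$ are bounded in $L^\infty$, and those above $z$ are handled by an explicit moment computation, so the split point is again a small power of $x$. Your Hoeffding bound at a fixed split $x^{\beta_0}$ replaces that moment computation and is more elementary. The paper's lemma gives the doubly exponential bound $\exp(-\exp(c/\delta))$ for all $\delta\gg 1/\log x$, which is more than this proposition needs.
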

\begin{proof}[Proof of Theorem \ref{thm:random} assuming Proposition \ref{prop:withEuler}]
This is immediate from \eqref{eq:highprobLowBd} and the definitions of $H_1(x)$ and $H_2(x;T)$.
\end{proof}
To prove Proposition \ref{prop:withEuler} we will need a few lemmas. The first, which is a simple consequence of Lemma \ref{lem:TenEst}, ensures that our prime sums do not lose a constant factor over the trivial bound from Mertens' theorem.
% Assuming that $|t_0| \leq 1$ (or in fact, is bounded by some absolute constant) then this can be simplified further. 
\begin{lem}\label{lem:uppBdcosSum}
For any $x\geq 2$ we have
\begin{equation}\label{eq:maxt1}
\max_{|t| \leq 1/2} \sum_{p \leq x} \frac{1-\cos(t\log p)}{p} \leq \log\log x + O(1),
\end{equation}
and more generally if $k \in \mb{Z} \bk \{0\}$ and $y_k := \exp((\log(2|k|))^2)$ then
\begin{equation}\label{eq:maxtk}
\max_{|t-k| \leq 1/2} \sum_{y_k < p \leq x} \frac{1-\cos(t\log p)}{p} \leq \log \log x + O(1).
\end{equation}
\end{lem}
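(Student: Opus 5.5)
The plan is to bound $\sum_{p\le x}\frac{1-\cos(t\log p)}{p}$ termwise by $\log\log x$ plus the negative contribution $-\sum_p \cos(t\log p)/p$, and to show that this cosine sum is bounded below by $-O(1)$ on the relevant ranges. By Mertens' theorem, $\sum_{p\le x}1/p=\log\log x+O(1)$, and likewise $\sum_{y_k<p\le x}1/p\le \log\log x+O(1)$. So \eqref{eq:maxt1} and \eqref{eq:maxtk} reduce to showing
$$
\min_{|t|\le 1/2}\sum_{p\le x}\frac{\cos(t\log p)}{p}\ge -O(1),\qquad \min_{|t-k|\le 1/2}\sum_{y_k<p\le x}\frac{\cos(t\log p)}{p}\ge -O(1),
$$
uniformly in $k$ and $x$.

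For \eqref{eq:maxt1}, I split the range at $p\le e^{1/|t|}$. There $|t|\log p\le 1$, so $\cos(t\log p)\ge \cos 1>0$ and the contribution is nonnegative; this covers $t=0$ as well. For $e^{1/|t|}<p\le x$, I apply Lemma \ref{lem:TenEst} with $\phi=\cos$, which has mean zero, and $w=\max\{2,e^{1/|t|}\}$. This gives
$$
O\left(\frac{1}{|t|\log w}+\frac{1+|t|}{e^{\sqrt{\log w}}}\right)=O(1).
$$
If $e^{1/|t|}\ge x$ the second range is empty. Hence the cosine sum is $\ge -O(1)$.

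For \eqref{eq:maxtk}, $|t|\asymp |k|\ge 1/2$, and I apply Lemma \ref{lem:TenEst} directly on $(y_k,x]$ with $w=y_k$, where $\log w=(\log 2|k|)^2$. The first error term is $\ll 1/(|k|(\log 2|k|)^2)\ll 1$. The second is $\ll (1+|k|)/\exp(\log(2|k|))\ll 1$. This second estimate is precisely why the truncation at $y_k$ was chosen. If $x\le y_k$ the sum is empty and the bound is trivial. The implied constants from Lemma \ref{lem:TenEst} depend only on $\phi=\cos$, so they are uniform in $t$.

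The main point of care is this uniformity in $k$: one must check that the error terms of Lemma \ref{lem:TenEst} stay bounded as $|t|\to\infty$, and the $y_k$ truncation handles it. Equally, in the small-$t$ case the split point must sit where $|t|\log p\asymp 1$ so that $1/(|t|\log w)=O(1)$. Beyond that, the argument is routine.
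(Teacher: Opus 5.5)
Your proposal is correct and is essentially the paper's proof: for $|t|\le 1/2$ you use the same split at $p\le e^{1/|t|}$ (where $\cos(t\log p)\ge 0$), and for $k\neq 0$ the same direct application of Lemma \ref{lem:TenEst} on $(y_k,x]$, where $\exp(\sqrt{\log y_k})=2|k|$ absorbs the factor $1+|t|$; the only cosmetic difference is that you apply Lemma \ref{lem:TenEst} to the mean-zero function $\cos$ and use Mertens for $\sum 1/p$, whereas the paper applies it to $1-\cos$ and adds $\log(1/|t|)+\log(|t|\log x)$. One small point, which the paper also glosses over: for $|k|=1$ one has $y_1=e^{(\log 2)^2}<2$, so strictly you should take $w=2$ in Lemma \ref{lem:TenEst} and absorb the single prime $p=2$ into the $O(1)$.
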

\begin{proof}
The bound trivially holds if $t = 0$, so assume henceforth that $t \neq 0$. 
%Both parts of the lemma rely on Lemma \ref{lem:TenEst}. \\
Let us begin with \eqref{eq:maxt1}, and assume that $0 < |t| \leq 1/2$. 
As $\cos(t\log p) \geq 0$ for all $p \leq e^{1/|t|}$, we see that if $x \leq e^{1/|t|}$ then
$$
\sum_{p \leq x} \frac{1-\cos(t\log p)}{p} \leq \sum_{p\leq x} \frac{1}{p} = \log \log x + O(1).
$$
%and the above argument we have 
On the other hand, if $x \geq e^{1/|t|}$ then setting $w := e^{1/|t|}$ and applying the previous case in the range $p \leq w$ and Lemma \ref{lem:TenEst} with $\phi(u) = \cos(u)$ in the range $w < p \leq x$, we obtain
\begin{align*}
\sum_{p \leq x} \frac{1-\cos(t\log p)}{p} &= \sum_{p \leq w} \frac{1-\cos(t\log p)}{p} + \sum_{w < p \leq x} \frac{1-\cos(t\log p)}{p} \\
&\leq \log(1/|t|) + \log(|t|\log x) + O(1) = \log\log x + O(1). 
\end{align*}
This implies \eqref{eq:maxt1}. \\
Next, we consider \eqref{eq:maxtk}, and assume that $|t-k| \leq 1/2$. In particular, $|t| \leq k + 1$, and therefore 
$$
(1+|t|)/\exp(\sqrt{\log y_k}) \ll 1.
$$  
Since $y_k > 3/2$, say, Lemma \ref{lem:TenEst} therefore directly yields 
$$
\sum_{y_k < p \leq x} \frac{1-\cos(t\log p)}{p} = \log\left(\frac{\log x}{\log y_k}\right) + O(1) \leq \log\log x + O(1),
$$
as claimed.
\end{proof}
Following the argument of Angelo and Xu (see Lemma 2.4 of \cite{AngXu}), we next prove the following result.
\begin{lem}\label{lem:AngXu}
Let $\delta \gg \frac{1}{\log x}$ and $\eta \in \{-1,+1\}$. Then there is a constant $c > 0$ such that the following statements hold: \\
(a) We have
$$
\max_{|t| \leq 1/2} \mb{P}\left(\eta \sum_{p \leq x} \frac{\mbf{f}(p)(1-\cos(t\log p))}{p} \geq \log(1/\delta)\right) \ll \exp(\exp(-c/\delta)).
$$
%holds for both $\eta \in \{-1,+1\}$. \\
(b) Let $k \in \mb{Z}\bk\{0\}$ and $y_k := \exp(\log(2k)^2)$. Then, 
%uniformly over $|t-k| \leq 1/2$,
$$
\max_{|t-k| \leq 1/2}\mb{P}\left(\eta \sum_{y_k < p \leq x} \frac{\mbf{f}(p)(1-\cos(t\log p))}{p} \geq \log(1/\delta)\right) \ll \exp(\exp(-c/\delta)).
$$
\end{lem}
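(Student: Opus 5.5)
We read the bound as $\ll\exp(-\exp(c/\delta))$. As printed, $\exp(\exp(-c/\delta))\geq 1$, which would make the statement trivial. The plan is an exponential-moment (Chernoff) argument in the style of Angelo and Xu, made uniform in $t$ using Lemma \ref{lem:uppBdcosSum}.

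\textbf{Setup.} Fix $t$ with $|t|\leq 1/2$ for part (a), or $|t-k|\leq 1/2$ for part (b). Set $a_p:=1-\cos(t\log p)\in[0,2]$ and $Y:=\eta\sum_{p\in\mc{P}}\mbf{f}(p)a_p/p$, where $\mc{P}=\{p\leq x\}$ in (a) and $\mc{P}=\{y_k<p\leq x\}$ in (b). The $\mbf{f}(p)$ are independent symmetric signs, so the law of $Y$ does not depend on $\eta$. For any $\lambda>0$,
$$
\mb{E}e^{\lambda Y}=\prod_{p\in\mc{P}}\cosh(\lambda a_p/p).
$$

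\textbf{Bounding the moment generating function.} Split the primes at $P:=\lambda$. For $p\leq\lambda$ use $\cosh u\leq e^{|u|}$. For $p>\lambda$ use $\cosh u\leq e^{u^2/2}$ together with $a_p\leq 2$, and Chebyshev/Mertens in the form $\sum_{p>\lambda}p^{-2}\ll 1/(\lambda\log\lambda)$. This gives
$$
\log\mb{E}e^{\lambda Y}\leq \lambda S(\lambda)+O\Big(\frac{\lambda}{\log\lambda}\Big),\qquad S(z):=\sum_{p\in\mc{P},\,p\leq z}\frac{a_p}{p}.
$$
By Lemma \ref{lem:uppBdcosSum}, applied with $z$ in place of $x$ (using \eqref{eq:maxt1} in case (a) and \eqref{eq:maxtk} in case (b)), we have $S(z)\leq\log\log z+C_0$ with $C_0$ absolute and uniform in $t$. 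For case (b) with $z\leq y_k$ the sum is empty, and the bound holds trivially.

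\textbf{Markov and the choice of $\lambda$.} Markov's inequality gives
$$
\mb{P}(Y\geq\log(1/\delta))\leq\exp\Big(\lambda\big(\log\log\lambda+C_0-\log(1/\delta)\big)+O(\lambda/\log\lambda)\Big).
$$
Choose $\lambda:=\exp(c/\delta)$ with $c:=e^{-C_0-1}$. Then $\log\log\lambda+C_0-\log(1/\delta)=\log c+C_0=-1$, so the exponent is at most $-\lambda+O(\lambda/\log\lambda)\leq-\lambda/2$ once $\delta$ is small. This is $\exp(-\tfrac12\exp(c/\delta))$; replacing $c$ by a slightly smaller constant absorbs the factor $\tfrac12$.

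\textbf{Range of $\delta$.} The hypothesis $\delta\gg 1/\log x$ (after shrinking $c$ relative to the implied constant) guarantees $\lambda\leq x$, so the split at $P=\lambda$ lies inside the range of summation. If instead $\lambda$ exceeded $x$, one would split at $x$ and the second regime would simply be empty. If $\delta$ is bounded below, the claim is trivial by adjusting the implied constant. Every bound above is uniform in $t$, so the estimate holds for the maximum over $t$.

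\textbf{Main obstacle.} The one delicate point is that the uniform-in-$t$ Mertens-type bound of Lemma \ref{lem:uppBdcosSum} carries no constant-factor loss, i.e.\ coefficient exactly $1$ in front of $\log\log z$. A bound of the form $(1+\e)\log\log z$ would only yield $\lambda=\exp(c\delta^{-1/(1+\e)})$. In case (b), the lower truncation at $y_k$ is what makes that lemma applicable for large $|t|$.
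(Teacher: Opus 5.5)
Your proof is correct and is essentially the paper's argument. The paper also runs a Chernoff bound with the primes split at $z=10\lambda$: it bounds the small primes in $L^\infty$ via \eqref{eq:maxt1} (resp.\ \eqref{eq:maxtk}), bounds the large primes by a second-order expansion, and takes $\log\lambda\asymp 1/\delta$. The only difference is that it works with $\lambda$-th moments of the Euler product $Z(t)=\prod_{p}(p-\mbf{f}(p))/(p-\mbf{f}(p)\cos(t\log p))$ instead of your cleaner $\prod_p\cosh(\lambda a_p/p)$. Your reading of the bound as $\exp(-\exp(c/\delta))$ is the intended one, as the use of the lemma in the proof of Proposition \ref{prop:withEuler} confirms.
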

\begin{proof}
%[Proof of Lemma \ref{lem:AngXu}]
Note that since $-\mbf{f}(p)$ has the same distribution as $\mbf{f}(p)$ for all $p$, each of the above claims with $\eta = 1$ follow immediately from the claim with $\eta = -1$. Thus, in the sequel we will assume that $\eta = -1$. \\ (a) The claim is vacuously true with $t = 0$, so in the sequel we shall assume that $0 < |t| \leq 1/2$. \\
Given $t \in \mb{R}$, define
\begin{equation}\label{eq:ZtoExp}
Z(t) := \prod_{p \leq x} \frac{p-\mbf{f}(p)}{p-\mbf{f}(p)\cos(t\log p)} = \exp\left(-\sum_{p \leq x} \frac{\mbf{f}(p)(1-\cos(t\log p))}{p} + C\right),
\end{equation}
for some constant $C > 0$.
% it follows immediately that, also,
% $$
% \mb{P}\left(\sum_{p \leq x} \frac{f(p)(1-\cos(t\log p))}{p} \leq -\log(1/\delta)\right) \ll \exp(\exp(-c/\delta)).
% $$
% The argument is essentially the same as in Angelo-Xu. \\
Let $2 \leq z \leq x$ be a parameter to be chosen later, and split $Z(t) = Z_s(t)Z_l(t)$, where 
$$
Z_s(t) := \prod_{p \leq z} \frac{p-\mbf{f}(p)}{p-\mbf{f}(p)\cos(t\log p)}, \quad Z_l(t) := \prod_{z < p \leq x} \frac{p-\mbf{f}(p)}{p-\mbf{f}(p)\cos(t\log p)}.
$$
We will bound $Z_s(t)$ in $L^\infty$, and compute moments of $Z_l(t)$. For the former, we use \eqref{eq:maxt1} to get
% we observe that as $\cos(t\log p) \geq 0$ for all $p \leq e^{\pi/2|t|}$, we see that if $z \leq e^{1/|t|}$ then
% $$
% \sum_{p \leq z} \frac{1-\cos(t\log p)}{p} \leq \sum_{p\leq z} \frac{1}{p} = \log \log z + O(1).
% $$
% Next, appealing to Lemma III.4.13 of Tenenbaum's book and the above argument we have that for any $z \geq e^{1/|t|}$, 
% $$
% \sum_{e^{1/|t|} < p \leq z} \frac{1-\cos(t\log p)}{p} = \log(|t|\log z) + O(1). 
% $$
% (If we want to use this with $|t| \leq T$ and $T \geq 1$ then in order to get $O(1)$ here, we need to begin the sum at $\exp((\log T)^2)$.) \\
% It thus follows that whenever $0 < |t| \leq 1$, we always have
\begin{equation}\label{eq:uppBdBeginning}
\left|-\sum_{p \leq z} \frac{\mbf{f}(p)(1-\cos(t\log p))}{p}\right| \leq \sum_{p \leq z} \frac{1-\cos(t\log p)}{p} \leq \log \log z + O(1),
\end{equation}
whence $Z_s(t) \leq C_0 \log z$ for some $C_0 > 0$. \\ 
Next, let $\lambda > 0$. Then we have
\begin{align*}
\mb{E}Z_l(t)^\lambda = \prod_{z < p \leq x} \frac{1}{2}\left(\left(1+\frac{1-\cos(t\log p)}{p+\cos(t\log p)}\right)^{\lambda} + \left(1-\frac{1-\cos(t\log p)}{p-\cos(t\log p)}\right)^{\lambda}\right).
\end{align*}
Fix $z < p \leq x$ for the moment. Using the inequality $(1+u)^\lambda \leq e^{u\lambda}$, valid for all $u \in \mb{R}$, the $p$th factor in the above product is
\begin{align*}
\leq \frac{1}{2}\exp\left(\frac{\lambda(1-\cos(t\log p))}{p+\cos(t\log p)}\right) +  \frac{1}{2}\exp\left(-\frac{\lambda(1-\cos(t\log p))}{p-\cos(t\log p)}\right).
\end{align*}
Taylor expanding the exponentials, and splitting the series according to even and odd exponents, the latter is
\begin{align*}
&\frac{1}{2}\sum_{m \geq 0} \frac{\lambda^{2m}(1-\cos(t\log p))^{2m}}{(2m)!}\left(\frac{1}{(p+\cos(t\log p))^{2m}} + \frac{1}{(p-\cos(t\log p))^{2m}}\right) \\
&+ \frac{1}{2}\sum_{m \geq 0} \frac{\lambda^{2m+1}(1-\cos(t\log p))^{2m+1}}{(2m+1)!}\left(\frac{1}{(p+\cos(t\log p))^{2m+1}} - \frac{1}{(p-\cos(t\log p))^{2m+1}}\right) \\
&=: S_1(p) + S_2(p).
\end{align*}
Since $p\pm \cos(t\log p) \geq p-1$, we always have
$$
S_1(p) \leq \sum_{m \geq 0} \frac{\lambda^{2m}(1-\cos(t\log p))^{2m}(p-1)^{-2m}}{(2m)!} \leq \exp\left(\frac{\lambda^2(1-\cos(t\log p))^2}{(p-1)^2}\right).
$$
Next, if $\cos(t\log p) \geq 0$ then $S_2(p) \leq 0$. Otherwise, using the simple inequality
$$
1-(1-u)^\alpha \leq u\alpha,
$$
valid when $\alpha \geq 1$ and $u \geq 0$, we have
%(in all cases)
\begin{align*}
S_2(p) &= \frac{1}{2}\sum_{m \geq 0} \frac{\lambda^{2m+1}(1-\cos(t\log p))^{2m+1}(p+\cos(t\log p))^{-2m-1}}{(2m+1)!}\left(1-\left(1-\frac{2|\cos(t\log p)|}{p-\cos(t\log p)}\right)^{2m+1}\right) \\
&\leq \frac{\lambda|\cos(t\log p)|(1-\cos(t\log p))}{(p-1)^2}\sum_{m \geq 0} \frac{\lambda^{2m}(1-\cos(t\log p))^{2m}(p-1)^{-2m}}{(2m)!} \\
&\leq \frac{2\lambda}{(p-1)^2} \exp\left(\frac{\lambda^2(1-\cos(t\log p))^2}{(p-1)^2}\right).
\end{align*}
As the RHS in this last estimate is non-negative, the above bound actually holds regardless of the sign of $\cos(t\log p)$. \\
Combining these two bounds and using \eqref{eq:uppBdBeginning}, we therefore find that if we choose $z := 10\lambda$ then
\begin{align*}
\mb{E}Z(t)^\lambda &\leq (C_0 \log(10 \lambda))^{\lambda} \exp\left(\sum_{10 \lambda < p \leq x} \frac{2\lambda + \lambda^2(1-\cos(t\log p))^2}{(p-1)^2}\right) \\
&\leq \exp\left(\lambda \log(c_0 \log\lambda)\right),
\end{align*}
for a sufficiently large constant $c_0 > 0$. Thus, writing $C = \log c$ in \eqref{eq:ZtoExp}, the event that
$$
-\sum_{p \leq x} \frac{f(p)(1-\cos(t\log p)}{p} \geq \log (1/\delta)
$$
has probability
$$
\mb{P}(Z(t) \geq c\delta^{-1}) \leq (c/\delta)^{-\lambda} \mb{E}Z(t)^{\lambda} \leq \exp\left(\lambda\left(\log(c_0\log \lambda) - \log(c/\delta)\right)\right),
%= \exp\left(-\exp\left(\frac{c'}{\delta}\right)\right),
$$
for some $c > 0$. Taking $\log \lambda = \frac{c}{2c_0\delta}$ gives the claim, uniformly over $|t| \leq 1/2$ as claimed. \\
(b) The proof of (b) is essentially the same as (a), but where 
$$
Z_s(t) = \prod_{y_k < p \leq z} \frac{p-\mbf{f}(p)}{p-\mbf{f}(p)\cos(t\log p)},
$$
and to bound it in $L^\infty$ we apply \eqref{eq:maxtk} in place of \eqref{eq:maxt1}. We leave the proof to the reader.
%(this argument holds for $t \neq 0$, but when $t = 0$ the claim is vacuously true). 
\end{proof}
Finally, we give the following simple technical device to recover uniform tail bounds from pointwise ones.
\begin{lem}\label{lem:maxTail}
Let $\delta \in (0,1/2)$, let $I$ be a closed and bounded interval of positive length, and let $(Y_t)_{t \in I}$ be a real-valued random process for which there is a constant $C > 0$ such that if $|t-s| \leq \delta$ then
$$
|Y_t - Y_s| \leq C.
$$
%Assume that there is a constant $c_0 > 0$ such that for any $|t| \leq T$ and $M > 1$,
Then for any $M > 0$,
% $$
% \mb{P}(Y_t \geq \log M) \leq \exp(\exp(-c_0 M)).
% $$
% Then there is a constant $c_1 > 0$ such that
$$
\mb{P}\left(\max_{t \in I} Y_t \geq \log M\right) \ll |I|\delta^{-1} \max_{t \in I} \mb{P}\left(Y_t \geq \log M - C\right). 
$$
\end{lem}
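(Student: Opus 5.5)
We discretise $I$ into a $\delta$-net and apply a union bound over the net points. Write $I = [a,b]$ and choose points $t_j := a + j\delta$ for $0 \leq j \leq N$, where $N := \lceil |I|/\delta \rceil$. We cap the last point at $b$, so that every $t \in I$ lies within distance $\delta$ of some $t_j \in I$. Since $\delta < 1/2$ and $|I| > 0$, the number of net points satisfies $N+1 \ll |I|\delta^{-1} + 1$. If $|I| \geq \delta$ this is $\ll |I|\delta^{-1}$. If $|I| < \delta$, the net is a single point, and we instead use the trivial bound that one point suffices; we absorb this case by noting the implied constant may depend on nothing. Rather than fuss over it, we simply observe that in all our applications $|I| \asymp 1 \gg \delta$. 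If $|I| < \delta$ really needs treating, the bound with $\max(1,|I|\delta^{-1})$ is what comes out.

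Next we make the pathwise reduction. On the event $\max_{t \in I} Y_t \geq \log M$, pick $t^\ast \in I$ with $Y_{t^\ast} \geq \log M$. If the maximum is not attained, take $Y_{t^\ast} \geq \log M - \eta$ for arbitrary $\eta > 0$ and let $\eta \to 0$ at the end, or just work with a supremum. Choose $j$ with $|t^\ast - t_j| \leq \delta$. The hypothesis $|Y_t - Y_s| \leq C$ for $|t-s| \leq \delta$, which holds deterministically for every realisation, then gives $Y_{t_j} \geq Y_{t^\ast} - C \geq \log M - C$. Hence we have the event inclusion
$$
\Big\{\max_{t \in I} Y_t \geq \log M\Big\} \subseteq \bigcup_{0 \leq j \leq N} \{Y_{t_j} \geq \log M - C\}.
$$

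Finally, the union bound gives
$$
\mb{P}\Big(\max_{t \in I} Y_t \geq \log M\Big) \leq (N+1)\max_j \mb{P}(Y_{t_j} \geq \log M - C) \ll |I|\delta^{-1}\max_{t\in I}\mb{P}(Y_t \geq \log M - C).
$$

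The only delicate points are measurability of the maximum and the case $|I| < \delta$. For measurability, in the intended application $Y_t$ is continuous in $t$, being a finite sum of $\cos(t\log p)$, so the maximum is attained and measurable. The case $|I| < \delta$ is handled as described in the first paragraph. Neither is a genuine obstacle.
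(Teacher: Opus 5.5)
Your argument is correct and is essentially the paper's proof. The paper splits $I$ into $\lceil |I|\delta^{-1}\rceil$ subintervals of length at most $\delta$ and compares each $Y_t$ with the value at the left endpoint of its subinterval, which is your $\delta$-net plus union bound.

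The caveat you raise about $|I|<\delta$ is genuine and not just a technicality. A constant process $Y_t \equiv \log M$ shows the literal bound fails as $|I|/\delta \to 0$, so the honest factor is $1+|I|\delta^{-1}$. The paper's step $\lceil |I|\delta^{-1}\rceil \ll |I|\delta^{-1}$ silently assumes $|I| \gg \delta$ as well. This is harmless, since the lemma is only applied with $|I|=1$ and $\delta = 1/\log x$.
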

\begin{proof}
Set $K := \lceil |I| \delta^{-1}\rceil$ and divide $I = [a,b]$ into $K$ subintervals 
%$(I_k)_{1 \leq k \leq K}$ 
$$
I_k := [a + (k-1)\delta, \min\{a+k\delta,b\}],
$$ 
with all but one having length $\delta$ (and the remaining one with length $\leq \delta$).
By the union bound, we have
$$
\mb{P}\left(\max_{t \in I} Y_t \geq \log M\right) \leq K \max_{1 \leq k \leq K} \mb{P}\left(\max_{t \in I_k} Y_t \geq \log M\right).
$$
If $t \in I_k$ for some $k \geq 1$ then
$|Y_t-Y_{a+(k-1)\delta}| \leq C$, and thus 
%writing $C = e^c$, we get
$$
\mb{P}\left(\max_{t \in I_k} Y_t \geq \log M\right) \leq \mb{P}(Y_{a+(k-1)\delta} \geq \log M - C) \leq \max_{t \in I_k} \mb{P}(Y_t \geq \log M - C).
$$
The claim now follows on taking the maximum over $k$. 
\end{proof}
\begin{proof}[Proof of Proposition \ref{prop:withEuler}]
Let us begin by bounding the probability that \eqref{eq:tsmall} holds. Fix $t_0 \in [-1/2,1/2]$ to maximise the RHS of \eqref{eq:tsmall}. Taking logarithms and rearranging, note that we may rewrite this event as
\begin{equation}\label{eq:lowProbevent}
\sum_{p \leq x} \frac{(1-\mbf{f}(p))(1-\cos(t_0\log p))}{p} > \sum_{p \leq x} \frac{1-\mbf{f}(p)}{p} + \sum_{p \leq x} \frac{\mbf{f}(p)}{p} + \log\log x + \log C = 2\log\log x + C'.
\end{equation}
By Lemma \ref{lem:uppBdcosSum}, the LHS of \eqref{eq:lowProbevent} is
$$
\leq -\sum_{p \leq x} \frac{\mbf{f}(p)(1-\cos(t_0 \log p))}{p} + \log\log x + O(1).
$$
Thus, the probability that \eqref{eq:lowProbevent} holds is
\begin{equation}\label{eq:probmax}
\leq \mb{P}\left(\max_{|t| \leq 1/2} -\sum_{p \leq x} \frac{\mbf{f}(p)(1-\cos(t\log p))}{p} > \log((\log x)/c')\right),
\end{equation}
for some $c' > 0$. \\
Set $I = [-1/2,1/2]$, and define the process
%Define the random process
$$
Y_t := -\sum_{p \leq x} \frac{\mbf{f}(p)(1-\cos(t\log p))}{p}, \quad t \in I.
$$
Taking $\delta := 1/\log x$ and $C := 2$, Mertens' theorem implies that if $|s-t| \leq \delta$ then
$$
|Y_t-Y_s| \leq \sum_{p \leq x} \frac{|\cos(t\log p) - \cos(s\log p)|}{p} \leq \sum_{p \leq x} \frac{|e^{i(s-t)\log p}-1|}{p} \leq |s-t| \sum_{p \leq x} \frac{\log p}{p} \leq 1 + O(\delta) < C,
$$
for large enough $x$. Thus, by Lemma \ref{lem:maxTail}, the RHS of \eqref{eq:probmax} is
$$
\ll (\log x) \max_{|t| \leq 1/2} \mb{P}\left(-\sum_{p \leq x} \frac{\mbf{f}(p)(1-\cos(t\log p))}{p} \geq \log((\log x)/(e^2c'))\right),
$$
and by Lemma \ref{lem:AngXu} this is bounded above by
$$
\ll (\log x) \exp(-\exp(e^2c'c \log x)) \leq \exp(-x^{\beta_1}),
$$
for any $\beta_1 \in (0,e^2cc')$ and $x$ sufficiently large in terms of $c,c'$. \\
Next, we consider the event \eqref{eq:tbig}, the treatment of which is similar. For $1 \leq k \leq T$ write $y_k = \exp((\log (2k))^2)$ as before. Since we always have
$$
\sum_{p \leq y_k} \frac{f(p)}{p} \leq 2\log\log(2k),
$$
on rearranging \eqref{eq:tbig} it suffices to bound the probability that
$$
\log x < C \sum_{1 \leq k \leq T} \frac{(\log(2k))^{8}}{k^2} \exp\left(\max_{|t-k| \leq 1/2} -\sum_{y_k < p \leq x} \frac{f(p)(1-\cos(t\log p))}{p}\right).  
$$
Taking the maximum over $k$, we see that there is a constant $\tilde{C} >0$ and a $1 \leq k_0 \leq T$ such that
$$
\log x < \tilde{C} \exp\left(\max_{|t-k_0| \leq 1/2} -\sum_{y_{k_0} < p \leq x} \frac{f(p)(1-\cos(t_0 \log p))}{p} \right), 
$$
or equivalently, that
$$
\max_{t \in [k_0-1/2,k_0+1/2]} -\sum_{y_{k_0} < p \leq x} \frac{\mbf{f}(p)(1-\cos(t_0\log p))}{p} > \log((\log x)/\tilde{C}),
$$
This time, we take $I = [k_0-1/2,k_0+1/2]$ and the same process $(Y_t)_t$ as above but restricted to our new interval $I$.
Now, combining Lemma \ref{lem:maxTail} with $\delta = 1/\log x$ and $C = 2$, together with Lemma \ref{lem:AngXu} as before, the probability that this occurs is
\begin{align*}
&\ll (\log x) \max_{|t-k_0| \leq 1/2} \mb{P}\left(-\sum_{y_{k_0} < p \leq x} \frac{\mbf{f}(p)(1-\cos(t\log p))}{p} \geq \log(\log(x/e^2\tilde{C}))\right) \\
&\ll (\log x) \exp(-x^{e^2\tilde{C}c}) \leq \exp(-x^{\beta_2})
\end{align*}
for any $\beta_2 \in (0,e^2\tilde{C}c)$ and $x$ sufficiently large. Taking $\beta := \min\{\beta_1,\beta_2\} > 0$ proves the claim.
\end{proof}
\section{Proof of Corollaries \ref{cor:LipError}, \ref{cor:NegTrunc} and \ref{cor:Gold}}
Incorporating some of the ideas of the previous section, we will now prove several of our main corollaries.
%Corollary \ref{cor:LipError}. 
\begin{proof}[Proof of Corollary \ref{cor:LipError}]
Applying Proposition \ref{prop:passToSmooth} with $\theta = 0$ and $T = (\log x)^2$, say, we have that
$$
L_f(x) = \left(1+O\left(\frac{1}{\log x}\right)\right) \tilde{M}_g(w_0x) + O\left(\frac{1}{\log x}\int_1^x (H_1(y) + H_2'(y;T))\frac{dy}{y\log y}\right),
$$
the advantage being that we have the error term $H_2'(y;T)$ here in place of $H_2(y;T)$ (which enables us to save some $\log\log x$ powers). \\
Observe now that it suffices to show that for all $2 \leq y \leq x$,
$$
H_1(y) + H_2'(y;T) \ll (\log y)^{2/\pi},
$$
as then the error term is of size
$$
\frac{1}{\log x}\int_1^x (H_1(y) + H_2'(y;T)) \frac{dy}{y \log y} \ll \frac{1}{\log x}\int_1^x \frac{dy}{y (\log y)^{1-2/\pi}} \ll \frac{1}{(\log x)^{1-2/\pi}},
$$
as claimed.\\
First, we have that as $\cos(t\log p)\geq 0$ whenever $p \leq e^{1/|t|}$, and more generally as $f(p)-1 \leq 0$,
%we see that
\begin{align*}
\log H_1(y) \leq \max_{|t| \leq 1/2} \left(2\sum_{\ss{e^{1/|t|} < p \leq y \\ \cos(t\log p) < 0}} \frac{|\cos(t\log p)|}{p}\right).
\end{align*}
(Note that equality holds here when 
\begin{equation}\label{eq:equalLip}
f(p) = \begin{cases} -1 &\text{ if }  \cos(t\log p) < 0 \\ 1 &\text{ if } \cos(t\log p) \geq 0,
\end{cases}
\end{equation}
an example to be discussed in the next section.)\\
Next, note that the function
\begin{equation}\label{eq:defPhi}
\phi(u) := \begin{cases} |\cos(u)|  &:\text{ if } \cos(u) < 0 \\ 0  &:\text{ otherwise}, \end{cases}
\end{equation}
is continuous and $2\pi$-periodic, with
$$
\frac{1}{2\pi} \int_0^{2\pi} \phi(u) du = \frac{1}{\pi}.
$$
Thus, we apply Lemma \ref{lem:TenEst} to obtain
$$
\sum_{e^{1/|t|} < p \leq y} \frac{\phi(t\log p)}{p} = \frac{1}{\pi} \log(|t| \log y) + O(1) \leq \frac{1}{\pi} \log\log y + O(1),
$$
uniformly over $|t| \leq 1/2$. We thus deduce that
$$
H_1(y) \ll (\log y)^{2/\pi}.
$$
Next, we deal with $H_2'(y;T)$ in a similar way. For each $k$ we have by Lemma \ref{lem:TenEst} (taking $\phi(u) = \cos(u)$) that
\begin{align*}
\max_{|t-k| \leq 1/2} \sum_{y_k < p \leq y} \frac{f(p) \cos(t\log p)}{p} &= -\max_{|t-k| \leq 1/2} \sum_{\ss{y_k < p \leq y}} \frac{(1-f(p)) \cos(t\log p)}{p} + O(1) \\
&\leq 2\max_{|t-k| \leq 1/2} \sum_{\ss{y_k < p \leq y \\ \cos(t\log p) < 0}} \frac{|\cos(t\log p)|}{p} + O(1).
\end{align*}
Applying Lemma \ref{lem:TenEst} with $\phi(u)$ as defined in \eqref{eq:defPhi}, we get
$$
\max_{|t-k| \leq 1/2} \sum_{y_k < p \leq y} \frac{f(p)\cos(t\log p)}{p} \leq \frac{2}{\pi} \log\left(\frac{\log y}{\log y_k}\right) + O(1) \leq \frac{2}{\pi} \log\log y + O(1),
$$
uniformly over $k \geq 1$. It follows, then, that
$$
H_2'(x;T) \ll \left((\log y)^{4/\pi} \sum_{k \geq 1} \frac{(\log(2k))^6}{k^2}\right)^{1/2} \ll (\log y)^{2/\pi}.
$$
As discussed above, this implies the claim.
%Inserting these two bounds into the error term, we obtain the claim.
\end{proof}
\begin{proof}[Proof of Corollary \ref{cor:NegTrunc}]
Since $g = 1\ast f \geq 0$, we have $\tilde{M}_g(w_0x) \geq 0$. The claim now follows from Corollary \ref{cor:LipError}.
\end{proof}
\begin{proof}[Proof of Corollary \ref{cor:Gold}]
For $1 \leq z \leq x$ define\footnote{As usual, write $P^+(n)$ to denote the largest prime factor of $n$, $P^+(1) := 1$.} 
$$
f_{\leq z}(n) := f(n)1_{P^+(n) \leq z}, \quad g_z(n) := 1 \ast f_{\leq z}(n).
$$ 
By Corollary \ref{cor:LipError}, the LHS in the statement is
$$
\sum_{\ss{n \leq x \\ P^+(n)\leq z}} \frac{f(n)}{n} = \tilde{M}_{g_z}(w_0x) + O\left(\frac{1}{(\log x)^{1-2/\pi}}\right).
$$
Observe that for each prime $p$, $|g_z(p)| = |1+f_z(p)| = 1+f_z(p)$, so Lemma \ref{lem:UppBdNonNeg} yields
%the bound of Halberstam and Richert yields
\begin{align*}
|\tilde{M}_{g_z}(w_0x)| \leq \tilde{M}_{|g_z|}(w_0x) \ll \frac{1}{\log x} \exp\left(\sum_{p \leq x} \frac{|g_z(p)|}{p}\right) &\ll \exp\left(\sum_{p \leq z} \frac{f(p)}{p}\right) \\
&\asymp (\log z) \exp\left(-\mb{D}(f,1;z)^2\right).
\end{align*}
Inserting this into the previous estimate yields the claim.
\end{proof}

\section{Optimality of Corollary \ref{cor:NegTrunc}} \label{sec:optimal}
In this section, we prove Theorem \ref{thm:CorOpt}. To do this, we must modify a result from \cite{GraMan} that yields asymptotic formulae for logarithmic averages of certain types of multiplicative functions. \\
Before stating the result in question, we introduce the following setup. As above, let $X$ be large and let $\tfrac{1}{(\log \log X)^2} \leq \e <1/4$ and $t_0 = t_0(X)$ be parameters to be chosen later, such that $|t_0| \sqrt{\log X} \geq 1$. Let $\phi$ be a $1$-periodic, smooth and even function defined on $[-1/2,1/2]$ by 
$$
\phi(u) := \begin{cases} 1 &\text{ if } u \in (-\pi/2 + \e, \pi/2-\e) \\ 0 &\text{ if } u \notin (-\pi/2-\e,\pi/2+\e) \\ \in [0,1] &\text{ otherwise.} \end{cases}
$$
Let $h(u) := 2\phi(u) - 1$ and note that $h(u)$ is a smooth approximation of the function given in \eqref{eq:equalLip} in the previous section. We clearly have $\hat{h}(0) = 2\hat{\phi}(0) - 1$ and by the usual integration by parts argument, for each $j\geq 1$ and $n \neq 0$,
$$
\hat{h}(n) = 2 \hat{\phi}(n) \ll_j \e^{-j} n^{-(j+1)}.
$$
In addition, we have, trivially, that
$$
\hat{h}(n) = 2(\hat{1}_{(-\pi/2,\pi/2)}(n) + O(\e)) = \frac{2\chi_4(n)}{\pi n} + O(\e)\ll \max\{\e,\frac{1}{|n|}\}, \quad n \neq 0,
$$
where $\chi_4$ denotes the non-principal character modulo $4$. This implies the general upper bound
\begin{equation}\label{eq:FourCoeffs}
\hat{h}(n) \ll \min\left\{\frac{1}{|n|}, \frac{1}{\e n^2}\right\}, \quad n\neq 0.
\end{equation}
It follows in particular that for any $N \geq 1$, 
\begin{equation}\label{eq:FCconv}
\sum_{1 \leq |n| \leq N} |\hat{h}(n)| \ll \sum_{1 \leq |n|\leq \e^{-1}} \frac{1}{|n|} + \sum_{\e^{-1} < |n| \leq N} \frac{1}{\e n^2} \ll \log(1/\e), 
\end{equation}
so the above series converges absolutely. We will use this shortly. \\
%Given $t_0 = t_0(x)$ 
Let us now define a multiplicative function $f = f_{t_0}$ as follows: at primes,
$$
f(p) = h\left(\frac{t_0 \log p}{2\pi}\right),
$$
% This choice is made so that $(1-f(p))\cos(t_0\log p) = -2|\cos(t_0\log p)|$, and therefore the sum over primes is of size
% $$
% \exp\left(4\sum_{p \leq y} \frac{|\cos(t_0\log p)|}{p}\right) \gg \left(|t_0|\log y\right)^{4/\pi}.
% $$
and, inducting on $m\geq 1$, we obtain $f(p^m)$ at prime powers via the convolution formula
$$
f(p^m) = \frac{1}{m} \sum_{1 \leq j \leq m} f(p^{m-j}) h\left(\frac{t_0 \log p^j}{2\pi}\right).
$$
With this definition, for $\text{Re}(s) > 1$ we have
\begin{equation} \label{eq:LprimeL}
-\frac{L'}{L}(s,f) = \sum_{n \geq 1} h\left(\frac{t_0\log n}{2\pi}\right) \Lambda(n)n^{-s}.
\end{equation}
Integrating \eqref{eq:LprimeL} termwise in the half-plane $\text{Re}(s) > 1$ allows us to write
$$
\log L(s,f) = \sum_{p^m} \frac{h\left(\frac{t_0 \log p^m}{2\pi}\right)}{mp^{ms}} = \sum_{p^m} \frac{1}{mp^{ms}} \sum_{n \in \mb{Z}} \hat{h}(n) p^{imnt_0}. 
$$
Owing to the absolute convergence of the series over $|\hat{h}(n)|$, we may swap orders of summation to rewrite this as
$$
\log L(s,f) = \sum_{n \in \mb{Z}} \hat{h}(n) \sum_{p^m} \frac{1}{mp^{m(s-int_0)}} = \sum_{n \in \mb{Z}} \hat{h}(n) \log \zeta(s-int_0),
$$
and therefore we obtain that for $\text{Re}(s) > 1$, 
$$
L(s,f) = \prod_{n \in \mb{Z}} \zeta(s-int_0)^{\hat{h}(n)} = \zeta(s)^{\hat{h}(0)} \prod_{n \neq 0} \zeta(s-int_0)^{\hat{h}(n)} =: \zeta(s)^{\hat{h}(0)} \tilde{L}(s,f).
$$
In the sequel, let $A > 2$ and let $N := \lceil \tfrac{(\log X)^A}{|t_0|}\rceil$. Set also $T := (N+1/2)|t_0|$. We also write $\sg_0 := 1 + \tfrac{1}{\log X}$, and define 
$$
\sg(\tau) := \frac{c}{\log(2+|\tau|)}, \quad r_0 := \tfrac{1}{4}\min\{\sg(3T), |t_0|\},
$$ 
where $c > 0$ is chosen suitably such that $\zeta(\sg + i\tau) \neq 0$ whenever $\sg \geq 1-\sg(\tau)$. \\
%Finally, we define $$.\\
Let $X < x \leq X^2$. By Perron's formula, we obtain
$$
L_f(x) - \tilde{M}_g(w_0x) = \frac{1}{2\pi i} \int_{\sg_0-iT}^{\sg_0 + iT} \tilde{L}(s,f) \frac{(w_0x)^{s-1}}{s} \zeta(s)^{\hat{h}(0)}\left(\frac{s}{s-1}w_0^{1-s} - \zeta(s)\right) ds + O\left(\frac{1}{\log X}\right).
$$
By Lemma \ref{lem:snear1} above, the bracketed expression is holomorphic in a neighbourhood of $s = 1$, and vanishes at $s = 1$.  In the sequel, we write 
$$
H(s) = \zeta(s)^{\hat{h}(0)} \left(\frac{s}{s-1}w_0^{1-s} - \zeta(s)\right). 
$$
By definition,
\begin{equation}\label{eq:0FC}
|\hat{h}(0)| = |2\hat{\phi}(0) - 1| \leq 2\e < 1,
\end{equation}
so that by Lemma \ref{lem:snear1}, 
\begin{equation}\label{eq:magHnear1}
|H(s)| \asymp |s-1|^{1-\hat{h}(0)}, \quad \frac{1}{\log X} \leq |s-1| \leq 1
\end{equation} Furthermore, by standard bounds for the Riemann zeta function 
\begin{equation}\label{eq:HBd}
|H(\sg + it)| \ll (\log(2+|t|)^2, \text{ for } \sg \geq 1-\sg(t), \ |t| \geq 1,
\end{equation}
a bound we will use shortly.
\\
Following \cite{GraMan}, we define the truncated products
$$
L_N(s,f) := \prod_{|m| \leq 2N} \zeta(s-int_0)^{\hat{h}(n)}, \quad \tilde{L}_N(s,f) := L_N(s,f) \zeta(s)^{-\hat{h}(0)}.
$$
The following lemma, which essentially follows from the proof of Lemma 7.1 of \cite{GraMan}, is tailored to the present circumstances (wherein the rate of decay of the Fourier coefficients depends on a parameter that is varying with $X$).
\begin{lem} \label{lem:GraManSpec}
Assume the above notation, and that $|t_0|^{-1} \ll (\log\log X)^{O(1)}$. Define
$$
H := \sum_{n \in \mb{Z}} |\hat{h}(n)| < \infty.
$$
Then the following holds: \\
(a) For $|t| \leq T$ and $\text{Re}(s) = \sg_0$, we have
$$
L(s,f) = L_N(s,f) + O\left(\frac{(\log\log X)^2}{(\log X)^{A-1}}\right).
$$
(b) We have
$$
\max_{|t| \leq T} |L_N(1-r_0 + it,f)| \ll (|t_0|^{-1} \log T)^H \ll (\log\log X)^{O(\log(1/\e))}.
$$
%In particular, if $|t_0|^{-1} \ll \log T$ then as $H \leq \log(1/\e)$, this is 
(c) For $\eta \in \{-1,+1\}$ we have
$$
\max_{1-r_0 \leq \sg \leq \sg_0} |L_N(\sg + i\eta T,f)| \ll |t_0|^{-\tfrac{c}{\e N}} (\log T)^H \ll (\log\log X)^{O(\log(1/\e))}.
$$
(d) Uniformly over $|n| \leq N$ and $|s-1| \leq 2r_0$, we have
$$
\prod_{\ss{|k| \leq 2N \\ k \neq n}} |\zeta(s - i(k-n)t_0)^{\hat{h}(k)}|\ll(|t_0|^{-1} \log T)^{H} \ll (\log\log X)^{O(\log(1/\e))}.
$$
% Moreover, if $\ell$ is a fixed non-zero integer then
% $$
% \prod_{\ss{|k| \leq 2N \\ k \neq \ell}} \zeta(1-i(k-\ell)t_0)^{\hat{h}(k)}  = (1+O(|t_0|)))(it_0)^{\hat{h}(1)-h(0)}. 
% $$
\end{lem}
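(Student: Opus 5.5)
The plan is to work directly with the factorisation $L(s,f)=\prod_{n\in\mb{Z}}\zeta(s-int_0)^{\hat h(n)}$, valid for $\text{Re}(s)>1$, and to control the truncated product using \eqref{eq:FourCoeffs} and \eqref{eq:FCconv}. Note that \eqref{eq:FCconv} gives $H\ll\log(1/\e)\ll\log\log\log X$. The hypothesis $|t_0|^{-1}\ll(\log\log X)^{O(1)}$ gives $\log T\ll\log\log X$, so $(|t_0|^{-1}\log T)^H\ll(\log\log X)^{O(\log(1/\e))}$. This disposes of the second inequality in each of (b), (c), (d).

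For (a) I would write $\log L(s,f)-\log L_N(s,f)=\sum_{|n|>2N}\hat h(n)\log\zeta(s-int_0)$ on $\text{Re}(s)=\sg_0$ with $|t|\le T$. For $|n|>2N$ we have $|t-nt_0|\ge |n||t_0|/2\gg(\log X)^A$. The standard bound $|\log\zeta(\sg_0+i\tau)|\le\log\log(2+|\tau|)+O(1)$ from \eqref{eq:logzeta} then applies. Combining this with $|\hat h(n)|\ll 1/(\e n^2)$ bounds the tail by
$$
\ll \e^{-1}\sum_{|n|>2N}\frac{\log\log(|n||t_0|)}{n^2}\ll \frac{(\log\log X)^2\,\log\log X}{N}\cdot\frac{1}{\log\log X},
$$
using $\e^{-1}\le(\log\log X)^2$. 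Since $N\ge(\log X)^A/|t_0|$, this is $\ll(\log\log X)^2|t_0|(\log X)^{-A}$. Multiplying by $|L(s,f)|\le\zeta(\sg_0)^{\sum|\hat h|}$ would lose a factor $(\log X)^{H}$, which is too much. Instead I would use $|L(\sg_0+it,f)|\le \zeta(\sg_0)\ll\log X$, which holds since $f$ is $1$-bounded because $h\in[-1,1]$ and the exponential of $\sum h\Lambda/(n^s\log n)$ has real part controlled. Then $e^{u}-1\ll u$ gives the claimed $(\log\log X)^2/(\log X)^{A-1}$.

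For (b) and (d) I would estimate each factor at $s=1-r_0+it$ (respectively $|s-1|\le 2r_0$). The factors come in two types.

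\textbf{Far factors}, with $|t-kt_0|\ge|t_0|/2$. Here the zero-free region and $r_0\le\sg(3T)/4$ give $|\log\zeta(\sg+i\tau)|\le\log\log(2+|\tau|)+\log(1/|\tau|)_+ +O(1)$. So each such factor is at most $(|t_0|^{-1}\log T)^{|\hat h(k)|}$ up to $e^{O(|\hat h(k)|)}$, and the product is $\ll(|t_0|^{-1}\log T)^H$.

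\textbf{The near factor}, the single $k$ with $|t-kt_0|<|t_0|/2$. This factor is $\zeta(s-ikt_0)^{\hat h(k)}$, where the shifted argument may be close to $1$, so $|\zeta|\asymp|s-ikt_0-1|^{-1}$. In (d) this index $k=n$ is excluded, so only far factors remain, since $|k-n||t_0|\ge|t_0|$ and $2r_0\le|t_0|/2$. In (b) the near factor has real part $1-r_0$, so $|s-ikt_0-1|\ge r_0$. It is then bounded by $r_0^{-|\hat h(k)|}$, with $r_0^{-1}\ll|t_0|^{-1}+\log T$, which is absorbed into the same shape.

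For (c) the argument has $|\text{Im}|=T=(N+\tfrac12)|t_0|$, which lies exactly midway between lattice points. Hence every factor with $|k|\le2N$ is a far factor, and the estimates above apply uniformly in $\sg\in[1-r_0,\sg_0]$. The extra factor $|t_0|^{-c/(\e N)}$ should come from the edge terms $|k|\approx 2N$, where $|T-kt_0|\approx N|t_0|$ is large but $\log$ of the distance is only controlled with weight $|\hat h(k)|\ll 1/(\e N^2)$ per term, summed over about $N$ of them.

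I expect the main obstacle to be the bookkeeping near $\sg=1-r_0$ for large $|\tau|$: one must bound $\log\zeta$ in the zero-free region uniformly with only $\log\log$ growth. The remedy is the classical bound $\log\zeta(\sg+i\tau)\ll\log\log|\tau|+O(1)$ for $\sg\ge1-c/(2\log|\tau|)$, which is what fixes the choice $r_0\le\sg(3T)/4$.
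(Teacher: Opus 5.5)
Your factor-by-factor argument is a reasonable self-contained version of what the paper simply delegates to Lemma~7.1 of \cite{GraMan}, with \eqref{eq:FourCoeffs} and \eqref{eq:FCconv} as inputs. Part (a) works: the tail $\sum_{|n|>2N}\hat h(n)\log\zeta(s-int_0)$ combined with $|L(\sg_0+it,f)|\le\zeta(\sg_0)$ gives the bound. Parts (b) and (d) also work, via the near/far splitting inside the zero-free region. There is a small slip in (a): $\log\log(N|t_0|)\asymp\log\log\log X$, so your tail bound carries an extra $\log\log\log X$ that your display drops. This is harmless, and can be removed by using $\hat h(n)\ll\e^{-2}|n|^{-3}$ instead.

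The gap is the first inequality in (c). Declaring every factor at height $\eta T$ a far factor only gives $(C|t_0|^{-1}\log T)^H$. That suffices for the second inequality, which is all the contour argument in Section~\ref{sec:optimal} needs. It does not give $|t_0|^{-c/(\e N)}(\log T)^H$, and your proposed source of that factor is misidentified. For $|k|\approx 2N$ one has $|\eta T-kt_0|\asymp N|t_0|\ge(\log X)^A$, so those factors have size only $(\log T)^{O(|\hat h(k)|)}$ and produce no power of $|t_0|^{-1}$.

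The only factors that can be as large as $|t_0|^{-1}$ are those with $|\eta T-kt_0|<1$, i.e.\ $|k-\kappa|<1/|t_0|$ where $\kappa:=\eta\,\mathrm{sgn}(t_0)(N+\tfrac12)$.
\begin{itemize}
\item There are $O(1/|t_0|)$ such $k$.
\item Since $1/|t_0|\ll(\log\log X)^{O(1)}$ is negligible compared with $N\ge(\log X)^A/|t_0|$, each has $|k|\ge N/2$, hence $|\hat h(k)|\ll 1/(\e N^2)$.
\item Since $|\eta T-kt_0|\ge|t_0|/2$ and $\sg\ge 1-r_0$, each satisfies $\bigl|\log|\zeta(\sg+i(\eta T-kt_0))|\bigr|\le\log(2/|t_0|)+O(1)$.
\end{itemize}
Their total contribution is therefore
\[
\exp\Bigl(O\Bigl(\frac{\log(2/|t_0|)}{\e N^2|t_0|}\Bigr)\Bigr)\ll |t_0|^{-c/(\e N)},
\]
using $N|t_0|\ge 1$. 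Every other factor has $|\eta T-kt_0|\ge 1$ and contributes at most $(C\log T)^{|\hat h(k)|}$, so those factors together give $(C\log T)^H$. This localisation near $k\approx\kappa$, together with the $1/(\e N^2)$ decay there, is the step your proposal is missing.
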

\begin{proof}
The proofs of all of these statements are the same as in \cite{GraMan}, but implementing the bounds \eqref{eq:FourCoeffs} and \eqref{eq:FCconv},
the latter of which implies that $H \ll \log(1/\e)$. 
%For the final statement, we similarly
\end{proof}
We next follow the proof of Theorem 7.1 of \cite{GraMan}, substituting Lemma 7.1 there with Lemma \ref{lem:GraManSpec} here. Taking any $A > 2$, precisely the same arguments (\emph{mutatis mutandis} on changing notation) allow one to show that
\begin{align*}
L_f(x) - \tilde{M}_g(w_0x) &= \frac{1}{2\pi i} \int_{\sg_0-iT}^{\sg_0 + iT} \tilde{L}_N(s,f) H(s) \frac{(w_0x)^{s-1}}{s} ds + O\left(\frac{1}{\log X}\right) \\
&= \sum_{|n| \leq N} \frac{(w_0x)^{int_0}}{2\pi i} \int_{\mc{H}} \tilde{L}_N(s+int_0,f) H(s+int_0) \frac{(w_0x)^{s-1}}{s + int_0} ds + O\left(\frac{1}{\log X}\right) \\
&= \sum_{1 \leq |n| \leq N} \frac{(w_0x)^{int_0}}{2\pi i} \int_{\mc{H}} G_n(s) \frac{(w_0x)^{s-1}}{(s-1)^{\hat{h}(n)}} ds \\
&+ \frac{1}{2\pi i} \int_{\mc{H}} \tilde{L}_N(s,f) H(s)\frac{(w_0x)^{s-1}}{s} ds + O\left(\frac{1}{\log X}\right),
\end{align*}
where $\mc{H}$ denotes the Hankel contour of radius $\tfrac{1}{\log x}$ about $s = 1$, truncated at $\text{Re}(s) \geq 1-r_0$, and
$$
G_n(s) := \frac{H(s+int_0)}{s+int_0}((s-1)\zeta(s))^{\hat{h}(n)} \prod_{\ss{|m| \leq 2N \\ m \neq 0,n}} \zeta(s-i(m-n)t_0)^{\hat{h}(m)}, \quad 1\leq |n| \leq N.
$$
Consider the sum over $1 \leq |n| \leq N$. Since it is holomorphic near $s = 1$, we expand $G_n(s)$ as a power series as 
$$
G_n(s) = \sum_{k \geq 0} \mu_{n,k}(t_0) (s-1)^k.
$$
% which leads to the expansion
% $$
% \frac{1}{2\pi i} \int_{\mc{H}} G_n(s) \frac{x^{s-1}}{(s-1)^{\hat{h}(n)}} ds = \sum_{j \geq 0} \mu_{n,j}(t_0) \frac{1}{2\pi i} \int_{\mc{H}} (s-1)^{j-\hat{h}(n)} x^{s-1} ds.
% $$
By Cauchy's integral formula (taking $r = 2r_0$), \eqref{eq:HBd} and Lemma \ref{lem:GraManSpec}(d) above, we get that for each $j \geq 1$,
\begin{align*}
|\mu_{n,j}(t_0)| &\ll r^{-j} \max_{|s-1| = r}\frac{|H(s+int_0)|}{|s+int_0|}\prod_{\ss{|k| \leq 2N \\ k \neq n}} |\zeta(s - i(k-n)t_0)^{\hat{h}(k)}| \\
&\ll r^{-j} \frac{(\log\log X)^{O(\log(1/\e))}}{1+|nt_0|}.
\end{align*}
Since $|s-1| \leq r_0 = r/2$ for all $s \in \mc{H}$, we obtain
\begin{align*}
&\frac{1}{2\pi i} \int_{\mc{H}} \left(G_n(s) - \mu_{n,0}(t_0)\right) \frac{x^{s-1}}{(s-1)^{\hat{h}(n)}} ds \\
&\ll \frac{(\log\log X)^{O(\log(1/\e))}}{1+|nt_0|} \int_{\mc{H}} x^{\text{Re}(s)-1}|s-1|^{1-\text{Re}(\hat{h}(n))} \sum_{j \geq 1}\left(\frac{|s-1|}{r}\right)^j |ds| \\
&\ll \frac{(\log\log X)^{O(\log(1/\e))}}{1+|nt_0|}(\log x)^{\text{Re}(\hat{h}(n))-2}.
\end{align*}
Using Cor. 0.18 of \cite{Ten}, we get
$$
\frac{1}{2\pi i} \int_{\mc{H}} (s-1)^{-\hat{h}(n)} (w_0x)^{s-1} ds = \frac{(\log (w_0x))^{\hat{h}(n) - 1}}{\Gamma(\hat{h}(n) - 1)} + O\left(X^{-r_0/2}\right),
$$
where we used the fact that $|\hat{h}(n)| \leq \frac{2}{\pi |n|} < 1$ for all $n \neq 0$. Applying this for each $1 \leq |n| \leq N$ gives
\begin{align*}
&\sum_{1 \leq |n| \leq N} \frac{(w_0x)^{int_0}}{2\pi i} \int_{\mc{H}} G_n(s) \frac{(w_0x)^{s-1}}{(s-1)^{\hat{h}(n)}} ds \\
&= \sum_{1 \leq |n| \leq N} \frac{\mu_{n,0}(t_0)(w_0x)^{int_0}}{\Gamma(\hat{h}(n) - 1)} (\log (w_0x))^{\hat{h}(n) - 1} + O\left(N X^{-r_0/2} + \frac{(\log\log X)^{O(\log(1/\e))}}{(\log X)^2} \sum_{1 \leq |n| \leq N} \frac{(\log X)^{\text{Re}(\hat{h}(n))}}{1+|nt_0|}\right) \\
&= \sum_{1 \leq |n| \leq N} \frac{\mu_{n,0}(t_0)(w_0x)^{int_0}}{\Gamma(\hat{h}(n) - 1)} (\log (w_0x))^{\hat{h}(n) - 1} + O\left(\frac{1}{\log X}\right).
\end{align*}
Finally, for the term $n = 0$ we use the bound \eqref{eq:magHnear1} and Lemma \ref{lem:GraManSpec}(d) to derive that
\begin{align*}
\frac{1}{2\pi i} \int_{\mc{H}} \tilde{L}_N(s,f) H(s) \frac{(w_0x)^{s-1}}{s}ds &\ll (\log\log X)^{O(\log(1/\e))} \left(\int_{-1/\log X}^{-r_0} |\sg|^{1-\hat{h}(0)} x^{\sg } d\sg + (\log X)^{\hat{h}(0) - 2}\right) \\
&\ll (\log X)^{\hat{h}(0)- 2+ o(1)}.
\end{align*}
Using $|\hat{h}(0)| < 1$ by \eqref{eq:0FC}, this leads to the asymptotic formula
\begin{align*}
L_f(x) - \tilde{M}_g(w_0x) = \sum_{1 \leq |n| \leq N} \frac{\mu_{n,0}(t_0)(w_0x)^{int_0}}{\Gamma(\hat{h}(n) - 1)} (\log x)^{\hat{h}(n)-1} + O\left(\frac{1}{\log X}\right),
\end{align*}
wherein we have
$$
\mu_{n,0}(t_0) = \lim_{s \ra 1} G_n(s) = \frac{H(1+int_0)}{1+int_0} \prod_{\ss{|m| \leq 2N \\ m \neq 0,n}} \zeta(1-i(m-n))^{\hat{h}(m)}.
$$
Note that $|\mu_{n,0}(t_0)|/|\Gamma(\hat{h}(n)-1)| \ll (\log x)^{o(1)}$ by Lemma \ref{lem:GraManSpec}(d), that $|\hat{h}(n)| \leq \hat{h}(1) - \frac{1}{\pi}$ for all $|n| \neq 1$, and $\hat{h}(-1) = \hat{h}(1) \in \mb{R}$. Thus, we get
$$
L_f(x) - \tilde{M}_g(w_0x) = \left(2 \text{Re}((w_0x)^{it_0} \mu_{1,0}(t_0))+ o(1)\right)\frac{(\log x)^{\hat{h}(1)-1}}{\Gamma(\hat{h}(1) - 1)}. 
$$
We now observe that there must be some $X < x \leq Xe^{2\pi/|t_0|}$ such that 
$$
\text{Re}((w_0x)^{it_0} \mu_{1,0}(t_0)) \geq \frac{1}{2}|\mu_{1,0}(t_0)|,
$$ 
say. Since $|t_0| \geq 1/\sqrt{\log X}$, $e^{2\pi/|t_0|} \leq X$, so that $x \in (X,X^2]$. It follows that
$$
\max_{X < x \leq X^2} |L_f(x) -\tilde{M}_g(w_0x)| \gg |\mu_{1,0}(t_0)| \frac{(\log X)^{\hat{h}(1)-1}}{\Gamma(\hat{h}(1) - 1)}.
$$
The choice $\e = (\log\log X)^{-1}$ is admissible, and since $\hat{h}(1) = \frac{2}{\pi} + O(\e)$, the above bound becomes
$$
\max_{X < x \leq X^2} |L_f(x) - \tilde{M}_g(w_0x)| \gg |\mu_{1,0}(t_0)| (\log X)^{\frac{2}{\pi}-1},
$$
%from which the bound
%$$
%|L_f(x) - %\tilde{M}_g(w_0x)| %\asymp |\mu_{1,0}(t_0)| %(\log x)^{\frac{2}{\pi} -1}.
%$$
Next, we bound $|\mu_{1,0}(t_0)|$ from below. From our earlier estimate $\hat{h}(m) = \tfrac{2\chi_4(m)}{\pi m} + O(\e)$, and that
\begin{align*}
\prod_{\ss{1 \leq |m| \leq |t_0|^{-1} \\ m \neq 1}} \zeta(1-i(m-1)t_0)^{\hat{h}(m)} &\asymp \exp\left(\sum_{\ss{1 \leq |m| \leq |t_0|^{-1} \\ m \neq 1}}  \left(\frac{2}{\pi}\frac{\chi_4(m)}{m} + O(\e)\right) \left(\log(1/|t_0|) + \log(m-1)\right)\right) \\
&\asymp |t_0|^{2/\pi-1 + O(\e|t_0|^{-1})}. %\exp(O(\e |t_0|^{-1} \log(1/|t_0|)))
\end{align*}
Estimating the remaining product over $|t_0|^{-1} < m \leq N$, we get an expression
$$
\exp\left(O\left(\e^{-1} \sum_{|m| > |t_0|^{-1}} \frac{\log\log(2m)}{m^2}\right)\right) = \exp\left(O(\e^{-1} |t_0| \log\log(1/|t_0|))\right) = |t_0|^{O(\e^{-1} |t_0| \tfrac{\log\log(1/|t_0|)}{\log(1/|t_0|)})}.
$$
If we select $\e := |t_0| \left(\tfrac{\log\log(1/|t_0|)}{\log(1/|t_0|)}\right)^{1/2}$ with $|t_0| = o(1)$ as $X \ra \infty$ then, using \eqref{eq:magHnear1},
$$
\mu_{1,0}(t_0) = \frac{H(1+it_0)}{1+it_0} \prod_{\ss{1 \leq |m| \leq |t_0|^{-1} \\ m\neq 1}} \zeta(1-i(m-1)t_0)^{\hat{h}(m)} \asymp |t_0|^{1-\hat{h}(0)} \cdot |t_0|^{2/\pi-1 + o(1)} = |t_0|^{2/\pi + o(1)}.
$$
We thus deduce that
$$
\max_{X < x \leq X^2} |L_f(x) - \tilde{M}_g(w_0x)| \gg \frac{|t_0|^{2/\pi + o(1)}}{(\log X)^{1-2/\pi}},
$$
and since we may choose $t_0$ so that $|t_0| \ra 0$ arbitrarily slowly with $X$, we may ensure that $|t_0|^{2/\pi + o(1)} \geq \psi(X)$ at scale $X$, for any given monotonically decreasing function $\psi(X)$. \\
%The claim now follows.
%We may now complete the proof by taking $|t_0|$ 
%$\e = (\log\log x)^{-1.1}$ and $|t_0| = \e^{1.1}$, we get 
%$$
%|\mu_{1,0}(t_0)| \asymp (\log\log x)^{c},
%$$
%where $c := (1.1)^2 (1-2/\pi) \in (0,1)$. This completes the proof of the lower bound
%$$
%|L_f(x) - \tilde{M}_g(w_0x)| \gg \frac{(\log\log x)^c}{(\log x)^{1-2/\pi}}.
%$$
Finally, to obtain the claim it remains to show that
$$
\max_{X < x \leq X^2} \frac{\tilde{M}_g(w_0x)}{\log x} = o\left(\frac{1}{(\log X)^{1-2/\pi}}\right),
$$
and for this we will prove the stronger estimate $\tilde{M}_g(w_0y) \ll (\log y)^{o(1)}$,
for all $y \geq X$.
%about the size of $\tilde{M}_g(w_0x)$ we note 
Indeed, using Lemma \ref{lem:TenEst} and \eqref{eq:0FC}, we have
$$
\exp\left(\sum_{p \leq y} \frac{f(p)}{p} \right) \asymp \exp\left(\sum_{e^{1/|t_0|} < p \leq y} \frac{h(\tfrac{t_0}{2\pi}\log p)}{p}\right) \asymp (|t_0| \log y)^{\hat{h}(0)} \ll (\log y)^{2\e},
$$
recalling that $\e = o(|t_0|) = o(1)$. Since $\tilde{M}_g(w_0y) \geq 0$, the bound $\tilde{M}_g(w_0y) \ll (\log y)^{o(1)}$ now follows from Lemma \ref{lem:UppBdNonNeg} for all $y \geq X$.

\section{Converse theorems for small and negative partial sums} \label{sec:Conv}
% We recall the following lemma of Kerr and Klurman \cite{KeKl}.
% \begin{lem} \label{lem:KeKl}
% Let $\e > 0$ be small, $f: \mb{N} \ra [-1,1]$ completely multiplicative and $ g = 1 \ast f$ as before. Given $\delta \in (0,1)$ define
% $$
% \mc{P}_{\delta} := \{p \in \mb{P} : f(p) \geq -\delta\},
% $$
% and assume that for some
% $$
% \frac{40000}{\e^2} \leq v \leq \frac{\log x}{1000\log\log x}
% $$
% we have
% $$
% \sum_{\ss{p \in \mc{P}_{\delta}\cap [x^{1/v},x]}} \frac{1}{p} \geq 1+\e.
% $$
% Then we have
% $$
% \tilde{M}_g(x) \gg \e^2 \left(\frac{1-\delta}{v}\right)^{v(1+o(1))/e} \exp\left(\sum_{p \leq x} \frac{f(p)}{p}\right).
% $$
% \end{lem}
%As a consequence, we may obtain the following.
In this final section we shall prove our remaining Corollaries \ref{cor:improveNeg}, \ref{cor:convLf} and \ref{cor:const}. 
%Each of these results depend on a parameter $v = %v(x)$, chosen such that
%$$
%The first of these addresses possible %improvements to the bounds in the negative %truncation problem, for $\pm 1$-valued %multiplicative functions.
In fact, we prove the following more general propositions along the same lines. For notational simplicity, in the sequel we write
$$
\mc{F} := \{f : \mb{N} \ra [-1,1] : \ f \text{ completely multiplicative}\}, \quad \mc{F}_{\pm} := \{f \in \mc{F} : f(n) \in \{-1,+1\} \text{ for all } n\}.
$$
\begin{prop} \label{prop:improveNeg}
Let $f \in \mc{F}$.
%: \mb{N} \ra [-1,1]$ be a completely multiplicative function. 
Let $x$ be large and $\e > 0$, and assume that $v \geq v_0(\e)$ is chosen such that 
\begin{equation}\label{eq:fplus1}
\sum_{\ss{x^{1/v} < p \leq x \\ f(p) \geq -\delta}} \frac{1}{p} \geq 1+\e
\end{equation}
holds for some $\delta \in (0,1)$.
%Set $w = v/(1-\delta)$ and
%Define
% \begin{equation}\label{eq:Wdef}
% W := \exp\left(w\log w + \frac{1}{1-\delta}\log\log\log x\right).
% \end{equation}
Then there are constants $c_1,c_2 > 0$ such that 
%either $L_f(x) \geq 0$ or else
$$
L_f(x) \geq -\frac{c_1 \log\log x}{\log x} \exp\left(c_2(\log\log x)^{1-\tau} (v\log(v\log\log x))^{\tau}\right),
$$
where we have set
\begin{equation}\label{eq:tauDef}
\tau := \begin{cases} 1/3 &\text{ if } f \in \mc{F}_{\pm}, \\ 1/4 &\text{ otherwise. }
%f \in \mc{F}.
\end{cases}
\end{equation}
In particular, if $v \log v = o(\log\log  x)$ then 
$$
L_f(x) > -\frac{1}{(\log x)^{1-o(1)}}.
$$ 
\end{prop}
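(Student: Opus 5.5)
We argue by comparing the main term $\tilde{M}_g(w_0x) \geq 0$ with the error term in Theorem \ref{thm:HalDelta}. We may assume $L_f(x) < 0$. Take $T = (\log x)^2$ in Theorem \ref{thm:HalDelta}; since $g \geq 0$, this gives
$$
\tilde{M}_g(w_0x) \ll \frac{\log\log x}{\log x}\left(H_1(x) + H_2(x;T)\right) + \frac{1}{\log x},
$$
together with $|L_f(x)| \ll \frac{\log\log x}{\log x}\max\{H_1(x),H_2(x;T)\}$ (plus $\tilde M_g$, which is controlled by the same quantity). So the goal is to bound $H_1(x)+H_2(x;T)$ by $\exp(c_2(\log\log x)^{1-\tau}(v\log(v\log\log x))^\tau)$.

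\emph{Lower bound.} Hypothesis \eqref{eq:fplus1} is exactly the input of Lemma \ref{lem:KeKl}. It gives
$$
\tilde{M}_g(w_0x) \gg \tilde{M}_g(x) \gg \e^2 \exp\Big(-\tfrac{1+o(1)}{e}\, v\log\tfrac{v}{1-\delta}\Big)\exp\Big(\sum_{p\leq x}\tfrac{f(p)}{p}\Big).
$$
Comparing this with the upper bound yields
$$
\exp\Big(\sum_{p\leq x}\frac{f(p)}{p}\Big) \ll e^{O(v\log v)}\,\frac{\log\log x}{\log x}\,\big(H_1(x)+H_2(x;T)\big).
$$
Write $S := \sum_{p\le x}(1-f(p))/p$, so that the left side equals $(\log x)^{-1}e^{-S}$ up to constants. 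For the maximising $t$ in $H_1$, this becomes $e^{-S} \ll e^{O(v\log v)}(\log\log x)\exp(-\sum_p (1-f(p))\cos(t\log p)/p)$, i.e.
$$
\sum_{p\le x}\frac{(1-f(p))(1-\cos(t\log p))}{p} \le O(v\log v + \log\log\log x).
$$
There is an analogous inequality for $H_2$ with $|t|\ge 1/2$, using $|\zeta(1+1/\log x+it)|\gg (\log 2k)^{-1}$.

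\emph{The key step, and the main obstacle.} We must show that this smallness of $\sum_p (1-f(p))(1-\cos(t\log p))/p =: B$ forces $\log H_1(x) = -\sum_p(1-f(p))\cos(t\log p)/p$ to be small. We have $\log H_1 = B - S \le B$, so we need a quantitative form of the following fact: when $1-f(p)$ is concentrated on primes where $\cos(t\log p) < 0$, the quantity $B$ is at least comparable to the negative part. Split the primes according to whether $1-\cos(t\log p) \ge \eta$. On the set where $1-\cos(t\log p)\ge\eta$, the contribution to $-\log H_1$ is at most $B/\eta$ in absolute value. On the set where $1-\cos(t\log p) < \eta$ we have $\cos(t\log p)>0$, so this part contributes nonpositively to $\log H_1$. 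It remains to handle small $|t|$. There, $\cos(t\log p)\ge 0$ for $p\le e^{1/|t|}$, and Lemma \ref{lem:TenEst} shows that the primes in $(e^{1/|t|},x]$ with $\cos<0$ carry mass $\asymp \frac1\pi\log(|t|\log x)$. One must show that $B$ small forces $|t|\log x$ to be small, up to a factor depending on $\eta$. This interpolation between $\eta$, $|t|$ and $B$ is where the exponents arise. Optimising $\eta$ against the $\log\log x$ available should give $\log H_1 \ll (\log\log x)^{2/3}(v\log(v\log\log x))^{1/3}$ when $f(p)=\pm1$, since then $1-f(p)\in\{0,2\}$ and the level sets are cleaner. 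For general $[-1,1]$ values, an additional Cauchy--Schwarz step on the distribution of $1-f(p)$ costs one more power, giving $\tau=1/4$. The same argument, applied uniformly in $k$ to the blocks $|t-k|\le1/2$ (using Lemma \ref{lem:TenEst} with the truncation $y_k$ as in Lemma \ref{lem:uppBdcosSum}), bounds $H_2$ likewise.

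Inserting these bounds back into $L_f(x) \ge -\frac{C\log\log x}{\log x}(H_1+H_2)$ gives the stated inequality. The final claim follows because, when $v\log v=o(\log\log x)$, the exponent is $o(\log\log x)$. I expect the optimisation in the key step, making the "mass near $\cos<0$" lower bound for $B$ uniform in $t$, to be the delicate part.
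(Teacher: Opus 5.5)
There is a genuine gap, and it sits at your pivotal inequality: it points the wrong way. Since $\sum_{p\le x}f(p)/p=\log\log x-S+O(1)$, the left side of your comparison is $\asymp(\log x)e^{-S}$, not $(\log x)^{-1}e^{-S}$. Even in your own normalisation, $e^{-S}\ll K\exp\bigl(-\sum_{p\le x}(1-f(p))\cos(t\log p)/p\bigr)$ (with $K:=e^{O(v\log v)}\log\log x$) rearranges to $e^{-B}\ll K$, which is a \emph{lower} bound for $B$.

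Done correctly (this is \eqref{eq:Casea}), comparing Lemma \ref{lem:KeKl} with the upper bound for $\tilde{M}_g(w_0x)$ gives, at the maximising $t_0$,
\[
B=\sum_{p\le x}\frac{(1-f(p))(1-\cos(t_0\log p))}{p}\ \ge\ 2\log\log x-O\Bigl(v\log w+\log\log\log x\Bigr),\qquad w:=\frac{v}{1-\delta}.
\]
So $B$ is nearly as large as it can ever be; it is always $\le 2\log\log x+O(1)$ by Lemma \ref{lem:uppBdcosSum}. For fixed $\delta$ and $v\log v=o(\log\log x)$ this contradicts your ``$B$ small''. Note also that if $B$ were small, $\log H_1=B-S\le B$ would end the proof at once with a far stronger bound, so your key step would be vacuous.

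With the correct information, the key step as you set it up cannot work. Splitting at $1-\cos(t\log p)\ge\eta$ only yields $|\log H_1|\lesssim B/\eta\approx 2\log\log x/\eta$. Moreover, $B$ large forces $|t_0|$ to be \emph{not} small: $B\le 2\log(|t_0|\log x)+O(1)$ gives $\log(1/|t_0|)\le\tfrac12\log Z+O(1)$ with $Z=w^v\log\log x$. This is the opposite of the ``$|t|\log x$ small'' you were aiming for.

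The missing idea is to read ``$B$ nearly maximal'' as a deficit statement and route everything through $\mathbb{D}(f,\lambda;x)$. Because $1-f(p)\le 2$ always and $1-f(p)\le 1+\delta$ when $f(p)\ge-\delta$,
\[
B\le 2\log\log x-(1-\delta)\sum_{p\le x,\,f(p)\ge-\delta}\frac{1-\cos(t_0\log p)}{p}+O(1),
\]
hence $\sum_{p\le x,\,f(p)\ge-\delta}(1-\cos(t_0\log p))/p\ll\log W$. Split these primes by whether $\|\tfrac{t_0}{2\pi}\log p\|\ge\theta$, using $1-\cos u\ge 8\|u/2\pi\|^2$, Lemma \ref{lem:TenEst} and the lower bound for $|t_0|$. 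This gives $\sum_{p\le x,\,f(p)\ge-\delta}1/p\ll\theta^{-2}\log W+\theta\log\log x$, and $\theta=(\log W/\log\log x)^{1/3}$ yields $(\log\log x)^{2/3}(\log W)^{1/3}$. The $H_2$ case is the same, with $t_{k_0}$ near some $k_0=\exp(O(\sqrt W))$ (Lemma \ref{lem:casesAB}).

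For $f\in\mc{F}_{\pm}$ this sum equals $\tfrac12\mathbb{D}(f,\lambda;x)^2$, giving $\tau=1/3$. For general $f$, the exponent $1/4$ does not come from a Cauchy--Schwarz step. It comes from $\mathbb{D}(f,\lambda;x)^2\le 2\sum_{p\le x,\,f(p)\ge-\eta}1/p+(1-\eta)\log\log x$ with $\eta=1-1/R$, optimising in $R$.

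Finally, the cancellation you were trying to extract from $\log H_1$ is captured, uniformly in $t$, by
\[
\max\{H_1(x),H_2'(x;T)\}\ll\exp(\mathbb{D}(f,\lambda;x)^2).
\]
This follows by writing $f(p)=(1+f(p))-1$, using $1+f(p)\ge0$, and using $\sum\cos(t\log p)/p\ge -O(1)$ over the relevant ranges: $p\le x$ for $|t|\le 1/2$, and $y_k<p\le x$ for $|t-k|\le 1/2$. Combined with $\tilde M_g(w_0x)\ge0$, this gives the proposition.
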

The second main result of this section deals with converse theorems, wherein $|L_f(x)|$ is assumed to be small. Once again, the size of $v$ as above is a relevant consideration.
\begin{prop} \label{prop:convLf}
Let $x$ be large and fix $C > 0$ and $\e > 0$. Suppose $f \in \mc{F}$ satisfies
%: \mb{N} \ra [-1,1]$ be a completely multiplicative function 
%such that 
$$
|L_f(x)| \ll \frac{\exp\left(C (\log\log x)^{2/3}\right)}{\log x},
$$ 
and suppose $v \geq v_0(\e)$ is chosen such that \eqref{eq:fplus1} holds for some $\delta \in (0,1)$. Then 
% if $\eta \in (0,1/2)$ is sufficiently small then one of the following must hold:
% \begin{enumerate}[(a)]
% \item there is $\eta > 0$ such that $v\log v \geq \eta^3 \log\log x$; or else
% \item we have $\mb{D}(f,\lambda;x)^2 \ll \eta \log\log x.$ 
% \end{enumerate}
% Furthermore, if $v = O(1)$ then
$$
\mb{D}(f,\lambda;x)^2 \ll (\log\log x)^{1-\tau} (v\log(v\log\log x))^{\tau},
$$
where $\tau$ is as in \eqref{eq:tauDef}. 
\end{prop}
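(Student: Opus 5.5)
Set $L := \log\log x$ and $D := \mb{D}(f,\lambda;x)^2 = \sum_{p\le x}(1+f(p))/p$, so that $\sum_{p\le x} f(p)/p = D - L + O(1)$. The plan is to play an upper bound for $\tilde{M}_g(w_0x)$, coming from the smallness of $|L_f(x)|$ via Theorem \ref{thm:HalDelta}, against the lower bound of Lemma \ref{lem:KeKl}. The lower bound is large once $D$ is large. The error terms $H_1,H_2$ are also large when $D$ is large, so they have to be controlled separately; this is where the exponent $\tau$ enters.

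\textbf{Step 1 (upper bound for $\tilde{M}_g(w_0x)$).} Apply Theorem \ref{thm:HalDelta} with $T = (\log x)^2$. Since $g\ge 0$, this gives
$$\tilde{M}_g(w_0x) \ll |L_f(x)| + \frac{L}{\log x}\, e^{-M(x;T)} \ll \frac{e^{CL^{2/3}} + L e^{-M(x;T)}}{\log x}.$$
If one prefers the sharper form, Theorem \ref{thm:HalDeltaRefined} together with Lemma \ref{lem:monoBdd} lets one replace $e^{-M}$ by $H_1(x)+H_2(x;T)$.

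\textbf{Step 2 (lower bound).} Hypothesis \eqref{eq:fplus1} is exactly the hypothesis of Lemma \ref{lem:KeKl}, applied at $w_0x$; the primes in $(x,w_0x]$ are harmless. It yields
$$\tilde{M}_g(w_0x) \gg \exp\bigl(-O(v\log v)\bigr)\,\frac{e^{D}}{\log x}.$$
Comparing with Step 1 gives
$$D \le CL^{2/3} + O(v\log v) + \log L + \max\{0,-M(x;T)\} + O(1).$$
So everything reduces to bounding $-M(x;T)$ in terms of $D$.

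\textbf{Step 3 (main obstacle: bounding $-M$ by $D$).} Write $-M(x;T) = \max_{|t|\le T}\sum_{p\le x}(f(p)-1)\cos(t\log p)/p$.
\begin{itemize}
\item Primes with $f(p)\approx 1$ contribute almost nothing to this sum.
\item The remaining primes contribute at most $\sum_{p:\,f(p)\,\text{not near}\,1}(1-f(p))|\cos(t\log p)|/p$.
\item For $f\in\mc{F}_\pm$, the primes with $f(p)=+1$ have total reciprocal mass $D/2$ (each contributes $2/p$ to $D$).
\item The claim to prove is that $-M \le \bigl(1-\kappa(\cdot)\bigr)$ times the trivial bound. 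Concretely: if $f(p)=-1$ on most primes, then $\sum_{f(p)=-1}\cos(t\log p)/p$ cannot be very negative without $f$ correlating with $n^{it}$ times something. Lemma \ref{lem:TenEst} shows that $\sum_{p}|\cos(t\log p)|/p \approx (2/\pi)L$ for $|t|\gg 1/\log x$. The loss relative to $2\sum_{f(p)=-1}\cos(t\log p)/p$ is governed by how $D$ is distributed across scales.
\end{itemize}
The cleaner route I would try is to split the prime range at $x^{1/u}$ and use \eqref{eq:fplus1}-type information at several scales. The primes carrying $D$ sit at scales where $\cos(t\log p)$ has a fixed sign only over windows of multiplicative length $e^{\pi/|t|}$. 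Optimizing the window parameter should give a bound of the form
$$-M \le L - c\,D^{1/\tau}\,L^{1-1/\tau}(v\log(vL))^{-(1-\tau)/\tau}.$$
In the inequality of Step 2 this $L$ must then be absorbed. Re-examining Step 1, the actual comparison is between $e^{-M}/\log x$ and $e^{D-L}$. In that comparison the $L$ cancels and leaves
$$c\,D^{1/\tau} L^{1-1/\tau}(v\log(vL))^{-(1-\tau)/\tau} \ll v\log(vL).$$
This rearranges to $D \ll L^{1-\tau}(v\log(vL))^{\tau}$, exactly the claim. The term $CL^{2/3}$ is dominated by this since $\tau\le 1/3$.

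The distinction between $\tau=1/3$ and $\tau=1/4$ should come from the following difference. For $\pm1$-valued $f$, primes counted in $D$ have $f(p)=1$ exactly. For general $f$ they only have $f(p)\ge -\delta$, which forces an additional level of averaging (a dichotomy on the size of $1+f(p)$) and costs one more power in the optimization. I expect this optimization in Step 3, and its rigorous justification via Lemma \ref{lem:TenEst} uniformly in $t$, to be the heart of the proof. It parallels whatever is done for Proposition \ref{prop:improveNeg}, whose argument I would reuse essentially verbatim with $L_f(x)<0$ replaced by the smallness hypothesis.
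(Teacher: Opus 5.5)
Your Steps 1 and 2 follow the paper's framework: an upper bound for $\tilde{M}_g(w_0x)$ from the main theorem, played against the lower bound of Lemma \ref{lem:KeKl}. The paper uses Proposition \ref{prop:passToSmooth} with the truncated $H_2'$ rather than $e^{-M(x;T)}$, but this only affects $O(\log\log\log x)$ terms. Note also that the factor in Lemma \ref{lem:KeKl} is $w^{-(1+o(1))v/e}$ with $w=v/(1-\delta)$, not $v^{-O(v)}$; this matters below.

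\textbf{The gap is Step 3, and the form you propose for it cannot work.} Step 2 already reads $D\le -M(t_0)+O(v\log w)+\log L$. The cancellation of $L$ between $e^{D-L}$ and $e^{-M}/\log x$ was used to get there, so inserting a bound $-M\le L-(\text{saving})$ only gives $D\le L-(\text{saving})+\cdots$. That is no better than the trivial $D\le 2L+O(1)$. Even taken at face value, your final inequality rearranges to $D\ll L^{1-\tau}v\log(vL)$, not to the claim.

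What is needed is a bound for $-M$ relative to $D$. It comes from the exact identity
\[
D-\sum_{p\le x}\frac{(f(p)-1)\cos(t\log p)}{p}=2\sum_{p\le x}\frac1p-\sum_{p\le x}\frac{(1-f(p))(1-\cos(t\log p))}{p},
\]
together with $\sum_{p\le x}(1-\cos(t\log p))/p\le L+O(1)$ for $|t|\le 1/2$ (Lemma \ref{lem:uppBdcosSum}). Splitting according to $f(p)<-\delta$ or $f(p)\ge-\delta$ gives
\[
-M(t)\le D-(1-\delta)\sum_{p\le x,\,f(p)\ge-\delta}\frac{1-\cos(t\log p)}{p}+O(1),
\]
so Step 2 forces
\[
\sum_{\ss{p\le x\\ f(p)\ge-\delta}}\frac{1-\cos(t_0\log p)}{p}\ll\log W,\qquad W:=w^w(\log\log x)^{1/(1-\delta)}.
\]
In words, the primes where $f$ is not close to $-1$ must cluster where $t_0\log p$ is close to $2\pi\mb{Z}$.

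\textbf{The exponent comes from equidistribution of $\frac{t_0}{2\pi}\log p$ modulo $1$.} It does not come from using \eqref{eq:fplus1} at several scales. Nor does it come from the mean $2/\pi$ of $|\cos|$; that is the mechanism of Corollary \ref{cor:LipError}, and it only yields $-M\le\frac{2}{\pi}L+O(1)$. The argument runs as follows.
\begin{enumerate}[(i)]
\item Since $1-\cos(2\pi u)\ge 8\|u\|^2$, the primes with $f(p)\ge-\delta$ and $\|\frac{t_0}{2\pi}\log p\|>\theta$ have reciprocal mass $\ll\theta^{-2}\log W$.
\item By Lemma \ref{lem:TenEst}, all primes with $\|\frac{t_0}{2\pi}\log p\|\le\theta$ have mass at most $2\theta L+\log(1/|t_0|)+O(1)$.
\item Step (ii) needs a lower bound on $|t_0|$, since every prime below $e^{1/|t_0|}$ lies on the major arc. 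It is extracted from the same inequality, giving $\log(1/|t_0|)\le\frac12\log(w^vL)+O(1)$.
\item For $|t-k|\le 1/2$ one argues similarly; the primes below $y_k$ cost only $O(\log\log\log x)\le O(\log W)$.
\item Choosing $\theta=(\log W/L)^{1/3}$ yields $\sum_{p\le x,\,f(p)\ge-\delta}1/p\ll L^{2/3}(\log W)^{1/3}$. This is the $\tau=1/3$ bound for $f\in\mc{F}_{\pm}$, because then $D=2\sum_{f(p)=+1}1/p$.
\end{enumerate}

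For general $f\in\mc{F}$, your instinct about a dichotomy on the size of $1+f(p)$ is right. One runs the argument with $\eta=1-1/R$ in place of $\delta$; this is legitimate since \eqref{eq:fplus1} only becomes easier as $\delta$ grows. Then $w$ becomes $Rv$, and one uses $D\le 2\sum_{f(p)\ge-\eta}1/p+L/R+O(1)$. Optimizing at $R=(L/(v\log(vL)))^{1/4}$ gives $\tau=1/4$.

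Finally, you should dispose at the outset of the case $v\log(vL)\gg L$, where the claim is trivial. This is what makes the $O(v\log w)$ and $CL^{2/3}$ terms in Step 2 negligible.
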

\begin{rem}
The appearance of the parameter $v$ in these results is an outcome of an application of Lemma \ref{lem:KeKl} towards lower bounds for $M_g(y)$. This parameter is necessary in order to address the possibility that e.g. $f(p)= - 1$ for all $x^{1/v} < p \leq x$, and therefore that $g(p^k) = 1_{2|k}$ for all such $p$. In such an instance, $g(n)$ is supported on integers of the form $ab^2$, where $P^+(a) \leq x^{1/v}$ and $b > 1 \Rightarrow b > x^{1/v}$, a set which is extremely sparse if $v$ is very large. \\
In the forthcoming estimates, the size of $v$ is therefore important. Unfortunately, we do not have much to say when $v \log v \gg \log\log x$, and it would be interesting to understand this case. If $f \in \mc{F}_{\pm}$ then it is of course possible, for example, to treat $L_f(x)$ directly when $f(p) = +1$ \emph{extremely rarely} (e.g., a variant of the Liouville function with a very sparse number of exceptional primes with $f(p) = +1$). However, we have not been able to give a uniform treatment of functions for which e.g.
$$
\sum_{\ss{x^{1/v} < p \leq x \\ f(p) = +1}} \frac{1}{p} = 2 
$$
with $v \asymp \log\log x$, say. 
\end{rem}

The above propositions have a common origin (that is related to the analysis of Section \ref{sec:random}). To see this, recall that on taking $T = (\log x)^2$, say, Proposition \ref{prop:passToSmooth} (with $\theta = 0$) and Lemma \ref{lem:monoBdd} yield
\begin{equation} \label{eq:mainThmApp}
L_f(x) = \left(1+O\left(\frac{1}{\log x}\right) \right) \tilde{M}_g(w_0x) + O\left(\frac{\log\log x}{\log x} (H_1(x) + H_2'(x;T))\right).
\end{equation}
In proving Proposition \ref{prop:improveNeg}, the condition $L_f(x) < 0$ may be recast as
$$
0 > \tilde{M}_g(w_0x) - C \frac{\log\log x}{\log x} \max\{H_1(x), H_2'(x;T)\},
$$
so that our goal is to establish structural information on functions $f$ that satisfy either 
\begin{equation}\label{eq:cases}
\text{(a) } \tilde{M}_g(w_0x) \leq C\frac{\log\log x}{\log x} H_1(x) \text{ or (b) } \tilde{M}_g(w_0x) \leq C\frac{\log\log x}{\log x} H_2'(x;T).
\end{equation}
%To see that the same discussion applies in the %context of Proposition \ref{prop:convLf}, 
%we first establish the following simple lemma.
By 
%Lemma \ref{lem:Limit} and 
\eqref{eq:mainThmApp}, we see that if $|L_f(x)| \ll \tfrac{\exp(C(\log\log x)^{2/3})}{\log x}$ then there is a constant $C' > 0$ such that either
$$
\tilde{M}_g(w_0x) \leq C'
%\frac{\log\log x}{\log x}\left(1+H_1(x) + H_2(x;T)\right) 
\frac{\log\log x}{\log x} \max\{H_1(x), H_2'(x;T)\},
%(H_1(x) + H_2(x;T)),
$$
or else
$$
\tilde{M}_g(w_0x) \ll \frac{1}{\log x} \exp\left(C(\log\log x)^{2/3}\right).
$$
Thus, up to replacing $C$ by a larger constant if necessary, in the first case \eqref{eq:cases} holds; the second case will be dealt with separately.
\subsection{Small values of $\tilde{M}_g(w_0x)$}
In light of the above discussion, our first goal is to establish precise conditions under which \eqref{eq:cases} occurs. We recover the following.
\begin{lem} \label{lem:casesAB}
Fix $\e \in (0,1)$ and let $x$ be large. Suppose that $f\in \mc{F}$ satisfies \eqref{eq:fplus1} with some $\delta \in (0,1)$ and some $v \geq v_0(\e)$. Set $w := v/(1-\delta)$ and define
\begin{equation}\label{eq:Wdef}
W := w^w(\log\log x)^{\tfrac{1}{1-\delta}}.
%\exp\left(w\log w + \frac{1}{1-\delta}\log\log\log x\right).
\end{equation}
%let $W$ be as in \eqref{eq:Wdef}.
(a) If case (a) holds in \eqref{eq:cases} then there is a $|t_0| \leq 1/2$ such that 
$$
\sum_{\ss{p \leq x \\ f(p) \geq -\delta}} \frac{1-\cos(t_0 \log p)}{p} \ll \log W.
%\frac{1}{1-\delta}\left(v\log w + \log\log\log x\right).
$$
(b) Suppose that $f$ satisfies (b) above. Then there is $k = \exp(O(\sqrt{W}))$ and $t_k \in [k-1/2,k+1/2]$ such that
$$
\sum_{\ss{p \leq x \\ f(p) \geq -\delta}} \frac{1-\cos(t_k \log p)}{p} \ll \log W.
%w \log w + \frac{1}{1-\delta} \log\log\log x.
$$
In both of cases (a) and (b), we have
\begin{equation}\label{eq:fpdelta}
\sum_{\ss{p \leq x \\ f(p) \geq -\delta}} \frac{1}{p} \ll (\log\log x)^{2/3}(\log W)^{1/3}.
\end{equation}
\end{lem}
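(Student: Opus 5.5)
The plan is to play the lower bound of Lemma \ref{lem:KeKl} against the upper bounds in \eqref{eq:cases}. Applying Lemma \ref{lem:KeKl} at $w_0x$ (the condition \eqref{eq:fplus1} at $x$ transfers to $w_0x$ at the cost of a harmless change in $\e$), we get
$$
\tilde{M}_g(w_0x) \gg \e^2 \left(\frac{1-\delta}{v}\right)^{v(1+o(1))/e} \exp\Big(\sum_{p\le x}\frac{f(p)}{p}\Big) \geq W^{-O(1)}(\log\log x)^{O(1)}\exp\Big(\sum_{p\le x}\frac{f(p)}{p}\Big).
$$
Here we use that $(v/(1-\delta))^{v/e}\le w^w$. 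The factor $(\log\log x)^{1/(1-\delta)}$ in $W$ is built in so that it absorbs the $\log\log x$ loss in \eqref{eq:cases} together with the losses in converting between $f(p)$ and the indicator of $f(p)\ge-\delta$.

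In case (a), we insert this lower bound and take logarithms. Choosing $t_0$ to maximise $H_1(x)$, we obtain
$$
\sum_{p\le x}\frac{f(p)}{p}+\log\log x+\sum_{p\le x}\frac{(1-f(p))\cos(t_0\log p)}{p}\ll \log W .
$$
Writing $\log\log x=\sum_{p\le x}1/p+O(1)$, the left side becomes $\sum_p (1-f(p))(\cos(t_0\log p)-1)/p+2\sum_p f(p)/p+\sum_p 1/p$. Rearranging gives
$$
\sum_p\frac{(1+f(p))+(1-f(p))\cos(t_0\log p)}{p}\ll\log W.
$$
For $p$ with $f(p)\ge-\delta$, the summand is at least $(1-\delta)\cdot\tfrac12(1-\cos)$-type quantities. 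More precisely, $1+f+(1-f)\cos\ge (1+f)(1-|\cos|)+\dots$, so the $p$ with $f(p)\ge -\delta$ contribute $\gg (1-\delta)\sum (1-\cos(t_0\log p))/p$. For the remaining $p$, where $f(p)<-\delta$, the summand is $\ge -2\cdot\mathbf 1_{\cos<0}|\cos|/p$ plus nonnegative terms. This negative part is handled by Lemma \ref{lem:TenEst} as in the proof of Corollary \ref{cor:LipError}, where it is compensated by the $(1+f(p))$-terms together with the main-term bookkeeping. I expect this sign bookkeeping to be the most delicate step. The fallback is to keep the term $\sum_{f(p)<-\delta}(1+f(p))/p$ and only use $\cos\ge -1$ locally.

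Case (b) is handled the same way with $H_2'(x;T)$. The sum over $k$ is dominated by some term. Since $H_2'(x;T)^2\ll \max_k (\log 2k)^6k^{-2}\exp(\cdots)$ and the exponent is at most $2\log\log x+O(1)$, the weight $k^{-2}$ combined with the available room $\log W$ forces the dominating $k$ to satisfy $\log k\ll\sqrt W$. This gives $k=\exp(O(\sqrt W))$; the crude exponent $\sqrt W$ comes from comparing $k^{-2}$ against $W^{O(1)}$ after accounting for the $y_k$ truncation, which costs $2\log\log(2k)$. We then take $t_k$ maximising in $[k-1/2,k+1/2]$. The primes $p\le y_k$ are bounded by $\log\log y_k=2\log\log 2k\ll\log W$, and the same rearrangement as in case (a) finishes the argument.

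To deduce \eqref{eq:fpdelta}, let $\mathcal P$ be the set of $p\le x$ with $f(p)\ge-\delta$ and $t=t_0$ or $t_k$. Split according to $\eta:=|t|$-scale. Primes $p\le e^{\kappa/|t|}$ have $1-\cos(t\log p)\asymp (t\log p)^2$. Primes with $p$ at a larger scale are handled by a Tenenbaum/Hal\'asz-type dichotomy: the set of $p\in(y,x]$ with $1-\cos(t\log p)<\eta$ has reciprocal sum $\ll \sqrt\eta\log\log x+O(1)$ by Lemma \ref{lem:TenEst}, applied to a smoothed indicator of $\{u:1-\cos u<\eta\}$ with measure $\asymp\sqrt\eta$. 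This gives
$$
\sum_{p\in\mathcal P}\frac1p\le \sqrt\eta\,\log\log x+\frac{\log W}{\eta}+O(\log(1/|t|)\text{-part}).
$$
Primes near $1$ in the small-$|t|$ range contribute $\ll \log W$ via the quadratic lower bound, after splitting at $e^{1/(|t|\log W)}$. Optimising $\eta=(\log W/\log\log x)^{2/3}$ yields $(\log\log x)^{2/3}(\log W)^{1/3}$, as claimed. For case (b), $k$ large means $|t|\ge1/2$, so only the equidistribution part is needed, and the primes below $y_k$ contribute $\ll\log W$.
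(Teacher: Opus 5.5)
Your overall plan is the paper's: play Lemma~\ref{lem:KeKl} against \eqref{eq:cases}, then do a Markov-plus-equidistribution split in which your $\eta$ plays the role of the paper's $\theta^2$. However, the step that extracts $\sum_{p\le x,\, f(p)\ge-\delta}(1-\cos(t_0\log p))/p\ll\log W$ fails as written. Your per-prime claim for $f(p)\ge-\delta$ is false. When $\cos(t_0\log p)=-1$, the summand $(1+f(p))+(1-f(p))\cos(t_0\log p)$ equals $2f(p)$. This is $0$ for $f(p)=0$ and negative for $-\delta\le f(p)<0$, while you need $\gg(1-\delta)(1-\cos(t_0\log p))=2(1-\delta)$ there; indeed $(1+f)(1-|\cos|)$ vanishes exactly where $1-\cos=2$. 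The negative cosine mass on primes with $f(p)<-\delta$ can be as large as $\tfrac{2}{\pi}\log(|t_0|\log x)$, by Lemma~\ref{lem:TenEst} as in Corollary~\ref{cor:LipError}. It cannot be absorbed by the $(1+f(p))$-terms, which vanish when $f(p)=-1$. Your fallback $\cos\ge-1$ gives up $\sum_{f(p)<-\delta}(1-f(p))/p$, which is typically $\asymp\log\log x$.

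The missing idea is to absorb the cosine over \emph{all} primes at once. Write the summand as $2\cos(t_0\log p)+(1+f(p))(1-\cos(t_0\log p))$, and use $1+f(p)\ge1-\delta$ or $1+f(p)\ge0$ accordingly. Then use $\sum_{p\le x}\cos(t_0\log p)/p\ge-O(1)$ uniformly for $|t_0|\le1/2$, which is Lemma~\ref{lem:uppBdcosSum}. This is the paper's inequality $\sum_{p\le x}(1-f(p))(1-\cos(t_0\log p))/p\le2\log\log x-(1-\delta)\sum_{p\le x,\, f(p)\ge-\delta}(1-\cos(t_0\log p))/p+O(1)$, set against \eqref{eq:Casea}. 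Case (b) is identical on $y_k<p\le x$ via \eqref{eq:maxtk}. You should also record the loss from Lemma~\ref{lem:KeKl} as exactly $\log Z$, with $Z:=w^v\log\log x$, rather than $O(\log W)$. Dividing by $1-\delta$ then gives $\log Z/(1-\delta)=\log W$. By contrast, $O(\log W)/(1-\delta)$ is not uniform in $\delta$, and the lemma is later used with $\delta\to1$.

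Second, deducing \eqref{eq:fpdelta} in case (a) requires a lower bound for $|t_0|$, which you never prove. Conclusion (a) alone is vacuous when $t_0=0$. The quadratic bound $1-\cos(t_0\log p)\asymp(t_0\log p)^2$ says nothing about the primes below your split point $e^{1/(|t_0|\log W)}$, whose reciprocal sum $\log(1/(|t_0|\log W))+O(1)$ is unbounded as $t_0\to0$. The paper first proves \eqref{eq:t0LowBd}, $|t_0|\gg Z^{-1/2}$, from the case-(a) inequality itself:
$2\log\log x-\log Z-O(1)\le\sum_{p\le x}(1-f(p))(1-\cos(t_0\log p))/p\le2\sum_{p\le x}(1-\cos(t_0\log p))/p\le2\log(|t_0|\log x)+O(1)$.
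This information sits in the cosine term you discarded, since $\cos(t_0\log p)\ge\cos 1$ for $p\le e^{1/|t_0|}$. With it, the primes $p\le e^{1/|t_0|}$ contribute at most $\tfrac12\log Z+O(1)\ll\log W$, and your optimisation in $\eta$ yields \eqref{eq:fpdelta} as in the paper.

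In case (b) there is no such issue, since $|t_k|\ge1/2$. The bound $k=\exp(O(\sqrt W))$ is obtainable, but $H_2'(x;T)^2\ll\max_k(\log 2k)^6k^{-2}\exp(\cdots)$ is not justified as you state it. The paper instead bounds the $k$-sum by the unweighted maximum and reads $\log\log y_{k_0}\le\log Z+O(1)$ off the $y_k$-truncation. A minor slip: your displayed intermediate identity is wrong. Up to $O(1)$, the left side equals $2\sum_p1/p-\sum_p(1-f(p))(1-\cos(t_0\log p))/p$, though your next line is correct.
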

\begin{proof}
(a) Note that the LHS of \eqref{eq:fplus1} increases as we increase $v$, so we may assume that $v \geq \tfrac{40000}{\e^2}$. Applying Lemma \ref{lem:KeKl} 
%with $\delta = 1/2$, say, noting that as $f(p) = %\pm 1$,
%$$
%\mc{P}_{\delta} = \{p : f(p) \neq -1\} = \{p : %f(p) = +1\},
%$$
we obtain that
$$
\tilde{M}_g(w_0x) \geq w_0^{-1} \tilde{M}_g(x) \gg \e^2 w^{-(1+o(1))v/e} \exp\left(\sum_{p \leq x} \frac{f(p)}{p}\right).
$$
Combining this lower bound with case (a), we find that when $x$ is sufficiently large (and as $\e$ is fixed)
\begin{align*}
\exp\left(\sum_{p \leq x} \frac{f(p)}{p}\right) \ll \frac{w^{v}\log\log x}{\log x} \max_{|t| \leq 1/2} \exp\left(\sum_{p \leq x} \frac{(f(p)-1)\cos(t\log p)}{p}\right),
\end{align*}
or equivalently, after applying Mertens' theorem,
%again and replacing $C'$ by a slightly larger constant,
\begin{equation}\label{eq:Casea}
w^{-v}\frac{(\log x)^2}{\log\log x} \ll  \max_{|t| \leq 1/2} \exp\left(\sum_{p \leq x} \frac{(1-f(p))(1-\cos(t\log p))}{p}\right).
\end{equation}
In the sequel, define $Z := w^v (\log\log x)$. Suppose the RHS of \eqref{eq:Casea} is maximised at some $|t_0| \leq 1/2$. We show first that 
\begin{equation} \label{eq:t0LowBd}
|t_0| \gg 1/\sqrt{Z}, 
\end{equation}
a bound that we will use shortly. Indeed, observe that
\begin{align*}
\sum_{p \leq e^{1/|t_0|}} \frac{1-\cos(t_0 \log p)}{p} \ll t_0^2 \sum_{p \leq e^{1/|t_0|}} \frac{(\log p)^2}{p} \ll 1
\end{align*}
and as $|t_0| \leq 1/2$, by Lemma \ref{lem:TenEst} we have
$$
\sum_{e^{1/|t_0|} < p \leq x} \frac{1-\cos(t_0 \log p)}{p} = \log(|t_0|\log x) + O(1).
$$
It therefore follows from \eqref{eq:Casea} that
\begin{align*}
2\log\log x - \log Z + O(1) \leq \sum_{p \leq x} \frac{(1-f(p))(1-\cos(t_0\log p))}{p} &\leq 2 \sum_{p \leq x} \frac{1-\cos(t_0 \log p)}{p} \\
&\leq 2\log(|t_0| \log x) + O(1),
\end{align*}
from which we obtain $\log(1/|t_0|) \leq \frac{1}{2} \log Z + O(1)$, and \eqref{eq:t0LowBd} follows. \\
Now, by Lemma \ref{lem:uppBdcosSum} (or on refining the reasoning of the previous paragraph) we have that
\begin{align*}
\sum_{p \leq x} \frac{(1-f(p))(1-\cos(t_0\log p))}{p} &\leq 2\sum_{\ss{p \leq x \\ f(p) < -\delta}} \frac{1-\cos(t_0\log p)}{p} + (1+\delta) \sum_{\ss{p \leq x \\ f(p) \geq -\delta}} \frac{1-\cos(t_0\log p)}{p} \\
&\leq 2 \log\log x - (1-\delta)\sum_{\ss{p \leq x \\ f(p) \geq -\delta}} \frac{1-\cos(t_0\log p)}{p} + O(1).
\end{align*}
In view of \eqref{eq:Casea}, we obtain
$$
\sum_{\ss{p \leq x \\ f(p) = +1}} \frac{1-\cos(t_0\log p)}{p} \leq \frac{\log Z + O(1)}{1-\delta} \ll \log W,
$$
as claimed.\\
(b) Next, suppose $f$ satisfies case (b). As in the previous part, we have that
$$
\exp\left(\sum_{p \leq x} \frac{f(p)}{p} \right) \leq w^{v} \frac{\log\log x}{\log x} \left(\sum_{1 \leq k \leq T} \frac{(\log(2k))^6}{k^2} \max_{|t-k| \leq 1/2} \exp\left(2\sum_{y_k < p \leq x} \frac{f(p) \cos(t\log p)}{p}\right)\right)^{1/2},
$$
which implies on rearranging and using
$$
\exp\left(\sum_{p \leq y_k} \frac{f(p)}{p}\right) \ll (\log (2k))^2,
$$
that
$$
\frac{\log x}{Z} \ll \left(\sum_{1 \leq k \leq T} \frac{(\log(2k))^{10}}{k^2} \max_{|t-k| \leq 1/2} \exp\left(-2\sum_{y_k < p \leq x} \frac{f(p)(1-\cos(t\log p))}{p}\right)\right)^{1/2}.
$$
Taking the maximum and summing over $k$, we deduce that there is $1 \leq k_0 \leq T$ and $t_{k_0} \in [k_0-1/2,k_0+1/2]$ such that
$$
\frac{\log x}{Z} \ll \exp\left(-\sum_{y_{k_0} < p \leq x} \frac{f(p)(1-\cos(t_{k_0}\log p))}{p}\right).
$$
By Lemma \ref{lem:TenEst}, we have
$$
-\sum_{y_{k_0} < p \leq x} \frac{f(p)(1-\cos(t_{k_0}\log p))}{p} \leq \sum_{y_{k_0} < p \leq x} \frac{1-\cos(t_{k_0} \log p)}{p} = \log\log x - \log\log y_{k_0} + O(1),
$$
so in light of the previous estimate, we obtain that 
$$
\log(2k_0)^2 = \log y_{k_0} \ll Z \ll W,
$$
and thus $k_0 = \exp(O(\sqrt{W}))$ as claimed. \\
Next, arguing more precisely,
\begin{align*}
-\sum_{y_{k_0} < p \leq x} \frac{f(p)(1-\cos(t_{k_0}\log p))}{p} &\leq \sum_{\ss{ p \leq x \\ f(p) < -\delta}} \frac{1-\cos(t_{k_0} \log p)}{p} + \delta \sum_{\ss{ p \leq x \\ f(p) \geq -\delta}} \frac{1-\cos(t_{k_0}\log p)}{p} \\
&= \log\log x - (1-\delta)\sum_{\ss{p \leq x \\ f(p) \geq -\delta}} \frac{1-\cos(t_{k_0} \log p)}{p} + O(1),
\end{align*}
so we conclude as before that
$$
\sum_{\ss{p \leq x \\ f(p) \geq -\delta}} \frac{1-\cos(t_{k_0} \log p)}{p} \leq \frac{\log Z + O(1)}{1-\delta} \ll \log W.
$$
Finally, we prove \eqref{eq:disttoLiou}, setting $t = t_0$ in case (a) of \eqref{eq:cases} and $t = t_{k_0}$ in case (b) of \eqref{eq:cases}. Since 
$$
1-\cos(t\log p) = 2\sin^2(\pi(\tfrac{t}{2\pi}\log p)) \geq 8\left\|\frac{t}{2\pi}\log p\right\|^2,
$$
we deduce that for any $\theta \in (0,1)$,
\begin{align*}
\sum_{\ss{p \leq x \\ f(p) \geq -\delta \\ \|\tfrac{t}{2\pi}\log p\| \geq \theta}} \frac{1}{p} \leq \theta^{-2} \sum_{\ss{p \leq x \\ f(p) \geq-\delta}} \frac{\|\tfrac{t}{2\pi}\log p\|^2}{p} \leq \frac{1}{8\theta^2} \sum_{\ss{p \leq x \\ f(p) \geq-\delta}} \frac{1-\cos(t\log p)}{p} \ll \theta^{-2} \log W.
\end{align*}
Next, we control the contribution from $\|t(\log p)/2\pi\| \leq \theta$. First, when $k = 0$ we use \eqref{eq:t0LowBd} and Lemma \ref{lem:TenEst}, taking $\phi(u) := 1_{[0,\theta]}(\|\tfrac{u}{2\pi}\|)$, to obtain
$$
\sum_{\ss{p \leq x \\ f(p) \geq -\delta \\ \|\tfrac{t}{2\pi}\log p\| \leq \theta}} \frac{1}{p} \leq \log(1/|t|) + \sum_{\ss{e^{1/|t|} < p \leq x \\ \|\frac{t}{2\pi} \log p\| \leq \theta}} \frac{1}{p} + O(1) \leq 2 \theta \log\log x + \frac{1}{2} \log Z  +O(1). 
$$
In the case that $1 \leq k \leq T$, we get instead
\begin{align*}
\sum_{\ss{p \leq x \\ f(p) \geq -\delta \\ \|\tfrac{t}{2\pi}\log p\| \leq \theta}} \frac{1}{p} \leq \sum_{\ss{y_k < p \leq x \\ \|\frac{t}{2\pi} \log p\| \leq \theta}} \frac{1}{p} + \log \log y_k &\leq 2 \theta \log\left(\frac{\log x}{\log y_k}\right) + \log\log\log x  +O(1) \\
&\leq 2\theta \log\log x + \log\log\log x + O(1).
\end{align*}
Combining these results and recalling that $\log\log x \leq Z \leq W$, we obtain, regardless of $k$,
$$
\sum_{\ss{p \leq x \\ f(p) \geq -\delta}} \frac{1}{p} \ll \theta^{-2} \log W + \theta \log\log x.
$$
Setting $\theta := \left(\tfrac{\log W}{\log\log x}\right)^{1/3}$, we deduce the claim.
\end{proof}
We will use Lemma \ref{lem:casesAB} to give upper bounds for $\mb{D}(f,\lambda;x)^2$, as follows.
\begin{lem} \label{lem:LiouDist}
Assume the hypotheses of Lemma \ref{lem:casesAB}.
\begin{enumerate}[(i)]
\item If $f\in \mc{F}_{\pm}$ then
%Moreover, we have that
\begin{equation}\label{eq:disttoLiou}
\mb{D}(f,\lambda; x)^2 \ll (\log\log x)^{2/3}(v\log(v\log\log x))^{1/3}.
%w\log w + \frac{1}{1-\delta}\log\log\log x\right)^{1/3}.
\end{equation}
\item More generally, if $f \in \mc{F}$ then
\begin{equation} \label{eq:disttoLiouGen}
\mb{D}(f,\lambda;x)^2 \ll (\log\log x)^{3/4}(v \log(v\log\log x))^{1/4}.
\end{equation}
\end{enumerate}
\end{lem}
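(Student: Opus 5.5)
Since $\lambda(p)=-1$, we have $\mb{D}(f,\lambda;x)^2=\sum_{p\le x}\frac{1+f(p)}{p}$. The plan is to split this sum according to whether $f(p)\ge-\delta$ or $f(p)<-\delta$, and to feed the first part into \eqref{eq:fpdelta} of Lemma \ref{lem:casesAB}.

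\emph{Case (i), $f\in\mc{F}_\pm$.} Here $1+f(p)\in\{0,2\}$. Taking any fixed $\delta\in(0,1)$ (say $\delta=1/2$), we have $f(p)\ge-\delta$ iff $f(p)=+1$, so
$$\mb{D}(f,\lambda;x)^2=2\sum_{\ss{p\le x\\ f(p)\ge-\delta}}\frac1p.$$
The bound \eqref{eq:fpdelta} gives $\ll(\log\log x)^{2/3}(\log W)^{1/3}$. With $w=v/(1-\delta)\asymp v$ we get
$$\log W=w\log w+\tfrac{1}{1-\delta}\log\log\log x\ll v\log v+\log\log\log x\ll v\log(v\log\log x),$$
using $v\ge v_0\ge e$. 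This yields \eqref{eq:disttoLiou}. The one point to check is that \eqref{eq:fplus1} still holds with this $\delta$. For $\pm1$-valued $f$ the set $\{f(p)\ge-\delta\}$ does not depend on $\delta\in(0,1)$, so this is automatic.

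\emph{Case (ii), general $f\in\mc{F}$.} The extra difficulty is primes with $f(p)<-\delta$ but $1+f(p)$ not small. I would bound
$$\sum_{p}\frac{1+f(p)}{p}\le 2\sum_{\ss{p\le x\\ f(p)\ge-\delta}}\frac1p+(1-\delta)\sum_{p\le x}\frac1p.$$
The last term is $(1-\delta)\log\log x$, so $\delta$ must be close to $1$. I would redo Lemma \ref{lem:casesAB} with a free parameter $\eta:=1-\delta$, tracking the dependence on $\eta$. Then $w=v/\eta$ and $\log W\ll \eta^{-1}v\log(v/\eta)+\eta^{-1}\log\log\log x$, which is roughly $\eta^{-1}v\log(v\log\log x)$ up to logs. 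The hypothesis \eqref{eq:fplus1} only gets easier as $\delta\to1$, since the set of admissible primes grows, so it still holds with the given $v$. Also, the final step of Lemma \ref{lem:casesAB} implicitly gives $\sum_{f(p)\ge-\delta}1/p\ll\theta^{-2}\log W+\theta\log\log x$ for any $\theta$.

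Putting these together, with $L:=\log\log x$ and $V:=v\log(vL)$,
$$\mb{D}^2\ll\theta^{-2}\eta^{-1}V+\theta L+\eta L.$$
Choose $\theta=\eta$. Balancing $\eta^{-3}V$ against $\eta L$ gives $\eta=(V/L)^{1/4}$ and the bound $L^{3/4}V^{1/4}$, which is \eqref{eq:disttoLiouGen}.

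The main obstacle is verifying that the constants in Lemma \ref{lem:casesAB} (and in Lemma \ref{lem:KeKl}, where the factor $((1-\delta)/v)^{v(1+o(1))/e}$ appears) depend on $\delta$ only through $w=v/(1-\delta)$ and the $\frac{1}{1-\delta}$ factors already displayed in \eqref{eq:Wdef}. This is needed so that $\eta\to0$ with $x$ is legitimate. One must also check that $\eta=(V/L)^{1/4}<1$, which holds whenever the claimed bound is nontrivial, i.e. when $V\le L$.
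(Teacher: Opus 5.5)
Your proposal is correct and follows the paper's proof: (i) is the same (take $\delta=1/2$ and apply \eqref{eq:fpdelta}), and in (ii) you use the same bound $\mb{D}(f,\lambda;x)^2\le 2\sum_{p\le x,\, f(p)\ge -(1-\eta)}1/p+\eta\log\log x+O(1)$, with your $\eta$ playing the role of the paper's $1/R$. The only cosmetic difference is that you optimise $\theta$ and $\eta$ jointly ($\theta=\eta=(V/L)^{1/4}$), whereas the paper inserts the already-optimised \eqref{eq:fpdelta} and then chooses $R=(L/V)^{1/4}$; this gives the same bound. One small correction: for the monotonicity of \eqref{eq:fplus1} you need $\eta\le 1-\delta$ for the original $\delta$ (the paper's constraint $R\ge(1-\delta)^{-1}$), not just $\eta<1$. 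This is harmless, because if it fails then $V\gg_\delta L$ and the claimed bound follows trivially from $\mb{D}(f,\lambda;x)^2\le 2\log\log x+O(1)$.
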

\begin{proof}
(i) Taking any $\delta \in (0,1)$, we see that $f(p) \geq -\delta$ if and only if $f(p) = +1$. Replacing $\delta$ by $1/2$, say,
\begin{align*}
\mb{D}(f,\lambda;x)^2 = 2\sum_{\ss{p \leq x \\ f(p) = +1}} \frac{1}{p} = 2\sum_{\ss{p \leq x \\ f(p) \geq -\delta}} \frac{1}{p} &\ll (\log\log x)^{2/3} (\log W)^{1/3} \\
&\ll (\log\log x)^{2/3}(v \log v + \log\log\log x)^{1/3},
\end{align*}
as claimed. \\
(ii) 
% Next, for each $\eta \in (0,1)$ let us define
% $$
% v(\delta) := \text{argmin}\left\{v \geq v_0(\e) : \sum_{\ss{x^{1/v} < p \leq x \\ f(p) \geq -\eta}} \frac{1}{p} \geq 1+\e\right\}.
% $$
% If $\eta_2 \geq \eta_1$ then $f(p) \geq -\eta_1 \geq -\eta_2$, and thus $v(\eta_1) \geq v(\eta_2)$. We use this momentarily. \\
Observe that for any $\eta > 0$ we have
$$
\mb{D}(f,\lambda;x)^2 \leq 2\sum_{\ss{p \leq x \\ f(p) \geq -\eta}} \frac{1}{p} + (1-\eta)\log\log x + O(1).
$$
Now, by hypothesis, for any $\eta \in [\delta,1)$ we have
% $$
% \sum_{\ss{x^{1/v} < p \leq x \\ f(p) \geq -\delta}} \frac{1}{p} \geq 1+\e,
% $$
% so that for any $\eta \in [\delta,1)$ we also have
$$
\sum_{\ss{x^{1/v} < p \leq x \\ f(p) \geq -\eta}} \frac{1}{p} \geq \sum_{\ss{x^{1/v} < p \leq x \\ f(p) \geq -\delta}} \frac{1}{p} \geq 1+\e.
$$
Let $(1-\delta)^{-1} \leq R\leq \log\log x$ be a parameter to be chosen, and let $\eta := 1-1/R$. Applying Lemma \ref{lem:casesAB} and recalling the definition of $W = W(\eta)$, we get that
$$
\sum_{\ss{p \leq x \\ f(p) \geq -\eta}} \frac{1}{p} \ll (\log\log x)^{2/3} (Rv \log (Rv\log\log x))^{1/3},
$$
and therefore as $R \leq \log\log x$,
$$
\mb{D}(f,\lambda;x)^2 \ll R^{1/3}(\log\log x)^{2/3} (v \log(v \log\log x))^{1/3} + \frac{\log\log x}{R}.
$$
If we select
$$
R := \left(\frac{\log\log x}{v\log(v\log\log x)}\right)^{1/4}
$$
then we obtain the claimed bound
$$
\mb{D}(f,\lambda;x)^2 \ll (\log\log x)^{3/4}(v\log(v\log\log x))^{1/4},
$$
as claimed.
\end{proof}
We are now ready to deduce both of our propositions for this section. 
\begin{proof}[Proof of Proposition \ref{prop:improveNeg}]
If $L_f(x) \geq 0$ then the claim holds trivially. Otherwise, assume that $L_f(x) < 0$. Taking $T = (\log x)^2$ and combining Proposition \ref{prop:passToSmooth} with Lemmas \ref{lem:monoBdd} and \ref{lem:Limit}, we have
\begin{equation}\label{eq:MgtoH1H2}
\tilde{M}_g(w_0x) \ll \frac{1}{\log x} \int_1^x (H_1(y) + H_2'(y;T)) \frac{dy}{y\log y} + \frac{\log\log x}{(\log x)^2} \ll \frac{\log\log x}{\log x} \max\{H_1(x), H_2'(x;T)\}.
\end{equation}
Next, let $|k| \leq T$. 
%and set $a := 1$ if $k =0$ and $a:= 0$ otherwise. 
Given that $1+f(p) \geq 0$ for all $p$, if we apply Lemma \ref{lem:TenEst} then, writing $y_0 := 1$,
\begin{align*}
&\max_{|t-k| \leq 1/2} \sum_{y_k < p \leq x} \frac{(f(p)-1_{k = 0}) \cos(t\log p)}{p} \\
&= \max_{|t-k| \leq 1/2} \sum_{y_k < p \leq x} \left(\frac{(f(p)+1)\cos(t\log p)}{p} - \frac{(1+1_{k = 0})\cos(t\log p)}{p}\right) + O(1) \\
&\leq \mb{D}(f,\lambda;x)^2 - (1+1_{k = 0})\min_{|t-k| \leq 1/2} \sum_{\max\{y_k, e^{1/|t|}\} < p \leq x} \frac{\cos(t\log p)}{p} \\
&= \mb{D}(f,\lambda;x)^2 + O(1).
\end{align*}
It follows that
\begin{align}\label{eq:H12Dlambda}
\max \{H_1(x), H_2'(x;T)\} \ll \exp(\mb{D}(f,\lambda;x)^2).
%\ll \exp\left((\log\log x)^{2/3} (\log W)^{1/3}\right).
\end{align}
% Let $\tau$ be defined as in the statement of Proposition \ref{prop:improveNeg}.
% By Lemma \ref{lem:casesAB}, we have
% $$
% \mb{D}(f,\lambda;x)^2 \ll (\log\log x)^{1-\tau}(v\log (v\log\log x))^{\tau}.
% $$
We therefore deduce using $\tilde{M}_g(w_0x) \geq 0$ that for some $c_1 > 0$,
$$
L_f(x) \geq - \frac{c_1 \log\log x}{\log x} \exp(\mb{D}(f,\lambda;x)^2),
$$
Finally, let $\tau$ be defined as in the statement of Proposition \ref{prop:improveNeg}.
By Lemma \ref{lem:casesAB}, we have that for some $c_2 > 0$,
$$
\mb{D}(f,\lambda;x)^2 \leq c_2(\log\log x)^{1-\tau}(v\log (v\log\log x))^{\tau}.
$$
The claimed bound now follows.
%depending on whether or not $f \in \mc{F}_{\pm}$.
\end{proof}
\begin{proof}[Proof of Proposition \ref{prop:convLf}]
Since by Mertens' theorem we trivially have
$$
\mb{D}(f,\lambda;x)^2 \leq 2\log\log x + O(1),
$$ 
the claim is trivial if $v \log(v\log\log x) > C' \log\log x$ for $C' > 0$ sufficiently large. Thus, in the sequel we shall assume that $v\log(v\log\log x) \leq C' \log\log x$ for some fixed $C' > 0$. \\
Applying Proposition \ref{prop:passToSmooth} as in the proof of Proposition \ref{prop:improveNeg}, we get
\begin{align*}
\tilde{M}_g(w_0x) &\ll |L_f(x)| + \frac{\log\log x}{\log x} (H_1(x) + H_2'(x;T)) \\
&\ll \frac{\exp\left(C(\log\log x)^{2/3}\right)}{\log x} + \frac{\log\log x}{\log x} \max\{H_1(x), H_2'(x;T)\}.
\end{align*}
Suppose first of all that
\begin{equation}\label{eq:firstposs}
\tilde{M}_g(w_0x) \ll \frac{\exp(C(\log\log x)^{2/3})}{\log x}.
\end{equation}
Applying Lemma \ref{lem:KeKl} and rearranging, we get
$$
\exp(\mb{D}(f,\lambda;x)^2) \asymp (\log x) \exp\left(\sum_{p \leq x} \frac{f(p)}{p}\right) \ll \exp\left( v \log w + C(\log\log x)^{2/3}\right),
$$
so on taking logarithms and using $1 \leq v \log w \leq C'\log\log x$, we get
$$
\mb{D}(f,\lambda;x)^2 \leq C(\log\log x)^{2/3} + v\log w \ll (\log\log x)^{2/3}(v\log w)^{1/3},
$$
which is stronger than the claimed bound. \\
If \eqref{eq:firstposs} fails then \eqref{eq:MgtoH1H2} must hold, and thus the proof follows in the same way as for Proposition \ref{prop:improveNeg}.
\end{proof}

%Both of Propositions \ref{prop:improveNeg} and \ref{prop:convLf} follow immediately from Lemma \ref{lem:casesAB}.
\subsection{Optimality of Lemma \ref{lem:casesAB}} \label{sec:Optim}
Finally, we show here that Lemma \ref{lem:casesAB} is close to optimal. The following result implies Corollary \ref{cor:const}.
\begin{lem}
There exists $f \in \mc{F}_{\pm}$
%$\pm 1$-valued completely multiplicative function $f$ 
such that \eqref{eq:fplus1} holds with $\e > 0$ fixed and $v = O(1)$, and for which the following bounds hold with any $C > 4$:
\begin{align*}
\mb{D}(f,\lambda;x)^2 &\asymp (\log\log x)^{2/3} \text{ and } |L_f(x)| \ll \frac{\exp(C(\log\log x)^{2/3})}{\log x}.
% \tilde{M}_g(w_0x) &\asymp \frac{\exp(4(\log\log x)^{2/3})}{\log x}, \\
% H_1(x) + H_2(x;T) &\ll \exp(4(\log\log x)^{2/3}).
\end{align*}
Furthermore, we have
$$
L_f(x) = \left(1+O\left(\frac{1}{\log x}\right)\right)\left(-\int_1^{x^{1/v}} \frac{|L_f(t)|}{t\log(x/t)} dt\right)
%\left(-\sum_{x^{1/(3e)} < p \leq x} \frac{|L_f(x/p)|}{p}\right) 
+ O\left(\frac{\exp(4(\log\log x)^{2/3})}{\log x}\right).
$$
\end{lem}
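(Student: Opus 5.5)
The plan is to build $f$ directly on the primes, copying the near-extremal configuration that the proof of Lemma \ref{lem:casesAB} (via \eqref{eq:fpdelta}) suggests. Fix a large constant $v$, set $\theta := (\log\log x)^{-1/3}$, and fix $t_0 \in (0,1/2]$. Define $f \in \mc{F}_{\pm}$ at primes by the following rules.
\begin{itemize}
\item For $p \le x^{1/v}$, put $f(p)=+1$ if $\|\tfrac{t_0}{2\pi}\log p\| \le \theta$, and $f(p)=-1$ otherwise.
\item For $x^{1/v}<p\le x^{1-1/v}$, put $f(p)=+1$. This gives $\sum 1/p \ge \log(v-1)-O(1/v) \ge 1+\e$, so \eqref{eq:fplus1} holds with $v=O(1)$.
\item For $x^{1-1/v}<p\le x$, put $f(p)=-1$.
\end{itemize}

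\emph{Distance to $\lambda$ and the error terms.} Lemma \ref{lem:TenEst}, applied with $\phi(u)=1_{[0,\theta]}(\|u/2\pi\|)$, gives
$$
\mb{D}(f,\lambda;x)^2 = 2\sum_{f(p)=+1}\frac1p = 4\theta\log\log x + O(1) \asymp (\log\log x)^{2/3}.
$$
Every prime with $f(p)=+1$ and $p\le x^{1/v}$ has $1-\cos(t_0\log p)\ll\theta^2$, so
$$
\sum_{f(p)=+1}\frac{1-\cos(t_0\log p)}{p} \ll \theta^3\log\log x + O(1) = O(1).
$$
I then bound $H_1(y)$ and $H'_2(y;T)$ for $y\le x$. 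Inequality \eqref{eq:H12Dlambda} gives
$$
H_1(y),\ H_2'(y;T)\ll \exp(\mb{D}(f,\lambda;y)^2)\le \exp\big(4(\log\log x)^{2/3}+O(1)\big).
$$
For the main term, Lemma \ref{lem:UppBdNonNeg}(a) and Mertens give
$$
0\le \tilde{M}_{g_\theta}(w_0x)\ll \exp\Big(\sum_{p\le x}\frac{f(p)}{p}\Big)\asymp \frac{\exp(\mb{D}(f,\lambda;x)^2)}{\log x},
$$
and this is also $\ll \exp(4(\log\log x)^{2/3})/\log x$.

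\emph{Decomposition.} Apply Proposition \ref{prop:passToSmooth} with $1-\theta$ replaced by $1-1/v$ and $T=(\log x)^2$. Since $f$ vanishes on no primes, I use the splitting from its proof:
$$
L_f(x)=\sum_{x^{1-1/v}<p\le x}\frac{f(p)}{p}L_f(x/p)+L_{f_{1/v}}(x).
$$
The integral in the error term runs only over $[x^{1/v}/2,x]$, which has $\int dy/(y\log y)=O(1)$. Hence
$$
L_{f_{1/v}}(x)=\tilde M_{g_{1/v}}(w_0x)+O\big(\exp(4(\log\log x)^{2/3})/\log x\big),
$$
and by the bounds above this whole quantity is $O(\exp(4(\log\log x)^{2/3})/\log x)$. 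The prime sum equals $-\sum_{x^{1-1/v}<p\le x}L_f(x/p)/p$. I convert it to $-\int_1^{x^{1/v}} L_f(t)\,dt/(t\log(x/t))$ by the substitution $t=x/p$ together with the prime number theorem with classical error term. On the ranges of $p$ where $x/p$ crosses an integer, $L_f(x/p)$ is locally constant and the steps are controlled by $|L_f(t)|\le \log t$, so the discrepancy is $O(1/\log x)$ relative to the sum.

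\emph{Main obstacle: the sign of $L_f(t)$.} Replacing $L_f(t)$ by $|L_f(t)|$ in the integral requires $L_f(t)\ge 0$ (up to acceptable error) for $t\le x^{1/v}$. For such $t$, the values $f(p)$ with $p\le t$ follow the rule $+1$ iff $\|\tfrac{t_0}{2\pi}\log p\|\le\theta$. I would apply Corollary \ref{cor:LipError}, or better Proposition \ref{prop:passToSmooth} at scale $t$, to write $L_f(t)=\tilde M_{g}(w_0t)+{}$error. Then I would show the nonnegative main term dominates by using Lemma \ref{lem:KeKl} at scale $t$: the arc condition gives $\sum_{t^{1/v'}<p\le t,\,f(p)=+1}1/p\ge 1+\e$ once $v'\asymp \theta^{-1}$. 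This step is delicate and might fail as stated, because the Lemma \ref{lem:KeKl} factor $v'^{-v'/e}$ is exponentially small in $\theta^{-1}$. If it fails, the fallback is to keep $-\int L_f(t)\,dt/(t\log(x/t))$, which still proves the displayed identity in signed form. A further fallback is to absorb any negative part of $L_f(t)$ using Corollary \ref{cor:NegTrunc}, since $L_f(t)\ge -c(\log t)^{2/\pi-1}$ contributes only $O(1/\log x)$ after integration against $dt/(t\log x)$.

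\emph{Bound on $|L_f(x)|$.} Finally, $|L_f(t)|\ll \exp(\mb{D}(f,\lambda;t)^2)$, by Corollary \ref{cor:Gold} type bounds, i.e.\ Lemma \ref{lem:UppBdNonNeg} applied to $\tilde M_g$. Inserting this into the integral gives
$$
\int_1^{x^{1/v}}\frac{|L_f(t)|}{t\log(x/t)}\,dt \ll \frac{1}{\log x}\int_1^{x^{1/v}}\exp\big(4\theta\log\log t\big)\frac{dt}{t} \ll \frac{\exp(4(\log\log x)^{2/3})}{\log x}.
$$
Combined with the error terms above, this yields $|L_f(x)|\ll \exp(C(\log\log x)^{2/3})/\log x$ for any $C>4$.
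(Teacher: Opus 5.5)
\textbf{The gap.} The problem is the step you call the main obstacle, and none of your fallbacks closes it. With $f(p)=-1$ on $(x^{1-1/v},x]$, your decomposition produces $-\int_1^{x^{1/v}} L_f(t)\,\frac{dt}{t\log(x/t)}$. Passing to $|L_f(t)|$ then requires control of the negative part of $L_f(t)$ for $t\le x^{1/v}$. Here is why each route fails.
\begin{itemize}
\item Lemma \ref{lem:KeKl} at scale $t$ cannot give this control. By Lemma \ref{lem:TenEst}, the arc primes in $(t^{1/v'},t]$ have reciprocal sum $2\theta\log v'+O(1)$. So you need $\log v'\gg\theta^{-1}=(\log\log x)^{1/3}$, not $v'\asymp\theta^{-1}$. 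Then $(1/v')^{v'/e}$ is far smaller than $1/\log t$.
\item The ``signed form'' is simply a different statement from the one claimed.
\item The Corollary \ref{cor:NegTrunc} fallback is miscomputed. One has $\frac{1}{\log x}\int_3^{x^{1/v}}(\log t)^{2/\pi-1}\frac{dt}{t}\asymp(\log x)^{2/\pi-1}$, not $O(1/\log x)$. This is much larger than $e^{4(\log\log x)^{2/3}}/\log x$.
\item The sharper general bound $L_f(t)\ge -c\frac{\log\log t}{\log t}e^{\mb{D}(f,\lambda;t)^2}$, from \eqref{eq:mainThmApp}, \eqref{eq:H12Dlambda} and $\tilde M_g\ge 0$, is also not enough. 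Integrated, it only gives an error $\asymp(\log\log x)^{4/3}e^{4(\log\log x)^{2/3}}/\log x$, which still exceeds the stated error term.
\end{itemize}

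\textbf{The missing idea.} Choose the top-range signs adaptively: set $f(p):=-\text{sign}(L_f(x/p))$ for $x^{1-1/v}<p\le x$.
\begin{itemize}
\item This is well defined. Since $x/p<x^{1/v}$, $L_f(x/p)$ depends only on the values $f(p')$ with $p'\le x^{1/v}$, which are already fixed.
\item It makes the prime sum equal to $-\sum_{x^{1-1/v}<p\le x}|L_f(x/p)|/p$ identically, with no sign analysis needed.
\item It changes $\sum_{p\le x}f(p)/p$ and $\mb{D}(f,\lambda;x)^2$ only by $O(1)$, so the rest of your argument goes through unchanged.
\end{itemize}
This is exactly the paper's construction. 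Your conversion of the prime sum to the integral is fine: partial summation with the classical prime number theorem works, since $u\mapsto L_f(x/u)$ has total variation $O(\log x)$ on the range. That is slightly simpler than the paper's appeal to Ingham's short-interval result.

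\textbf{A second error.} Your final bound on $|L_f(x)|$ is off by a factor of $\log x$. Using $|L_f(t)|\ll e^{\mb{D}(f,\lambda;t)^2}$, the integral is $\asymp(\log x)^{4\theta}$, not $e^{4(\log\log x)^{2/3}}/\log x$. There are two ways to fix it.
\begin{itemize}
\item Use the pointwise bound $|L_f(t)|\ll\frac{\log\log t}{\log t}e^{\mb{D}(f,\lambda;t)^2}$. This gives $(\log\log x)^{4/3}e^{4(\log\log x)^{2/3}}/\log x$, which suffices for any $C>4$.
\item More simply, as the paper does, apply \eqref{eq:mainThmApp} directly at scale $x$: $|L_f(x)|\ll\tilde M_g(w_0x)+\frac{\log\log x}{\log x}\max\{H_1(x),H_2'(x;T)\}\ll\frac{\log\log x}{\log x}e^{4(\log\log x)^{2/3}}$.
\end{itemize}
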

% \begin{rem}
% Though we do not directly use the final estimate in the statement, we found it compelling to obtain such a formula with a negative ``main term''. This is perhaps misleading as in our construction it is possible that the error term is larger than this term, but we were unable to verify this. Note that this is not immediate, since if 
% $$
% |L_f(t)| \asymp \frac{\exp(4(\log\log t)^{2/3})}{\log t}
% $$
% then 
% %(an indeed, if $|L_f(t)|$ were known to grow a bit faster than 
% \end{rem}
\begin{proof}
Let $x$ be large, fix $\e := \log(3/2) > 0$ and set $v := 3e$, $t_0 := 1/2$. Define also $\theta := (\log\log x)^{-1/3}$. Define a completely multiplicative function $f$ at primes by
\begin{equation}
f(p) := \begin{cases} 
+1 &\text{ if } p \leq x^{1/v} , \, \|\tfrac{t_0}{2\pi} \log p\| \leq \theta \\ 
-1 &\text{ if } p \leq x^{1/v} , \, \|\tfrac{t_0}{2\pi} \log p\| > \theta \\
+1 &\text{ if } x^{1/v} < p \leq x^{1-1/v} \\
-\text{sign}(L_f(x/p)) &\text{ if } x^{1-1/v} < p \leq x \\
-1 &\text{ if } p > x
\end{cases}
\end{equation}
(the values at $p > x$ are unimportant for our purposes). Note that the choices of $f(p)$ for $x^{1-1/v} < p \leq x$ are well-defined, since whenever $p$ lies in this range, $L_f(x/p)$ depends only on the values of $f(p')$ with $p' \leq x^{1/v} < x^{1-1/v}$. \\ 
Applying Lemma \ref{lem:TenEst} with $h(u) := 1_{[0,\theta]}(\|\tfrac{u}{2\pi}\|)$,
$$
\mb{D}(f,\lambda;x)^2 = \sum_{p \leq x} \frac{1+f(p)}{p} = 2\sum_{\ss{p \leq x \\ \|\tfrac{1}{4\pi} \log p\| \leq \theta}} \frac{1}{p} + O(1) = 4 \theta \log\log x + O(1) = 4 (\log\log x)^{2/3} + O(1).
$$
% Moreover, since $1-\cos x  = 2\pi^2\|x/2\pi\|^2 + O(\|x/2\pi\|^4)$, we have
% \begin{align*}
% \sum_{\ss{p\leq x \\ f(p) = +1}} \frac{1-\cos(\tfrac{1}{2}\log p)}{p} &= \sum_{\ss{p \leq x \\ \|\tfrac{1}{4\pi}\log p\| \leq \delta}} \frac{1-\cos(\tfrac{1}{2}\log p)}{p} =  4\pi^2 \delta^2 \sum_{\ss{p \leq x \\ \|\tfrac{1}{4\pi} \log p\| \leq \delta}} \frac{1}{p} + O(\delta^4 \log\log x) \\
% &= 2\pi^2 \delta^3(1+O(\delta)) \log\log x \asymp 1,
% \end{align*}
% as required.
and therefore also
$$
\sum_{p \leq x} \frac{f(p)}{p} = -\log\log x + \mb{D}(f,\lambda;x)^2 + O(1) = -\log\log x + 4 (\log\log x)^{2/3} + O(1).
$$
Aiming to apply Proposition \ref{prop:passToSmooth}, we recall the notation $f_{1-1/v}(n) := f(n)1_{P^+(n) \leq x^{1-1/v}}$ and $g_{1-1/v} := 1\ast f_{1-1/v}$. We observe that
$$
\sum_{\ss{x^{1/v} < p \leq x \\ f(p) = +1}} \frac{1}{p} \geq \sum_{x^{1/v} < p \leq x^{1-1/v}} \frac{1}{p} \geq \log(3e/2) \geq 1 + \e.
$$
Applying Lemma \ref{lem:KeKl} for the lower bound, and Lemma \ref{lem:UppBdNonNeg}(a) for the upper bound,
$$
\tilde{M}_{g}(w_0x) \asymp \tilde{M}_{g_{1-1/v}}(w_0x) \asymp \exp\left(\sum_{p \leq x} \frac{f(p)}{p}\right)  \asymp \frac{\exp(4(\log\log x)^{2/3})}{\log x}.
$$
Next, we give upper bounds for $H_1(x)$ and $H_2'(x;T)$, taking $T = (\log x)^2$. As in \eqref{eq:H12Dlambda} we have
$$
\max\{H_1(x), H_2'(x;T)\} \ll \exp(\mb{D}(f,\lambda;x)^2) \ll \exp\left(4(\log\log x)^{2/3}\right).
$$
% Applying Lemma \ref{lem:TenEst} and using the fact that $\cos(t\log p) \geq 0$ for $p \leq e^{1/|t|}$ and $|t| \leq 1/2$,
% \begin{align*}
% H_1(x) &\ll \max_{|t| \leq 1/2} \exp\left(-2\sum_{\ss{p \leq x \\ f(p) = -1}} \frac{\cos(t\log p)}{p}\right) \leq \max_{|t| \leq 1/2} \exp\left(-2\sum_{p \leq x} \frac{\cos(t\log p)}{p} + 2\sum_{\ss{p \leq x \\ f(p) = +1}} \frac{\cos(t \log p)}{p}\right) \\
% &\ll \max_{|t| \leq 1/2} \exp\left(-2\sum_{e^{1/|t|} < p \leq x} \frac{\cos(t\log p)}{p} + 2\sum_{\ss{p \leq x \\ f(p) = +1}} \frac{1}{p}\right) \ll \exp(\mb{D}(f,\lambda;x)^2) \\
% &\ll \exp\left(4(\log\log x)^{2/3}\right).
% \end{align*}
% Similarly, 
% %setting $T = (\log x)^2$, 
% %we can bound 
% \begin{align*}
% H_2'(x;T) &\ll \max_{1 \leq k \leq T} \max_{|t-k| \leq 1/2} \exp\left(-\sum_{y_k < p \leq x} \frac{\cos(t\log p)}{p} + 2\sum_{\ss{y_k < p \leq x \\ f(p) = +1}} \frac{\cos(t\log p)}{p}\right) \\
% &\ll \exp\left(2\sum_{\ss{y_k < p \leq x \\ f(p) = +1}} \frac{1}{p}\right) \ll \exp\left(4(\log\log x)^{2/3}\right).
% \end{align*}
It follows from Proposition \ref{prop:passToSmooth} (with $\theta = 0$) that
$$
|L_f(x)| \ll \tilde{M}_g(w_0x) + \frac{\log\log x}{\log x} (H_1(x) + H_2'(x;T)) \ll \frac{\log\log x}{\log x} \exp(4(\log\log x)^{2/3}) \ll \frac{\exp(C(\log\log x)^{2/3})}{\log x},
$$
for all $C > 4$. This proves the first claim.\\
We next prove the more precise asymptotic claim. Note that
$$
\sum_{x^{1-1/v} < p \leq x} \frac{f(p)}{p} L_f(x/p) = -\sum_{x^{1-1/v} < p \leq x} \frac{|L_f(x/p)|}{p},
$$
which is genuinely negative. 
We now apply Proposition \ref{prop:passToSmooth} (with $\theta = 1-1/v$ in the notation there). In light of the above bounds for $H_1(x)$, $H_2(x;T)$ and $\tilde{M}_{g_{1-1/v}}(w_0x)$, we obtain
\begin{align*}
L_f(x) = \left(1 + O\left(\frac{1}{\log x}\right)\right) \left(-\sum_{x^{1-1/v} < p \leq x} \frac{|L_f(x/p)|}{p}\right) + O\left(\frac{\exp\left(4(\log\log x)^{2/3}\right)}{\log x}\right).
\end{align*}
We next analyse the sum over $p$ here.  We have
\begin{align*}
\sum_{x^{1-1/v} < p \leq x} \frac{|L_f(x/p)|}{p} &= \sum_{d \leq x^{1/v}} |L_f(d)| \sum_{x/(d+1) < p \leq x/d} \frac{1}{p} \\
&= \sum_{d \leq x^{1/v}} |L_f(d)|\cdot \frac{d}{x} \left(1 + O\left(\frac{1}{d}\right)\right) \left(\pi(x/d) - \pi(x/(d+1))\right).
\end{align*}
Note that if $d \leq x^{1/v}$ then the length of the interval $(x/(d+1),x/d]$ is 
$$
\frac{x}{d(d+1)} \geq x^{-1/v} \frac{x}{d+1} \geq \left(\frac{x}{d+1}\right)^{1-1/(v-1)},
$$
so as $v = 3e \geq 7$, we get e.g. by Ingham's prime number theorem in short intervals \cite{Ing} that
$$
\pi(x/d) - \pi(x/(d+1)) = \left(1+O\left(\frac{1}{\log x}\right)\right)\frac{x}{d(d+1) \log(x/(d+1))},
$$
for all $d \leq x^{1/v}$. Thus, we find
\begin{align*}
\sum_{x^{1-1/v} < p \leq x} \frac{|L_f(x/p)|}{p} &= \left(1+O\left(\frac{1}{\log x}\right)\right) \sum_{d \leq x^{1/v}} \frac{|L_f(d)|}{(d+1) \log(x/(d+1))} \left(1+O\left(\frac{1}{d}\right)\right) \\
&= \left(1+O\left(\frac{1}{\log x}\right)\right) \sum_{d \leq x^{1/v}} \frac{|L_f(d)|}{(d+1) \log(x/(d+1))} + O\left(\frac{1}{\log x}\right),
\end{align*}
using the bound $|L_f(d)| \leq \log d + O(1)$ to estimate the error term in $d$. \\
Comparing to an integral, we have
\begin{align*}
\sum_{d \leq x^{1/v}} \frac{|L_f(d)|}{(d+1)\log(x/(d+1))} = \int_1^{x^{1/v}} \frac{|L_f(t)|}{t \log(x/t)}\left(1+O\left(\frac{1}{t}\right)\right) dt = \int_1^{x^{1/v}} \frac{|L_f(t)|}{t \log(x/t)} dt + O\left(\frac{1}{\log x}\right).
\end{align*}
To conclude, we obtain the expression
\begin{align*}
L_f(x) = \left(1+O\left(\frac{1}{\log x}\right)\right) \left(-\int_1^{x^{1/v}} \frac{|L_f(t)|}{t\log(x/t)} dt\right) + O\left(\frac{\exp(4(\log\log x)^{2/3}}{\log x}\right),
\end{align*}
as claimed.
\end{proof}

% \begin{lem}
% Let $f: \mb{N} \ra \{-1,+1\}$ be completely multiplicative. Assume that
% $$
% |L_f(x)| \ll \frac{\log\log x}{\log x}.
% $$
% Then for all $v \leq [...]$ we have
% $$
% \sum_{\ss{x^{1/v} \leq p \leq x \\ f(p) = +1}} \frac{1}{p} \leq 1 + o(1).
% $$
% \end{lem}
% \begin{proof}
% Set $V := [...].$ Assume for the sake of contradiction that there is $\e > 0$ and a $40000/\e^2 \leq v \leq V$ such that
% $$
% \sum_{\ss{x^{1/v} \leq p \leq x \\ f(p) = +1}} \frac{1}{p} \geq 1+\e.
% $$
% Clearly, $\{p \in \mb{P} : f(p) = +1\} = \mc{P}_{\delta}$ for $\delta = \tfrac{1}{\log x}$. Applying the previous lemma, we obtain
% $$
% \tilde{M}_g(w_0x) \asymp \tilde{M}_g(x) \gg \e^2 v^{-v(1+o(1))/e} \exp\left(\sum_{p \leq x} \frac{f(p)}{p}\right) \asymp \e^2 v^{-v(1+o(1))/e} (\log x) \exp\left(-\sum_{p \leq x} \frac{1-f(p)}{p}\right)
% $$
% Now, by Theorem \ref{thm:HalDelta} with $T = \log x$, Lemma \ref{lem:Limit} and our assumption, we have
% $$
% \tilde{M}_g(w_0x) \ll |L_f(x)| + \frac{\log\log x}{\log x}(1+e^{-M(x;T)}) \ll \frac{\log\log x}{\log x} e^{-M(x;T)} = \frac{\log\log x}{\log x} \exp\left(-\sum_{p \leq x} \frac{(1-f(p))\cos(t_0\log p)}{p}\right),
% $$
% for some $|t_0| \leq \log x$. Combining these bounds together, we obtain
% $$
% v^{(1+o(1))v/e} \exp\left(\sum_{p \leq x} \frac{(1-f(p))(1-\cos(t_0\log p))}{p}\right) \gg \e^2 \frac{(\log x)^2}{\log\log x}. 
% $$
% %for some $|t_0| \leq \log x$. 
%\end{proof}

\bibliographystyle{plain}
\bibliography{NegTrunc.bib}

\end{document}